\documentclass[leqno,12pt]{amsart}
\usepackage{amssymb}
\usepackage{amsmath}
\usepackage{enumerate}
\usepackage{amsfonts}
\usepackage{hyperref}
\usepackage{mathrsfs}
\usepackage{tikz}

\usepackage[headheight=18pt, top=25mm, bottom=25mm, left=20mm, right=20mm]{geometry}

\hypersetup{
    colorlinks=true,
    linkcolor= blue,
    citecolor =cyan,
    urlcolor = teal,
}

\newtheorem{theorem}{Theorem}[section]
\newtheorem{corollary}[theorem]{Corollary}
\newtheorem{lemma}[theorem]{Lemma}

\newtheorem{remark}[theorem]{Remark}
\newtheorem{definition}[theorem]{Definition}

\numberwithin{equation}{section}

\begin{document}
\title[Lipschitz Spaces in Dunkl--Schrödinger Analysis] {On Lipschitz Spaces, Semigroup Transference, and Heat Kernel Derivative Estimates in Dunkl--Schrödinger Analysis}

\author[Agnieszka Hejna-Łyżwa]{Agnieszka Hejna-Łyżwa}

\subjclass[2020]{{primary: 42B35, 35J10; secondary: 33C52, 47D06, 35K08, 46E35}}

\keywords{Rational Dunkl theory, Schr\"odinger operators, Reverse H\"older classes, Lipschitz spaces.}

\begin{abstract}
In this paper, we study Lipschitz spaces adapted to Dunkl--Schr\"odinger operators $\mathcal{L} = -\Delta_k + V$, where $\Delta_k$ is the Dunkl Laplacian and $V \ge 0$ belongs to the reverse H\"older class ${\rm RH}^q(dw)$ with $q > \max(1, \mathbf{N}/2)$. For $0 < \beta < 2$, we establish the full equivalence between the heat semigroup-based Lipschitz spaces $\widetilde{\Lambda}_{\mathcal{L},k}^{\beta/2}(\mathbb{R}^N)$ and the pointwise weighted Zygmund classes $\Lambda_{\mathcal{L},k}^{\beta}(\mathbb{R}^N)$ defined via the critical radius function $m(\mathbf{x})$. Remarkably, this characterization shows that the spatial smoothness component decouples completely from the underlying reflection group symmetries and root systems. As an intermediate result, we establish new equivalences for inhomogeneous weighted Lipschitz spaces associated with the unperturbed Dunkl Laplacian $\Delta_k$. A central difficulty in this context—arising from the necessity to disentangle the Euclidean distance from the orbit metric $d(\mathbf{x},\mathbf{y})$—is resolved through a detailed geometric analysis. Finally, we provide pointwise Gaussian upper bounds for the time derivatives of the Dunkl--Schr\"odinger heat kernel, which are of independent interest for future work in harmonic analysis on Dunkl structures. Crucially, we do not assume the potential $V$ to be $G$-invariant under the action of the Weyl group, which introduces major geometric difficulties resolved here via a local analysis of the orbit metric. 
\end{abstract}
\address{A. Hejna-Łyżwa, Uniwersytet Wroc\l awski,
Instytut Matematyczny,
Pl. Grunwaldzki 2,
50-384 Wroc\l aw,
Poland}
\email{agnieszka.hejna@math.uni.wroc.pl}

\enlargethispage{0.37cm}
\maketitle

\thispagestyle{empty}

\section{Introduction}

\subsection{Motivation and the state of the art}
Lipschitz spaces play an important role in harmonic analysis and partial differential equations, serving as a natural framework to quantify the precise smoothness  of functions. For $0<\beta<1$, the classical inhomogeneous Lipschitz space $\Lambda^\beta(\mathbb{R}^N)$ on the Euclidean space $\mathbb{R}^N$ is defined as the set of all functions $f: \mathbb{R}^N \to \mathbb{C}$ such that the norm
\begin{equation}\label{eq:Lip_class}
    \|f\|_{\Lambda^\beta(\mathbb{R}^N)} := \|f\|_{L^\infty(\mathbb{R}^N)} + \sup_{\mathbf{x} \neq \mathbf{x}'} \frac{|f(\mathbf{x}) - f(\mathbf{x}')|}{\|\mathbf{x} - \mathbf{x}'\|^\beta}
\end{equation}
is finite.

In classical harmonic analysis (see, for example, Taibleson \cite{Taibleson1} and Stein \cite{Stein_singular}), these spaces can be  characterized dynamically using semigroups. Specifically, instead of the Poisson integral, one can utilize the heat semigroup $\{e^{t\Delta}\}_{t > 0}$ to extend the notion of Lipschitz spaces to all positive parameters $\beta > 0$. For any $\beta > 0$, let $n$ be the smallest integer strictly greater than $\beta/2$. We say that $f \in \Lambda^\beta(\mathbb{R}^N)$ if the quantity
\begin{equation}\label{eq:heat_cond_1}
    \|f\|_{L^\infty(\mathbb{R}^N)} + \sup_{t>0} \, t^{n-\beta/2} \left\| \partial_t^n e^{t\Delta} f \right\|_{L^\infty(\mathbb{R}^N)}
\end{equation}
is finite, and \eqref{eq:heat_cond_1} serves as an equivalent norm for the space $\Lambda^\beta(\mathbb{R}^N)$ (see \cite[Theorem 7]{Taibleson1}).

Furthermore, it is a well-known result (see \cite[Theorem 4]{Taibleson1}) that for $0 < \beta < 2$, the semigroup norm \eqref{eq:heat_cond_1} (with any fixed integer $m \geq 1$ such that $m > \beta/2$) is equivalent to the generalized Zygmund condition involving second-order differences:
\begin{equation}\label{eq:Zygmund_cond}
    \|f\|_{L^\infty(\mathbb{R}^N)} + \sup_{\mathbf{x} \in \mathbb{R}^N} \sup_{0 \neq \mathbf{y} \in \mathbb{R}^N} \frac{|f(\mathbf{x}+\mathbf{y}) + f(\mathbf{x}-\mathbf{y}) - 2f(\mathbf{x})|}{\|\mathbf{y}\|^\beta} < \infty.
\end{equation}
Spaces defined via \eqref{eq:Zygmund_cond} can naturally be viewed as inhomogeneous generalized Zygmund classes.

For higher regularities, the spaces exhibit a clear inductive structure. If $\beta > 1$, then $f \in \Lambda^\beta(\mathbb{R}^N)$ if and only if $f \in L^\infty(\mathbb{R}^N)$ and its first-order partial derivatives satisfy $\partial_j f \in \Lambda^{\beta-1}(\mathbb{R}^N)$ for $j = 1, 2, \dots, N$ (see, e.g., \cite[Chapter V, Proposition 9]{Stein_singular}). In this case, we have the norm equivalence
\begin{equation}\label{eq:step-less}
    \|f\|_{\Lambda^\beta(\mathbb{R}^N)} \sim \|f\|_{L^\infty(\mathbb{R}^N)} + \sum_{j=1}^N \|\partial_j f\|_{\Lambda^{\beta-1}(\mathbb{R}^N)}.
\end{equation}

In recent years, significant progress in the study of Lipschitz spaces has been achieved within the framework of Dunkl theory. Dunkl analysis originates from the differential-difference operators introduced by C. F. Dunkl \cite{Dunkl} in the late 1980s to study special functions and spherical harmonics associated with finite Coxeter groups. On $\mathbb{R}^N$ equipped with a root system $R$ and a nonnegative multiplicity function $k \ge 0$, the Dunkl operators are defined by
\begin{equation}\label{eq:Dunkl_op}
    T_j f(\mathbf{x}) = \frac{\partial f}{\partial x_j}(\mathbf{x}) + \sum_{\alpha \in R} \frac{k(\alpha)}{2} \langle \alpha, e_j \rangle \frac{f(\mathbf{x}) - f(\sigma_\alpha(\mathbf{x}))}{\langle \alpha, \mathbf{x} \rangle}, \quad j = 1, \dots, N,
\end{equation}
and they generate the Dunkl Laplacian $\Delta_k = \sum_{j=1}^N T_j^2$. Replacing classical partial derivatives with Dunkl operators allows for the incorporation of reflection symmetries into differential equations and function space theory. Beyond its rich algebraic structure related to Hecke algebras, Dunkl analysis plays a pivotal role in quantum physics—most notably in proving the quantum integrability of the Calogero--Moser--Sutherland many-body model, as well as in the study of anyons, supersymmetry, non-commutative geometry, and PT-symmetric quantum mechanics (see, e.g., \cite{DV}). Furthermore, it has found notable applications in probability theory, particularly in describing Feller processes with jumps \cite{RV1998}. In this setting, remarkable results were recently established in~\cite{DzHL}, namely:
\begin{enumerate}
    \item[(a)] \textbf{Equivalence with Classical Spaces:} The authors proved the unexpected theorem that the Dunkl-Lipschitz spaces $\Lambda_k^\beta$, defined via the decay of time derivatives of the Dunkl-Poisson or heat semigroups on $L^\infty$, coincide \textit{identically} with the classical Euclidean Lipschitz spaces $\Lambda^\beta(\mathbb{R}^N)$ for all $\beta > 0$, with equivalent norms. 
    \item[(b)] \textbf{Abstract Semigroup Framework:} To overcome the technical obstacles inherent to Dunkl operators, the authors introduced an abstract framework for Lipschitz spaces $\Lambda_{\mathcal{A}}^\beta$ defined on non-reflexive dual Banach spaces $X^*$ (such as $L^\infty = (L^1)^*$) via dual semigroups $\mathcal{T}_t^* = e^{t\mathcal{A}^*}$. This approach provided general operator-theoretic machinery—including holomorphic functional calculus, Bessel-type potential isomorphisms, and Peetre $K$-method interpolation—for establishing structural properties of semigroup-associated Lipschitz spaces in an abstract, unified manner.
\end{enumerate}

A fundamental question naturally arises: \textit{can further progress be achieved for Dunkl operators, and how far can this abstract semigroup framework be extended to encompass perturbed differential operators, such as Dunkl--Schrödinger operators $\mathcal{L} = -\Delta_k + V$?}

Before turning to the Dunkl setting, let us recall what is known in the classical case. Let us denote by $\mathcal{L} = -\Delta + V$ the Schr\"odinger operators, where $V \ge 0$ is a non-negative potential satisfying a reverse H\"older inequality $RH_q$ for some $q > N/2$:
\begin{equation}\label{eq:RH}
    \left( \frac{1}{|B|} \int_B V(\mathbf{y})^q d\mathbf{y} \right)^{1/q} \le \frac{C}{|B|} \int_B V(\mathbf{y}) d\mathbf{y} \quad \text{for all balls } B \subset \mathbb{R}^N.
\end{equation}
Then we have the associated heat semigroup $W_t = e^{-t\mathcal{L}}$. A pivotal geometric quantity in this theory is the critical radius function $\frac{1}{m(\mathbf{x})}$, defined by
\begin{equation}\label{eq:critical_radius}
    \frac{1}{m(\mathbf{x})} = \sup \left\{ r > 0 : \frac{1}{r^{N-2}} \int_{B(\mathbf{x},r)} V(\mathbf{y}) d\mathbf{y} \le 1 \right\}.
\end{equation}
The integral defining the function $\mathbf{m}$  was introduced  by Ch. Fefferman (see \cite[p. 146, the assumption of the main lemma]{Fefferman}). The function  is then  used in the well-known Fefferman--Phong inequality (\cite[p. 146]{Fefferman}, see also Shen ~\cite{Shen2},~\cite[Lemma 1.9]{Shen}).
The function $\frac{1}{m(\mathbf{x})}$ acts as a local threshold: at spatial scales smaller than $\frac{1}{m(\mathbf{x})}$, the operator $\mathcal{L}$ behaves predominantly like the unperturbed Laplacian $\Delta$, whereas at scales larger than $\frac{1}{m(\mathbf{x})}$, the potential $V$ exerts a strong dampening effect.

In a paper~\cite{DeLeonTorrea}, M. de Le\'on-Contreras and J. L. Torrea introduced pointwise Lipschitz spaces $\Lambda^\beta_\mathcal{L}$ adapted to Schr\"odinger operators for $0 < \beta < 2$  via the norm
\begin{equation}\label{eq:Schrod_norm}
    \|f\|_{\Lambda^\beta_\mathcal{L}} := \left\| \rho(\cdot)^{-\beta} f(\cdot) \right\|_{L^\infty(\mathbb{R}^N)} + \sup_{0 \neq \mathbf{z} \in \mathbb{R}^N} \frac{\|f(\cdot+\mathbf{z}) + f(\cdot-\mathbf{z}) - 2f(\cdot)\|_{L^\infty(\mathbb{R}^N)}}{|\mathbf{z}|^\beta}.
\end{equation}

To establish a dynamical characterization, they also defined adapted Lipschitz spaces via time derivatives of operator semigroups. For $\beta > 0$ and $k = [\beta/2] + 1$, a function $f$ belonging to the heat-semigroup class $\Lambda_{\beta/2}^W$ satisfies a heat size condition ($\int_{\mathbb{R}^N} e^{-|\mathbf{x}|^2/t} |f(\mathbf{x})| d\mathbf{x} < \infty$ for all $t > 0$, and $\lim_{t \to \infty} \partial_t^\ell W_t f(\mathbf{x}) = 0$ pointwise for $\ell \ge 0$) alongside the decay estimate
\begin{equation}\label{eq:heat_semigroup_norm}
    \left\| \frac{\partial^k}{\partial t^k} W_t f \right\|_{L^\infty(\mathbb{R}^N)} \le C_\beta t^{-k + \beta/2} \quad \text{for all } t > 0.
\end{equation}
The space $\Lambda_{\beta/2}^W$ is endowed with the norm $\|f\|_{\Lambda_{\beta/2}^W} := S^W_\beta[f] + \|m(\cdot)^{\beta} f(\cdot)\|_{L^\infty(\mathbb{R}^N)}$, where $S^W_\beta[f]$ denotes the infimum of constants $C_\beta$ in \eqref{eq:heat_semigroup_norm}. A central achievement of their work is the equivalence theorem: for $0 < \beta \le 2 - N/q$, these three definitions coincide with equivalent norms:
\begin{equation}\label{eq:space_equivalence}
    \Lambda^\beta_\mathcal{L} = \Lambda_{\beta/2}^W.
\end{equation}
 This reveals that the adapted space $\Lambda^\beta_\mathcal{L}$ depends strictly on the critical radius function $\frac{1}{m(\mathbf{x})}$ and its underlying spatial geometry. No higher-order differential operators tailored to $\mathcal{L}$ are needed to capture the space structure up to order $\beta < 2$.

Connecting this to the results of~\cite{DzHL}, where the authors proved that unperturbed Dunkl--Lipschitz spaces $\Lambda_k^\beta$ collapse back to classical spaces $\Lambda^\beta(\mathbb{R}^N)$---leads directly to fundamental questions at the intersection of Dunkl analysis and potential theory:
\begin{itemize}
    \item \textit{Can Lipschitz spaces associated with Dunkl--Schr\"odinger operators $\mathcal{L}_k = -\Delta_k + V$ also be characterized purely in terms of the Dunkl critical radius function combined with classical H\"older/Zygmund conditions?}
    \item \textit{Can the abstract semigroup framework of~\cite{DzHL} be extended to accommodate perturbed operators $\mathcal{L}_k = -\Delta_k + V$, providing a unified operator-theoretic proof?}
\end{itemize}

This line of research builds upon a rich literature on function spaces adapted to Schr\"odinger operators with reverse H\"older potentials $V \in RH_q$. The foundational study of $BMO_\mathcal{L}$ spaces by Dziuba\'nski et al.~\cite{DGMTZ} laid the groundwork for defining Lipschitz and H\"older classes in the Schr\"odinger setting. For orders $0 < \beta < 1$, Bongioanni, Harboure, and Salinas in~\cite{Bongioanni2011} introduced pointwise classes $\Lambda^\beta_\mathcal{L}$ using first-order differences and the critical-radius weight $m(\mathbf{x})^{\beta}$, establishing dualities with Hardy spaces and equivalences with $BMO^\beta_\mathcal{L}$. Subsequently, Ma, Stinga, Torrea, and Zhang in~\cite{MaStingaTorreaZhang2012} characterized H\"older spaces $C^{0,\beta}_\mathcal{L}$ via Poisson semigroups. More recently, these techniques have been extended beyond standard Euclidean Laplacians: Chen and Zhang in~\cite{ChenZhang2022} extended the heat-semigroup characterization of adapted Lipschitz spaces to high-order Schr\"odinger operators $\mathcal{L} = (-\Delta)^m + V^m$, while the authors of~\cite{HongHouHu} established analogous equivalences on non-commutative stratified structures, specifically the Heisenberg group $\mathbb{H}^N$ equipped with a sub-Laplacian. In this direction, recent work in~\cite{Hejna_AMP},~\cite{Hejna_JFAA} establishing heat kernel bounds, fundamental solution estimates, and adapted Hardy spaces for Dunkl--Schr\"odinger operators with reverse H\"older potentials provides a highly promising framework for extending adapted Lipschitz space characterizations to this setting. It is also worth mentioning other works concerning Dunkl--Schrödinger operators in a more general context~\cite{AH},~\cite{AH2020}.

\subsection{Main results}

To state our results, we briefly recall the framework of Dunkl--Schr\"odinger operators; for complete details on Dunkl operators, measures, and kernel estimates, we refer the reader to Section~\ref{sec:preliminaries}. Let $\Delta_k = \sum_{j=1}^N T_j^2$ denote the Dunkl Laplacian acting on $L^2(dw)$, where $dw(\mathbf{x}) = \prod_{\alpha\in R}|\langle \mathbf{x},\alpha\rangle|^{k(\alpha)}\,d\mathbf{x}$ is the doubling weight measure of homogeneous dimension $\mathbf{N}$. The Dunkl--Schr\"odinger operator is defined by $\mathcal{L} = -\Delta_k + V$, where $V \ge 0$ belongs to the reverse H\"older class ${\rm RH}^q(dw)$ for $q > \max(1, \mathbf{N}/2)$ (see Section~\ref{sec:Schorodinger}). Central to this setting is the critical radius function $m(\mathbf{x})$, defined via
\begin{equation}\label{eq:m_def_intro}
    \frac{1}{m(\mathbf{x})} = \sup \left\{ r > 0 : \frac{r^2}{w(B(\mathbf{x},r))} \int_{B(\mathbf{x},r)} V(\mathbf{y})\,dw(\mathbf{y}) \le 1 \right\},
\end{equation}
 and the semigroup $K_t = e^{-t{L}}$ (see Section~\ref{sec:Schorodinger}).

We now present the pointwise and heat semigroup-based definitions of adapted Lipschitz spaces.

\begin{definition}\label{def:dunkl_lipschitz_diff}
Let $0 < \beta < 2$, and let $m(\mathbf{x})$ denote the critical radius function associated with the potential $V$. The pointwise Lipschitz--Zygmund space $\Lambda_{\mathcal{L},k}^{\beta}(\mathbb{R}^N)$ associated with the Dunkl--Schr\"odinger operator $\mathcal{L}$ is defined as the space of all measurable functions $f$ on $\mathbb{R}^N$ for which the norm
\begin{equation}\label{eq:norm_diff_Dunkl}
    \|f\|_{\Lambda^{\beta}_{\mathcal{L},k}} := \| f m^\beta \|_{L^\infty(\mathbb{R}^N)} + \sup_{\mathbf{z} \neq 0} \frac{\| f(\cdot + \mathbf{z}) + f(\cdot - \mathbf{z}) - 2f(\cdot) \|_{L^\infty(\mathbb{R}^N)}}{|\mathbf{z}|^\beta}
\end{equation}
is finite.
\end{definition}

\begin{definition}\label{def:weighted_lipschitz}
Let $\beta > 0$ and let $m(\mathbf{x})$ denote the critical radius function associated with the potential $V$. The inhomogeneous weighted Dunkl--Lipschitz space $\widetilde{\Lambda}_{\mathcal{L},k}^{\beta/2}(\mathbb{R}^N)$ associated with the Dunkl--Schr\"odinger operator $\mathcal{L}$ is defined as the space of all measurable functions $f$ on $\mathbb{R}^N$ for which the norm
\begin{equation}\label{eq:norm_L_Dunkl}
    \|f\|_{\widetilde{\Lambda}^{\beta/2}_{\mathcal{L},k}} := \| f m^\beta \|_{L^\infty(\mathbb{R}^N)} + \sup_{t>0} \, t^{n - \beta/2} \left\| \partial_t^n K_t f \right\|_{L^\infty(\mathbb{R}^N)}
\end{equation}
is finite, where $n = \lfloor \beta/2 \rfloor + 1$ is the smallest integer strictly greater than $\beta/2$.
\end{definition}

Our first main result establishes the fundamental equivalence between the heat semigroup-based Lipschitz space and the pointwise weighted Zygmund space for $0 < \beta < 2$.

\begin{theorem}\label{thm:dunkl_schrodinger_equivalence}
Let $V \in {\rm{RH}}^{q}(dw)$ with $q > \max\left(1, \frac{\mathbf{N}}{2}\right)$, $V \geq 0$ and $0 < \beta < 2-\frac{\mathbf{N}}{q}$, $\beta \neq 1$. Then $\Lambda_{\mathcal{L},k}^{\beta}(\mathbb{R}^N) = \widetilde{\Lambda}_{\mathcal{L},k}^{\beta/2}(\mathbb{R}^N)$ with equivalent norms:
\begin{equation}\label{eq:main_norm_equivalence}
    C^{-1} \|f\|_{\widetilde{\Lambda}^{\beta/2}_{\mathcal{L},k}} \leq \|f\|_{\Lambda^{\beta}_{\mathcal{L},k}} \leq C \|f\|_{\widetilde{\Lambda}^{\beta/2}_{\mathcal{L},k}}.
\end{equation}
\end{theorem}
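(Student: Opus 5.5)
The plan is to route the comparison through the unperturbed Dunkl heat semigroup $H_t = e^{t\Delta_k}$. The intermediate result announced in the abstract identifies inhomogeneous weighted Lipschitz spaces for $\Delta_k$ with the Euclidean Zygmund classes. On top of this we use a perturbation argument controlling $K_t - H_t$ by the size condition $\|fm^\beta\|_{L^\infty}$. The weighted term $\|fm^\beta\|_{L^\infty}$ appears identically in both norms, so it suffices to compare the second parts of the norms modulo this quantity. Since $\beta<2$, we have $n=1$, and the semigroup part is $\sup_t t^{1-\beta/2}\|\partial_t K_t f\|_{L^\infty}$.

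\textbf{Step 1 (unperturbed comparison).} By the results of \cite{DzHL}, for bounded functions one has
\[
\sup_t t^{1-\beta/2}\|\partial_t H_t f\|_\infty + \|f\|_\infty \sim \|f\|_\infty + \sup_{\mathbf z} |\mathbf z|^{-\beta}\|f(\cdot+\mathbf z)+f(\cdot-\mathbf z)-2f\|_\infty .
\]
Our functions are not bounded; they are only controlled by $m^{-\beta}$. So I would localize. Fix a point $\mathbf x_0$ and the scale $\rho=1/m(\mathbf x_0)$. Multiply $f$ by a smooth cutoff $\eta$ adapted to $B(\mathcal O(B(\mathbf x_0,C\rho)))$, the orbit of the ball. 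Here the orbit is needed because Dunkl kernels see $d(\mathbf x,\mathbf y)$, not the Euclidean distance. The standard slow-variation property of $m$ then gives $\|\eta f\|_\infty\lesssim \rho^\beta\|fm^\beta\|_\infty$. One must be careful here: since $V$ is not $G$-invariant, $m$ need not be comparable on the orbit, which is the geometric difficulty mentioned in the abstract. I would instead use a cutoff on the Euclidean ball only and handle the reflected pieces as tails, exploiting the Gaussian decay $\exp(-c\,d(\mathbf x,\mathbf y)^2/t)$ together with the extra factor $(1+|\mathbf x-\mathbf y|/\sqrt t)^{-2}$ in the Dunkl heat kernel bounds. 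This extra factor is what lets Euclidean-scale Zygmund information survive.

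\textbf{Step 2 (perturbation for small times $t\le\rho^2$).} Write the Duhamel formula
\[
\partial_t(K_t-H_t)f=-\partial_t\int_0^t H_{t-s}VK_s f\,ds ,
\]
and estimate it with the known Gaussian bounds for $K_t$ and for $\partial_t K_t$. The time-derivative bound I would take from the heat kernel derivative estimates in the paper, of the form $|\partial_t K_t(\mathbf x,\mathbf y)|\lesssim t^{-1}w(B(\mathbf x,\sqrt t))^{-1}e^{-cd^2/t}(1+\sqrt t m(\mathbf x))^{-M}$. The key quantitative input is the Shen-type estimate $\int_{B(\mathbf x,r)}V\,dw\lesssim (r m(\mathbf x))^{2-\mathbf N/q}\,r^{-2}w(B(\mathbf x,r))$ for $r\le 1/m$. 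Combined with $|f|\lesssim m^{-\beta}$, this yields
\[
t^{1-\beta/2}|\partial_t(K_t-H_t)f(\mathbf x)|\lesssim (\sqrt t\,m(\mathbf x))^{2-\mathbf N/q-\beta}\|fm^\beta\|_\infty .
\]
This is bounded precisely because $\beta<2-\mathbf N/q$, which explains the hypothesis.

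\textbf{Step 3 (large times $t\ge\rho^2$).} Both sides are controlled by the size term alone. The factor $(1+\sqrt t m)^{-M}$ gives $t^{1-\beta/2}|\partial_tK_tf|\lesssim \|fm^\beta\|_\infty$. On the Zygmund side, for $|\mathbf z|\ge\rho$ the second difference is trivially at most $C\rho^\beta\|fm^\beta\|_\infty$, using comparability of $m$ at $\mathbf x$ and at $\mathbf x\pm\mathbf z$ up to polynomial factors. This needs a slightly refined argument, or an a priori bound from the semigroup side via $f=-\int_0^\infty\partial_tK_tf\,dt$.

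\textbf{Converse direction.} Combining the steps gives $\|\cdot\|_{\widetilde\Lambda}\lesssim\|\cdot\|_{\Lambda}$. For the converse I would reconstruct second differences from the identity $f=K_\tau f-\int_0^\tau\partial_tK_tf\,dt$ with $\tau=|\mathbf z|^2$. The integral is handled by the size of $\partial_tK_t$, and the remaining term by second differences of $K_\tau f$, using spatial regularity of $\partial_tK_t(\cdot,\mathbf y)$. The case $\beta\neq1$ is excluded because at $\beta=1$ the integrals produce logarithms.

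I expect the main obstacle to be Step 1, namely transferring Euclidean second differences through kernels whose decay is only in the orbit distance $d$. The plan there is to split into $|\mathbf x-\mathbf y|\lesssim\sqrt t$ versus the rest and use the $(1+|\mathbf x-\mathbf y|/\sqrt t)^{-2}$ factor, which yields integrable error when $\beta<2$.
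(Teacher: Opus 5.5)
Your Duhamel mechanism in Step~2 is the right one: a Shen-type bound on $V$ traded against $|f|\lesssim m^{-\beta}$ and closed by $\beta<2-\mathbf{N}/q$. The argument around it, however, has genuine gaps, and they all come from working with the single scale $1/m(\mathbf{x})$. Because $\partial_t k_t(\mathbf{x},\cdot)$ decays only in the orbit distance $d$, it sees $f$ and $V$ near every $\sigma(\mathbf{x})$. Since $V$ is not $G$-invariant, the relevant scale there is $1/m(\sigma(\mathbf{x}))$, which need not be comparable to $1/m(\mathbf{x})$.

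\textbf{Step~3.} The bound $|\partial_t k_t|\lesssim\cdots(1+\sqrt{t}\,m(\mathbf{x}))^{-M}$ is not available: Theorem~\ref{thm:time_derivatives} is a plain Gaussian in $d$, with neither $m$-decay nor a Euclidean factor. Even granting it, your size-only bound fails. If $m(\mathbf{x})^{-2}\le t<m(\sigma(\mathbf{x}))^{-2}$, then near $\sigma(\mathbf{x})$ one may have $|f|\sim m(\sigma(\mathbf{x}))^{-\beta}=t^{\beta/2}(\sqrt{t}\,m(\sigma(\mathbf{x})))^{-\beta}$. This excess is unbounded, and no factor depending only on $\sqrt{t}\,m(\mathbf{x})$ can absorb it.

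\textbf{Step~2.} The Duhamel integrals involve $V$ on $\mathcal{O}(B(\mathbf{x},\sqrt{s}))$. There $\sqrt{s}$ may exceed $m(\sigma(\mathbf{x}))^{-1}$ even though $t\le m(\mathbf{x})^{-2}$. So the factor $(\sqrt t\,m(\mathbf{x}))^{\gamma-\beta}$ does not follow from the tools you list.

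The paper's remedy is to decompose into the sectors $D_\sigma(\mathbf{x})$ and compare $t$ with $m(\sigma(\mathbf{x}))^{-2}$ sector by sector. Above that threshold, Gaussian bounds plus the size of $f$ suffice (Lemma~\ref{lem:large_time_derivative_bound}). Below it, Duhamel is combined with the mass bound $\int_0^t\!\int Vk_s\,dw\,ds\le1$, the sector version of the Shen estimate, and the comparability chain of Lemma~\ref{lem:geometric_m_equivalence}. The output is Theorem~\ref{thm:main_derivative_difference}, valid for \emph{every} $t>0$.

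\textbf{Converse direction.} Controlling second differences of $K_\tau f$ requires spatial regularity of the Dunkl--Schr\"odinger kernel of order exceeding $\beta$, of $C^{1,\alpha}$ type when $1<\beta<2$. This is not proved anywhere in the paper. It is also far from automatic for $V\in{\rm RH}^q(dw)$: classically the kernel is only regular to orders below $2-N/q$. It would moreover need a cancellation argument, since bounding $\Delta^2_{\mathbf z}K_\tau f$ directly against $|f|\lesssim m^{-\beta}$ does not produce $|\mathbf z|^\beta$.

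The paper needs none of this. Once the transference holds for all $t$, both inequalities follow from the triangle inequality together with Theorem~\ref{teo:equivalence} for $H_t$, whose kernel is smooth. That theorem applies because $(1+\|\mathbf{x}\|)^{-\beta}\lesssim m(\mathbf{x})^\beta$ by Lemma~\ref{lem:m_growth}. The same global weighted theorem replaces your localization in Step~1, so cutoffs, large-time decay of $K_t$, and regularity of $K_t$ are all avoided.
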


Theorem~\ref{thm:dunkl_schrodinger_equivalence} reveals a crucial structural feature of the Dunkl--Schr\"odinger setting. It demonstrates that the adapted Lipschitz spaces $\Lambda_{\mathcal{L},k}^{\beta}(\mathbb{R}^N)$ are entirely characterized in terms of the geometry of the critical radius function $m(\mathbf{x})$ and classical Euclidean second-order Zygmund differences. Notably, the pointwise norm in \eqref{eq:norm_diff_Dunkl} contains no explicit dependence on the multiplicity function $k$ or the underlying root system $R$.

A major novelty and technical highlight of our work is that we completely 
dispense with the assumption of $G$-invariance of the potential $V$ 
(i.e., $V(\sigma(\mathbf{x})) = V(\mathbf{x})$ for all $\sigma \in G$). Consequently, the 
critical radius function $m(\mathbf{x})$ is no longer $G$-invariant, which prevents 
the use of classical global arguments. To overcome this obstacle, we perform 
a highly delicate localization scheme in Section~\ref{sec:proof_1} based on orbit sectors $D_\sigma(\mathbf{x})$ and dynamic choices of group elements realizing the orbit 
metric.

As a crucial intermediate step toward understanding the Dunkl--Schr\"odinger setting, we also establish new equivalence results for inhomogeneous weighted Lipschitz spaces associated with the unperturbed Dunkl Laplacian $\Delta_k$ and its heat semigroup $H_t = e^{t\Delta_k}$. In this context, the role of the potential-driven weight $m(\mathbf{x})^\beta$ is replaced by the classical polynomial growth factor $(1+\|\mathbf{x}\|)^{-\beta}$.

We first state the definition of the weighted Dunkl--Lipschitz space defined dynamically via time-derivatives of the Dunkl heat semigroup $H_t$.

\begin{definition}\label{def:lipschitz_weighted}
Let $\beta>0$. We say that a measurable function $f$ defined on $\mathbb R^N$ belongs to the inhomogeneous weighted Lipschitz space $\widetilde{\Lambda}_k^{\beta/2}$, if 
\begin{equation}\label{eq:norm_L_Dunkl_def_2}
    \| f(\cdot)(1+\|\cdot\|)^{-\beta}\|_{L^\infty(\mathbb{R}^N)} + \sup_{t>0} t^{n-\beta/2} \left\| \partial_t^n H_tf\right\|_{L^\infty(\mathbb{R}^N)} <\infty, 
\end{equation}
where $n$ is the smallest positive integer strictly greater than $\beta/2$. We endow this class with the norm
\begin{equation}\label{eq:norm_L_Dunkl_norm_2}
    \| f\|_{{\widetilde{\Lambda}}^{\beta/2}_k} := \| f(\cdot)(1+\|\cdot\|)^{-\beta}\|_{L^\infty(\mathbb{R}^N)} + \sup_{t>0} t^{n-\beta/2} \left\| \partial_t^n H_tf\right\|_{L^\infty(\mathbb{R}^N)}.
\end{equation}
\end{definition}

Next, we define the corresponding pointwise weighted Lipschitz classes via classical H\"older and Zygmund conditions.

\begin{definition}\label{def:lipschitz_weighted_holder_1}
Let $0<\beta<1$. We say that a measurable function $f$ defined on $\mathbb R^N$ belongs to the inhomogeneous weighted Lipschitz space $\widetilde{\Lambda}_k^{\beta/2,H}$, if 
\begin{equation}\label{eq:cond_L_Dunkl}
    \| f(\cdot)(1+\|\cdot\|)^{-\beta}\|_{L^\infty(\mathbb{R}^N)} + \sup_{\mathbf{x}\ne \mathbf{x}'}\frac{|f(\mathbf{x})-f(\mathbf{x}')|}{\|\mathbf{x}-\mathbf{x}'\|^\beta} <\infty. 
\end{equation}
We endow this class with the norm
\begin{equation}\label{eq:norm_L_Dunkl_3}
    \| f\|_{{\widetilde{\Lambda}}^{\beta/2,H}_k} := \| f(\cdot)(1+\|\cdot\|)^{-\beta}\|_{L^\infty(\mathbb{R}^N)} + \sup_{\mathbf{x}\ne \mathbf{x}'}\frac{|f(\mathbf{x})-f(\mathbf{x}')|}{\|\mathbf{x}-\mathbf{x}'\|^\beta}.
\end{equation}
\end{definition}

\begin{definition}\label{def:lipschitz_weighted_holder_2}
Let $1 < \beta<2$. We say that a measurable function $f$ defined on $\mathbb R^N$ belongs to the inhomogeneous weighted Lipschitz space $\widetilde{\Lambda}_k^{\beta/2,H}$, if 
\begin{equation}\label{eq:cond_L_Dunkl_zygmund}
    \| f(\cdot)(1+\|\cdot\|)^{-\beta}\|_{L^\infty(\mathbb{R}^N)} + \sup_{\mathbf{x}\in\mathbb{R}^N} \sup_{\mathbf{y}\neq \mathbf{0}} \frac{|f(\mathbf{x}+\mathbf{y}) + f(\mathbf{x}-\mathbf{y}) - 2f(\mathbf{x})|}{\|\mathbf{y}\|^\beta} <\infty. 
\end{equation}
We endow this class with the norm
\begin{equation}\label{eq:norm_L_Dunkl_zygmund}
    \| f\|_{{\widetilde{\Lambda}}^{\beta/2,H}_k} := \| f(\cdot)(1+\|\cdot\|)^{-\beta}\|_{L^\infty(\mathbb{R}^N)} + \sup_{\mathbf{x}\in\mathbb{R}^N} \sup_{\mathbf{y}\neq \mathbf{0}} \frac{|f(\mathbf{x}+\mathbf{y}) + f(\mathbf{x}-\mathbf{y}) - 2f(\mathbf{x})|}{\|\mathbf{y}\|^\beta}.
\end{equation}
\end{definition}

Our next main theorem establishes the equivalence between the heat semigroup-based weighted Lipschitz space and the pointwise H\"older/Zygmund spaces in the unperturbed Dunkl setting.

\begin{theorem}\label{teo:equivalence}
Let $0 < \beta < 2$ with $\beta \neq 1$. Then the inhomogeneous weighted Lipschitz space $\widetilde{\Lambda}_k^{\beta/2}$ (defined via the Dunkl heat semigroup $H_t$) coincides with the space $\widetilde{\Lambda}_k^{\beta/2,H}$ (defined via the H\"older and Zygmund conditions). Furthermore, their respective norms are equivalent; that is, there exists a constant $C > 0$ such that for every measurable function $f$:
\begin{equation}\label{eq:equivalence_lipschitz}
    C^{-1} \|f\|_{\widetilde{\Lambda}_k^{\beta/2,H}} \leq \|f\|_{\widetilde{\Lambda}_k^{\beta/2}} \leq C \|f\|_{\widetilde{\Lambda}_k^{\beta/2,H}}.
\end{equation}
\end{theorem}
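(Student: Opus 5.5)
Since $0<\beta<2$ and $\beta\neq 1$, in both spaces $n=1$. The weighted $L^\infty$ term $\|f(\cdot)(1+\|\cdot\|)^{-\beta}\|_{L^\infty}$ is common to both norms. So the task is to compare
\[
\sup_{t>0} t^{1-\beta/2}\|\partial_t H_t f\|_{L^\infty}
\]
with the H\"older seminorm (for $0<\beta<1$) or the Zygmund seminorm (for $1<\beta<2$), modulo the weighted sup term. The weight guarantees that $H_tf$ and its time derivatives are well defined pointwise. This follows from the Gaussian bounds for $h_t(\mathbf{x},\mathbf{y})$ in the orbit distance $d$, because $\|\mathbf{y}\|\le d(\mathbf{x},\mathbf{y})+\|\mathbf{x}\|$. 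The tools I will use are those from the preliminaries: the Gaussian bounds for $h_t$ and $\partial_t h_t$, the regularity bounds in $\mathbf{y}$ (Hölder in $\|\mathbf{y}-\mathbf{y}'\|$), and the conservation property $\int h_t(\mathbf{x},\mathbf{y})\,dw(\mathbf{y})=1$.

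\textbf{Pointwise $\Rightarrow$ semigroup.} For $0<\beta<1$, I use $\int \partial_t h_t(\mathbf{x},\mathbf{y})\,dw(\mathbf{y})=0$ to write
\[
\partial_t H_tf(\mathbf{x})=\int \partial_t h_t(\mathbf{x},\mathbf{y})\,\bigl(f(\mathbf{y})-f(\mathbf{x})\bigr)\,dw(\mathbf{y}).
\]
The difficulty is that the kernel decays only in $d(\mathbf{x},\mathbf{y})$, while $|f(\mathbf{y})-f(\mathbf{x})|\lesssim\|\mathbf{x}-\mathbf{y}\|^\beta$ may be large when $\mathbf{y}$ is near $\sigma(\mathbf{x})$ with $\sigma\neq\mathrm{id}$. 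My first attempt would avoid this by going through the Euclidean classical result. By \cite{DzHL}, unweighted Dunkl--Lipschitz spaces coincide with $\Lambda^\beta(\mathbb{R}^N)$, but that result needs $f\in L^\infty$. I would therefore localize. Fix $t$ and $\mathbf{x}_0$, and split $f=f\chi+f(1-\chi)$, where $\chi$ is a smooth cutoff equal to $1$ on the $G$-orbit neighbourhood $\bigcup_\sigma B(\sigma(\mathbf{x}_0),R)$ with $R\sim\sqrt t$. The far part is controlled by the Gaussian decay of $\partial_t h_t$ against the polynomial growth of $f$.

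The truly nontrivial point is that the near part still sees the values $f(\sigma(\mathbf{x}_0))$. The Hölder/Zygmund condition only gives $|f(\sigma(\mathbf{x}_0))-f(\mathbf{x}_0)|\lesssim\|\mathbf{x}_0-\sigma(\mathbf{x}_0)\|^\beta$, which is not bounded by $t^{\beta/2}$. This is the main obstacle. My plan is to exploit the structure of $\partial_t h_t$ rather than only its size. The time derivative equals $\Delta_k$ applied in the $\mathbf{y}$-variable. Using the symmetry of $\Delta_k$ and its reflection part, the contribution of the reflected orbit pieces should cancel up to terms that involve only local Euclidean differences of $f$ at scale $\sqrt t$. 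Concretely, I would prove that $\partial_t H_t f(\mathbf{x})=\Delta_k H_{t/2}(H_{t/2}f)(\mathbf{x})$, and then estimate $\Delta_k g$ for $g=H_{t/2}f$ as $\Delta g$ plus difference terms $\frac{g(\mathbf{x})-g(\sigma_\alpha\mathbf{x})}{\langle\alpha,\mathbf{x}\rangle^2}$ and $\frac{\partial g}{\langle \alpha,\mathbf{x}\rangle}$. These terms should be controlled by bounds on $\nabla^2 g$ along the segment from $\mathbf{x}$ to $\sigma_\alpha\mathbf{x}$, which is a Euclidean path. Hence I need Euclidean derivative bounds $\|\nabla^j H_t f\|_\infty\lesssim t^{(\beta-j)/2}$ for $j=1,2$.

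These bounds follow from the Hölder/Zygmund condition in the same way as in the classical proof, provided the Euclidean gradient of $h_t$ in $\mathbf{x}$ has the Gaussian bound $t^{-1/2}$. This is where the orbit-metric issue is handled: I differentiate $f$ rather than the kernel, by using translation-type differences and the commutation of $H_t$ with the Dunkl structure. I will try this route first. If it fails, the fallback is to cite the known equivalence from \cite{DzHL} for bounded $f$ and to reduce to bounded functions with a partition of unity at scale $1+\|\mathbf{x}_0\|$. In that reduction the commutators cost only lower-order terms, which are absorbed by the weighted sup norm.

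\textbf{Semigroup $\Rightarrow$ pointwise.} I follow Taibleson's scheme. Write
\[
f-H_tf=-\int_0^t\partial_sH_sf\,ds ,
\]
whose sup norm is at most $Ct^{\beta/2}$. Also $\partial_t H_t f=\int_t^\infty \partial_s^2H_sf\,ds$, with the tail justified by the weight and by the decay of $\partial_s^2 H_s f$ obtained from the semigroup property. Given $\mathbf{y}$ with $\|\mathbf{y}\|^2=t$, I decompose the difference (first or second order) of $f$ into the same difference of $f-H_tf$, which is at most $Ct^{\beta/2}$, plus the difference of $H_tf$. The latter is bounded by $\|\mathbf{y}\|^j\|\nabla^jH_tf\|_\infty$ with $j=1$ or $2$, so I need $\|\nabla^j H_tf\|_\infty\lesssim t^{(\beta-j)/2}$.

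I obtain this by writing $\nabla^jH_tf=\int_t^\infty \nabla^j\partial_sH_{s}f\,ds$ (the boundary term vanishes thanks to the weight). Then I use $\nabla^jH_{s/2}\partial_s H_{s/2}$ together with the Euclidean derivative kernel bound $\|\nabla^j_{\mathbf{x}}h_{s}(\mathbf{x},\cdot)\|_{L^1(dw)}\lesssim s^{-j/2}$, which holds by the Dunkl heat kernel derivative estimates in the preliminaries. This gives an integrand of order $s^{-j/2}s^{\beta/2-1}$, which is integrable at $\infty$ exactly when $\beta<j$. For $0<\beta<1$ I take $j=1$, and for $1<\beta<2$ I take $j=2$. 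The case $\beta=1$ is excluded for the usual reason.
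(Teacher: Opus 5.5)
Your semigroup $\Rightarrow$ H\"older/Zygmund direction is the paper's argument and is fine: the FTC up to $s=\|\mathbf y\|^2$, plus $\|\nabla^jH_tf\|_\infty\lesssim t^{(\beta-j)/2}$ obtained by integrating $\nabla^jH_{s/2}(\partial_rH_rf)|_{r=s/2}$ over $(t,\infty)$ with $j=1,2$. Note the sign: $\nabla^jH_tf=-\int_t^\infty\nabla^j\partial_sH_sf\,ds$.

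The gap is in the converse direction. The ``main obstacle'' you isolate is not actually present. Besides the Gaussian in $d(\mathbf x,\mathbf y)$, Theorem~\ref{teo:heat_new} gives the Euclidean factor $(1+\|\mathbf x-\mathbf y\|/\sqrt t)^{-2}$. So for $\mathbf y$ near $\sigma(\mathbf x)$, $\partial_th_t(\mathbf x,\mathbf y)$ carries an extra factor of roughly $t/\|\mathbf x-\sigma(\mathbf x)\|^2$. Since $\beta<2$, the function $r^\beta(1+r)^{-2}$ is bounded. Inserting $|f(\mathbf y)-f(\mathbf x)|\le[f]_\beta\|\mathbf x-\mathbf y\|^\beta$ into your own cancellation identity therefore gives $|\partial_tH_tf(\mathbf x)|\lesssim t^{\beta/2-1}$ at once; this is exactly Lemma~\ref{lem:inclusion_H_to_standard}.

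Conversely, if the kernel really decayed only in $d$, your detour would not close, for three reasons. First, proving $\|\nabla^jH_tf\|_\infty\lesssim t^{(\beta-j)/2}$ ``as in the classical proof'' means estimating $\int\nabla^j_{\mathbf x}h_t(\mathbf x,\mathbf y)\bigl(f(\mathbf y)-f(\mathbf x)\bigr)\,dw(\mathbf y)$, which meets the very same contribution from $\mathbf y$ near $\mathcal O(\mathbf x)$. Second, ``differentiating $f$ instead of the kernel'' is unavailable: $H_t$ commutes with $T_j$ but not with $\partial_j$ or Euclidean translations, and $f$ is not even differentiable when $\beta<1$. Third, the localisation/unweighted-equivalence fallback runs into the obstacle you identified yourself: the localized piece still sees $f(\sigma(\mathbf x_0))-f(\mathbf x_0)$, which is of size $\|\mathbf x_0-\sigma(\mathbf x_0)\|^\beta\gg t^{\beta/2}$.

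For $1<\beta<2$ a second idea is missing. Even with the Euclidean factor, the Zygmund difference cannot be inserted directly, because neither $h_t(\mathbf x,\cdot)$ nor its $\mathbf x$-derivatives are even about $\mathbf x$. That symmetry is what the classical bound on $\nabla^2H_tf$ uses, so ``in the same way as in the classical proof'' does not go through.

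The paper instead passes to first derivatives:
\begin{enumerate}
\item The Zygmund condition gives $\partial_lf$ H\"older of order $\beta-1$; this is a purely Euclidean fact, the case $k\equiv0$.
\item Hence $T_jf$ is H\"older of order $\beta-1$, via $D_\alpha f(\mathbf x)=\int_0^1\langle\nabla f(A_\tau\mathbf x),\alpha\rangle\,d\tau$.
\item Then $\partial_tH_tf=\sum_jT_jH_t(T_jf)$ is bounded by $Ct^{\beta/2-1}$, using \eqref{eq:T_j}, the cancellation $\int T_{j,\mathbf x}h_t(\mathbf x,\mathbf y)\,dw(\mathbf y)=0$, and again the factor $(1+\|\mathbf x-\mathbf y\|/\sqrt t)^{-2}$.
\end{enumerate}

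Your observation that $\|\Delta_kg\|_\infty\lesssim\|\nabla^2g\|_\infty$ (via the segment from $\mathbf x$ to $\sigma_\alpha(\mathbf x)$) is correct. Your route can also be repaired without the $T_j$ machinery, by writing $\partial_tH_tf(\mathbf x)=\int\partial_th_t(\mathbf x,\mathbf y)\bigl(f(\mathbf y)-f(\mathbf x)-\langle\nabla f(\mathbf x),\mathbf y-\mathbf x\rangle\bigr)\,dw(\mathbf y)$. This is legitimate because $\int(\mathbf y-\mathbf x)h_t(\mathbf x,\mathbf y)\,dw(\mathbf y)=0$ for all $t$, by \eqref{eq:T_j}. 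One then combines the Taylor remainder bound $\lesssim\|\mathbf y-\mathbf x\|^\beta$ with the Euclidean factor. The proposal as written contains neither mechanism, so this direction is not established.
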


The proofs of Theorem~\ref{thm:dunkl_schrodinger_equivalence} and Theorem~\ref{teo:equivalence} partially parallel the classical Euclidean approach, yet require fundamentally new ideas. In the classical setting, extracting pointwise Zygmund and H\"older conditions from heat semigroup estimates relies heavily on the standard translational convolution structure of the Gauss--Weierstrass kernel. In the Dunkl framework, however, the heat kernel $h_t(\mathbf{x},\mathbf{y})$ (and its perturbed counterpart $k_t(\mathbf{x},\mathbf{y})$) lacks this translational invariance, preventing a direct application of classical difference-extraction techniques. To circumvent these technical hurdles, our arguments build upon and adapt the abstract semigroup framework of~\cite{DzHL}, providing a unified methodology to bridge operator-theoretic decay bounds and pointwise spatial smoothness.

Another fundamental tool in our analysis is a transference estimate that allows us to transition seamlessly between the unperturbed Dunkl heat semigroup $H_t$ and the Dunkl--Schr\"odinger semigroup $K_t$. This comparison result controls the difference between their time derivatives in terms of the potential-weighted norm $\|f m^\beta\|_{L^\infty(\mathbb{R}^N)}$.

\begin{theorem}\label{thm:main_derivative_difference}
Assume that $V \in {\rm{RH}}^{q}(dw)$ with $q>\max\left(1,\frac{\mathbf{N}}{2}\right)$ and $V \geq 0$. Let $0 < \beta < 2 - \frac{\mathbf{N}}{q}$. Then there exists a constant $C>0$ such that for all measurable functions $f$ satisfying $\|f m^\beta\|_{L^\infty(\mathbb{R}^N)} < \infty$, all $\mathbf{x} \in \mathbb{R}^N$, and $t>0$, the following inequality holds:
\begin{equation}\label{eq:main_derivative_difference}
    |\partial_t H_t f(\mathbf{x}) - \partial_t K_t f(\mathbf{x})| \leq C t^{-1+\beta/2} \|f m^\beta\|_{L^\infty(\mathbb{R}^N)}.
\end{equation}
\end{theorem}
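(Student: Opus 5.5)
We write the difference of the two heat kernels through Duhamel's formula and then estimate it using Gaussian bounds together with the critical radius function. Put $q_t(\mathbf{x},\mathbf{y}) = h_t(\mathbf{x},\mathbf{y}) - k_t(\mathbf{x},\mathbf{y})$. Then
\begin{equation*}
q_t(\mathbf{x},\mathbf{y}) = \int_0^t \int_{\mathbb{R}^N} h_{t-s}(\mathbf{x},\mathbf{z})\, V(\mathbf{z})\, k_s(\mathbf{z},\mathbf{y})\, dw(\mathbf{z})\, ds .
\end{equation*}
The quantity to estimate is $\int_{\mathbb{R}^N} |\partial_t q_t(\mathbf{x},\mathbf{y})|\, |f(\mathbf{y})|\, dw(\mathbf{y})$. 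Since $|f(\mathbf{y})| \le \|f m^\beta\|_{L^\infty} m(\mathbf{y})^{-\beta}$, it suffices to show
\begin{equation*}
\int_{\mathbb{R}^N} |\partial_t q_t(\mathbf{x},\mathbf{y})|\, m(\mathbf{y})^{-\beta}\, dw(\mathbf{y}) \le C t^{-1+\beta/2}.
\end{equation*}

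\textbf{Kernel bound for $\partial_t q_t$.} We treat the two time regimes separately.

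\emph{Large times.} When $t \ge m(\mathbf{x})^{-2}$, we use the Gaussian bounds for the time derivatives. For $h_t$ these come from Dunkl theory. For $k_t$ we use the bounds with the extra decay factor $(1+\sqrt t\, m(\mathbf{x}))^{-M}$, taken from the earlier sections and \cite{Hejna_AMP}. Each term is then bounded separately by
\begin{equation*}
C\, t^{-1} w(B(\mathbf{x},\sqrt t))^{-1} e^{-c\, d(\mathbf{x},\mathbf{y})^2/t},
\end{equation*}
where $d$ is the orbit distance; for $k_t$ we also keep the factor $(1+\sqrt t\, m(\mathbf{x}))^{-M}$.

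\emph{Small times.} When $t < m(\mathbf{x})^{-2}$, we differentiate the Duhamel formula, which is most convenient in the symmetric form obtained by splitting at $s = t/2$. The time derivative then falls on whichever factor has time parameter at least $t/2$, plus a boundary term. Integrating out $\mathbf{z}$ with the standard estimate
\begin{equation*}
\int_{d(\mathbf{x},\mathbf{z}) \lesssim \sqrt s} V\, dw \;\lesssim\; s^{-1} (\sqrt s\, m(\mathbf{x}))^{2-\mathbf{N}/q}\, w(B(\mathbf{x},\sqrt s)),
\end{equation*}
valid for $\sqrt s \le m(\mathbf{x})^{-1}$ and a consequence of ${\rm RH}^q(dw)$, we expect
\begin{equation*}
|\partial_t q_t(\mathbf{x},\mathbf{y})| \lesssim t^{-1} (\sqrt t\, m(\mathbf{x}))^{\delta}\, w(B(\mathbf{x},\sqrt t))^{-1} e^{-c\, d(\mathbf{x},\mathbf{y})^2/t}, \qquad \delta = 2 - \frac{\mathbf{N}}{q}.
\end{equation*}
The Gaussian factors from the two kernels are combined through the triangle inequality for the orbit distance $d$.

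\textbf{Integrating against $m(\mathbf{y})^{-\beta}$.} We use the standard growth estimate
\begin{equation*}
m(\mathbf{y})^{-1} \lesssim m(\mathbf{x})^{-1} \bigl(1 + \|\mathbf{x}-\mathbf{y}\|\, m(\mathbf{x})\bigr)^{k_0/(k_0+1)}.
\end{equation*}
Here the distance is Euclidean, and this is the main obstacle: the Gaussians decay only in the orbit distance $d(\mathbf{x},\mathbf{y})$, and since $V$ is not $G$-invariant, $m(\mathbf{y})$ need not be comparable to $m(\sigma(\mathbf{y}))$. We handle it by splitting $\mathbb{R}^N$ into the sectors $D_\sigma(\mathbf{x})$ on which $d(\mathbf{x},\mathbf{y}) = \|\sigma(\mathbf{x}) - \mathbf{y}\|$. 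On each sector we compare $m(\mathbf{y})$ with $m(\sigma(\mathbf{x}))$ rather than with $m(\mathbf{x})$, and then need a bound on $m(\sigma(\mathbf{x}))^{-\beta}$.

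\begin{itemize}
\item \emph{Small-time regime.} Here the weight $m(\mathbf{x})^{-\beta}$ pairs with $(\sqrt t\, m(\mathbf{x}))^{\delta}$. Because $\beta < \delta$, we get
\begin{equation*}
(\sqrt t\, m(\mathbf{x}))^{\delta}\, m(\mathbf{x})^{-\beta} \le t^{\beta/2} \qquad \text{for } \sqrt t\, m(\mathbf{x}) \le 1,
\end{equation*}
which gives the claimed $t^{-1+\beta/2}$. This is exactly where the hypothesis $\beta < 2 - \mathbf{N}/q$ is used.
\item \emph{Large-time regime.} Here $m(\mathbf{y})^{-\beta}$ grows at most like $m(\mathbf{x})^{-\beta}\,\bigl(\sqrt t\, m(\mathbf{x})\bigr)^{\beta}$ times a polynomial in $d(\mathbf{x},\mathbf{y})/\sqrt t$, which the Gaussian absorbs. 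This yields a factor $t^{\beta/2}$ directly.
\item \emph{Comparing $m(\sigma(\mathbf{x}))$ with $m(\mathbf{x})$.} A bound on $m(\sigma(\mathbf{x}))^{-\beta}$ may cost a polynomial factor in $\|\mathbf{x}\|\, m(\mathbf{x})$, so we plan to avoid it altogether. When $\|\sigma(\mathbf{x}) - \mathbf{x}\| \gg \sqrt t$, the contribution of the sector $D_\sigma(\mathbf{x})$ should instead be bounded using the extra exponential decay in $\|\mathbf{x}-\mathbf{y}\|$ that the perturbative factor $V$ carries in the Duhamel integral. We will first try to justify this with the known refined bound for $h_t$, which contains the factor $(1 + \|\mathbf{x}-\mathbf{y}\|/\sqrt t)^{-2}$.
\end{itemize}
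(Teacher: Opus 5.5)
Your outline has several of the right ingredients: the Duhamel split at $t/2$, the sectors $D_\sigma(\mathbf{x})$, comparing $m(\mathbf{y})$ with $m(\sigma(\mathbf{x}))$, and using $\beta<2-\mathbf{N}/q$ to turn $(\sqrt t\,m)^{\delta}m^{-\beta}$ into $t^{\beta/2}$. But there is a genuine gap exactly at the obstacle you flag. You index both the time regimes and the kernel bound by $m(\mathbf{x})$. On $D_\sigma(\mathbf{x})$, however, the relevant scale is $m(\sigma(\mathbf{x}))$, and without $G$-invariance the two need not be comparable. Three steps fail as stated.

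\emph{The standard estimate.} Your ``standard estimate'' integrates $V$ over $\mathcal{O}(B(\mathbf{x},\sqrt s))$, i.e. over all the balls $B(\sigma_1(\mathbf{x}),\sqrt s)$, $\sigma_1\in G$. The piece near $\sigma_1(\mathbf{x})$ is governed by $m(\sigma_1(\mathbf{x}))$, not $m(\mathbf{x})$. If $\sqrt s\,m(\sigma_1(\mathbf{x}))\gg 1\ge\sqrt s\,m(\mathbf{x})$, Lemma~\ref{lem:Rr} gives $\int_{B(\sigma_1(\mathbf{x}),\sqrt s)}V\,dw\gtrsim s^{-1}(\sqrt s\,m(\sigma_1(\mathbf{x})))^{\delta}\,w(B(\mathbf{x},\sqrt s))$, which contradicts your bound. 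Only the sector-restricted version holds, namely Lemma~\ref{lem:integral_estimate_A} under $\sqrt s\le m(\sigma_1(\mathbf{x}))^{-1}$.

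\emph{The large-time regime.} Here \eqref{eq:Shen_C} controls $m(\mathbf{y})^{-\beta}$ by a power of the Euclidean quantity $1+m(\mathbf{x})\|\mathbf{x}-\mathbf{y}\|$, not of $d(\mathbf{x},\mathbf{y})/\sqrt t$. Suppose $m(\mathbf{x})^{-2}\le t<m(\sigma(\mathbf{x}))^{-2}$. For $\mathbf{y}$ near $\sigma(\mathbf{x})$ one has $d(\mathbf{x},\mathbf{y})\approx 0$, while $|f(\mathbf{y})|$ may be of size $m(\sigma(\mathbf{x}))^{-\beta}$, which can be much larger than $t^{\beta/2}$. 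The factor $(1+\|\mathbf{x}-\mathbf{y}\|/\sqrt t)^{-2}$ rescues the $\partial_t h_t$ term, since the resulting power is $\beta\kappa/(1+\kappa)<2$. However, Theorem~\ref{thm:time_derivatives} gives $\partial_t k_t$ only orbit-Gaussian decay. So estimating $\partial_t K_t f$ separately on this sector yields $t^{-1}m(\sigma(\mathbf{x}))^{-\beta}$, not $t^{-1+\beta/2}$. The factor $(1+\sqrt t\,m(\mathbf{x}))^{-M}$ you want to import is not proved here for non-$G$-invariant $V$, and it is $\sim 1$ when $\sqrt t\,m(\mathbf{x})\sim 1$ anyway.

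\emph{The proposed escape.} The mechanism in your last bullet does not exist. $V$ carries no decay, and the available kernel bounds decay exponentially only in $d$. In $\|\mathbf{x}-\mathbf{y}\|$ they give only the polynomial factor $(1+\|\mathbf{x}-\mathbf{y}\|/\sqrt t)^{-2}$, and only for $h_t$.

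What is missing is to make the threshold depend on the sector, so that $m(\mathbf{x})$ never enters. The paper fixes $\sigma$ and compares $t$ with $m(\sigma(\mathbf{x}))^{-2}$.

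If $t\ge m(\sigma(\mathbf{x}))^{-2}$, it bounds $\partial_t h_t$ and $\partial_t k_t$ separately by their Gaussian bounds (Theorems~\ref{teo:heat_new} and~\ref{thm:time_derivatives}). It then applies \eqref{eq:Shen_C} centred at $\sigma(\mathbf{x})$. Since $\|\sigma(\mathbf{x})-\mathbf{y}\|=d(\mathbf{x},\mathbf{y})$ on $D_\sigma(\mathbf{x})$, the orbit Gaussian absorbs the growth, and no Euclidean factor for $\partial_t k_t$ is needed.

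If $t<m(\sigma(\mathbf{x}))^{-2}$, the part $\|\sigma(\mathbf{x})-\mathbf{y}\|\ge m(\sigma(\mathbf{x}))^{-1}$ is handled the same way, since there $m(\mathbf{y})^{-\beta}\lesssim\|\sigma(\mathbf{x})-\mathbf{y}\|^{\beta}$. On the local part $m(\mathbf{y})\sim m(\sigma(\mathbf{x}))$, so the weight is the constant $m(\sigma(\mathbf{x}))^{-\beta}$. There the Duhamel identity is used with $\mathbf{x}$ as the endpoint of $k$, i.e. with $h_{t-s}(\mathbf{y},\mathbf{z})V(\mathbf{z})k_s(\mathbf{z},\mathbf{x})$, and $\mathbf{z}$ is split at $\|\mathbf{y}-\mathbf{z}\|=m(\mathbf{y})^{-1}$.
\begin{enumerate}
\item[(a)] For far $\mathbf{z}$, the bound $m(\sigma(\mathbf{x}))^{-\beta}\lesssim\|\mathbf{y}-\mathbf{z}\|^{\beta}$ is absorbed by the Euclidean factor of $h_{t-s}(\mathbf{y},\mathbf{z})$. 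This is where your instinct about the refined bound for $h_t$ is right, and where $\beta<2$ enters. The estimate is then closed by $\int_0^t\int V(\mathbf{z})k_s(\mathbf{z},\mathbf{x})\,dw(\mathbf{z})\,ds\le 1$, $\int V(\mathbf{z})k_t(\mathbf{z},\mathbf{x})\,dw(\mathbf{z})\lesssim t^{-1}$, and $\int_{t/2}^t\int V(\mathbf{z})|\partial_s k_s(\mathbf{z},\mathbf{x})|\,dw(\mathbf{z})\,ds\lesssim t^{-1}$.
\item[(b)] For near $\mathbf{z}$ lying in the sector $A_{\sigma_1}(\mathbf{x})$, one has $m(\sigma(\mathbf{x}))\sim m(\mathbf{y})\sim m(\mathbf{z})\sim m(\sigma_1(\mathbf{x}))$ by Lemma~\ref{lem:geometric_m_equivalence}. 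Hence Lemma~\ref{lem:integral_estimate_A} applies on that sector, and $\beta<2-\mathbf{N}/q$ gives $t^{-1+\beta/2}$.
\end{enumerate}
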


Theorem~\ref{thm:main_derivative_difference} plays a pivotal role in bridging the regularity theory governed by the Dunkl Laplacian $\Delta_k$ and the perturbed operator $\mathcal{L} = -\Delta_k + V$. While the overall proof strategy draws inspiration from the work of De Le\'on-Contreras and Torrea~\cite{DeLeonTorrea} in the classical Schr\"odinger setting, their techniques are inherently insufficient in the presence of reflection symmetries. 

The primary difficulty lies in the fact that heat kernel estimates in Dunkl analysis (cf. Theorem~\ref{teo:heat_new}, see also~\cite{DzH_CalcVar} for upper and lower bounds for Dunkl heat kernel) cannot be expressed purely in terms of the Euclidean metric $\|\mathbf{x}-\mathbf{y}\|$. Instead, they heavily involve two distinct metrics—most prominently the orbit metric $d(\mathbf{x},\mathbf{y}) = \min_{\sigma \in G} \|\sigma(\mathbf{x})-\mathbf{y}\|$, which encodes the geometry of the reflection group $G$. To overcome this obstacle, we restructure the proof by carefully disentangling and isolating the role of the standard Euclidean distance from that of the orbit distance. This geometric decoupling is entirely absent in the classical Euclidean context, where no reflection group actions or multiple metrics exist.

Finally, a key technical ingredient supporting our heat semigroup estimates is a pointwise Gaussian bound for the time derivatives of the Dunkl--Schr\"odinger heat kernel $k_t(\mathbf{x},\mathbf{y})$.

\begin{theorem}\label{thm:time_derivatives}
For every $m \in \mathbb{N}_0$, there exist constants $C_m, c_m > 0$ such that for all $\mathbf{x}, \mathbf{y} \in \mathbb{R}^N$ and $t > 0$, the Dunkl--Schr\"odinger heat kernel satisfies
\begin{equation}\label{eq:time_derivatives_bound}
    |\partial_t^m k_t(\mathbf{x},\mathbf{y})| \leq \frac{C_m}{t^m w(B(\mathbf{x},\sqrt{t}))} \exp\left(- c_m \frac{d(\mathbf{x},\mathbf{y})^2}{t}\right).
\end{equation}
\end{theorem}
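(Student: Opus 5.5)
The plan is to obtain \eqref{eq:time_derivatives_bound} from a Gaussian upper bound for $k_t$ on a complex sector, followed by the Cauchy integral formula in the time variable. The self-adjoint operator $\mathcal{L}=-\Delta_k+V\ge 0$ on $L^2(dw)$ generates a holomorphic semigroup $K_z=e^{-z\mathcal{L}}$ on the right half-plane. The case $m=0$ is already available from the earlier work \cite{Hejna_AMP}, and follows directly from the Feynman--Kac type domination $0\le k_t(\mathbf{x},\mathbf{y})\le h_t(\mathbf{x},\mathbf{y})$ together with the Dunkl heat kernel bound of Theorem~\ref{teo:heat_new}:
\[
0\le k_t(\mathbf{x},\mathbf{y})\le \frac{C}{w(B(\mathbf{x},\sqrt t))}\exp\Big(-c\frac{d(\mathbf{x},\mathbf{y})^2}{t}\Big).
\]
For $m\ge1$ the key intermediate goal is to extend this bound to complex times $z$ with $|\arg z|\le \pi/4$, in the form
\[
|k_z(\mathbf{x},\mathbf{y})|\le \frac{C}{w(B(\mathbf{x},\sqrt{|z|}))}\exp\Big(-c\frac{d(\mathbf{x},\mathbf{y})^2}{|z|}\Big).
\]
Given this, Cauchy's formula on the circle $|z-t|=t/2$, which lies in the sector, yields
\[
|\partial_t^m k_t(\mathbf{x},\mathbf{y})|\le m!\,(t/2)^{-m}\sup_{|z-t|=t/2}|k_z(\mathbf{x},\mathbf{y})|.
\]
Since $|z|\sim t$ on this circle, the doubling property of $w$ allows us to replace $w(B(\mathbf{x},\sqrt{|z|}))$ by $w(B(\mathbf{x},\sqrt t))$, and $|z|$ by $t$ in the exponent, at the cost of adjusting the constants $C_m$ and $c_m$.

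To get the complex-time bound, I would use the standard Davies / Phragmén--Lindelöf route in the orbit-metric space of homogeneous type $(\mathbb{R}^N,d,dw)$. Here $d$ is a $G$-invariant pseudo-metric, and the measure is doubling with respect to the ball $\mathcal{O}(B(\mathbf{x},r))$, whose $w$-measure is comparable to $w(B(\mathbf{x},r))$. The first step is a uniform on-diagonal bound $|k_z(\mathbf{x},\mathbf{y})|\le C\,w(B(\mathbf{x},\sqrt{\Re z}))^{-1/2}w(B(\mathbf{y},\sqrt{\Re z}))^{-1/2}$ for $\Re z>0$. This follows from the semigroup law $k_z(\mathbf{x},\mathbf{y})=\int k_{z/2}(\mathbf{x},\mathbf{u})k_{z/2}(\mathbf{u},\mathbf{y})\,dw(\mathbf{u})$, the contractivity of $K_{i s}$ on $L^2$, and the real-time bound (which gives $\|k_{\Re z/2}(\mathbf{x},\cdot)\|_{L^2(dw)}^2=k_{\Re z}(\mathbf{x},\mathbf{x})$). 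The second step combines this with the real Gaussian decay through the Phragmén--Lindelöf argument (as in Davies, or Coulhon--Sikora, for spaces of homogeneous type). That argument produces the Gaussian factor $\exp(-c\,d^2/|z|)$ on the sector, with the symmetric volume normalisation turned into $w(B(\mathbf{x},\sqrt{|z|}))$ via doubling.

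The main obstacle is that $d$ is only a pseudo-metric on $\mathbb{R}^N$, since $d(\mathbf{x},\sigma(\mathbf{x}))=0$, and that $V$ is not $G$-invariant, so $k_t$ itself is not $G$-invariant. I would first verify that the Phragmén--Lindelöf argument only uses the scalar function $\phi(z)=k_z(\mathbf{x},\mathbf{y})$ for fixed $\mathbf{x},\mathbf{y}$, together with the two estimates above. It therefore never needs invariance: one applies it with the number $d(\mathbf{x},\mathbf{y})$ as the distance parameter, and no metric structure of $d$ enters beyond doubling. Should this fail, the fallback is the analytic functional calculus estimate $\|\mathcal{L}^m K_t\|$ in weighted $L^2$ with Davies weights $e^{\lambda\psi}$, taking $\psi=d(\cdot,\mathbf{y})$. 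This function is Lipschitz with respect to the Euclidean metric, and the perturbation by $V\ge0$ only helps in the energy estimate, while the nonlocal reflection terms of $\Delta_k$ are controlled because $\psi$ is $G$-invariant.
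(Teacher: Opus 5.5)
Your proposal is correct and follows essentially the same route as the paper. The paper likewise obtains real-time bounds from $0\le k_t\le h_t$ and Theorem~\ref{teo:heat_new}, and the complex-time on-diagonal bound from $e^{-zL}=e^{-tL/2}e^{-isL}e^{-tL/2}$ with Cauchy--Schwarz. It then applies a Phragm\'en--Lindel\"of argument to the scalar function $z\mapsto k_z(\mathbf{x},\mathbf{y})$, with $d(\mathbf{x},\mathbf{y})$ entering only as a number, so no $G$-invariance is needed; your Davies-weight fallback is unnecessary. It concludes with Cauchy's formula on $|z-t|=\varepsilon t$, and the only difference is cosmetic: the paper normalizes its sector bound by $w(B(\mathbf{x},d(\mathbf{x},\mathbf{y})))^{-1}$ and passes to $w(B(\mathbf{x},\sqrt{t}))^{-1}$ via volume estimates (using the on-diagonal bound when $d(\mathbf{x},\mathbf{y})\lesssim\sqrt{t}$), whereas you target the $w(B(\mathbf{x},\sqrt{|z|}))^{-1}$ form directly.
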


Although the proof of Theorem~\ref{thm:time_derivatives} adapts the well-established framework developed by Coulhon and Sikora \cite{CoulhonSikora2008}, a rigorous verification is necessary in our setting. This requirement stems from the fact that the bounds are governed by the orbit metric $d(\mathbf{x},\mathbf{y}) = \min_{\sigma \in G} \|\sigma(\mathbf{x})-\mathbf{y}\|$ associated with the reflection group $G$, rather than the standard Euclidean metric. Due to its standard yet technical nature, the detailed verification is deferred to Section~\ref{sec:appendix_}. We emphasize that Theorem~\ref{thm:time_derivatives} is of independent interest beyond its application in this paper, as these Gaussian estimates for time derivatives are expected to be a valuable tool for future research in Dunkl--Schr\"odinger analysis and related functional settings.

The paper is organized as follows. In Section~\ref{sec:preliminaries}, we provide preliminaries on Dunkl theory and introduce Dunkl--Schr\"odinger operators. Section~\ref{sec:reverse} is devoted to facts concerning the reverse H\"older class and the auxiliary function  $m(\mathbf{x})$. In Section~\ref{sec:lipschitz}, we state basic properties of the Lipschitz spaces defined in Definition~\ref{def:lipschitz_weighted} to build the necessary framework for the proof of Theorem~\ref{teo:equivalence}, which is presented in Section~\ref{sec:equiv}. Section~\ref{sec:aux} is concerned with integral representations for the semigroup kernels and potential operators. In Section~\ref{sec:proof_1}, we prove Theorem~\ref{thm:main_derivative_difference}, and finally, Section~\ref{sec:appendix_} is dedicated to the pointwise estimates from Theorem~\ref{thm:time_derivatives}.

Throughout this paper, $C>0$ denotes a positive constant that may change from line to line, but is independent of the main variables involved.

\section{Preliminaries}\label{sec:preliminaries}

\subsection{Basic definitions of the Dunkl theory}
In this section we present facts concerning the theory of the Dunkl operators.  For details we refer the reader to~\cite{Dunkl0},\cite{Dunkl},\cite{Dunkl3},\cite{Dunkl2},\cite{Roesler2},\cite{Roesle99},\cite{Roesler3},\cite{Roesler-Voit},\cite{RV1998}, and \cite{ThangaveluXu}.

We consider the Euclidean space $\mathbb R^N$ with the scalar product $\langle\mathbf x,\mathbf y\rangle=\sum_{j=1}^N x_jy_j
$, where $\mathbf x=(x_1,...,x_N)$, $\mathbf y=(y_1,...,y_N)$, and the norm $\| \mathbf x\|^2=\langle \mathbf x,\mathbf x\rangle$. For a nonzero vector $\alpha\in\mathbb R^N$,  the reflection $\sigma_\alpha$ with respect to the hyperplane $\alpha^\perp$ orthogonal to $\alpha$ is given by
\begin{align*}
\sigma_\alpha (\mathbf x)=\mathbf x-2\frac{\langle \mathbf x,\alpha\rangle}{\| \alpha\| ^2}\alpha.
\end{align*}
In this paper we fix a normalized root system in $\mathbb R^N$, that is, a finite set  $R\subset \mathbb R^N\setminus\{0\}$ such that $R \cap \alpha \mathbb{R} = \{\pm \alpha\}$,  $\sigma_\alpha (R)=R$, and $\|\alpha\|=\sqrt{2}$ for all $\alpha\in R$. The finite group $G$ generated by the reflections $\sigma_\alpha$, $\alpha \in R$ is called the {\it Weyl group} ({\it reflection group}) of the root system. A~{\textit{multiplicity function}} is a $G$-invariant function $k:R\to\mathbb C$ which will be fixed and $\geq 0$  throughout this paper. 
 Let
\begin{equation}\label{eq:measure_formula}
dw(\mathbf x)=\prod_{\alpha\in R}|\langle \mathbf x,\alpha\rangle|^{k(\alpha)}\, d\mathbf x
\end{equation} 
be  the associated measure in $\mathbb R^N$, where, here and subsequently, $d\mathbf x$ stands for the Lebesgue measure in $\mathbb R^N$.
We denote by 
\begin{equation}\label{eq:homo}
\mathbf{N}=N+\sum_{\alpha \in R} k(\alpha)
\end{equation}
the homogeneous dimension of the system. Clearly, 
\begin{align*} w(B(t\mathbf x, tr))=t^{\mathbf N}w(B(\mathbf x,r)) \ \ \text{\rm for all } \mathbf x\in\mathbb R^N, \ t,r>0,   
\end{align*}
 where $B(\mathbf x, r)=\{\mathbf y\in\mathbb R^N: \|\mathbf y-\mathbf x\|<r\}$. Observe that there is a constant $C>0$ such that 
\begin{equation}\label{eq:balls_asymp} 
C^{-1}w(B(\mathbf x,r))\leq  r^{N}\prod_{\alpha \in R} (|\langle \mathbf x,\alpha\rangle |+r)^{k(\alpha)}\leq C w(B(\mathbf x,r)),
\end{equation}
so $dw(\mathbf x)$ is doubling, that is, there is a constant $C>0$ such that
\begin{equation}\label{eq:doubling} w(B(\mathbf x,2r))\leq C w(B(\mathbf x,r)) \ \ \text{ for all } \mathbf x\in\mathbb R^N, \ r>0.
\end{equation}
Moreover, there exists a constant $C\ge1$ such that,
for every $\mathbf{x}\in\mathbb{R}^N$ and for every $r_2\ge r_1>0$,
\begin{equation}\label{eq:growth}
C^{-1}\Big(\frac{r_2}{r_1}\Big)^{N}\leq\frac{{w}(B(\mathbf{x},r_2))}{{w}(B(\mathbf{x},r_1))}\leq C \Big(\frac{r_2}{r_1}\Big)^{\mathbf{N}}.
\end{equation}

For a measurable subset $A$ of $\mathbb{R}^N$ we define 
\begin{equation}\label{eq:orbit_of_A}
    \mathcal{O}(A)=\{\sigma(\mathbf{x})\,:\, \mathbf{x} \in A, \, \sigma \in G\}.
\end{equation}
and let
$$d(\mathbf x,\mathbf y)=\min_{\sigma\in G}\| \sigma(\mathbf x)-\mathbf y\|$$
be the distance of the orbit of $\mathbf x$ to the orbit of $\mathbf y$. 
Clearly, by~\eqref{eq:balls_asymp}, for all $\mathbf{x} \in \mathbb{R}^N$ and $r>0$ we get
\begin{equation}\label{eq:ball_orbit_compare}
    w(\mathcal{O}(B(\mathbf{x},r))) \leq |G|w(B(\mathbf{x},r)).
\end{equation}

For $\xi \in \mathbb{R}^N$, the {\it Dunkl operators} $T_\xi$  are the following $k$-deformations of the directional derivatives $\partial_\xi$ by a  difference operator:
\begin{equation}\label{eq:T_def}
     T_\xi f(\mathbf x)= \partial_\xi f(\mathbf x) + \sum_{\alpha\in R} \frac{k(\alpha)}{2}\langle\alpha ,\xi\rangle\frac{f(\mathbf x)-f(\sigma_\alpha(\mathbf{x}))}{\langle \alpha,\mathbf x\rangle}.
\end{equation}
The Dunkl operators $T_{\xi}$, which were introduced in~\cite{Dunkl}, commute and are skew-symmetric with respect to the $G$-invariant measure $dw$. For fixed $\mathbf y\in\mathbb R^N$ the {\it Dunkl kernel} $E(\mathbf x,\mathbf y)$ is the unique analytic solution to the system
\begin{equation}\label{eq:Dunkl_kernel_definition}
    T_\xi f=\langle \xi,\mathbf y\rangle f, \ \ f(0)=1.
\end{equation}
The function $E(\mathbf x ,\mathbf y)$, which generalizes the exponential  function $e^{\langle \mathbf x,\mathbf y\rangle}$, has the unique extension to a holomorphic function on $\mathbb C^N\times \mathbb C^N$. Moreover, it satisfies $E(\mathbf{x},\mathbf{y})=E(\mathbf{y},\mathbf{x})$ for all $\mathbf{x},\mathbf{y} \in \mathbb{C}^N$.

Let $\{e_j\}_{1 \leq j \leq N}$ denote the canonical orthonormal basis in $\mathbb R^N$ and let $T_j=T_{e_j}$. 

The \textit{Dunkl transform}
  \begin{align*}\mathcal F f(\xi)=c_k^{-1}\int_{\mathbb R^N} E(-i\xi, \mathbf x)f(\mathbf x)\, dw(\mathbf x),
  \end{align*}
  where
  $$c_k=\int_{\mathbb{R}^N}e^{-\frac{\|\mathbf{x}\|^2}{2}}\,dw(\mathbf{x})>0,$$
   originally defined for $f\in L^1(dw)$, is an isometry on $L^2(dw)$, i.e., $\|f\|_{L^2(dw)}=\|\mathcal{F}f\|_{L^2(dw)} \text{ for all }f \in L^2(dw)$
and preserves the Schwartz class of functions $\mathcal S(\mathbb R^N)$ (see \cite{deJeu}).

\subsection{Dunkl Laplacian and Dunkl heat semigroup}\label{sec:laplacian} The {\it Dunkl Laplacian} associated with $R$ and $k$  is the differential-difference operator $\Delta_k=\sum_{j=1}^N T_{j}^2$, which  acts on $C^2(\mathbb{R}^N)$-functions by

\begin{align*}
    \Delta_k f(\mathbf x)=\Delta f(\mathbf x)+\sum_{\alpha\in R} k(\alpha) \delta_\alpha f(\mathbf x),
\end{align*}
\begin{align*}
    \delta_\alpha f(\mathbf x)=\frac{\partial_\alpha f(\mathbf x)}{\langle \alpha , \mathbf x\rangle} - \frac{\|\alpha\|^2}{2} \frac{f(\mathbf x)-f(\sigma_\alpha \mathbf x)}{\langle \alpha, \mathbf x\rangle^2}.
\end{align*}
Obviously, $\mathcal F(\Delta_k f)(\xi)=-\| \xi\|^2\mathcal Ff(\xi)$. The operator $\Delta_k$ is essentially self-adjoint on $L^2(dw)$ (see for instance \cite[Theorem\;3.1]{AH}) and generates the semigroup $H_t$  of linear self-adjoint contractions on $L^2(dw)$. The semigroup has the form
  \begin{equation}\label{eq:heat}
  H_t f(\mathbf x)=\mathcal F^{-1}(e^{-t\|\xi\|^2}\mathcal Ff(\xi))(\mathbf x)=\int_{\mathbb R^N} h_t(\mathbf x,\mathbf y)f(\mathbf y)\, dw(\mathbf y),
  \end{equation}
  where the two-variable Dunkl heat kernel $h_t(x,y)$ is defined explicitly in terms of the Dunkl kernel $E(x,y)$ via the formula
\begin{equation}\label{eq:heat_kernel_explicit}
h_t(x,y) = c_k^{-1} \mathcal{F}^{-1}\left(e^{-t\|\cdot\|^2} E(\cdot, y)\right)(x) = c_k^{-2} \int_{\mathbb{R}^N} e^{-t\|\xi\|^2} E(i\xi, x) E(-i\xi, y) \, dw(\xi).
\end{equation}
It is well-known that $h_t(x,y)$ is a $C^\infty$-function of all variables $x,y \in \mathbb{R}^N$, $t > 0$, and satisfies \begin{align*} 0<h_t(\mathbf x,\mathbf y)=h_t(\mathbf y,\mathbf x),
  \end{align*}
 \begin{equation} \label{eq:h_integral_1}
    \int_{\mathbb R^N} h_t(\mathbf x,\mathbf y)\, dw(\mathbf y)=1.
 \end{equation}

 \begin{theorem}[{\cite[Theorem 4.7]{RV1998}}]
Let $f \in \mathcal{S}(\mathbb{R}^N)$. Then $u(\mathbf{x},t)=H_{t}f(\mathbf{x})$ solves the initial-value problem
\begin{equation*}
\begin{cases}
    \partial_{t}u(\mathbf{x},t)=\Delta_{k,\mathbf{x}}u(\mathbf{x},t) \text{ on }\mathbb{R}^N \times (0,\infty),\\
    u(\mathbf{x},0)=f(\mathbf{x}) \text{ for all }\mathbf{x} \in \mathbb{R}^N.
\end{cases}
\end{equation*}
Moreover, the following properties hold:
\begin{enumerate}[(A)]
    \item{$H_tf \in \mathcal{S}(\mathbb{R}^N)$ for all $t>0$;}
    \item{$H_{t}(H_{s}f)=H_{t+s}f$ for all $t,s>0$;}
    \item{$\lim_{t \to 0}\|H_tf-f\|_{L^{\infty}}=0$.}
\end{enumerate}
\end{theorem}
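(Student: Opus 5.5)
The plan is to work entirely on the Dunkl transform side, using the inversion formula $g(\mathbf{x}) = c_k^{-1}\int_{\mathbb{R}^N} E(i\xi,\mathbf{x})\,\mathcal{F}g(\xi)\,dw(\xi)$, valid for $g\in\mathcal{S}(\mathbb{R}^N)$. Three standard properties of the Dunkl kernel will be used:
\begin{itemize}
\item the eigenfunction relation $T_{\xi'}E(i\xi,\cdot) = i\langle \xi',\xi\rangle E(i\xi,\cdot)$, which gives $\Delta_{k,\mathbf{x}}E(i\xi,\mathbf{x}) = -\|\xi\|^2E(i\xi,\mathbf{x})$;
\item the bound $|E(i\xi,\mathbf{x})|\le 1$;
\item the derivative bounds $|\partial^\gamma_{\mathbf{x}}E(i\xi,\mathbf{x})|\le \|\xi\|^{|\gamma|}$ for real $\xi,\mathbf{x}$ (Rösler).
\end{itemize}
By definition \eqref{eq:heat}, $\mathcal{F}(H_tf)(\xi)=e^{-t\|\xi\|^2}\mathcal{F}f(\xi)$.

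For (A), I would note that $\mathcal{F}f\in\mathcal{S}(\mathbb{R}^N)$, since $\mathcal{F}$ preserves the Schwartz class \cite{deJeu}. Multiplication by the Schwartz function $e^{-t\|\xi\|^2}$ keeps it in $\mathcal{S}$. Applying $\mathcal{F}^{-1}$, which also preserves $\mathcal{S}$, gives $H_tf\in\mathcal{S}$.

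For (B), I would use (A) to see that $H_sf\in\mathcal{S}$. Then
\[
\mathcal{F}(H_tH_sf)(\xi) = e^{-t\|\xi\|^2}e^{-s\|\xi\|^2}\mathcal{F}f(\xi) = \mathcal{F}(H_{t+s}f)(\xi),
\]
and injectivity of $\mathcal{F}$ on $L^2(dw)$ (it is an isometry) finishes the argument.

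For (C), I would write
\[
|H_tf(\mathbf{x})-f(\mathbf{x})| \le c_k^{-1}\int_{\mathbb{R}^N}\bigl(1-e^{-t\|\xi\|^2}\bigr)\,|\mathcal{F}f(\xi)|\,dw(\xi),
\]
using $|E(i\xi,\mathbf{x})|\le1$. The right-hand side does not depend on $\mathbf{x}$. It tends to $0$ as $t\to0$ by dominated convergence, with majorant $|\mathcal{F}f|\in L^1(dw)$. This gives uniform convergence, and with it the initial condition, understood as $u(\cdot,t)\to f$ uniformly.

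The heat equation is where the real work lies. I would set $u(\mathbf{x},t)=c_k^{-1}\int E(i\xi,\mathbf{x})e^{-t\|\xi\|^2}\mathcal{F}f(\xi)\,dw(\xi)$.
\begin{itemize}
\item \emph{Time derivative.} Differentiating in $t$ under the integral is justified on $t\ge t_0>0$ by the majorant $\|\xi\|^2|\mathcal{F}f(\xi)|$. This gives $\partial_tu = c_k^{-1}\int E(i\xi,\mathbf{x})(-\|\xi\|^2)e^{-t\|\xi\|^2}\mathcal{F}f(\xi)\,dw(\xi)$.
\item \emph{Classical part of $\Delta_k$.} Second-order $\mathbf{x}$-derivatives pass under the integral by the derivative bounds above, with majorant $(1+\|\xi\|)^2|\mathcal{F}f(\xi)|$. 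This handles $\Delta u$ and the terms $\partial_\alpha u/\langle\alpha,\mathbf{x}\rangle$ for $\langle\alpha,\mathbf{x}\rangle\ne0$.
\item \emph{Difference terms.} The quotients $\bigl(u(\mathbf{x})-u(\sigma_\alpha\mathbf{x})\bigr)/\langle\alpha,\mathbf{x}\rangle^2$ are linear in $u$, so they pass under the integral directly for $\langle\alpha,\mathbf{x}\rangle\neq0$. To obtain a bound uniform near the hyperplanes, I would use the Taylor-type representation
\[
\frac{g(\mathbf{x})-g(\sigma_\alpha\mathbf{x})}{\langle\alpha,\mathbf{x}\rangle} = \int_0^1\partial_\alpha g\bigl(\mathbf{x}-s\langle\alpha,\mathbf{x}\rangle\alpha\bigr)\,ds.
\]
Iterated once more for the $\delta_\alpha$ combination, this bounds everything by second derivatives of $E(i\xi,\cdot)$, hence by $C\|\xi\|^2$. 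By continuity the resulting identity then extends to all $\mathbf{x}$.
\end{itemize}
With these interchanges, $\Delta_{k,\mathbf{x}}u = c_k^{-1}\int \Delta_{k,\mathbf{x}}E(i\xi,\mathbf{x})\,e^{-t\|\xi\|^2}\mathcal{F}f(\xi)\,dw(\xi) = \partial_tu$ by the eigenfunction relation.

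I expect the main obstacle to be this justification of moving the difference part of $\Delta_k$ inside the integral uniformly near the reflecting hyperplanes. Everything else is dominated convergence with Schwartz-class majorants.
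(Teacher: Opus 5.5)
The paper does not prove this statement; it quotes it from \cite[Theorem 4.7]{RV1998}, so there is no argument in the text to compare yours against. Your proof is correct, and it follows the standard route for this result. The Dunkl transform diagonalizes $H_t$. Properties (A)--(C) then follow from $\mathcal{F}$ being a bijection of $\mathcal{S}(\mathbb{R}^N)$ together with $|E(i\xi,\mathbf{x})|\le 1$. The heat equation comes from differentiating the inversion integral.

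The step you flag as the main obstacle can be bypassed entirely. By (A), $g=H_tf\in\mathcal{S}(\mathbb{R}^N)$. The identity $\mathcal{F}(\Delta_k g)(\xi)=-\|\xi\|^2\mathcal{F}g(\xi)$ on $\mathcal{S}$ is recorded in Section~\ref{sec:laplacian}; it follows from the skew-symmetry of the $T_j$ and the eigenfunction relation. It gives $\Delta_k H_tf=\mathcal{F}^{-1}\bigl(-\|\xi\|^2e^{-t\|\xi\|^2}\mathcal{F}f\bigr)$, which is exactly your formula for $\partial_t u$. No analysis near the reflecting hyperplanes is needed.

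Your hands-on version is also valid and more self-contained. Note that off the hyperplanes the difference part of $\Delta_k$ at a fixed $\mathbf{x}$ is a finite combination of point values and first derivatives. It therefore commutes with the $\xi$-integral once the derivative interchanges are justified. The Taylor representation is needed only to see that both sides extend continuously across $\{\langle\alpha,\mathbf{x}\rangle=0\}$.
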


Let us denote
\begin{equation}\label{eq:mathcal_G}
     \mathcal G_t(\mathbf x,\mathbf y)=\Big(\max (w(B(\mathbf x,\sqrt{t})),w(B(\mathbf y, \sqrt{t})))\Big)^{-1}\exp\Big(-\frac {d(\mathbf x,\mathbf y)^2}{t}\Big).
 \end{equation}

  We shall need the following estimates for $h_t(\mathbf x,\mathbf y)$ - their two step proof, which is based on R\"osler's formula for the Dunkl  translations of  radial functions (see~\cite{Roesler2003}), can be found in~\cite[Theorem 4.1]{ADzH} and~\cite[Theorem 3.1]{DzH1}.

  \begin{theorem}\label{teo:heat_new}   For all $m \in \mathbb{N}_0$ and  for all multi-indices $\boldsymbol{\beta},\boldsymbol{\beta}' \in \mathbb{N}_0^{N}$ there are constants $C, c>0$ such that for all $\mathbf{x},\mathbf{y} \in \mathbb{R}^N$ and $t>0$ we have
  \begin{equation}\label{eq:heat2} |\partial_t^m \partial_{\mathbf x}^{\boldsymbol{\beta}}\partial_{\mathbf y}^{\boldsymbol{\beta}'} h_t(\mathbf{x},\mathbf{y})|
  \leq C t^{-m-\frac{|\boldsymbol{\beta}|}{2}-\frac{|\boldsymbol{\beta}'|}{2}} \Big(1+\frac{\| \mathbf x-\mathbf y\|}{\sqrt{t}}\Big)^{-2} \mathcal G_{t\slash c} (\mathbf x,\mathbf y).
  \end{equation}
  Moreover, if $\mathbf{y}' \in \mathbb{R}^{N}$ and $\|\mathbf y-\mathbf y'\|\leq \sqrt{t}$, then
  \begin{equation}\label{eq:heat3}
  \begin{split}|\partial_t^m  h_t(\mathbf{x},\mathbf{y}) & -
 \partial_t^m  h_t(\mathbf{x},\mathbf{y'})|  \leq C t^{-m} \frac{\|\mathbf y-\mathbf y'\|}{\sqrt{t}}\Big(1+\frac{\| \mathbf x-\mathbf y\|}{\sqrt{t}}\Big)^{-2} \mathcal G_{t\slash c} (\mathbf x,\mathbf y).
  \end{split}\end{equation}
\end{theorem}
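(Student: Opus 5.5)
The plan is to reduce everything to the case $t=1$ by scaling. I would then obtain the basic Gaussian bound from R\"osler's formula, transfer derivatives onto radial profiles, and treat the extra Euclidean decay factor $\big(1+\|\mathbf x-\mathbf y\|/\sqrt t\big)^{-2}$ as a separate, second step. This mirrors the two-step structure of \cite[Theorem 4.1]{ADzH} and \cite[Theorem 3.1]{DzH1}.

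\emph{Scaling and the basic bound.} From \eqref{eq:heat_kernel_explicit} and the homogeneity $E(\lambda\mathbf x,\mathbf y)=E(\mathbf x,\lambda\mathbf y)$ we get
$$h_t(\mathbf x,\mathbf y)=t^{-\mathbf N/2}h_1(\mathbf x/\sqrt t,\mathbf y/\sqrt t).$$
Combined with $w(B(\mathbf x,\sqrt t))=t^{\mathbf N/2}w(B(\mathbf x/\sqrt t,1))$, this shows it suffices to prove everything for $t=1$. The powers $t^{-m-|\boldsymbol\beta|/2-|\boldsymbol\beta'|/2}$ then appear automatically.

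Next, $\partial_t^m h_t(\mathbf x,\mathbf y)$ is, by \eqref{eq:heat_kernel_explicit}, the Dunkl translation $\tau_{\mathbf x}\varphi_m(-\mathbf y)$ of the radial Schwartz function $\varphi_m=\mathcal F^{-1}\big((-\|\xi\|^2)^m e^{-\|\xi\|^2}\big)$. Its profile is a polynomial in $\|\mathbf x\|^2$ times $e^{-c\|\mathbf x\|^2}$, so $|\widetilde{\varphi_m}(s)|\le Ce^{-c's^2}$.

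R\"osler's formula \cite{Roesler2003} writes $\tau_{\mathbf x}\varphi(-\mathbf y)=\int\widetilde\varphi(A(\mathbf x,\mathbf y,\eta))\,d\mu_{\mathbf x}(\eta)$, where $A^2=\|\mathbf x\|^2+\|\mathbf y\|^2-2\langle\mathbf y,\eta\rangle$ and $\mu_{\mathbf x}$ is a probability measure on the convex hull of the orbit $G\mathbf x$. Using positivity of $\mu_{\mathbf x}$, this gives
$$|\partial_t^m h_1(\mathbf x,\mathbf y)|\le C\,\tau_{\mathbf x}\big(e^{-c'\|\cdot\|^2}\big)(-\mathbf y)=C''h_{1/c'}(\mathbf x,\mathbf y).$$
So time derivatives are dominated by the heat kernel at a comparable time. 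For the heat kernel itself, $A(\mathbf x,\mathbf y,\eta)\ge d(\mathbf x,\mathbf y)$ on the support of $\mu_{\mathbf x}$. Together with the standard lower-dimensional estimate $\mu_{\mathbf x}(\{\eta:A\le r\})\lesssim w(B(\mathbf x,r))/w(B(\mathbf x,1))$ and the doubling property \eqref{eq:doubling}, this yields the bound $C\mathcal G_{1/c}(\mathbf x,\mathbf y)$, first without the extra factor.

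\emph{Spatial derivatives.} Derivatives in $\mathbf y$ fall only on $A$. Since $\partial_{y_j}A^2=2(y_j-\eta_j)$ is controlled by $A$ up to bounded terms, repeated differentiation again gives polynomial-times-Gaussian profiles in $A$, and the previous argument applies. For derivatives in $\mathbf x$, where $\mu_{\mathbf x}$ itself depends on $\mathbf x$, I would not differentiate the measure. Instead I would use the semigroup identity
$$h_1(\mathbf x,\mathbf y)=\int h_{1/2}(\mathbf x,\mathbf z)h_{1/2}(\mathbf z,\mathbf y)\,dw(\mathbf z)$$
together with the symmetry $h(\mathbf x,\mathbf z)=h(\mathbf z,\mathbf x)$. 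This puts $\partial^{\boldsymbol\beta}_{\mathbf x}$ on a second-variable derivative of the first factor and $\partial^{\boldsymbol\beta'}_{\mathbf y}$ on the second factor. The resulting convolution of two kernels of type $\mathcal G$ is again $\lesssim\mathcal G_{1/c}$, using $d(\mathbf x,\mathbf y)\le d(\mathbf x,\mathbf z)+d(\mathbf z,\mathbf y)$ and the doubling property. Estimate \eqref{eq:heat3} then follows from the $\mathbf y$-gradient bound by the mean value theorem along the segment $[\mathbf y,\mathbf y']$. On that segment $\|\mathbf y-\mathbf y'\|\le\sqrt t$, so $d(\cdot,\cdot)$, $\|\mathbf x-\cdot\|$ and $w(B(\cdot,\sqrt t))$ change only by bounded amounts, at the cost of shrinking $c$.

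\emph{Main obstacle.} I expect the hardest step to be the extra factor $\big(1+\|\mathbf x-\mathbf y\|\big)^{-2}$ at $t=1$. It encodes decay in the Euclidean distance, which can be large even when $d(\mathbf x,\mathbf y)$ is small, so it cannot come from the Gaussian in $A$ alone. My first attempt would be to write
$$\|\mathbf x-\mathbf y\|^2h_1(\mathbf x,\mathbf y)=\big(\|\mathbf x\|^2+\|\mathbf y\|^2-2\langle\mathbf x,\mathbf y\rangle\big)h_1(\mathbf x,\mathbf y)$$
and move the factors of $\mathbf x$ onto the Fourier side via $T_{j,\xi}E(i\xi,\mathbf x)=ix_jE(i\xi,\mathbf x)$, integrating by parts using skew-symmetry of $T_j$ in \eqref{eq:heat_kernel_explicit}. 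Because Dunkl operators do not satisfy a Leibniz rule, the reflection terms $\big(f(\xi)-f(\sigma_\alpha\xi)\big)/\langle\alpha,\xi\rangle$ must be tracked. Each resulting term should be of the form $h$-type kernel evaluated at $(\sigma\mathbf x,\mathbf y)$, or a derivative of one, with a factor $\langle\alpha,\mathbf x\rangle^{-1}$ that is compensated by $w(B(\mathbf x,1))$. One then compares $\|\mathbf x-\mathbf y\|$ with $\|\sigma_\alpha(\mathbf x)-\mathbf y\|$ and iterates over chains of reflections; this gives only a finite, $G$-dependent number of steps, which suffices for the power $-2$. If this bookkeeping becomes unmanageable, the fallback is to follow \cite{DzH1} directly and obtain the gain by differentiating R\"osler's formula in $\mathbf y$ and integrating along the segment $[\mathbf x,\mathbf y]$.
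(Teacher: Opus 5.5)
The paper does not prove this theorem itself; it cites a two-step argument based on R\"osler's formula, and your architecture matches that. Both steps, however, have a gap at the decisive point.

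\textbf{The extra Euclidean factor.} The main gap is the factor $(1+\|\mathbf x-\mathbf y\|/\sqrt t)^{-2}$, which you leave open, and the bookkeeping you anticipate is not what happens. No factor $\langle\alpha,\mathbf x\rangle^{-1}$ survives, and $w(B(\mathbf x,1))$, which is bounded below near the walls, could not absorb one anyway. The reflected terms carry \emph{no} decay in $\|\mathbf x-\mathbf y\|$, so comparing $\|\mathbf x-\mathbf y\|$ with $\|\sigma_\alpha(\mathbf x)-\mathbf y\|$ along chains of reflections cannot help. Your fallback, integrating a $\mathbf y$-gradient along $[\mathbf x,\mathbf y]$, controls differences, not decay.

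The missing idea is one exact identity, obtained by applying \eqref{eq:T_j} twice. With $p(\mathbf x)=y_j-x_j$ one has $p(\mathbf x)-p(\sigma_\alpha(\mathbf x))=-\langle\alpha,\mathbf x\rangle\alpha_j$, so the Dunkl denominators cancel exactly and
$T_{j,\mathbf x}(p\,h_t)=-h_t+p\,T_{j,\mathbf x}h_t-\sum_{\alpha\in R}\frac{k(\alpha)}{2}\alpha_j^2\,h_t(\sigma_\alpha(\mathbf x),\mathbf y)$.
Summing over $j$ and using $\Delta_{k,\mathbf x}h_t=\partial_t h_t$ gives
\[
\frac{\|\mathbf x-\mathbf y\|^2}{4t^2}\,h_t(\mathbf x,\mathbf y)=\partial_t h_t(\mathbf x,\mathbf y)+\frac{N}{2t}\,h_t(\mathbf x,\mathbf y)+\frac{1}{2t}\sum_{\alpha\in R}k(\alpha)\,h_t(\sigma_\alpha(\mathbf x),\mathbf y).
\]
Since $\mathcal G_{t/c}(\sigma_\alpha(\mathbf x),\mathbf y)=\mathcal G_{t/c}(\mathbf x,\mathbf y)$, the plain Gaussian bounds for $h_t$ and $\partial_t h_t$ immediately give $\frac{\|\mathbf x-\mathbf y\|^2}{t}h_t\lesssim\mathcal G_{t/c}$. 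Differentiating the identity and inducting on the order handles $\partial_t^m\partial^{\boldsymbol\beta}_{\mathbf x}\partial^{\boldsymbol\beta'}_{\mathbf y}$. The reflected terms, which have no Euclidean decay, are also why the exponent is $2$ and no more. Your Fourier-side integration by parts, done carefully, produces the same identity with $\sigma_\alpha(\mathbf y)$ in place of $\sigma_\alpha(\mathbf x)$.

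\textbf{The volume factor.} The estimate you call standard, $\mu_{\mathbf x}(\{A\le r\})\lesssim w(B(\mathbf x,r))/w(B(\mathbf x,1))$, is false. Here $\mu_{\lambda\mathbf x}$ is the dilate of $\mu_{\mathbf x}$ and $A$ is homogeneous of degree one. Applying the estimate to $(\mathbf x/\lambda,\mathbf y/\lambda,r/\lambda)$ yields $\mu_{\mathbf x}(\{A\le r\})\lesssim w(B(\mathbf x,r))/w(B(\mathbf x,\lambda))\to 0$ as $\lambda\to\infty$, which fails for large $r$.

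Its scale-correct form, $\mu_{\mathbf x}(\{A\le r\})\lesssim r^{\mathbf N}/w(B(\mathbf x,r))$, is by the layer-cake formula equivalent to $h_t(\mathbf x,\mathbf y)\lesssim w(B(\mathbf x,\sqrt t))^{-1}$. That is exactly the volume factor you need, so it cannot simply be quoted. It does follow from R\"osler's formula:
\begin{enumerate}
\item Since $\|\eta\|\le\|\mathbf x\|$ on $\operatorname{supp}\mu_{\mathbf x}$, one has $\|\mathbf x-\eta\|\le A(\mathbf x,\mathbf x,\eta)$, hence $A(\mathbf x,\mathbf y,\eta)\le A(\mathbf x,\mathbf x,\eta)+\|\mathbf x-\mathbf y\|$.
\item Therefore $h_t(\mathbf x,\mathbf y)\ge c\,h_{t/2}(\mathbf x,\mathbf x)$ for $\|\mathbf x-\mathbf y\|\le\sqrt t$.
\item Integrating over $B(\mathbf x,\sqrt t)$ and using \eqref{eq:h_integral_1} gives $h_{t/2}(\mathbf x,\mathbf x)\lesssim w(B(\mathbf x,\sqrt t))^{-1}$.
\item Combine $h_t(\mathbf x,\mathbf y)\le h_t(\mathbf x,\mathbf x)^{1/2}h_t(\mathbf y,\mathbf y)^{1/2}$, your Gaussian gain from $A\ge d(\mathbf x,\mathbf y)$, and $w(B(\mathbf y,\sqrt t))\lesssim(1+d(\mathbf x,\mathbf y)/\sqrt t)^{\mathbf N}w(B(\mathbf x,\sqrt t))$. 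This gives the bound by $\mathcal G_{t/c}$ with the maximum of the two volumes.
\end{enumerate}
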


We will also need the following formula (see~\cite{DzH1}).

\begin{equation}\label{eq:T_j}
 T_{j,\mathbf x}h_t(\mathbf{x},\mathbf{y})=\frac{y_j-x_j}{2t}h_t(\mathbf{x},\mathbf{y}),
 \end{equation}



The following lemma is a standard estimate which holds on the spaces of homogeneous type.

\begin{lemma}\label{lem:homogeneous}
    Let $\delta>0$. There is a constant $C>0$ such that for all $\mathbf{x} \in \mathbb{R}^N$ and $r>0$ we have:
\begin{enumerate}[(A)]
    \item \label{numitem:outside_orbit} $\int_{\mathcal{O}(B(\mathbf{x},r))^{c}}\left(\frac{d(\mathbf{x},\mathbf{y})}{r}\right)^{-\delta}w(B(\mathbf{x},d(\mathbf{x},\mathbf{y})))^{-1}\,dw(\mathbf{y}) \leq C$;
    \item \label{numitem:inside_orbit} $\int_{\mathcal{O}(B(\mathbf{x},r))}\left(\frac{d(\mathbf{x},\mathbf{y})}{r}\right)^{\delta}w(B(\mathbf{x},d(\mathbf{x},\mathbf{y})))^{-1}\,dw(\mathbf{y}) \leq C$.
\end{enumerate}
\end{lemma}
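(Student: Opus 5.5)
The plan is to reduce both parts to dyadic annuli in the orbit distance and then use the growth estimate \eqref{eq:growth} together with the doubling property \eqref{eq:doubling}.

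\textbf{Decomposition.} Fix $\mathbf{x}$ and $r$. For $j\in\mathbb{Z}$ set
\[
A_j=\{\mathbf{y}: 2^{j}r\le d(\mathbf{x},\mathbf{y})<2^{j+1}r\}.
\]
Since $d(\mathbf{x},\mathbf{y})<\rho$ holds exactly when $\mathbf{y}\in\mathcal{O}(B(\mathbf{x},\rho))$, we get
\[
A_j\subseteq \mathcal{O}(B(\mathbf{x},2^{j+1}r)).
\]
By \eqref{eq:ball_orbit_compare}, this gives
\[
w(A_j)\le |G|\,w(B(\mathbf{x},2^{j+1}r)).
\]
On $A_j$ we also have
\[
w(B(\mathbf{x},d(\mathbf{x},\mathbf{y})))\ge w(B(\mathbf{x},2^jr)).
\]
By doubling \eqref{eq:doubling}, the ratio $w(B(\mathbf{x},2^{j+1}r))/w(B(\mathbf{x},2^jr))\le C$. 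Hence
\[
\int_{A_j} w(B(\mathbf{x},d(\mathbf{x},\mathbf{y})))^{-1}\,dw(\mathbf{y})\le C|G|,
\]
uniformly in $j$, $\mathbf{x}$ and $r$. The set where $d(\mathbf{x},\mathbf{y})=0$ is the orbit $G\mathbf{x}$, a finite set of $dw$-measure zero, so it can be ignored.

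\textbf{Part (A).} The complement $\mathcal{O}(B(\mathbf{x},r))^c$ is $\{d(\mathbf{x},\mathbf{y})\ge r\}=\bigcup_{j\ge0}A_j$. On $A_j$ the factor satisfies $(d/r)^{-\delta}\le 2^{-j\delta}$. The integral is therefore bounded by $C|G|\sum_{j\ge0}2^{-j\delta}<\infty$.

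\textbf{Part (B).} The set $\mathcal{O}(B(\mathbf{x},r))$ is $\{d<r\}=\bigcup_{j\le -1}A_j$, up to a null set. On $A_j$ the factor satisfies $(d/r)^{\delta}\le 2^{(j+1)\delta}$. The integral is bounded by $C|G|\sum_{j\le -1}2^{(j+1)\delta}<\infty$.

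The only point needing care is the comparison between orbit-distance annuli and Euclidean balls centred at the single point $\mathbf{x}$. This is supplied by \eqref{eq:ball_orbit_compare}, together with the fact that $w(B(\mathbf{x},\cdot))$ is evaluated at the same centre $\mathbf{x}$ on both sides. No other obstacle is expected.
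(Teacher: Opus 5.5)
Your proof is correct. The paper gives no proof of this lemma; it only calls it a standard estimate on spaces of homogeneous type. Your argument is exactly that standard one: dyadic annuli in the orbit distance $d(\mathbf{x},\cdot)$, the bound $w(\mathcal{O}(B(\mathbf{x},\rho)))\le |G|\,w(B(\mathbf{x},\rho))$ from \eqref{eq:ball_orbit_compare}, and doubling at the single centre $\mathbf{x}$, which give a uniform bound per annulus that sums geometrically in both (A) and (B).
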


We record the commutation properties of the Dunkl operators with time differentiation and the heat semigroup. Since each $T_j$ acts solely on the spatial variable $\mathbf{x} \in \mathbb{R}^N$ while $\partial_t$ acts on the time variable $t > 0$, standard smoothness of the heat kernel $h_t(\mathbf{x},\mathbf{y})$ allows differentiation under the integral sign, yielding $\partial_t T_j = T_j \partial_t$. Furthermore, because $T_j$ commutes with the Dunkl Laplacian $\Delta_k = \sum_{i=1}^N T_i^2$, it commutes with the generated heat semigroup $H_t = e^{t\Delta_k}$. Consequently, for all $t > 0$ and $j \in \{1, \dots, N\}$, we have
\begin{equation}\label{eq:dunkl_time_commute}
    \partial_t T_j H_t f = T_j \partial_t H_t f = T_j \Delta_k H_t f.
\end{equation}

\subsection{Dunkl-Schr\"odinger operator and semigroup}\label{sec:Schorodinger}
Let $V \geq 0$ be a measurable function such that $V \in L^2_{\rm{loc}}(dw)$. We consider the following operator on the Hilbert space $L^2(dw)$:
\begin{equation}\label{eq:Schrodinger_operator}
    \mathcal{L}=-\Delta_k+V
\end{equation}
with the domain
\begin{align*}
    \mathcal{D}(\mathcal{L})=\{f \in L^2(dw)\,:\,\|\xi\|^2\mathcal{F}f(\xi) \in L^2(dw(\xi)) \text{ and }V(\mathbf{x})f(\mathbf{x}) \in L^2(dw(\mathbf{x}))\}
\end{align*}
(see~\cite{AH}). We call this operator the \textit{Dunkl-Schr\"odinger operator}. Let us define the quadratic form
\begin{equation}
    \mathbf{Q}(f,g)=\sum_{j=1}^{N}\int_{\mathbb{R}^N}T_jf(\mathbf{x})\overline{T_jg(\mathbf{x})}\,dw(\mathbf{x})+\int_{\mathbb{R}^N}V(\mathbf{x})f(\mathbf{x})\overline{g(\mathbf{x})}\,dw(\mathbf{x})
\end{equation}
with the domain
\begin{align*}
    \mathcal{D}(\mathbf{Q})=\left\{f \in L^2(dw)\;:\; \left(\sum_{j=1}^{N}|T_jf|^2\right)^{1/2},V^{1/2}f \in L^2(dw)\right\} .
\end{align*}
The quadratic form is densely defined and closed (see~\cite[Lemma 4.1]{AH}), so there exists a unique positive self-adjoint operator $L$ such that
\begin{align*}
    \langle Lf,f\rangle=\mathbf{Q}(f,f) \text{ for all }f \in \mathcal{D}(L), 
\end{align*}
moreover,
\begin{align*}
    \mathcal{D}(L^{1/2})=\mathcal{D}(\mathbf{Q}) \text{ and }\mathbf{Q}(f,f)=\|L^{1/2}f\|_{L^2(dw)},
\end{align*}
where $L^{1/2}$ is a unique self-adjoint operator such that $(L^{1/2})^2=L$. It was proved in~\cite[Theorem 4.6]{AH}, that $\mathcal{L}$ is essentially self-adjoint on $C^{\infty}_{c}(\mathbb{R}^N)$ and $L$ is its closure. Consequently, $L$ generates the semigroup of self-adjoint contractions on $L^2(dw)$. The semigroup has the form (see~\cite[Theorem 4.8]{AH})
\begin{equation}\label{eq:K_semigroup}
    K_tf(\mathbf{x})=\int_{\mathbb{R}^N}k_t(\mathbf{x},\mathbf{y})f(\mathbf{y})\,dw(\mathbf{y}),
\end{equation}
where $k_t(\mathbf{x},\mathbf{y})$ is the integral kernel which satisfies
\begin{equation}\label{eq:kernels_compare}
   0 \leq k_t(\mathbf{x},\mathbf{y}) \leq h_t(\mathbf{x},\mathbf{y}). 
\end{equation}

\section{Reverse Hölder class, The auxiliary function \texorpdfstring{$m(\mathbf{x})$}{m(x)}}\label{sec:reverse}
In this part, we assume that $q > \max(1,\frac{\mathbf{N}}{2})$ and $V \geq 0$ belongs to the reverse H\"older class ${\rm{RH}}^{q}(dw)$, that is, there is a constant $C_{\text{RH}}>0$ such that
\begin{equation}\label{eq:reverse_Holder}
    \left(\frac{1}{w(B)}\int_{B}V(\mathbf{x})^q\,dw(\mathbf{x})\right)^{1/q} \leq C_{\text{RH}}\frac{1}{w(B)}\int_{B}V(\mathbf{x})\,dw(\mathbf{x}) \text{ for every ball }B.
\end{equation}
For any Lebesque measurable set $A$ we define
\begin{equation}\label{eq:mu}
    \mu(A)=\int_{A}V(\mathbf{x})\,dw(\mathbf{x}).
\end{equation}

 The proofs of the results in this section are standard and they are based on the~\cite[Chapter 7]{Grafakos}.

\begin{lemma}\label{lem:mu_doubling}
The measure $\mu$ defined in~\eqref{eq:mu} is doubling, i.e. there is a constant $C_{\mu}>0$ such that for all $\mathbf{x} \in \mathbb{R}$ and $r>0$ we have
\begin{equation*}
    \mu(B(\mathbf{x},2r)) \leq C_{\mu}\mu(B(\mathbf{x},r)).
\end{equation*}
\end{lemma}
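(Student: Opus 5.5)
The plan is the classical argument that an $A_\infty$-type weight gives a doubling measure, carried out on the space of homogeneous type $(\mathbb{R}^N,\|\cdot\|,dw)$. Since $dw$ is doubling by \eqref{eq:doubling}, everything in \cite[Chapter 7]{Grafakos} that uses only doubling of the underlying measure transfers verbatim. Fix a ball $B=B(\mathbf{x},2r)$ and the subset $E=B(\mathbf{x},r)\subset B$. By \eqref{eq:doubling} we have $w(E)\ge C_0^{-1}w(B)$ with $C_0$ independent of $\mathbf{x},r$. The goal is to show $\mu(E)\ge c\,\mu(B)$ with $c>0$ uniform.

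The key step is a reverse-H\"older-to-$A_\infty$ inequality. For any measurable $F\subset B$, H\"older's inequality with exponents $q,q'$ (here $q>1$ is used) together with \eqref{eq:reverse_Holder} gives
\begin{equation*}
\mu(F)=\int_B \chi_F V\,dw\le w(F)^{1/q'}\Big(\int_B V^q\,dw\Big)^{1/q}\le C_{\rm RH}\Big(\frac{w(F)}{w(B)}\Big)^{1/q'}\mu(B).
\end{equation*}
Apply this with $F=B\setminus E$. Then $w(F)/w(B)\le 1-C_0^{-1}$, so $\mu(B\setminus E)\le C_{\rm RH}(1-C_0^{-1})^{1/q'}\mu(B)$.

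This is where I expect the only real obstacle. The constant $C_{\rm RH}(1-C_0^{-1})^{1/q'}$ need not be less than $1$, so subtracting does not immediately give $\mu(E)\ge c\,\mu(B)$. To fix this, I would replace the single inner ball by a chain of small balls. Choose $\varepsilon$ small, depending only on $C_{\rm RH}$, $q$ and the doubling constant, such that any ball $B'$ with $w(B')\le\varepsilon w(B)$ has $\mu(B')\le\frac12\mu(B)$ whenever $B'\subset B$. The H\"older bound above gives exactly this once $C_{\rm RH}\varepsilon^{1/q'}\le \frac12$.

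The argument then runs by contraposition. Suppose $\mu(E)$ were tiny compared with $\mu(B)$. Cover $B$ by a bounded number $M$ of balls of radius comparable to $r$ contained in a slightly larger concentric ball. The number $M$ depends only on the doubling constant, via \eqref{eq:growth}. Comparable balls of this kind have comparable $w$-measure, which again follows from \eqref{eq:balls_asymp}.

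The cleaner route is the standard one. Nonnegativity plus the displayed $A_\infty$ estimate yields the converse $A_\infty$ property: there are $\gamma,\delta\in(0,1)$ with $w(F)\ge\gamma w(B)\Rightarrow \mu(F)\ge\delta\mu(B)$. This is obtained by applying the displayed inequality to $B\setminus F$ and choosing $\gamma$ close to $1$ so that $C_{\rm RH}(1-\gamma)^{1/q'}\le\frac12$. One then iterates the doubling of $w$ by inserting intermediate radii $r=\rho_0<\rho_1<\dots<\rho_J=2r$. The number of steps $J$ is chosen, depending only on the doubling constant, so that $w(B(\mathbf{x},\rho_{j}))\ge\gamma\, w(B(\mathbf{x},\rho_{j+1}))$ at every step. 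Such a choice is possible by continuity of $\rho\mapsto w(B(\mathbf{x},\rho))$ combined with \eqref{eq:balls_asymp}, which shows that a ratio $\rho_{j+1}/\rho_j\le 1+\eta$ with $\eta$ small forces the $w$-ratio to be at most $(1+\eta)^{\mathbf{N}}C$. Strictly, one needs this $w$-ratio close to $1$; I would get that from the explicit formula \eqref{eq:measure_formula}, whose density is locally comparable under small dilations.

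Applying the converse $A_\infty$ property at each of the $J$ steps gives $\mu(B(\mathbf{x},r))\ge\delta^J\mu(B(\mathbf{x},2r))$. Hence $C_\mu=\delta^{-J}$.
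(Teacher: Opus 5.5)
Your route is the standard $A_\infty$ argument that the paper only cites (it gives no proof beyond \cite[Chapter 7]{Grafakos}), and it works except for one step. The H\"older estimate $\mu(F)\le C_{\rm RH}(w(F)/w(B))^{1/q'}\mu(B)$ is correct. So is the resulting converse property $w(F)\ge\gamma w(B)\Rightarrow\mu(F)\ge\frac12\mu(B)$, using $\mu(B)<\infty$. Iterating along radii $r=\rho_0<\dots<\rho_J=2r$ is the right idea. It is also more elementary than the other standard route, $\mathrm{RH}^q(dw)\Rightarrow A_p(dw)\Rightarrow$ doubling, which needs a Calder\'on--Zygmund argument on the space of homogeneous type. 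The first half of your write-up, with the chain of small balls, can be dropped.

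\textbf{The gap: uniform $J$.} The step that is not justified as written is the choice of the $\rho_j$ with $J$ independent of $\mathbf{x}$ and $r$. As you note, \eqref{eq:balls_asymp} and \eqref{eq:growth} only bound $w(B(\mathbf{x},(1+\eta)\rho))/w(B(\mathbf{x},\rho))$ by $C(1+\eta)^{\mathbf{N}}$ with a fixed $C\ge1$. Your proposed remedy, that the density is locally comparable under small dilations, is false pointwise. If $\langle\mathbf{x},\alpha\rangle\neq0$ and $k(\alpha)>0$, the dilation about $\mathbf{x}$ sends a point $\mathbf{y}$ with $\langle\mathbf{y},\alpha\rangle=0$ to a point where $\langle\cdot,\alpha\rangle=-\eta\langle\mathbf{x},\alpha\rangle\neq0$. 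So the dilated density is positive where the original one vanishes, and no bound of the form $W(\text{dilate})\le(1+\varepsilon)W(\mathbf{y})$ can hold.

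\textbf{Fix by adaptive radii.} The simplest repair uses only the continuity you already mention. Since $dw$ is absolutely continuous with respect to Lebesgue measure, $\rho\mapsto w(B(\mathbf{x},\rho))$ is continuous and nondecreasing. Set
\[
\rho_{j+1}=\max\{\rho\le 2r:\ w(B(\mathbf{x},\rho))\le\gamma^{-1}w(B(\mathbf{x},\rho_j))\}.
\]
Then either $\rho_{j+1}=2r$ or $w(B(\mathbf{x},\rho_{j+1}))=\gamma^{-1}w(B(\mathbf{x},\rho_j))$. Because $w(B(\mathbf{x},2r))\le C_0\,w(B(\mathbf{x},r))$, the process stops after $J\le 1+\log C_0/\log\gamma^{-1}$ steps, uniformly in $\mathbf{x}$ and $r$. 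This gives $C_\mu=2^J$.

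\textbf{Alternative fix.} You can also rescue your uniform-ratio version. Cover the annulus $B(\mathbf{x},(1+\eta)\rho)\setminus B(\mathbf{x},\rho)$ by $O(\eta^{1-N})$ balls of radius $2\eta\rho$ centred on $\partial B(\mathbf{x},\rho)$. The lower bound in \eqref{eq:growth} shows each such ball has $w$-measure at most $C\eta^N w(B(\mathbf{x},\rho))$. Summing gives $w(B(\mathbf{x},(1+\eta)\rho))\le(1+C\eta)\,w(B(\mathbf{x},\rho))$ uniformly, so a fixed dilation factor $1+\eta$ works.
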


Here and subsequently, we write
\begin{equation}
    \gamma=2-\frac{\mathbf{N}}{q}.
\end{equation}
The reverse H\"older inequality~\eqref{eq:reverse_Holder} has the following consequence (see~\cite[Lemma 1.2]{Shen}), which will be used in the next section many times. 

\begin{lemma}\label{lem:Rr}
Assume that $V \in {\rm{RH}}^{q}(dw)$, where $q>\max(1,\frac{\mathbf{N}}{2})$, and $V \geq 0$. There is a constant $C \geq 1$ such that for all $\mathbf{x} \in \mathbb{R}^N$ and $0<r_1<r_2<\infty$ we have
\begin{align*}
    \frac{r_1^2}{w(B(\mathbf{x},r_1))}\int_{B(\mathbf{x},r_1)}V(\mathbf{y})\,dw(\mathbf{y}) \leq C \left(\frac{r_1}{r_2}\right)^{\gamma} \frac{r_2^2}{w(B(\mathbf{x},r_2))}\int_{B(\mathbf{x},r_2)}V(\mathbf{y})\,dw(\mathbf{y}). 
\end{align*}
\end{lemma}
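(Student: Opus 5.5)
The argument follows Shen's \cite[Lemma 1.2]{Shen}, adapted to the doubling measure $dw$. Fix $\mathbf{x}$ and $0<r_1<r_2$, and write $B_i=B(\mathbf{x},r_i)$. Since $B_1\subset B_2$, H\"older's inequality with exponents $q$ and $q'$ applied to $\mu(B_1)=\int_{B_1}V\,dw$ gives
\begin{equation*}
\int_{B_1}V\,dw\le \Big(\int_{B_2}V^q\,dw\Big)^{1/q} w(B_1)^{1-1/q}.
\end{equation*}
By the reverse H\"older inequality \eqref{eq:reverse_Holder} on $B_2$, the right-hand side is at most
\begin{equation*}
C_{\text{RH}}\, w(B_2)^{1/q-1}\Big(\int_{B_2}V\,dw\Big) w(B_1)^{1-1/q}.
\end{equation*}
Multiplying by $r_1^2/w(B_1)$, we obtain
\begin{equation*}
\frac{r_1^2}{w(B_1)}\int_{B_1}V\,dw\le C_{\text{RH}}\Big(\frac{w(B_1)}{w(B_2)}\Big)^{-1/q}\Big(\frac{r_1}{r_2}\Big)^{2}\,\frac{r_2^2}{w(B_2)}\int_{B_2}V\,dw .
\end{equation*}

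The remaining step is to bound the factor $(w(B_2)/w(B_1))^{1/q}$. By the upper estimate in \eqref{eq:growth}, $w(B_2)/w(B_1)\le C(r_2/r_1)^{\mathbf{N}}$, so
\begin{equation*}
\Big(\frac{w(B_2)}{w(B_1)}\Big)^{1/q}\le C\Big(\frac{r_2}{r_1}\Big)^{\mathbf{N}/q}.
\end{equation*}
Combining this with the factor $(r_1/r_2)^2$ gives $(r_1/r_2)^{2-\mathbf{N}/q}=(r_1/r_2)^{\gamma}$, which is the claimed inequality with a constant depending only on $C_{\text{RH}}$ and the constant in \eqref{eq:growth}.

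The only delicate point is which dimension enters. The exponent must use the homogeneous dimension $\mathbf{N}$ from the upper bound in \eqref{eq:growth}, not the Euclidean $N$, because the worst-case volume growth of $dw$ is $r^{\mathbf{N}}$. This is exactly why $\gamma=2-\mathbf{N}/q$, and the hypothesis $q>\mathbf{N}/2$ makes $\gamma>0$. We also need $q>1$ so that H\"older's inequality applies with a finite conjugate exponent. No $G$-invariance and no orbit geometry are involved: only balls and the doubling of $dw$ are used. Finally, if $\int_{B_2}V\,dw=0$, then $V=0$ a.e.\ on $B_2\supset B_1$, so both sides vanish and the inequality is trivial; the chain above covers this case as well.
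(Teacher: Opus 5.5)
Your proof is correct and is essentially the argument the paper relies on: the paper gives no proof of its own, only a citation to Shen's Lemma 1.2. That proof is exactly H\"older's inequality followed by the reverse H\"older inequality on the larger ball, with the volume ratio $w(B_2)/w(B_1)$ controlled here by the upper bound $(r_2/r_1)^{\mathbf{N}}$ in \eqref{eq:growth}, as you did.
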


\subsection{Definition and growth properties of \texorpdfstring{$m(\mathbf{x})$}{m(x)}}
For $\mathbf{x} \in \mathbb{R}^N$ we define 
\begin{equation}\label{eq:m}
    \frac{1}{m(\mathbf{x})}=\sup\left\{r>0\,:\; \frac{r^2}{w(B(\mathbf{x},r))}\int_{B(\mathbf{x},r)}V(\mathbf{y})\,dw(\mathbf{y}) \leq 1 \right\}
\end{equation}
(see~\cite[Definition 1.3]{Shen}). Thanks to Lemma~\ref{lem:Rr}, for all $\mathbf{x} \in \mathbb{R}^N$ (and $V \not\equiv 0$) we have
\begin{equation}\label{eq:well-define}
    \lim_{r \to 0} \frac{r^2}{w(B(\mathbf{x},r))}\int_{B(\mathbf{x},r)}V(\mathbf{y})\,dw(\mathbf{y})=0,\, \ \ \lim_{r \to +\infty} \frac{r^2}{w(B(\mathbf{x},r))}\int_{B(\mathbf{x},r)}V(\mathbf{y})\,dw(\mathbf{y})=+\infty,
\end{equation}
so the function $m$ is well-defined and $0<m(\mathbf{x})<\infty$. The next lemma is an adaptation of~\cite[Lemma 1.4]{Shen}.

\begin{lemma}\label{lem:m_growth}
Assume that $V \in {\rm{RH}}^{q}(dw)$, where $q>\max(1,\frac{\mathbf{N}}{2})$, and $V \geq 0$. There are constants $C,\kappa>0$ such that for all $\mathbf{x},\mathbf{y} \in \mathbb{R}^N$ we have
\begin{equation}\label{eq:Shen_A}
    C^{-1}m(\mathbf{y}) \leq m(\mathbf{x}) \leq Cm(\mathbf{y}) \text { if }\|\mathbf{x}-\mathbf{y}\|<m(\mathbf{x})^{-1},
\end{equation}
\begin{equation}\label{eq:Shen_B}
    m(\mathbf{y}) \leq Cm(\mathbf{x})(1+m(\mathbf{x})\|\mathbf{x}-\mathbf{y}\|)^{\kappa},
\end{equation}
\begin{equation}\label{eq:Shen_C}
    m(\mathbf{y}) \geq C^{-1}m(\mathbf{x})(1+m(\mathbf{x})\|\mathbf{x}-\mathbf{y}\|)^{-\frac{\kappa}{1+\kappa}}.
\end{equation}
\end{lemma}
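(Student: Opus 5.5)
We follow Shen's argument for \cite[Lemma 1.4]{Shen}. Throughout we write $\Psi(\mathbf{x},r)=\frac{r^2}{w(B(\mathbf{x},r))}\mu(B(\mathbf{x},r))$. By \eqref{eq:well-define} and continuity of $r\mapsto \Psi(\mathbf{x},r)$, we have $\Psi(\mathbf{x},1/m(\mathbf{x}))=1$. Lemma~\ref{lem:Rr} gives $\Psi(\mathbf{x},r)\le C(r m(\mathbf{x}))^{\gamma}$ for $r\le 1/m(\mathbf{x})$. It also gives $\Psi(\mathbf{x},r)\ge C^{-1}(rm(\mathbf{x}))^{\gamma}$ for $r\ge 1/m(\mathbf{x})$. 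The key facts used are that $dw$ is doubling, as in \eqref{eq:doubling} and \eqref{eq:growth}, and that $\mu$ is doubling by Lemma~\ref{lem:mu_doubling}. No group action enters: the balls here are Euclidean balls and $w,\mu$ are doubling on them.

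\textbf{Step 1: \eqref{eq:Shen_A}.} Let $\|\mathbf{x}-\mathbf{y}\|<r:=1/m(\mathbf{x})$. Then $B(\mathbf{y},r)\subset B(\mathbf{x},2r)$ and $B(\mathbf{x},r)\subset B(\mathbf{y},2r)$. Doubling of $w$ and $\mu$ gives $\Psi(\mathbf{y},r)\sim\Psi(\mathbf{x},r)=1$, with comparability constants $C_0$.
If $m(\mathbf{y})$ were much larger than $m(\mathbf{x})$, i.e.\ $1/m(\mathbf{y})\le \epsilon r$, then the growth bound (from Lemma~\ref{lem:Rr}) at the point $\mathbf{y}$ would give $\Psi(\mathbf{y},r)\ge C^{-1}\epsilon^{-\gamma}$. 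For small $\epsilon$ this contradicts $\Psi(\mathbf{y},r)\le C_0$.
Conversely, if $1/m(\mathbf{y})\ge r/\epsilon$, then $\Psi(\mathbf{y},r)\le C\epsilon^{\gamma}$, contradicting $\Psi(\mathbf{y},r)\ge C_0^{-1}$.
Hence $m(\mathbf{y})\sim m(\mathbf{x})$. Here $\gamma>0$ is needed, which holds since $q>\mathbf{N}/2$.

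\textbf{Step 2: \eqref{eq:Shen_B}.} Set $r=1/m(\mathbf{x})$ and $R=\|\mathbf{x}-\mathbf{y}\|$. If $R<r$, Step 1 applies, so assume $R\ge r$ and write $R=2^jr$ roughly.
Then $B(\mathbf{y},R)\subset B(\mathbf{x},2R)$. Doubling of $\mu$ gives $\mu(B(\mathbf{y},R))\le C_\mu^{j+2}\mu(B(\mathbf{x},r))$. Combining this with $w(B(\mathbf{y},R))\ge c\,w(B(\mathbf{x},r))$ (by \eqref{eq:growth} and doubling) yields
\[
\Psi(\mathbf{y},R)\le C\,2^{2j}C_\mu^{j}\,\Psi(\mathbf{x},r)\le C (Rm(\mathbf{x}))^{\kappa_0}.
\]
Next we compare with the radius $t=R(Rm(\mathbf{x}))^{-\kappa_0/\gamma}/C'$. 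If $t\le R$, Lemma~\ref{lem:Rr} applied at $\mathbf{y}$ gives $\Psi(\mathbf{y},t)\le C(t/R)^{\gamma}\Psi(\mathbf{y},R)\le 1$. Hence $1/m(\mathbf{y})\ge t$, i.e.
\[
m(\mathbf{y})\le Cm(\mathbf{x})(1+Rm(\mathbf{x}))^{\kappa},\qquad \kappa=\kappa_0/\gamma-1\ \text{(or larger)}.
\]

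\textbf{Step 3: \eqref{eq:Shen_C}.} This follows from \eqref{eq:Shen_B} with the roles of $\mathbf{x}$ and $\mathbf{y}$ swapped, via Shen's algebraic trick. We have $m(\mathbf{x})\le Cm(\mathbf{y})(1+m(\mathbf{y})R)^{\kappa}$, and we split into cases.

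If $m(\mathbf{y})R\le 1$, then $m(\mathbf{x})\le Cm(\mathbf{y})$, which is stronger than needed.

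Otherwise, $m(\mathbf{x})\le C m(\mathbf{y})^{1+\kappa}R^{\kappa}$. Hence $m(\mathbf{y})\ge c\,m(\mathbf{x})^{1/(1+\kappa)}R^{-\kappa/(1+\kappa)}$, which equals $c\,m(\mathbf{x})(m(\mathbf{x})R)^{-\kappa/(1+\kappa)}$.

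In both cases this gives \eqref{eq:Shen_C}. The main (mild) obstacle is Step 1's two-sided comparison of $\Psi$ at different centres, which is where the exponent $\gamma>0$ is essential.
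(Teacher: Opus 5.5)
Your proposal is correct and follows the route the paper intends. The paper gives no separate proof and simply refers to \cite[Lemma 1.4]{Shen}, and your argument reproduces Shen's: comparability of $\Psi$ at the two centres via doubling of $w$ and $\mu$ for \eqref{eq:Shen_A}, the dyadic growth bound combined with Lemma~\ref{lem:Rr} for \eqref{eq:Shen_B}, and the algebraic inversion for \eqref{eq:Shen_C}. The case $t>R$ left open in Step 2 never occurs, since $Rm(\mathbf{x})\ge 1$ and $C'\ge 1$. Also, $\kappa=\kappa_0/\gamma-1$ is automatically positive, because $\kappa_0\ge 2>\gamma$.
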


\section{Inhomogeneous weighted Lipschitz spaces and regularity}\label{sec:lipschitz}

In this section, we study the inhomogeneous weighted Lipschitz spaces $\widetilde{\Lambda}_k^{\beta/2}$ associated with the Dunkl heat semigroup $H_t$ (see Definition~\ref{def:lipschitz_weighted}) and study their fundamental properties. First, we establish the well-definedness, spatial smoothness, and initial-value convergence for $H_t f$ when $f \in \widetilde{\Lambda}_k^{\beta/2}$. Next, we analyze the spatial regularity of functions in these spaces, showing that classical partial derivatives $\partial_{x_j} f$ exist and map $\widetilde{\Lambda}_k^{\beta/2}$ boundedly into the lower-order space $\widetilde{\Lambda}_k^{(\beta-1)/2}$.

\subsection{Basic properties}

\begin{lemma}\label{lem:heat_semigroup_well_defined}
Let $\beta>0$, $j \in \mathbb{N}_0$, and $t>0$. If $f(\cdot)(1+\|\cdot\|)^{-\beta} \in L^{\infty}(\mathbb{R}^N)$, then $\partial_t^j H_t f$ is well-defined and satisfies
\begin{equation}\label{eq:q_bound}
    |\partial_t^j H_t f(\mathbf{x})| \leq C \|f(\cdot)(1+\|\cdot\|)^{-\beta} \|_{L^{\infty}} t^{-j} (1+\|\mathbf{x}\| + \sqrt{t})^{\beta} \quad \text{for all } \mathbf{x} \in \mathbb{R}^N.
\end{equation}
\end{lemma}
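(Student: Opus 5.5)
We will differentiate under the integral sign in \eqref{eq:heat} and bound the resulting kernel integral with Theorem~\ref{teo:heat_new}. Write $M:=\|f(\cdot)(1+\|\cdot\|)^{-\beta}\|_{L^\infty}$. Then
\begin{equation*}
\int_{\mathbb{R}^N} |\partial_t^j h_t(\mathbf{x},\mathbf{y})|\,|f(\mathbf{y})|\,dw(\mathbf{y}) \leq C M t^{-j}\int_{\mathbb{R}^N}\Big(1+\frac{\|\mathbf{x}-\mathbf{y}\|}{\sqrt t}\Big)^{-2}\mathcal{G}_{t/c}(\mathbf{x},\mathbf{y})\,(1+\|\mathbf{y}\|)^{\beta}\,dw(\mathbf{y}).
\end{equation*}
Once this integral is shown to be finite and bounded by $C(1+\|\mathbf{x}\|+\sqrt t)^\beta$, uniformly for $t$ in compact subsets of $(0,\infty)$, dominated convergence (together with the smoothness of $h_t$ in $t$) justifies the differentiation under the integral. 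It also gives \eqref{eq:q_bound}.

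The weight $(1+\|\mathbf{y}\|)^\beta$ is not $G$-invariant a priori, but $\|\cdot\|$ is, since $G$ consists of orthogonal maps. Choose $\sigma\in G$ with $d(\mathbf{x},\mathbf{y})=\|\sigma(\mathbf{x})-\mathbf{y}\|$. Then
\[
\|\mathbf{y}\|\leq \|\sigma(\mathbf{x})\|+d(\mathbf{x},\mathbf{y})=\|\mathbf{x}\|+d(\mathbf{x},\mathbf{y}),
\]
and therefore
\[
(1+\|\mathbf{y}\|)^\beta\leq C_\beta(1+\|\mathbf{x}\|+\sqrt t)^\beta\Big(1+\frac{d(\mathbf{x},\mathbf{y})}{\sqrt t}\Big)^\beta .
\]
This is the key geometric observation. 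It lets us discard the Euclidean factor $(1+\|\mathbf{x}-\mathbf{y}\|/\sqrt t)^{-2}\le1$ entirely and work only with the orbit distance.

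It remains to prove
\[
\sup_{\mathbf{x},t}\int \big(1+d(\mathbf{x},\mathbf{y})/\sqrt t\big)^\beta\,\mathcal{G}_{t/c}(\mathbf{x},\mathbf{y})\,dw(\mathbf{y})<\infty .
\]
First absorb the polynomial factor into the Gaussian, at the cost of passing to $e^{-c' d^2/t}$ with a smaller constant. Next use $\mathcal{G}_{t/c}\le w(B(\mathbf{x},\sqrt{t/c}))^{-1}e^{-c d^2/t}$ and doubling. Then decompose $\mathbb{R}^N$ into the orbit annuli
\[
\mathcal{O}(B(\mathbf{x},2^{\ell}\sqrt t))\setminus\mathcal{O}(B(\mathbf{x},2^{\ell-1}\sqrt t)).
\]
By \eqref{eq:ball_orbit_compare} and \eqref{eq:growth}, the $\ell$-th annulus has $w$-measure at most $|G|C2^{\ell\mathbf{N}}w(B(\mathbf{x},\sqrt t))$, while the integrand there is at most $Cw(B(\mathbf{x},\sqrt t))^{-1}e^{-c'4^{\ell}}$. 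Summing over $\ell$ gives a bound independent of $\mathbf{x}$ and $t$. Alternatively, one could invoke Lemma~\ref{lem:homogeneous} with $r=\sqrt t$, after bounding the Gaussian by a power of $d/\sqrt t$.

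The only delicate point is the second step: the weight must be transferred from $\mathbf{y}$ to $\mathbf{x}$ even though the kernel decays only in the orbit distance $d$, not in $\|\mathbf{x}-\mathbf{y}\|$. The $G$-invariance of the Euclidean norm resolves this. The remaining steps are routine doubling computations.
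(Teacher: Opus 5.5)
Your proof is correct and follows the paper's argument. The paper also transfers the weight via $(1+\|\mathbf{y}\|)^\beta \le C_\beta\big((1+\|\mathbf{x}\|)^\beta + d(\mathbf{x},\mathbf{y})^\beta\big)$, which rests on the same $G$-invariance of $\|\cdot\|$ that you make explicit; it then uses the orbit-distance Gaussian from Theorem~\ref{teo:heat_new}, absorbs $(d(\mathbf{x},\mathbf{y})/\sqrt t)^\beta$ into the exponential, and integrates with Lemma~\ref{lem:homogeneous}, so your multiplicative weight bound and orbit-annuli summation are only cosmetic variants.
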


\begin{proof}
Set $M = \|f(\cdot)(1+\|\cdot\|)^{-\beta} \|_{L^{\infty}}$. Using $(1+\|\mathbf{y}\|)^\beta \le C_\beta \left( (1+\|\mathbf{x}\|)^\beta + d(\mathbf{x},\mathbf{y})^\beta \right)$, we have $|f(\mathbf{y})| \le C_\beta M \left( (1+\|\mathbf{x}\|)^\beta + d(\mathbf{x},\mathbf{y})^\beta \right)$. Combining this with the Gaussian estimates for $\partial_t^j h_t(\mathbf{x},\mathbf{y})$ (see Theorem~\ref{teo:heat_new}), we get
\begin{align*}
    |\partial_t^j H_t f(\mathbf{x})| 
    &\le \int_{\mathbb{R}^N} |\partial_t^j h_t(\mathbf{x},\mathbf{y})| |f(\mathbf{y})| \, dw(\mathbf{y}) \\
    &\le \frac{C M}{t^j w(B(\mathbf{x},\sqrt{t}))} \int_{\mathbb{R}^N} e^{-c d(\mathbf{x},\mathbf{y})^2/t} \left( (1+\|\mathbf{x}\|)^\beta + d(\mathbf{x},\mathbf{y})^\beta \right) dw(\mathbf{y}).
\end{align*}
Writing $d(\mathbf{x},\mathbf{y})^\beta = t^{\beta/2} (d(\mathbf{x},\mathbf{y})/\sqrt{t})^\beta$, absorbing the polynomial term into the exponential decay, and applying Lemma~\ref{lem:homogeneous}, the integral is bounded by $C w(B(\mathbf{x},\sqrt{t})) \left( (1+\|\mathbf{x}\|)^\beta + t^{\beta/2} \right)$. Consequently,
\[
    |\partial_t^j H_t f(\mathbf{x})| \le C M t^{-j} \left( (1+\|\mathbf{x}\|)^\beta + t^{\beta/2} \right) \le C M t^{-j} (1+\|\mathbf{x}\| + \sqrt{t})^\beta,
\]
which proves both the absolute convergence of the integral and the bound \eqref{eq:q_bound}.
\end{proof}

\begin{lemma}\label{lem:converge}
    Let $\beta>0$. Assume that $f(\cdot)(1+\|\cdot\|)^{-\beta} \in L^{\infty}(dw)$. Then
    \[ \lim_{t \to 0^{+}} H_tf(\mathbf{x}) = f(\mathbf{x}) \quad \text{for almost every } \mathbf{x} \in \mathbb{R}^N. \]
\end{lemma}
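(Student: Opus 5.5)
We reduce the statement to a local problem and then run the standard approximate-identity argument for the Dunkl heat kernel. The key point is that for almost every point the only relevant $\mathbf{y}$ are those near $\mathbf{x}$ in the Euclidean sense. Contributions near the reflected points $\sigma(\mathbf{x})$, $\sigma\neq \mathrm{id}$, must be shown to vanish as $t\to 0^+$.

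\emph{Localization.} Fix $R>0$ and write $f=f_1+f_2$ with $f_1=f\chi_{B(0,2R)}$, so that $f_1\in L^1(dw)\cap L^\infty$.
\begin{itemize}
\item For $\|\mathbf{x}\|<R$, every $\mathbf{y}\notin B(0,2R)$ satisfies $d(\mathbf{x},\mathbf{y})\ge \|\mathbf{y}\|-\|\mathbf{x}\|\ge \max(R,\|\mathbf{y}\|/2)$, since $\|\sigma(\mathbf{x})\|=\|\mathbf{x}\|$.
\item By Theorem~\ref{teo:heat_new} with $m=0$ we have $h_t(\mathbf{x},\mathbf{y})\le C\,w(B(\mathbf{x},\sqrt t))^{-1}e^{-cd(\mathbf{x},\mathbf{y})^2/t}$.
\item Using $|f_2(\mathbf{y})|\le M(1+\|\mathbf{y}\|)^\beta\le CM(1+d(\mathbf{x},\mathbf{y}))^\beta$, split $e^{-cd^2/t}=e^{-cd^2/(2t)}e^{-cd^2/(2t)}$ and use $e^{-cd^2/(2t)}\le e^{-cR^2/(4t)}e^{-cd^2/(4t)}$.
\item Lemma~\ref{lem:homogeneous} (or the computation in the proof of Lemma~\ref{lem:heat_semigroup_well_defined}) then gives $|H_tf_2(\mathbf{x})|\le CM(1+R+\sqrt t)^\beta e^{-cR^2/(4t)}\to 0$ as $t\to0^+$.
\end{itemize}
Hence, for a.e.\ $\mathbf{x}$ with $\|\mathbf{x}\|<R$, it suffices to show $H_tf_1(\mathbf{x})\to f_1(\mathbf{x})=f(\mathbf{x})$. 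Exhausting $\mathbb{R}^N$ by countably many $R$ then yields the result.

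\emph{Integrable part.} We prove a.e.\ convergence for $g\in L^1(dw)$ by the usual density argument.
\begin{itemize}
\item For $g\in C_c(\mathbb{R}^N)$ we have $H_tg\to g$ uniformly. Property (C) of the Rösler--Voit theorem gives this for Schwartz functions. For continuous compactly supported $g$, approximate uniformly by Schwartz functions and use $\|H_t\|_{L^\infty\to L^\infty}\le 1$, which follows from positivity and \eqref{eq:h_integral_1}.
\item Define the maximal operator $H^*g=\sup_{t>0}|H_tg|$. The Gaussian bound gives
\[
H^*g(\mathbf{x})\le C\sup_{t>0}\sum_{\sigma\in G}\frac{1}{w(B(\mathbf{x},\sqrt t))}\int e^{-c\|\sigma(\mathbf{x})-\mathbf{y}\|^2/t}|g(\mathbf{y})|\,dw(\mathbf{y}).
\]
\item Since $w(B(\mathbf{x},r))=w(B(\sigma(\mathbf{x}),r))$ by $G$-invariance of $dw$, and the radial Gaussian is a decreasing average of balls, the right-hand side is bounded by $C\sum_{\sigma}M_wg(\sigma(\mathbf{x}))$. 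Here $M_w$ is the Hardy--Littlewood maximal function on the doubling space $(\mathbb{R}^N,\|\cdot\|,dw)$.
\item $M_w$ is weak type $(1,1)$, and $\mathbf{x}\mapsto M_wg(\sigma(\mathbf{x}))$ has the same distribution function because $\sigma$ preserves $dw$. Therefore $H^*$ is weak type $(1,1)$.
\item The standard argument then gives $H_tg\to g$ a.e.: the set where $\limsup_{t\to0}|H_tg-g|>\lambda$ has measure at most $C\lambda^{-1}\|g-\phi\|_{L^1(dw)}$ for any $\phi\in C_c$. Since $dw$ and Lebesgue measure share null sets (the density vanishes only on hyperplanes), the convergence also holds Lebesgue-a.e.
\end{itemize}

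The main obstacle is the maximal bound. The kernel is not centered at $\mathbf{x}$ alone but at the whole orbit, and the normalization uses $w(B(\mathbf{x},\sqrt t))$ rather than $w(B(\sigma(\mathbf{x}),\sqrt t))$. This is resolved by the $G$-invariance of $w$. A secondary point is making the layer-cake step rigorous, which we do by expanding $e^{-s^2}=\int_s^\infty 2re^{-r^2}dr$ and applying doubling \eqref{eq:growth} to compare $w(B(\sigma(\mathbf{x}),r\sqrt t))/w(B(\mathbf{x},\sqrt t))\le C(1+r)^{\mathbf N}$.
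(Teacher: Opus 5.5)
Your proof is correct, and the localization step is exactly the paper's: the splitting $f=f_1+f_2$ with $f_1=f\chi_{B(0,2R)}$, the bound $d(\mathbf{x},\mathbf{y})\ge\max(R,\|\mathbf{y}\|/2)$ for $\|\mathbf{x}\|<R$ and $\|\mathbf{y}\|\ge 2R$, and absorbing the polynomial growth of $f_2$ into part of the Gaussian. The only difference is the local piece.

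The paper handles $H_tf_1\to f_1$ a.e.\ in one line, by citing \cite{ADzH} (Corollary 5.4 and Remark 5.5) for bounded functions. You instead prove it from scratch:
\begin{itemize}
\item you dominate $\sup_{t>0}|H_tg(\mathbf{x})|$ by $C\sum_{\sigma\in G}M_wg(\sigma(\mathbf{x}))$, using the $G$-invariance of $dw$ both in the volume normalization and in the distribution function of $M_wg\circ\sigma$;
\item you use the weak type $(1,1)$ bound for the Hardy--Littlewood maximal function $M_w$ on the doubling space;
\item you get uniform convergence on $C_c$ from property (C) of the R\"osler--Voit theorem together with $\|H_t\|_{L^\infty\to L^\infty}\le 1$, and conclude by density in $L^1(dw)$.
\end{itemize}

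Your route makes the lemma self-contained and actually yields a.e.\ convergence for every $g\in L^1(dw)$. The paper's route is shorter but depends on an external result. You are also slightly more careful than the paper in running $R$ over a countable set, so that the exceptional null sets combine.
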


\begin{proof}
    Let $R > 0$ be arbitrary. It is enough to show that the convergence holds for almost every $\mathbf{x} \in B(0,R)$. 
    We split the function $f$: let $f_1 = f\chi_{B(0,2R)}$ and $f_2 = f\chi_{\mathbb{R}^N \setminus B(0,2R)}$. Thus, $f = f_1 + f_2$.
    
    Since $f(\cdot)(1+\|\cdot\|)^{-\beta} \in L^{\infty}(dw)$, the function $f$ is locally bounded. Hence $f_1 \in L^{\infty}(dw)$. By Corollary 5.4 and Remark 5.5 of~\cite{ADzH} we have
    \[ \lim_{t \to 0^{+}} H_tf_1(\mathbf{x}) = f_1(\mathbf{x}) \quad \text{for almost every } \mathbf{x} \in \mathbb{R}^N. \]
    Since $f_1(\mathbf{x}) = f(\mathbf{x})$ for all $\mathbf{x} \in B(0,R)$, it follows that $\lim_{t \to 0^{+}} H_tf_1(\mathbf{x}) = f(\mathbf{x})$ for a.e. $\mathbf{x} \in B(0,R)$.
    
    It remains to show that for every $\mathbf{x} \in B(0,R)$, we have $\lim_{t \to 0^{+}} H_tf_2(\mathbf{x}) = 0$. 
    Let $\mathbf{x} \in B(0,R)$ and $\mathbf{y} \in \mathbb{R}^N \setminus B(0,2R)$. Then $\|\mathbf{x}\| < R$ and $\|\mathbf{y}\| \geq 2R$. For any $\sigma \in G$,  we have $\|\sigma(\mathbf{x})\| = \|\mathbf{x}\| < R$. By the triangle inequality,
    \[ \|\sigma(\mathbf{x})-\mathbf{y}\| \geq \|\mathbf{y}\| - \|\sigma(\mathbf{x})\| > \|\mathbf{y}\| - R \geq \frac{1}{2}\|\mathbf{y}\|. \]
    Taking the minimum over $\sigma \in G$, we get $d(\mathbf{x},\mathbf{y}) \geq \frac{1}{2}\|\mathbf{y}\|$. Also, clearly $d(\mathbf{x},\mathbf{y}) \geq 2R - R = R$.
    
    Let $t \in (0,1]$. By Theorem~\ref{teo:heat_new}, we have
    \begin{align*}
        |H_tf_2(\mathbf{x})| &\leq \int_{\|\mathbf{y}\| \geq 2R} h_t(\mathbf{x},\mathbf{y}) |f(\mathbf{y})|\,dw(\mathbf{y}) \\
        &\leq C \int_{\|\mathbf{y}\| \geq 2R} \frac{1}{w(B(\mathbf{x},\sqrt{t/c}))} \exp\Big(-c\frac{d(\mathbf{x},\mathbf{y})^2}{t}\Big)|f(\mathbf{y})|\,dw(\mathbf{y}).
    \end{align*}
    By our assumption, $|f(\mathbf{y})| \leq C_f(1+\|\mathbf{y}\|)^{\beta}$ for some $C_f>0$. We split the exponential term into three parts:
    \[ \exp\Big(-c\frac{d(\mathbf{x},\mathbf{y})^2}{t}\Big) = \exp\Big(-\frac{c}{2}\frac{d(\mathbf{x},\mathbf{y})^2}{t}\Big) \exp\Big(-\frac{c}{4}\frac{d(\mathbf{x},\mathbf{y})^2}{t}\Big) \exp\Big(-\frac{c}{4}\frac{d(\mathbf{x},\mathbf{y})^2}{t}\Big). \]
    For the first part, since $d(\mathbf{x},\mathbf{y}) \geq R$, we have $\exp(-\frac{c}{2}d(\mathbf{x},\mathbf{y})^2/t) \leq \exp(-\frac{c}{2}R^2/t)$. 
    For the second part, using $d(\mathbf{x},\mathbf{y}) \geq \frac{1}{2}\|\mathbf{y}\|$ and $t \leq 1$, we can absorb the polynomial growth:
    \[ (1+\|\mathbf{y}\|)^{\beta} \leq (1+2d(\mathbf{x},\mathbf{y}))^{\beta} \leq \widetilde{C} \Big(1+\frac{d(\mathbf{x},\mathbf{y})}{\sqrt{t}}\Big)^{\beta}. \]
    Since polynomials are suppressed by exponentials, $\widetilde{C}(1+d(\mathbf{x},\mathbf{y})/\sqrt{t})^{\beta} \exp(-\frac{c}{4}d(\mathbf{x},\mathbf{y})^2/t) \leq C$.
    The third part remains inside the integral, which evaluates to a bound independent of $t$:
    \[ \int_{\mathbb{R}^N} \frac{1}{w(B(\mathbf{x},\sqrt{t/c}))} \exp\Big(-\frac{c}{4}\frac{d(\mathbf{x},\mathbf{y})^2}{t}\Big)\,dw(\mathbf{y}) \leq C. \]
    
    Combining these estimates gives
    \[ |H_tf_2(\mathbf{x})| \leq C \exp\Big(-\frac{cR^2}{2t}\Big). \]
    Since $R>0$ is fixed, the right-hand side converges to $0$ as $t \to 0^{+}$.
    Therefore, for almost every $\mathbf{x} \in B(0,R)$,
    \[ \lim_{t \to 0^{+}} H_tf(\mathbf{x}) = \lim_{t \to 0^{+}} H_tf_1(\mathbf{x}) + \lim_{t \to 0^{+}} H_tf_2(\mathbf{x}) = f(\mathbf{x}) + 0 = f(\mathbf{x}). \]
    Since $R > 0$ was chosen arbitrarily, the convergence holds almost everywhere on $\mathbb{R}^N$.
\end{proof}

\begin{lemma}\label{lem:h_t_smooth}
    Let $\beta>0$. Assume that $f(\cdot)(1+\|\cdot\|)^{-\beta} \in L^{\infty}(dw)$. Then $H_tf \in C^{\infty}((0,\infty)\times \mathbb{R}^N)$ as a function of variables $(t,\mathbf{x})$. Moreover, for any $m \in \mathbb{N}_0$ and any multi-index $\boldsymbol{\alpha} \in \mathbb{N}_0^N$, we have
    \[
        \partial_t^m \partial_{\mathbf{x}}^{\boldsymbol{\alpha}} H_tf(t,\mathbf{x}) = \int_{\mathbb{R}^N} \partial_t^m \partial_{\mathbf{x}}^{\boldsymbol{\alpha}} h_t(\mathbf{x},\mathbf{y}) f(\mathbf{y})\,dw(\mathbf{y}).
    \]
\end{lemma}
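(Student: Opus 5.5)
The plan is to prove smoothness by justifying differentiation under the integral sign locally uniformly. This needs integrable majorants for $\partial_t^m\partial_{\mathbf{x}}^{\boldsymbol{\alpha}} h_t(\mathbf{x},\mathbf{y}) f(\mathbf{y})$ that are uniform for $(t,\mathbf{x})$ in compact subsets of $(0,\infty)\times\mathbb{R}^N$.

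Fix a compact set $[t_0,t_1]\times \overline{B(0,R)}$ with $0<t_0<t_1$. By Theorem~\ref{teo:heat_new} with $\boldsymbol{\beta}=\boldsymbol{\alpha}$ and $\boldsymbol{\beta}'=0$, we have
\[
|\partial_t^m\partial_{\mathbf{x}}^{\boldsymbol{\alpha}} h_t(\mathbf{x},\mathbf{y})|\le C t^{-m-|\boldsymbol{\alpha}|/2}\, w(B(\mathbf{x},\sqrt{t/c}))^{-1} e^{-c\,d(\mathbf{x},\mathbf{y})^2/t}.
\]
On the compact set, $t^{-m-|\boldsymbol{\alpha}|/2}\le C$. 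Also, by \eqref{eq:balls_asymp}, $w(B(\mathbf{x},\sqrt{t/c}))\ge w(B(\mathbf{x},\sqrt{t_0/c}))\ge C^{-1}t_0^{N/2}>0$ uniformly in $\mathbf{x}$, since the product factor there is at least $r^{\sum k}$. Moreover, for $\|\mathbf{x}\|\le R$ we have $d(\mathbf{x},\mathbf{y})\ge \|\mathbf{y}\|-R$, hence
\[
e^{-c\,d(\mathbf{x},\mathbf{y})^2/t}\le e^{-c(\|\mathbf{y}\|-R)_+^2/t_1}.
\]
Combining this with $|f(\mathbf{y})|\le M(1+\|\mathbf{y}\|)^\beta$, we obtain the majorant
\[
F(\mathbf{y})=C M (1+\|\mathbf{y}\|)^\beta e^{-c(\|\mathbf{y}\|-R)_+^2/t_1}.
\]
This function is independent of $(t,\mathbf{x})$. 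It is integrable with respect to $dw$ because $dw$ has polynomial growth: the density is bounded by $C(1+\|\mathbf{y}\|)^{\sum k}$.

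The argument then proceeds by induction on $m+|\boldsymbol{\alpha}|$. Suppose the formula holds for some pair of orders. Consider a single additional derivative in $t$ or in $x_j$. We apply the mean value theorem to the kernel along a segment contained in a slightly larger compact set; this is possible because $h_t$ is $C^\infty$. The difference quotients are then dominated by the majorant $F$ built for that larger set, using the bounds for the next-order derivative. Dominated convergence yields the next formula. Continuity of every $\partial_t^m\partial_{\mathbf{x}}^{\boldsymbol{\alpha}}H_tf$ in $(t,\mathbf{x})$ follows in the same way from the continuity of the kernel derivatives together with the same uniform majorant. This gives $H_tf\in C^\infty((0,\infty)\times\mathbb{R}^N)$ and the stated formula.

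The only delicate point is the uniformity of the majorant. The Gaussian in Theorem~\ref{teo:heat_new} is expressed in the orbit metric $d$, not in $\|\mathbf{x}-\mathbf{y}\|$. This is handled by the observation $d(\mathbf{x},\mathbf{y})\ge\|\mathbf{y}\|-\|\mathbf{x}\|$, which holds because $G$ acts by isometries. The other ingredient is the uniform lower bound on $w(B(\mathbf{x},\sqrt{t/c}))$ for $t\ge t_0$. With these two facts the remaining steps are routine.
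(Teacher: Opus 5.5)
Your proposal is correct and follows essentially the same route as the paper: on each compact subset of $(0,\infty)\times\mathbb{R}^N$ you build a $(t,\mathbf{x})$-independent, $dw$-integrable majorant from Theorem~\ref{teo:heat_new}, the inequality $d(\mathbf{x},\mathbf{y})\ge\|\mathbf{y}\|-\|\mathbf{x}\|$ and a uniform lower bound on $w(B(\mathbf{x},\sqrt{t/c}))$, and then apply dominated convergence. Your explicit induction with the mean value theorem makes the differentiation-under-the-integral step more careful than the paper's one-line appeal to dominated convergence; the only slip is cosmetic, since the volume lower bound from \eqref{eq:balls_asymp} is $C^{-1}(t_0/c)^{\mathbf{N}/2}$ rather than $C^{-1}t_0^{N/2}$, and any positive constant suffices.
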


\begin{proof}
    To prove that $(t,\mathbf{x}) \mapsto H_tf(\mathbf{x})$ belongs to $C^{\infty}((0,\infty)\times \mathbb{R}^N)$, it suffices to show that for any $m \in \mathbb{N}_0$ and any multi-index $\boldsymbol{\alpha} \in \mathbb{N}_0^N$, the derivative operator $\partial_t^m \partial_{\mathbf{x}}^{\boldsymbol{\alpha}}$ can be passed under the integral sign. It is justified if for every compact set $K \subset (0,\infty)\times \mathbb{R}^N$, there exists an integrable majorant $F_K(\mathbf{y}) \in L^1(dw)$ such that
    \[
        |\partial_t^m \partial_{\mathbf{x}}^{\boldsymbol{\alpha}} h_t(\mathbf{x},\mathbf{y}) f(\mathbf{y})| \leq F_K(\mathbf{y}) \quad \text{for all } (t,\mathbf{x}) \in K, \mathbf{y} \in \mathbb{R}^N.
    \]

    Fix an arbitrary compact set $K = [t_1, t_2] \times \overline{B}(0,R) \subset (0,\infty)\times \mathbb{R}^N$, where $0 < t_1 < t_2 < \infty$ and $R > 0$. 
    By Theorem~\ref{teo:heat_new} (setting $\boldsymbol{\beta} = \boldsymbol{\alpha}$ and $\boldsymbol{\beta}' = 0$), there exist constants $C, c > 0$ such that for all $(t,\mathbf{x}) \in K$ and $\mathbf{y} \in \mathbb{R}^N$,
    \begin{equation}\label{eq:diff_bound}
        |\partial_t^m \partial_{\mathbf{x}}^{\boldsymbol{\alpha}} h_t(\mathbf{x},\mathbf{y})| \leq C t^{-m-|\boldsymbol{\alpha}|/2} \mathcal{G}_{t/c}(\mathbf{x},\mathbf{y}) \leq C t_1^{-m-|\boldsymbol{\alpha}|/2} \frac{1}{w(B(\mathbf{x},\sqrt{t/c}))} \exp\Big(-c\frac{d(\mathbf{x},\mathbf{y})^2}{t}\Big).
    \end{equation}

    Since $t \in [t_1, t_2]$, we have $\sqrt{t/c} \geq \sqrt{t_1/c}$. By the doubling property and growth estimate~\eqref{eq:growth}, for all $\mathbf{x} \in \overline{B}(0,R)$ and $t \in [t_1, t_2]$,
    \[
        w(B(\mathbf{x},\sqrt{t/c})) \geq C^{-1} \Big(\frac{t_1}{t_2}\Big)^{\mathbf{N}/2} w(B(\mathbf{x},\sqrt{t_2/c})) \geq C_K > 0,
    \]
    where $C_K = C^{-1} (t_1/t_2)^{\mathbf{N}/2} \inf_{\mathbf{x} \in \overline{B}(0,R)} w(B(\mathbf{x},\sqrt{t_2/c})) > 0$.
    Furthermore, for $t \in [t_1, t_2]$, we have $e^{-c d(\mathbf{x},\mathbf{y})^2/t} \leq e^{-c d(\mathbf{x},\mathbf{y})^2/t_2}$.

    For the orbit distance $d(\mathbf{x},\mathbf{y}) = \min_{\sigma \in G} \|\sigma(\mathbf{x}) - \mathbf{y}\|$, when $\mathbf{x} \in \overline{B}(0,R)$ and $\|\mathbf{y}\| \geq 2R$, we have
    \[
        d(\mathbf{x},\mathbf{y}) \geq \|\mathbf{y}\| - \|\mathbf{x}\| \geq \|\mathbf{y}\| - R \geq \frac{1}{2}\|\mathbf{y}\|.
    \]
    Therefore, for all $(t,\mathbf{x}) \in K$ and $\|\mathbf{y}\| \geq 2R$,
    \[
        \exp\Big(-c\frac{d(\mathbf{x},\mathbf{y})^2}{t_2}\Big) \leq \exp\Big(-\frac{c}{4t_2}\|\mathbf{y}\|^2\Big).
    \]

    Combining this with the assumption $|f(\mathbf{y})| \leq C_f (1+\|\mathbf{y}\|)^{\beta}$, we construct the majorant $F_K(\mathbf{y})$ on $\mathbb{R}^N$:
    \[
        F_K(\mathbf{y}) = 
        \begin{cases}
            \displaystyle C t_1^{-m-|\boldsymbol{\alpha}|/2} C_K^{-1} C_f (1+\|\mathbf{y}\|)^{\beta} \exp\Big(-\frac{c}{4t_2}\|\mathbf{y}\|^2\Big), & \text{for } \|\mathbf{y}\| \geq 2R,\\[10pt]
            \displaystyle C t_1^{-m-|\boldsymbol{\alpha}|/2} C_K^{-1} C_f (1+2R)^{\beta}, & \text{for } \|\mathbf{y}\| < 2R.
        \end{cases}
    \]

    The function $F_K(\mathbf{y})$ is bounded on the compact ball $\overline{B}(0,2R)$, while for $\|\mathbf{y}\| \geq 2R$, the Gaussian decay $\exp(-c\|\mathbf{y}\|^2/(4t_2))$ dominates the polynomial growth $(1+\|\mathbf{y}\|)^{\beta}$. Since $dw$ has polynomial growth by~\eqref{eq:balls_asymp}, it follows that $F_K \in L^1(dw)$.

    Thus, by the Lebesgue Dominated Convergence Theorem, $\partial_t^m \partial_{\mathbf{x}}^{\boldsymbol{\alpha}} H_tf(t,\mathbf{x})$ exists, is continuous on $(0,\infty) \times \mathbb{R}^N$, and is equal to the integral of the differentiated heat kernel against $f$. Since this holds for arbitrary $m \in \mathbb{N}_0$ and $\boldsymbol{\alpha} \in \mathbb{N}_0^N$, we conclude that $H_tf \in C^{\infty}((0,\infty)\times \mathbb{R}^N)$.
\end{proof}

\subsection{Spatial derivatives and mapping properties}

Having established the foundational properties of the heat semigroup, we now analyze the spatial regularity of functions in $\widetilde{\Lambda}_k^{\beta/2}$. We first derive crucial $L^\infty$-bounds for mixed spatial and temporal derivatives of $H_t f$. Utilizing these estimates, we prove the existence of continuous classical partial derivatives $\partial_{x_j} f$ for $1 < \beta < 2$ and demonstrate that differentiation maps $\widetilde{\Lambda}_k^{\beta/2}$ boundedly into the lower-order space $\widetilde{\Lambda}_k^{(\beta-1)/2}$.

\begin{lemma}\label{lem:semigroup_derivative_estimate}
    Let $\beta>0$ and let $n = \lfloor \beta/2 \rfloor + 1$ be the smallest integer strictly greater than $\beta/2$. Suppose $f \in \widetilde{\Lambda}_k^{\beta/2}$. Then for any $j \in \{1, \dots, N\}$, $j_1 \in \mathbb{N}_0$, and integer $j_2 \geq n$, there exists a constant $C > 0$ such that for all $s > 0$,
    \begin{equation}\label{eq:semigroup_derivative_bound}
        \left\| \partial_{x_j}^{j_1} \partial_s^{j_2} H_s f \right\|_{L^\infty} \leq C \|f\|_{\widetilde{\Lambda}_k^{\beta/2}} s^{\beta/2 - j_2 - j_1/2}.
    \end{equation}
\end{lemma}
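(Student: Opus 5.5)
We prove the bound by splitting time in half: the $n$ time derivatives are carried by $H_{s/2}f$, where the hypothesis controls them, and the spatial and remaining time derivatives fall on the kernel $h_{s/2}$, where Theorem~\ref{teo:heat_new} controls them.

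By the semigroup property, $H_s f = H_{s/2}(H_{s/2} f)$. This identity holds for $f$ with polynomial growth because the integrals converge absolutely (Lemma~\ref{lem:heat_semigroup_well_defined}) and Fubini applies. Set $g_s := \partial_u^{n} H_u f\big|_{u=s/2}$. By the definition of $\|f\|_{\widetilde{\Lambda}_k^{\beta/2}}$,
\[
\|g_s\|_{L^\infty} \le \|f\|_{\widetilde{\Lambda}_k^{\beta/2}}\,(s/2)^{\beta/2-n}.
\]
Differentiating under the integral sign, which Lemma~\ref{lem:h_t_smooth} justifies, gives
\[
\partial_{x_j}^{j_1}\partial_s^{j_2} H_s f(\mathbf{x}) = 2^{-j_2}\int_{\mathbb{R}^N} \partial_{x_j}^{j_1}\partial_\tau^{\,j_2-n} h_\tau(\mathbf{x},\mathbf{y})\big|_{\tau=s/2}\; g_s(\mathbf{y})\,dw(\mathbf{y}).
\]
To see this, write $H_{s}f=H_{\tau}H_{\tau}f$ with $\tau=s/2$. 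Then $\partial_s = \tfrac12\partial_\tau$, and $\partial_\tau^{j_2}(H_\tau H_\tau f)=2^{j_2}H_\tau\,\partial_\tau^{j_2}(H_\tau f)$ in the sense of the generator. Alternatively, note that $\partial_s^{j_2}H_sf = H_{s-u}\,\partial_u^{j_2}H_uf|_{u}$ for any $0<u<s$. Taking $u=s/2$, we move $j_2-n$ of the time derivatives onto the kernel via $\partial_u^{j_2}H_u f = \partial_r^{\,j_2-n}\big(H_r \,\partial_u^nH_uf\big)$, with the split chosen symmetrically.

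We then bound the kernel factor with Theorem~\ref{teo:heat_new}, taking $m=j_2-n$, $\boldsymbol\beta=j_1e_j$ and $\boldsymbol\beta'=0$:
\[
\int_{\mathbb{R}^N} |\partial_{x_j}^{j_1}\partial_\tau^{\,j_2-n} h_\tau(\mathbf{x},\mathbf{y})|\,dw(\mathbf{y}) \le C\,\tau^{-(j_2-n)-j_1/2}\int_{\mathbb{R}^N} \mathcal{G}_{\tau/c}(\mathbf{x},\mathbf{y})\,dw(\mathbf{y}) \le C\,\tau^{-(j_2-n)-j_1/2}.
\]
The last integral is uniformly bounded by Lemma~\ref{lem:homogeneous}, or equivalently by the standard bound on $\int \mathcal{G}_{\tau/c}(\mathbf{x},\cdot)\,dw$ used already in Lemma~\ref{lem:converge}. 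Multiplying the two estimates, the powers of $s$ add up to $s^{\beta/2-n-(j_2-n)-j_1/2}=s^{\beta/2-j_2-j_1/2}$, which is the claimed bound.

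The main obstacle is justifying the identity $\partial_s^{j_2}H_sf=\partial_r^{j_2-n}H_r\big(\partial_u^nH_uf\big)\big|_{r=u=s/2}$ for non-integrable $f$ of polynomial growth. We plan to obtain it from the pointwise identity $H_{r}H_uf=H_{r+u}f$, proved via Fubini using the Gaussian bounds, the semigroup property of the kernel $h$, and the growth estimate of Lemma~\ref{lem:heat_semigroup_well_defined}. We then differentiate $n$ times in $u$ and $j_2-n$ times in $r$, each time passing derivatives under the integral by dominated convergence. The majorants are the same as in Lemma~\ref{lem:h_t_smooth}, combined with the polynomial bound \eqref{eq:q_bound} for $\partial_u^nH_uf$. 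Finally, we set $r=u=s/2$.
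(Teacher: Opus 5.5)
Your argument is the paper's proof. You split $H_s=H_{s/2}H_{s/2}$, let the $n$ time derivatives fall on $H_{s/2}f$ where the norm controls them, and bound the $L^1(dw)$-norm of $\partial_{x_j}^{j_1}\partial_\tau^{j_2-n}h_\tau$ by $C\tau^{-(j_2-n)-j_1/2}$ via Theorem~\ref{teo:heat_new}. Your Fubini and dominated-convergence justification of the semigroup identity for polynomially growing $f$ is a welcome addition that the paper omits.

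One small slip: the factor $2^{-j_2}$ in your displayed identity should not be there. The chain-rule factors $2^{-j_2}$ and $2^{j_2}$ cancel, so exactly $\partial_s^{j_2}H_sf=\partial_\tau^{j_2-n}H_\tau(\partial_u^nH_uf)|_{\tau=u=s/2}$. This only changes the constant.
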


\begin{proof}
    By the definition of the space $\widetilde{\Lambda}_k^{\beta/2}$, we have
    \begin{equation}\label{eq:m_derivative_def_bound}
        \left\| \partial_r^n H_r f \right\|_{L^\infty} \leq \|f\|_{\widetilde{\Lambda}_k^{\beta/2}} r^{\beta/2 - n} \quad \text{for all } r > 0.
    \end{equation}
    Using the semigroup property $H_s = H_{s/2} H_{s/2}$, we decompose the operator $H_s$ for any $s > 0$ as
    \[
        \partial_s^{j_2} H_s f = \partial_\tau^{j_2 - n} H_\tau \left( \partial_r^n H_r f \right)\Big|_{\tau = s/2, \, r = s/2}.
    \]
    Applying the spatial derivative $\partial_{x_j}^{j_1}$, we obtain
    \[
        \partial_{x_j}^{j_1} \partial_s^{j_2} H_s f(\mathbf{x}) = \int_{\mathbb{R}^N} \partial_{x_j}^{j_1} \partial_\tau^{j_2 - n} h_{s/2}(\mathbf{x}, \mathbf{y}) \left( \partial_r^n H_r f(\mathbf{y})\Big|_{r=s/2} \right) dw(\mathbf{y}).
    \]
    By Theorem~\ref{teo:heat_new}, there exists a constant $C_1 > 0$ such that for all $\tau > 0$,
    \[
        \int_{\mathbb{R}^N} \left| \partial_{x_j}^{j_1} \partial_\tau^{j_2 - n} h_\tau(\mathbf{x}, \mathbf{y}) \right| dw(\mathbf{y}) \leq C_1 \tau^{-(j_2 - n) - j_1/2}.
    \]
    Consequently, for any bounded function $g \in L^\infty(\mathbb{R}^N)$,
    \[
        \left\| \partial_{x_j}^{j_1} \partial_\tau^{j_2 - n} H_\tau g \right\|_{L^\infty} \leq C_1 \tau^{-(j_2 - n) - j_1/2} \|g\|_{L^\infty}.
    \]
    Setting $\tau = s/2$ and $g = \partial_r^n H_r f\big|_{r=s/2}$, and applying estimate~\eqref{eq:m_derivative_def_bound}, we deduce
    \begin{align*}
        \left\| \partial_{x_j}^{j_1} \partial_s^{j_2} H_s f \right\|_{L^\infty} 
        &\leq C_1 \left(\frac{s}{2}\right)^{-(j_2 - n) - j_1/2} \left\| \partial_r^n H_r f\Big|_{r=s/2} \right\|_{L^\infty} \\
        &\leq C_1 \left(\frac{s}{2}\right)^{-(j_2 - n) - j_1/2} \|f\|_{\widetilde{\Lambda}_k^{\beta/2}} \left(\frac{s}{2}\right)^{\beta/2 - n} = C \|f\|_{\widetilde{\Lambda}_k^{\beta/2}} s^{\beta/2 - j_2 - j_1/2}.
    \end{align*}
\end{proof}

\begin{lemma}\label{lem:semigroup_dunkl_derivative_estimate}
Let $0 < \beta < 2$ and integer $j_2 \geq 1$. If $f \in \widetilde{\Lambda}_k^{\beta/2}$, then for any $j \in \{1, \dots, N\}$ there exists $C > 0$ such that for all $s > 0$,
\begin{equation}\label{eq:semigroup_dunkl_derivative_bound}
    \left\| T_j \partial_s^{j_2} H_s f \right\|_{L^\infty} \leq C \|f\|_{\widetilde{\Lambda}_k^{\beta/2}} s^{\beta/2 - j_2 - 1/2}.
\end{equation}
\end{lemma}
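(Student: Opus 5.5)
The plan is to mimic the proof of Lemma~\ref{lem:semigroup_derivative_estimate}, replacing the classical derivative by the Dunkl operator $T_j$. Since $0<\beta<2$, we have $n=1$, and the defining bound reads $\|\partial_r H_r f\|_{L^\infty}\le \|f\|_{\widetilde{\Lambda}_k^{\beta/2}} r^{\beta/2-1}$.

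Using the semigroup property $H_s=H_{s/2}H_{s/2}$, I would write
\[
\partial_s^{j_2}H_sf = \partial_\tau^{j_2-1}H_\tau\bigl(\partial_rH_rf\bigr)\Big|_{\tau=r=s/2}.
\]
I would then apply $T_j$ in the $\mathbf{x}$ variable, obtaining
\[
T_j\partial_s^{j_2}H_sf(\mathbf{x})=\int_{\mathbb{R}^N}T_{j,\mathbf{x}}\partial_\tau^{j_2-1}h_\tau(\mathbf{x},\mathbf{y})\,g(\mathbf{y})\,dw(\mathbf{y}),\qquad g=\partial_rH_rf|_{r=s/2}\in L^\infty .
\]
Two things must be checked. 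First, the factorization itself is legitimate: $H_{s/2}$ applied to the bounded function $g$ reproduces $\partial_s H_sf$. This follows from Fubini together with the absolute convergence granted by Lemma~\ref{lem:heat_semigroup_well_defined}, i.e. the Chapman--Kolmogorov identity for $h_t$. Second, $T_j$ may be passed under the integral. The differential part of $T_j$ is handled as in Lemma~\ref{lem:h_t_smooth}. The difference part $\bigl(F(\mathbf{x})-F(\sigma_\alpha\mathbf{x})\bigr)/\langle\alpha,\mathbf{x}\rangle$ is pointwise linear in $F$, so it commutes with integration in $\mathbf{y}$ wherever the integrals converge. On $\langle\alpha,\mathbf{x}\rangle=0$ the operator is understood via smoothness and continuity of $H_sf$.

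The key kernel estimate is
\[
\sup_{\mathbf{x}}\int_{\mathbb{R}^N}\bigl|T_{j,\mathbf{x}}\partial_\tau^{m}h_\tau(\mathbf{x},\mathbf{y})\bigr|\,dw(\mathbf{y})\le C\tau^{-m-1/2},\qquad m=j_2-1\ge0 .
\]
Since $T_{j,\mathbf{x}}$ commutes with $\partial_\tau$, formula~\eqref{eq:T_j} gives
\[
T_{j,\mathbf{x}}\partial_\tau^m h_\tau=\partial_\tau^m\Bigl(\frac{y_j-x_j}{2\tau}h_\tau\Bigr)=\sum_{i=0}^m\binom{m}{i}\,\partial_\tau^{i}\bigl((2\tau)^{-1}\bigr)\,(y_j-x_j)\,\partial_\tau^{m-i}h_\tau .
\]
Each term is bounded by $C\tau^{-1-i}\|\mathbf{x}-\mathbf{y}\|\,\tau^{-(m-i)}(1+\|\mathbf{x}-\mathbf{y}\|/\sqrt\tau)^{-2}\mathcal G_{\tau/c}(\mathbf{x},\mathbf{y})$ by Theorem~\ref{teo:heat_new}.

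The main obstacle is that the factor $\|\mathbf{x}-\mathbf{y}\|$ is a Euclidean distance, while the Gaussian only decays in the orbit distance $d(\mathbf{x},\mathbf{y})$. The factor $(1+\|\mathbf{x}-\mathbf{y}\|/\sqrt\tau)^{-2}$ in~\eqref{eq:heat2} resolves this: $\|\mathbf{x}-\mathbf{y}\|(1+\|\mathbf{x}-\mathbf{y}\|/\sqrt\tau)^{-2}\le\sqrt\tau$. Each term is therefore at most $C\tau^{-m-1/2}\mathcal G_{\tau/c}(\mathbf{x},\mathbf{y})$. Integrating $\mathcal G_{\tau/c}(\mathbf{x},\cdot)$ in $dw(\mathbf{y})$ gives a constant, by Lemma~\ref{lem:homogeneous} or by comparing with $w(B(\mathbf{x},\sqrt\tau))$ through doubling and~\eqref{eq:ball_orbit_compare}.

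If one prefers to avoid~\eqref{eq:T_j}, there is an alternative. Write $T_j=\partial_j+$ difference terms, and bound the difference terms by the mean value theorem along the segment from $\mathbf{x}$ to $\sigma_\alpha\mathbf{x}$, using $\|\mathbf{x}-\sigma_\alpha\mathbf{x}\|=\sqrt2|\langle\alpha,\mathbf{x}\rangle|$. This route is messier, so I would use~\eqref{eq:T_j}.

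Combining the kernel estimate with $\|g\|_{L^\infty}\le\|f\|_{\widetilde{\Lambda}_k^{\beta/2}}(s/2)^{\beta/2-1}$ yields
\[
\|T_j\partial_s^{j_2}H_sf\|_{L^\infty}\le C(s/2)^{-(j_2-1)-1/2}(s/2)^{\beta/2-1}\|f\|_{\widetilde{\Lambda}_k^{\beta/2}}=C\|f\|_{\widetilde{\Lambda}_k^{\beta/2}}s^{\beta/2-j_2-1/2},
\]
which is~\eqref{eq:semigroup_dunkl_derivative_bound}.
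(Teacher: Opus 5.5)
Your proposal is correct and follows essentially the same route as the paper: you split $H_s=H_{s/2}H_{s/2}$ with $g=\partial_rH_rf|_{r=s/2}$, use formula \eqref{eq:T_j}, absorb $|y_j-x_j|$ with the factor $(1+\|\mathbf{x}-\mathbf{y}\|/\sqrt{\tau})^{-2}$ from \eqref{eq:heat2}, and integrate $\mathcal{G}_{\tau/c}$ via Lemma~\ref{lem:homogeneous}. Your explicit Leibniz expansion of $\partial_\tau^{m}\bigl(\frac{y_j-x_j}{2\tau}h_\tau\bigr)$, and your remarks on justifying the factorization and on passing $T_j$ under the integral, simply spell out steps the paper leaves implicit.
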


\begin{proof}
We decompose $H_s = H_{s/2} H_{s/2}$ to write $\partial_s^{j_2} H_s f = \partial_\tau^{j_2 - 1} H_\tau \left( \partial_r H_r f \right)\big|_{\tau = r = s/2}$. Applying $T_j$ yields
\[
    T_j \partial_s^{j_2} H_s f(\mathbf{x}) = \int_{\mathbb{R}^N} T_{j,\mathbf{x}} \partial_\tau^{j_2 - 1} h_{s/2}(\mathbf{x}, \mathbf{y}) \left( \partial_r H_r f(\mathbf{y})\Big|_{r=s/2} \right) dw(\mathbf{y}).
\]
Commuting $T_{j,\mathbf{x}}$ with $\partial_\tau$ and applying \eqref{eq:T_j}, we have $T_{j,\mathbf{x}} h_\tau(\mathbf{x}, \mathbf{y}) = \frac{y_j - x_j}{2\tau} h_\tau(\mathbf{x}, \mathbf{y})$. Differentiating with respect to $\tau$ and using estimate \eqref{eq:heat2}, the spatial factor is absorbed:
\[
    |y_j - x_j| \left(1 + \frac{\|\mathbf{x}-\mathbf{y}\|}{\sqrt{\tau}}\right)^{-2} \leq \frac{1}{4} \sqrt{\tau}.
\]
This yields $\left| T_{j,\mathbf{x}} \partial_\tau^{j_2 - 1} h_\tau(\mathbf{x}, \mathbf{y}) \right| \leq C \tau^{-j_2 + 1/2} \mathcal{G}_{\tau/c}(\mathbf{x}, \mathbf{y})$. Integrating over $\mathbb{R}^N$ via Lemma~\ref{lem:homogeneous} gives
\[
    \left\| T_j \partial_\tau^{j_2 - 1} H_\tau g \right\|_{L^\infty} \leq C \tau^{-j_2 + 1/2} \|g\|_{L^\infty}.
\]
Setting $\tau = r = s/2$ and $g = \partial_r H_r f\big|_{r=s/2}$ (where $\|g\|_{L^\infty} \leq \|f\|_{\widetilde{\Lambda}_k^{\beta/2}} (s/2)^{\beta/2 - 1}$) completes the proof.
\end{proof}

\begin{lemma}\label{lem:derivative_existence}
    Let $1 < \beta < 2$ and assume that $f \in \widetilde{\Lambda}_k^{\beta/2}$. Then $f$ possesses a continuous representative (still denoted by $f$) such that for every $j \in \{1, \dots, N\}$, the classical partial derivative $\partial_{x_j}f(\mathbf{x})$ exists for all $\mathbf{x} \in \mathbb{R}^N$. Moreover, $\partial_{x_j}H_t f$ converges uniformly on $\mathbb{R}^N$ to $\partial_{x_j}f$ as $t \to 0^{+}$.
\end{lemma}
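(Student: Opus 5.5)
The plan is to show that $\{\partial_{x_j}H_tf\}_{t>0}$ is uniformly Cauchy on $\mathbb{R}^N$ as $t\to0^+$. We then identify the uniform limit with the classical derivative of a continuous representative of $f$. Here $\beta\in(1,2)$, so $n=1$. Lemma~\ref{lem:semigroup_derivative_estimate} with $j_1=1$, $j_2=1$ gives
\[
\|\partial_{x_j}\partial_sH_sf\|_{L^\infty}\le C\|f\|_{\widetilde{\Lambda}_k^{\beta/2}}\,s^{\beta/2-3/2}.
\]
Since $\beta/2-3/2>-1$ exactly when $\beta>1$, this is integrable near $s=0$. By Lemma~\ref{lem:h_t_smooth}, $H_tf$ is smooth in $(t,\mathbf{x})$. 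Hence for $0<t_1<t_2$ and every $\mathbf{x}$,
\[
|\partial_{x_j}H_{t_2}f(\mathbf{x})-\partial_{x_j}H_{t_1}f(\mathbf{x})|\le\int_{t_1}^{t_2}|\partial_s\partial_{x_j}H_sf(\mathbf{x})|\,ds\le C\|f\|_{\widetilde{\Lambda}_k^{\beta/2}}\bigl(t_2^{(\beta-1)/2}-t_1^{(\beta-1)/2}\bigr).
\]
So $\partial_{x_j}H_tf$ converges uniformly on $\mathbb{R}^N$ to some $g_j$, and $g_j$ is continuous as a uniform limit of continuous functions.

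Next we produce the continuous representative. The same argument applied to $H_tf$ itself uses the bound $\|\partial_sH_sf\|_\infty\le C s^{\beta/2-1}$, which is integrable at $0$. It shows that $H_tf$ converges uniformly to a continuous function $F$. By Lemma~\ref{lem:converge}, $H_tf\to f$ a.e., so $F=f$ a.e., and we replace $f$ by $F$. Now fix $\mathbf{x}$ and $h\in\mathbb{R}$ and use the fundamental theorem of calculus for the smooth function $H_tf$:
\[
H_tf(\mathbf{x}+he_j)-H_tf(\mathbf{x})=\int_0^h\partial_{x_j}H_tf(\mathbf{x}+ue_j)\,du.
\]
Letting $t\to0^+$ is legitimate by uniform convergence on the segment, and gives $F(\mathbf{x}+he_j)-F(\mathbf{x})=\int_0^h g_j(\mathbf{x}+ue_j)\,du$. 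Since $g_j$ is continuous, $\partial_{x_j}F(\mathbf{x})$ exists and equals $g_j(\mathbf{x})$ at every point. This is exactly the claimed uniform convergence $\partial_{x_j}H_tf\to\partial_{x_j}f$.

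The argument is routine once Lemma~\ref{lem:semigroup_derivative_estimate} is in hand. The main point to check is the integrability of $s^{(\beta-3)/2}$ at $0$, which is precisely where $\beta>1$ is used. One should also note why only small times matter: $\partial_{x_j}H_tf$ need not be bounded for a fixed $t$, because $f$ may grow. The Cauchy estimate, however, involves only differences, and these are controlled in $L^\infty$ through the bound on $\partial_s\partial_{x_j}H_sf$. Finally, one must justify interchanging $\partial_s$ and $\partial_{x_j}$, and this follows from the joint smoothness given by Lemma~\ref{lem:h_t_smooth}.
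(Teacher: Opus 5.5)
Your proposal is correct and follows essentially the same route as the paper. Both arguments take $n=1$, apply Lemma~\ref{lem:semigroup_derivative_estimate} with $j_1=j_2=1$, and use the integrability of $s^{\beta/2-3/2}$ near $0$ (which is where $\beta>1$ enters) to get uniform convergence of $H_tf$ and of $\partial_{x_j}H_tf$. The only difference is presentational: the paper writes $\partial_{x_j}H_tf=\partial_{x_j}H_1f-\int_t^1\partial_{x_j}\partial_sH_sf\,ds$ and cites the standard theorem on differentiating uniformly convergent families, whereas you argue via the uniform Cauchy criterion and verify that theorem directly by the fundamental theorem of calculus along coordinate segments.
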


\begin{proof}
    Since $1 < \beta < 2$, we have $1/2 < \beta/2 < 1$, so $n = 1$ is the smallest integer strictly greater than $\beta/2$. By the definition of $\widetilde{\Lambda}_k^{\beta/2}$, $\|\partial_s H_s f\|_{L^\infty} \leq \|f\|_{\widetilde{\Lambda}_k^{\beta/2}} s^{\beta/2 - 1}$. By the Fundamental Theorem of Calculus, for any $0 < t < 1$ and $\mathbf{x} \in \mathbb{R}^N$,
    \begin{equation}\label{eq:ftc_Htf_en}
        H_t f(\mathbf{x}) = H_1 f(\mathbf{x}) - \int_t^1 \partial_s H_s f(\mathbf{x}) \, ds.
    \end{equation}
    Since $\beta/2 - 1 > -1$, the integral $\int_0^1 s^{\beta/2-1}\,ds$ converges. Thus, $H_t f$ converges uniformly on $\mathbb{R}^N$ as $t \to 0^{+}$ to a continuous function. Since $H_t f \to f$ almost everywhere, we identify $f$ with this continuous limit. Fix $j \in \{1, \dots, N\}$. Differentiating~\eqref{eq:ftc_Htf_en} with respect to $x_j$ for $t > 0$ yields
    \begin{equation}\label{eq:dx_Htf_en}
        \partial_{x_j} H_t f(\mathbf{x}) = \partial_{x_j} H_1 f(\mathbf{x}) - \int_t^1 \partial_{x_j} \partial_s H_s f(\mathbf{x}) \, ds.
    \end{equation}

    Applying Lemma~\ref{lem:semigroup_derivative_estimate} with $j_1 = 1$ and $j_2 = 1 = n$, we get
    \begin{equation}\label{eq:mixed_deriv_bound_applied}
        \|\partial_{x_j} \partial_s H_s f\|_{L^\infty} \leq C \|f\|_{\widetilde{\Lambda}_k^{\beta/2}} s^{\beta/2 - 1 - 1/2} = C \|f\|_{\widetilde{\Lambda}_k^{\beta/2}} s^{\beta/2 - 3/2}.
    \end{equation}
    Because $\beta > 1$, the exponent satisfies $\beta/2 - 3/2 > -1$. Hence, $s \mapsto s^{\beta/2 - 3/2}$ is integrable on $(0,1]$. The improper integral
    \[
        \int_0^1 \partial_{x_j} \partial_s H_s f(\mathbf{x}) \, ds
    \]
    converges absolutely and uniformly on $\mathbb{R}^N$.

    Standard differentiation theorems for uniform convergence imply that $f$ is continuously differentiable with respect to $x_j$, with
    \[
        \partial_{x_j} f(\mathbf{x}) = \partial_{x_j} H_1 f(\mathbf{x}) - \int_0^1 \partial_{x_j} \partial_s H_s f(\mathbf{x}) \, ds.
    \]
    Furthermore, for any $t \in (0, 1)$, we have
    \[
        \partial_{x_j} f(\mathbf{x}) - \partial_{x_j} H_t f(\mathbf{x}) = - \int_0^t \partial_{x_j} \partial_s H_s f(\mathbf{x}) \, ds.
    \]
    Taking the supremum over $\mathbf{x} \in \mathbb{R}^N$, we obtain
    \[
        \|\partial_{x_j} f - \partial_{x_j} H_t f\|_{L^\infty} \leq \int_0^t \|\partial_{x_j} \partial_s H_s f\|_{L^\infty} \, ds \leq C \|f\|_{\widetilde{\Lambda}_k^{\beta/2}} \frac{t^{\beta/2 - 1/2}}{\beta/2 - 1/2}.
    \]
    Since $\beta/2 - 1/2 > 0$, the right-hand side tends to $0$ as $t \to 0^{+}$. This confirms that $\partial_{x_j} H_t f$ converges uniformly on $\mathbb{R}^N$ to $\partial_{x_j}f$.
\end{proof}

\begin{corollary}\label{cor:dunkl_derivative_existence}
Let $1 < \beta < 2$ and assume that $f \in \widetilde{\Lambda}_k^{\beta/2}$. Then for every $j \in \{1, \dots, N\}$, the Dunkl derivative 
\[
    T_j f(\mathbf{x}) = \partial_j f(\mathbf{x}) + \sum_{\alpha \in R} \frac{k(\alpha)}{2} \langle \alpha, e_j \rangle D_\alpha f(\mathbf{x})
\]
is well-defined and continuous on $\mathbb{R}^N$, where $D_\alpha f(\mathbf{x}) = \int_0^1 \langle \nabla f(A_t \mathbf{x}), \alpha \rangle \, dt$ with $A_t = I - 2t \|\alpha\|^{-2} \alpha \alpha^T$. Moreover, $T_j H_t f$ converges uniformly on $\mathbb{R}^N$ to $T_j f$ as $t \to 0^{+}$.
\end{corollary}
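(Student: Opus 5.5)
The plan is to reduce everything to Lemma~\ref{lem:derivative_existence}, which gives a continuous representative of $f$ with continuous classical partials $\partial_{x_j}f$ and $\partial_{x_j}H_tf\to\partial_{x_j}f$ uniformly on $\mathbb{R}^N$. The only new ingredient is to rewrite each reflection difference quotient in \eqref{eq:T_def} as an integral of the gradient along a segment, so that it makes sense even on the hyperplane $\langle\alpha,\mathbf{x}\rangle=0$.

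\textbf{Step 1 (the difference quotient as a gradient integral).} Since $\|\alpha\|^2=2$, we have $A_t\mathbf{x}=\mathbf{x}-t\langle\alpha,\mathbf{x}\rangle\alpha$. This path joins $A_0\mathbf{x}=\mathbf{x}$ to $A_1\mathbf{x}=\sigma_\alpha(\mathbf{x})$, and $\frac{d}{dt}A_t\mathbf{x}=-\langle\alpha,\mathbf{x}\rangle\alpha$. For any $C^1$ function $g$ the fundamental theorem of calculus along this segment gives
\[
g(\mathbf{x})-g(\sigma_\alpha(\mathbf{x}))=\langle\alpha,\mathbf{x}\rangle\int_0^1\langle\nabla g(A_t\mathbf{x}),\alpha\rangle\,dt .
\]
Hence $\frac{g(\mathbf{x})-g(\sigma_\alpha\mathbf{x})}{\langle\alpha,\mathbf{x}\rangle}=D_\alpha g(\mathbf{x})$ whenever $\langle\alpha,\mathbf{x}\rangle\neq0$. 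The right-hand side is defined for every $\mathbf{x}$ and gives the continuous extension across the hyperplane $\langle\alpha,\mathbf{x}\rangle=0$.

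\textbf{Step 2 (definition and continuity of $T_jf$).} Apply Step 1 with $g=f$, which is $C^1$ by Lemma~\ref{lem:derivative_existence}. Fix a compact set $K$. The points $A_t\mathbf{x}$ with $\mathbf{x}\in K$, $t\in[0,1]$ stay in a compact set, since $\|A_t\mathbf{x}\|\le C\|\mathbf{x}\|$. There $\nabla f$ is bounded and uniformly continuous. Dominated convergence (or uniform continuity directly) then shows that $D_\alpha f$ is continuous on $K$. Since $\partial_jf$ is continuous as well, $T_jf$ given by the stated formula is well-defined and continuous on $\mathbb{R}^N$.

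\textbf{Step 3 (uniform convergence).} By Lemma~\ref{lem:h_t_smooth}, $H_tf$ is $C^\infty$, so Step 1 applies to $g=H_tf$. Thus $T_jH_tf=\partial_jH_tf+\sum_\alpha\frac{k(\alpha)}2\langle\alpha,e_j\rangle D_\alpha H_tf$ holds pointwise, including on the hyperplanes. Then
\[
|D_\alpha H_tf(\mathbf{x})-D_\alpha f(\mathbf{x})|\le\int_0^1|\langle\nabla(H_tf-f)(A_t\mathbf{x}),\alpha\rangle|\,dt\le\sqrt2\,\sup_{\mathbf{y}}\|\nabla H_tf(\mathbf{y})-\nabla f(\mathbf{y})\| .
\]
By Lemma~\ref{lem:derivative_existence}, applied to each coordinate, the right-hand side tends to $0$ uniformly in $\mathbf{x}$. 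The same lemma handles the term $\partial_jH_tf-\partial_jf$. Summing over the finite set $R$ gives $T_jH_tf\to T_jf$ uniformly on $\mathbb{R}^N$.

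\textbf{Main obstacle.} The only delicate point is the uniformity in Step 3. The estimate must use the global sup of $\nabla(H_tf-f)$ and not merely local bounds; $\nabla f$ itself need not be globally bounded, but the difference $\nabla(H_tf-f)$ is uniformly small. The segment representation gives exactly this, because $\int_0^1$ is an average over points of $\mathbb{R}^N$.
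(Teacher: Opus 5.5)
Your proposal is correct and follows essentially the same route as the paper. Continuity of $D_\alpha f$ comes from continuity of $\nabla f$ and $\|A_t\|\le 1$, and uniform convergence comes from the bound $\|T_jH_tf-T_jf\|_{L^\infty}\le \|\partial_jH_tf-\partial_jf\|_{L^\infty}+C\sum_i\|\partial_iH_tf-\partial_if\|_{L^\infty}$ combined with Lemma~\ref{lem:derivative_existence}. Your Step 1 (the segment identity showing that $D_\alpha g$ agrees with the reflection difference quotient) makes explicit a point the paper leaves implicit. In Step 3, rename the path parameter (the paper uses $\tau$) so it does not clash with the heat time $t$.
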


\begin{proof}
By Lemma~\ref{lem:derivative_existence}, for each $j \in \{1, \dots, N\}$, the classical partial derivatives $\partial_j f$ exist, are continuous on $\mathbb{R}^N$, and $\partial_j H_t f \to \partial_j f$ uniformly on $\mathbb{R}^N$ as $t \to 0^{+}$. 

Since $A_t$ is a bounded operator with $\|A_t\| \leq 1$ for all $t \in [0,1]$ and $\nabla f$ is continuous, the integral representation of $D_\alpha f$ defines a continuous function on $\mathbb{R}^N$. Thus, $T_j f$ exists as a continuous function on $\mathbb{R}^N$ for every $j \in \{1, \dots, N\}$.

To verify the uniform convergence, notice that for any $t > 0$,
\[
    T_j H_t f(\mathbf{x}) - T_j f(\mathbf{x}) = \left(\partial_j H_t f(\mathbf{x}) - \partial_j f(\mathbf{x})\right) + \sum_{\alpha \in R} \frac{k(\alpha)}{2} \langle \alpha, e_j \rangle \int_0^1 \langle \nabla H_t f(A_\tau \mathbf{x}) - \nabla f(A_\tau \mathbf{x}), \alpha \rangle \, d\tau.
\]
Taking the supremum over $\mathbf{x} \in \mathbb{R}^N$, we obtain
\[
    \|T_j H_t f - T_j f\|_{L^\infty} \leq \|\partial_j H_t f - \partial_j f\|_{L^\infty} + C \sum_{i=1}^N \|\partial_i H_t f - \partial_i f\|_{L^\infty}.
\]
Applying Lemma~\ref{lem:derivative_existence} to each partial derivative $\partial_i f$, we have $\|\partial_i H_t f - \partial_i f\|_{L^\infty} \to 0$ as $t \to 0^{+}$, which completes the proof.
\end{proof}

\begin{lemma}\label{lem:derivative_lipschitz}
    Let $1 < \beta < 2$ and assume that $f \in \widetilde{\Lambda}_k^{\beta/2}$. Then for every $j \in \{1, \dots, N\}$, we have
    \[
        T_j f(\cdot)(1+\|\cdot\|)^{-(\beta-1)} \in L^{\infty}(\mathbb{R}^N),
    \]
    and $T_j f$ belongs to the Dunkl-Lipschitz space $\widetilde{\Lambda}_k^{(\beta-1)/2}$. Moreover, there exists a constant $C > 0$, independent of $f$, such that
    \[
        \|T_j f\|_{\widetilde{\Lambda}_k^{(\beta-1)/2}} \leq C \|f\|_{\widetilde{\Lambda}_k^{\beta/2}}.
    \]
\end{lemma}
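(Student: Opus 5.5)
We must prove two things for $1<\beta<2$: the weighted growth bound $|T_jf(\mathbf{x})|\le C\|f\|(1+\|\mathbf{x}\|)^{\beta-1}$, and the semigroup bound $\sup_{t>0} t^{1-(\beta-1)/2}\|\partial_t H_t T_jf\|_{L^\infty}\le C\|f\|$. Here $(\beta-1)/2<1/2$, so $n=1$ for the target space, and we write $\|f\|=\|f\|_{\widetilde{\Lambda}_k^{\beta/2}}$.

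\emph{Semigroup part.} First I will justify the commutation $H_tT_jf=T_jH_tf$ for our (non-integrable) $f$. I will write $H_t T_j f=\lim_{s\to0^+}H_tT_jH_sf$ and use two facts.
\begin{itemize}
\item By Corollary~\ref{cor:dunkl_derivative_existence}, $T_jH_sf\to T_jf$ uniformly on $\mathbb{R}^N$.
\item $H_t$ is bounded on $L^\infty$, so $H_t T_jH_sf\to H_tT_jf$ uniformly.
\end{itemize}
On the other hand $T_jH_sf$ has at most polynomial growth, and $H_tT_jH_sf=T_jH_{t+s}f$. This follows from skew-symmetry of $T_j$ with respect to $dw$ together with \eqref{eq:T_j}: integrating by parts against $h_t(\mathbf{x},\cdot)$, the Gaussian decay of $h_t$ kills the boundary terms. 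Letting $s\to0$ and using continuity of $s\mapsto T_jH_{t+s}f$ (Lemma~\ref{lem:h_t_smooth}) gives $H_tT_jf=T_jH_tf$. Then
\[
\partial_tH_tT_jf=T_j\partial_tH_tf,
\]
and Lemma~\ref{lem:semigroup_dunkl_derivative_estimate} with $j_2=1$ yields
\[
\|T_j\partial_tH_tf\|_{L^\infty}\le C\|f\|\,t^{\beta/2-3/2}=C\|f\|\,t^{(\beta-1)/2-1},
\]
which is exactly the required bound.

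\emph{Growth part.} I will use the representation from Lemma~\ref{lem:derivative_existence} and Corollary~\ref{cor:dunkl_derivative_existence}:
\[
T_jf=T_jH_1f-\int_0^1 T_j\partial_sH_sf\,ds .
\]
The integral is bounded by $C\|f\|\int_0^1 s^{\beta/2-3/2}\,ds\le C\|f\|$, since $\beta>1$.

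For $T_jH_1f$ I will use \eqref{eq:T_j}:
\[
T_jH_1f(\mathbf{x})=\int\frac{y_j-x_j}{2}\,h_1(\mathbf{x},\mathbf{y})f(\mathbf{y})\,dw(\mathbf{y}).
\]
By the argument of Lemma~\ref{lem:heat_semigroup_well_defined}, with the factor $|y_j-x_j|(1+\|\mathbf{x}-\mathbf{y}\|)^{-2}\le 1$ absorbed by \eqref{eq:heat2}, this is bounded by $C\|f\|(1+\|\mathbf{x}\|)^{\beta}$. That is too large by one power of $(1+\|\mathbf{x}\|)$, and removing it is the main obstacle.

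To fix it I will replace time $1$ by a point-dependent time $\tau=\tau(\mathbf{x})=(1+\|\mathbf{x}\|)^2$ and write
\[
T_jf(\mathbf{x})=T_jH_\tau f(\mathbf{x})-\int_0^\tau T_j\partial_sH_sf(\mathbf{x})\,ds .
\]
The integral contributes at most $C\|f\|\tau^{(\beta-1)/2}=C\|f\|(1+\|\mathbf{x}\|)^{\beta-1}$.

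For the first term, the same estimate at time $\tau$ gives
\[
|T_jH_\tau f(\mathbf{x})|\le C\tau^{-1/2}\|f\|(1+\|\mathbf{x}\|+\sqrt\tau)^{\beta}\le C\|f\|(1+\|\mathbf{x}\|)^{\beta-1}.
\]
This is the Lemma~\ref{lem:heat_semigroup_well_defined} computation with the extra factor $|y_j-x_j|/\tau\lesssim\tau^{-1/2}$ coming from \eqref{eq:heat2}.

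Combining both parts gives $\|T_jf\|_{\widetilde{\Lambda}_k^{(\beta-1)/2}}\le C\|f\|$. The delicate points are the commutation step and the choice of the adaptive time $\tau(\mathbf{x})$.
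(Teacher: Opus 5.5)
Your proposal is correct. The growth part is exactly the paper's argument: the representation $T_jf=T_jH_\tau f-\int_0^\tau T_j\partial_sH_sf\,ds$ with the adaptive time $\tau=(1+\|\mathbf{x}\|)^2$, the kernel bound $|T_{j,\mathbf{x}}h_\tau(\mathbf{x},\mathbf{y})|\le C\tau^{-1/2}\mathcal{G}_{\tau/c}(\mathbf{x},\mathbf{y})$ from \eqref{eq:T_j} and \eqref{eq:heat2}, and Lemma~\ref{lem:semigroup_dunkl_derivative_estimate} for the integral.

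For the semigroup seminorm you take a different and shorter route. The paper never applies $H_t$ to $T_jf$ directly. Instead it substitutes $T_jf=T_jH_tf-\int_0^tT_j\partial_sH_sf\,ds$ into $\partial_tH_t$ and rewrites each piece as $T_j\partial_rH_rf$ at $r=2t$ and $r=t+s$. It then needs Lemma~\ref{lem:semigroup_dunkl_derivative_estimate} with both $j_2=1$ and $j_2=2$, followed by an $s$-integration of $(t+s)^{\beta/2-5/2}$.

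You instead prove $H_tT_jf=T_jH_tf$ once, by approximating with $T_jH_sf$. This uses Corollary~\ref{cor:dunkl_derivative_existence}, the $L^\infty$-contractivity of $H_t$, and skew-symmetry together with \eqref{eq:T_j} to get $H_tT_jH_sf=T_jH_{t+s}f$. After that, the bound is a single application of Lemma~\ref{lem:semigroup_dunkl_derivative_estimate} with $j_2=1$.

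Both routes rest on the same identity $H_tT_jg=T_jH_tg$ for smooth, polynomially growing $g$. The paper uses it for $g=H_tf$ and $g=\partial_sH_sf$ but only asserts it, and it also interchanges $\partial_tH_t$ with the $ds$-integral without comment. Your version makes the commutation explicit and justified, and it yields the estimate in one step. The paper's version applies the commutation only to smooth functions, never to $T_jf$ itself, at the cost of the extra $j_2=2$ estimate.
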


\begin{proof}
    Fix $j \in \{1, \dots, N\}$ and let $g = T_j f$. We want to show that $g \in \widetilde{\Lambda}_k^{(\beta-1)/2}$. Set $\beta' = (\beta-1)/2$. Since $1 < \beta < 2$, we have $0 < \beta' < 1/2$, so the smallest positive integer strictly greater than $\beta'$ is $n' = 1$. 

    According to Definition~\ref{def:lipschitz_weighted}, we must verify two conditions for $g$: the weighted $L^\infty$-norm bound and the supremum bound involving the heat semigroup.

    \smallskip
    \noindent\textit{Weighted $L^\infty$-norm bound:} 
    By Corollary~\ref{cor:dunkl_derivative_existence}, $T_j H_s f$ converges uniformly on $\mathbb{R}^N$ to $T_j f$ as $s \to 0^{+}$. By the Fundamental Theorem of Calculus in the time variable, for any fixed $\mathbf{x} \in \mathbb{R}^N$ and any $t > 0$, we can write
    \[
        T_j f(\mathbf{x}) = T_j H_t f(\mathbf{x}) - \int_0^t T_j \partial_s H_s f(\mathbf{x}) \, ds.
    \]
    We balance both terms by choosing $t = (1+\|\mathbf{x}\|)^2$. 

    For the integral term, we apply Lemma~\ref{lem:semigroup_dunkl_derivative_estimate} with $j_2 = 1 = n$, which gives $\|T_j \partial_s H_s f\|_{L^\infty} \leq C_1 \|f\|_{\widetilde{\Lambda}_k^{\beta/2}} s^{\beta/2 - 3/2}$. Integrating this bound yields
    \[
        \left| \int_0^t T_j \partial_s H_s f(\mathbf{x}) \, ds \right| \leq C_1 \|f\|_{\widetilde{\Lambda}_k^{\beta/2}} \int_0^t s^{\beta/2 - 3/2} \, ds = \frac{C_1}{\beta/2 - 1/2} \|f\|_{\widetilde{\Lambda}_k^{\beta/2}} t^{\beta/2 - 1/2}.
    \]
    Substituting $t = (1+\|\mathbf{x}\|)^2$, we obtain
    \[
        \left| \int_0^t T_j \partial_s H_s f(\mathbf{x}) \, ds \right| \leq C_2 \|f\|_{\widetilde{\Lambda}_k^{\beta/2}} (1+\|\mathbf{x}\|)^{\beta-1}.
    \]

    For the first term, using formula \eqref{eq:T_j} and Theorem~\ref{teo:heat_new}, we have $|T_{j,\mathbf{x}} h_t(\mathbf{x},\mathbf{y})| \leq C_3 t^{-1/2} \mathcal{G}_{t/c}(\mathbf{x},\mathbf{y})$. Together with $|f(\mathbf{y})| \leq \|f\|_{\widetilde{\Lambda}_k^{\beta/2}} (1+\|\mathbf{y}\|)^\beta$ and Lemma~\ref{lem:heat_semigroup_well_defined}, we get
    \begin{align*}
        |T_j H_t f(\mathbf{x})| &\leq \int_{\mathbb{R}^N} |T_{j,\mathbf{x}} h_t(\mathbf{x},\mathbf{y})| |f(\mathbf{y})| \, dw(\mathbf{y}) \\
        &\leq C_3 \|f\|_{\widetilde{\Lambda}_k^{\beta/2}} \int_{\mathbb{R}^N} t^{-1/2} \mathcal{G}_{t/c}(\mathbf{x},\mathbf{y}) (1+\|\mathbf{y}\|)^\beta \, dw(\mathbf{y}) \leq C_4 \|f\|_{\widetilde{\Lambda}_k^{\beta/2}} t^{-1/2} (1+\|\mathbf{x}\| + \sqrt{t})^\beta.
    \end{align*}
    Substituting $t = (1+\|\mathbf{x}\|)^2$, so that $\sqrt{t} = 1+\|\mathbf{x}\|$ and $t^{-1/2} = (1+\|\mathbf{x}\|)^{-1}$, we find
    \[
        |T_j H_t f(\mathbf{x})| \leq C_4 \|f\|_{\widetilde{\Lambda}_k^{\beta/2}} (1+\|\mathbf{x}\|)^{-1} \big(2(1+\|\mathbf{x}\|)\big)^\beta = C_5 \|f\|_{\widetilde{\Lambda}_k^{\beta/2}} (1+\|\mathbf{x}\|)^{\beta-1}.
    \]
    Adding both estimates together, we conclude that for all $\mathbf{x} \in \mathbb{R}^N$,
    \[
        |T_j f(\mathbf{x})| \leq (C_2 + C_5) \|f\|_{\widetilde{\Lambda}_k^{\beta/2}} (1+\|\mathbf{x}\|)^{\beta-1}.
    \]
    Therefore, $\|g(\cdot)(1+\|\cdot\|)^{-(\beta-1)}\|_{L^\infty} \leq C' \|f\|_{\widetilde{\Lambda}_k^{\beta/2}} < \infty$.

    \smallskip
    \noindent\textit{Heat semigroup seminorm bound:} 
    Using the integral representation of $T_j f$ evaluated at $\tau = t$:
    \[
        T_j f = T_j H_t f - \int_0^t T_j \partial_s H_s f \, ds.
    \]
    Applying the linear operator $\partial_t H_t$ to both sides yields
    \[
        \partial_t H_t (T_j f) = \partial_t H_t (T_j H_t f) - \int_0^t \partial_t H_t (T_j \partial_s H_s f) \, ds.
    \]
    We estimate each term on the right-hand side using the fact that $T_j$ commutes with $\partial_t$ and $H_t$, alongside Lemma~\ref{lem:semigroup_dunkl_derivative_estimate}. 

    For the first term, using the semigroup property $H_t H_t = H_{2t}$, we rewrite the action as
    \[
        \partial_t H_t (T_j H_t f) = \partial_t T_j H_{2t} f = 2 \left. T_j \partial_s H_s f \right|_{s=2t}.
    \]
    Applying Lemma~\ref{lem:semigroup_dunkl_derivative_estimate} with $j_2 = 1 = n$, we obtain
    \[
        \|\partial_t H_t (T_j H_t f)\|_{L^\infty} = 2 \left\| \left. T_j \partial_s H_s f \right|_{s=2t} \right\|_{L^\infty} \leq C_1 \|f\|_{\widetilde{\Lambda}_k^{\beta/2}} (2t)^{\beta/2 - 3/2} = C_2 \|f\|_{\widetilde{\Lambda}_k^{\beta/2}} t^{\beta/2 - 3/2}.
    \]

    For the integral term, commuting the operators gives for each $s \in (0, t)$:
    \[
        \partial_t H_t (T_j \partial_s H_s f) = T_j \partial_r^2 H_r f \Big|_{r = t + s}.
    \]
    Applying Lemma~\ref{lem:semigroup_dunkl_derivative_estimate} with $j_2 = 2 \geq n = 1$, we get
    \[
        \left\| T_j \partial_r^2 H_r f \Big|_{r = t + s} \right\|_{L^\infty} \leq C_3 \|f\|_{\widetilde{\Lambda}_k^{\beta/2}} (t + s)^{\beta/2 - 2 - 1/2} = C_3 \|f\|_{\widetilde{\Lambda}_k^{\beta/2}} (t + s)^{\beta/2 - 5/2}.
    \]
    Integrating this bound with respect to $s$ over $[0, t]$, we find
    \begin{align*}
        \left\| \int_0^t \partial_t H_t (T_j \partial_s H_s f) \, ds \right\|_{L^\infty} 
        &\leq \int_0^t \left\| T_j \partial_r^2 H_r f \Big|_{r = t + s} \right\|_{L^\infty} \, ds \leq C_3 \|f\|_{\widetilde{\Lambda}_k^{\beta/2}} \int_0^t (t + s)^{\beta/2 - 5/2} \, ds \\
        &= \frac{C_3}{\beta/2 - 3/2} \|f\|_{\widetilde{\Lambda}_k^{\beta/2}} \left( (2t)^{\beta/2 - 3/2} - t^{\beta/2 - 3/2} \right) = C_4 \|f\|_{\widetilde{\Lambda}_k^{\beta/2}} t^{\beta/2 - 3/2}.
    \end{align*}

    Combining the estimates for both terms, we conclude that
    \[
        \|\partial_t H_t g\|_{L^\infty} \leq (C_2 + C_4) \|f\|_{\widetilde{\Lambda}_k^{\beta/2}} t^{\beta/2 - 3/2}.
    \]
    Recalling that $n' = 1$ and $\beta' = (\beta-1)/2$, we have $n' - \beta' = 3/2 - \beta/2$. Multiplying the above inequality by $t^{n' - \beta'}$ yields
    \[
        t^{1 - (\beta-1)/2} \|\partial_t H_t(T_j f)\|_{L^\infty} \leq (C_2 + C_4) \|f\|_{\widetilde{\Lambda}_k^{\beta/2}} t^{3/2 - \beta/2} t^{\beta/2 - 3/2} = C_5 \|f\|_{\widetilde{\Lambda}_k^{\beta/2}}.
    \]
    Taking the supremum over all $t > 0$, we arrive at
    \[
        \sup_{t>0} t^{1 - (\beta-1)/2} \left\| \partial_t H_t(T_j f) \right\|_{L^\infty} \leq C_5 \|f\|_{\widetilde{\Lambda}_k^{\beta/2}} < \infty,
    \]
    which completes the proof.
\end{proof}

\section{Equivalence of Lipschitz Spaces}\label{sec:equiv}

The main goal of this section is to establish the complete equivalence between the heat semigroup Lipschitz spaces and their classical counterparts defined via Hölder and Zygmund conditions (Theorem~\ref{teo:equivalence}). 

To establish Theorem~\ref{teo:equivalence}, we split the proof into two distinct cases depending on the smoothness parameter $\beta$.

\subsection{Case \texorpdfstring{$0<\beta<1$}{0<beta<1}}

We begin our analysis with the range $0 < \beta < 1$. In this setting, the arguments are relatively standard and rely on heat kernel estimates (Theorem~\ref{teo:heat_new}). For the reader's convenience and to keep the exposition self-contained, we present the detailed proofs for this case below. Recall that the space $\widetilde{\Lambda}_k^{\beta/2,H}$ is defined in Definition~\ref{def:lipschitz_weighted_holder_1}

\begin{lemma}\label{lem:inclusion_H_to_standard}
    Let $0 < \beta < 1$. If $f \in \widetilde{\Lambda}_k^{\beta/2,H}$, then $f \in \widetilde{\Lambda}_k^{\beta/2}$. Moreover, there exists a constant $C>0$, independent of $f$, such that
    \[
        \|f\|_{\widetilde{\Lambda}_k^{\beta/2}} \leq C \|f\|_{\widetilde{\Lambda}_k^{\beta/2,H}}.
    \]
\end{lemma}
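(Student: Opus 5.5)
Since $0<\beta<1$, we have $n=1$. The weighted $L^\infty$ part of the two norms is identical, so it suffices to show
\[
\|\partial_t H_t f\|_{L^\infty}\le C t^{\beta/2-1}\,\|f\|_{\widetilde{\Lambda}_k^{\beta/2,H}}
\]
for all $t>0$. The plan is to exploit the cancellation $\int_{\mathbb{R}^N}\partial_t h_t(\mathbf{x},\mathbf{y})\,dw(\mathbf{y})=0$, obtained by differentiating \eqref{eq:h_integral_1} in $t$. Differentiation under the integral sign is legitimate by Lemma~\ref{lem:h_t_smooth} together with the Gaussian bounds of Theorem~\ref{teo:heat_new}. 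We then write
\[
\partial_t H_t f(\mathbf{x})=\int_{\mathbb{R}^N}\partial_t h_t(\mathbf{x},\mathbf{y})\bigl(f(\mathbf{y})-f(\mathbf{x})\bigr)\,dw(\mathbf{y}).
\]
The integral converges absolutely by the polynomial growth of $f$, as in Lemma~\ref{lem:heat_semigroup_well_defined}.

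By the H\"older condition, $|f(\mathbf{y})-f(\mathbf{x})|\le \|f\|\,\|\mathbf{x}-\mathbf{y}\|^\beta$. The main obstacle is that this is a Euclidean distance, while the Gaussian factor $\mathcal{G}_{t/c}$ in \eqref{eq:heat2} decays only in the orbit distance $d(\mathbf{x},\mathbf{y})$. This is exactly where the extra factor $\bigl(1+\|\mathbf{x}-\mathbf{y}\|/\sqrt{t}\bigr)^{-2}$ in \eqref{eq:heat2} enters. Since $\beta<1<2$,
\[
\|\mathbf{x}-\mathbf{y}\|^\beta\Bigl(1+\frac{\|\mathbf{x}-\mathbf{y}\|}{\sqrt t}\Bigr)^{-2}\le t^{\beta/2}\Bigl(\frac{\|\mathbf{x}-\mathbf{y}\|}{\sqrt t}\Bigr)^{\beta}\Bigl(1+\frac{\|\mathbf{x}-\mathbf{y}\|}{\sqrt t}\Bigr)^{-2}\le t^{\beta/2}.
\]
The Euclidean growth is therefore absorbed without ever comparing $\|\mathbf{x}-\mathbf{y}\|$ with $d(\mathbf{x},\mathbf{y})$.

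With this bound in hand, \eqref{eq:heat2} with $m=1$ gives
\[
|\partial_t H_t f(\mathbf{x})|\le C\|f\|\,t^{-1+\beta/2}\int_{\mathbb{R}^N}\mathcal{G}_{t/c}(\mathbf{x},\mathbf{y})\,dw(\mathbf{y}).
\]
The remaining integral is bounded uniformly in $\mathbf{x}$ and $t$: use $\max\bigl(w(B(\mathbf{x},\cdot)),w(B(\mathbf{y},\cdot))\bigr)\ge w(B(\mathbf{x},\sqrt{t/c}))$, then apply Lemma~\ref{lem:homogeneous} (or a dyadic decomposition over the orbit annuli $\mathcal{O}(B(\mathbf{x},2^{j}\sqrt t))$ combined with \eqref{eq:ball_orbit_compare} and doubling). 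Taking the supremum over $\mathbf{x}$ and $t$ yields the claimed estimate, and adding the common weighted $L^\infty$ term finishes the proof.
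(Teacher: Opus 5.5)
Your proposal is correct and follows essentially the same route as the paper's proof. Both use the cancellation $\int_{\mathbb{R}^N}\partial_t h_t(\mathbf{x},\mathbf{y})\,dw(\mathbf{y})=0$ and the H\"older bound, absorb $\|\mathbf{x}-\mathbf{y}\|^\beta$ into the factor $\bigl(1+\|\mathbf{x}-\mathbf{y}\|/\sqrt{t}\bigr)^{-2}$ from Theorem~\ref{teo:heat_new}, and finish with the uniform bound on $\int_{\mathbb{R}^N}\mathcal{G}_{t/c}(\mathbf{x},\mathbf{y})\,dw(\mathbf{y})$; your justifications of differentiation under the integral and of absolute convergence are details the paper leaves implicit.
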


\begin{proof}
    Let $f \in \widetilde{\Lambda}_k^{\beta/2,H}$. By definition, we already have the weighted $L^\infty$-norm bound:
    \[
        \| f(\cdot)(1+\|\cdot\|)^{-\beta}\|_{L^\infty} \leq \|f\|_{\widetilde{\Lambda}_k^{\beta/2,H}}.
    \]
    This is precisely the first component of the norm for $\widetilde{\Lambda}_k^{\beta/2}$. It remains to establish the heat semigroup seminorm bound. Since $0 < \beta < 1$, we have $0 < \beta/2 < 1/2$, so the smallest positive integer strictly greater than $\beta/2$ is $n=1$. Thus, we need to bound $t^{1-\beta/2}\|\partial_t H_t f\|_{L^\infty}$.

    Recall that $\int_{\mathbb{R}^N} h_t(\mathbf{x},\mathbf{y})\,dw(\mathbf{y}) = 1$ for all $t>0$. Differentiating this identity with respect to $t$, we obtain
    $
        \int_{\mathbb{R}^N} \partial_t h_t(\mathbf{x},\mathbf{y})\,dw(\mathbf{y}) = 0
    $.
    Using this, we can write the time derivative of the heat semigroup applied to $f$ as
    \begin{align*}
        \partial_t H_t f(\mathbf{x}) &= \int_{\mathbb{R}^N} \partial_t h_t(\mathbf{x},\mathbf{y}) f(\mathbf{y}) \, dw(\mathbf{y}) = \int_{\mathbb{R}^N} \partial_t h_t(\mathbf{x},\mathbf{y}) (f(\mathbf{y}) - f(\mathbf{x})) \, dw(\mathbf{y}).
    \end{align*}
    Since $f \in \widetilde{\Lambda}_k^{\beta/2,H}$, we have the Lipschitz condition $|f(\mathbf{y}) - f(\mathbf{x})| \leq \|f\|_{\widetilde{\Lambda}_k^{\beta/2,H}} \|\mathbf{x}-\mathbf{y}\|^\beta$. Applying this along with Theorem~\ref{teo:heat_new} for $m=1$ and $\beta=\beta'=\boldsymbol{0}$, we obtain
    \begin{align*}
        |\partial_t H_t f(\mathbf{x})| &\leq \int_{\mathbb{R}^N} |\partial_t h_t(\mathbf{x},\mathbf{y})| |f(\mathbf{y}) - f(\mathbf{x})| \, dw(\mathbf{y}) \leq \|f\|_{\widetilde{\Lambda}_k^{\beta/2,H}} \int_{\mathbb{R}^N} |\partial_t h_t(\mathbf{x},\mathbf{y})| \|\mathbf{x}-\mathbf{y}\|^\beta \, dw(\mathbf{y}) \\
        &\leq C_1 \|f\|_{\widetilde{\Lambda}_k^{\beta/2,H}} \int_{\mathbb{R}^N} t^{-1} \Big(1+\frac{\|\mathbf{x}-\mathbf{y}\|}{\sqrt{t}}\Big)^{-2} \mathcal{G}_{t/c}(\mathbf{x},\mathbf{y}) \|\mathbf{x}-\mathbf{y}\|^\beta \, dw(\mathbf{y}).
    \end{align*}
    We rewrite the distance term as $\|\mathbf{x}-\mathbf{y}\|^\beta = t^{\beta/2} \Big( \frac{\|\mathbf{x}-\mathbf{y}\|}{\sqrt{t}} \Big)^\beta$. This yields
    \[
        |\partial_t H_t f(\mathbf{x})| \leq C_1 \|f\|_{\widetilde{\Lambda}_k^{\beta/2,H}} t^{\beta/2 - 1} \int_{\mathbb{R}^N} \Big(1+\frac{\|\mathbf{x}-\mathbf{y}\|}{\sqrt{t}}\Big)^{-2} \Big( \frac{\|\mathbf{x}-\mathbf{y}\|}{\sqrt{t}} \Big)^\beta \mathcal{G}_{t/c}(\mathbf{x},\mathbf{y}) \, dw(\mathbf{y}).
    \]
    Since $0 < \beta < 1$, the function $r \mapsto (1+r)^{-2} r^\beta$ is bounded for all $r \geq 0$. Moreover, the integral of the Gaussian-type kernel $\mathcal{G}_{t/c}(\mathbf{x},\mathbf{y})$ over $\mathbb{R}^N$ with respect to the measure $w$ is uniformly bounded by a constant independent of $t$ and $\mathbf{x}$. Therefore,
    \begin{align*}
        |\partial_t H_t f(\mathbf{x})| &\leq C_2 \|f\|_{\widetilde{\Lambda}_k^{\beta/2,H}} t^{\beta/2 - 1} \int_{\mathbb{R}^N} \mathcal{G}_{t/c}(\mathbf{x},\mathbf{y}) \, dw(\mathbf{y}) \leq C_3 \|f\|_{\widetilde{\Lambda}_k^{\beta/2,H}} t^{\beta/2 - 1}.
    \end{align*}
    Multiplying both sides by $t^{1-\beta/2}$ and taking the supremum over all $\mathbf{x} \in \mathbb{R}^N$ and $t > 0$, we conclude that
    $
        \sup_{t>0} t^{1-\beta/2} \|\partial_t H_t f\|_{L^\infty} \leq C_3 \|f\|_{\widetilde{\Lambda}_k^{\beta/2,H}}
    $.
    Adding the weighted $L^\infty$-norm bound completes the proof.
\end{proof}

\begin{lemma}\label{lem:inclusion_standard_to_H}
    Let $0 < \beta < 1$. If $f \in \widetilde{\Lambda}_k^{\beta/2}$, then $f \in \widetilde{\Lambda}_k^{\beta/2,H}$. Moreover, there exists a constant $C>0$, independent of $f$, such that
    \[
        \|f\|_{\widetilde{\Lambda}_k^{\beta/2,H}} \leq C \|f\|_{\widetilde{\Lambda}_k^{\beta/2}}.
    \]
\end{lemma}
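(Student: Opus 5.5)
The weighted $L^\infty$ part of the norm is shared by both spaces, so only the H\"older seminorm needs work. Write $M=\|f\|_{\widetilde{\Lambda}_k^{\beta/2}}$; here $n=1$, so $\|\partial_s H_s f\|_{L^\infty}\le M s^{\beta/2-1}$. The idea is to compare $f$ with $H_t f$ at the scale $t=\|\mathbf{x}-\mathbf{x}'\|^2$ and control the two resulting errors separately.

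\emph{Step 1 (continuity).} As in the proof of Lemma~\ref{lem:derivative_existence}, for $0<t<1$ we have
$H_tf=H_1f-\int_t^1\partial_sH_sf\,ds$. Since $s^{\beta/2-1}$ is integrable at $0$, $H_tf$ converges uniformly as $t\to0^+$. By Lemma~\ref{lem:converge} the limit equals $f$ a.e., so $f$ has a continuous representative. For this representative and every $t>0$,
\[
f=H_tf-\int_0^t\partial_sH_sf\,ds, \qquad \|f-H_tf\|_{L^\infty}\le \tfrac{2}{\beta}Mt^{\beta/2}.
\]

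\emph{Step 2 (splitting).} Fix $\mathbf{x}\ne\mathbf{x}'$ and set $t=\|\mathbf{x}-\mathbf{x}'\|^2$. Then
\[
|f(\mathbf{x})-f(\mathbf{x}')|\le 2\|f-H_tf\|_{L^\infty}+|H_tf(\mathbf{x})-H_tf(\mathbf{x}')|.
\]
The first term is at most $CMt^{\beta/2}=CM\|\mathbf{x}-\mathbf{x}'\|^\beta$ by Step 1.

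\emph{Step 3 (gradient of $H_tf$).} For the second term, the mean value theorem gives the bound $\|\mathbf{x}-\mathbf{x}'\|\,\sup_{\mathbf{z}}|\nabla_{\mathbf{x}} H_tf(\mathbf{z})|$. The key estimate to prove is
\[
\|\partial_{x_j}H_tf\|_{L^\infty}\le CMt^{(\beta-1)/2}.
\]
Grant it for a moment. The second term is then at most $CM\|\mathbf{x}-\mathbf{x}'\|\,t^{(\beta-1)/2}=CM\|\mathbf{x}-\mathbf{x}'\|^{\beta}$, which completes the argument.

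To prove the gradient bound, take $T\gg t$ and write
\[
\partial_{x_j}H_tf=\partial_{x_j}H_Tf-\int_t^T\partial_{x_j}\partial_sH_sf\,ds.
\]
By Lemma~\ref{lem:semigroup_derivative_estimate} with $j_1=1$ and $j_2=1$, the integrand is bounded by $CMs^{\beta/2-3/2}$. Since $\beta<1$ this is integrable at infinity, and the integral is at most $CMt^{(\beta-1)/2}$.

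It remains to show that $\partial_{x_j}H_Tf(\mathbf{x})\to0$ as $T\to\infty$ for each fixed $\mathbf{x}$. This is the main obstacle, because $f$ is only known to grow like $(1+\|\cdot\|)^\beta$ and is not bounded. By Theorem~\ref{teo:heat_new} with $|\boldsymbol\beta|=1$, combined with the integration argument of Lemma~\ref{lem:heat_semigroup_well_defined}, we get
\[
|\partial_{x_j}H_Tf(\mathbf{x})|\le CMT^{-1/2}(1+\|\mathbf{x}\|+\sqrt T)^\beta .
\]
Since $\beta<1$, the right-hand side tends to $0$ as $T\to\infty$ for fixed $\mathbf{x}$. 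This pointwise limit is enough, because the integral bound is uniform in $\mathbf{x}$. Hence $\|\nabla H_tf\|_{L^\infty}\le CMt^{(\beta-1)/2}$, which is exactly where the restriction $\beta<1$ is used.
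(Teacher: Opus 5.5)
Your proof is correct and follows essentially the same route as the paper. You split at the scale $t=\|\mathbf{x}-\mathbf{x}'\|^2$, control $f-H_tf$ by integrating $\|\partial_sH_sf\|_{L^\infty}\le Ms^{\beta/2-1}$ near $0$, and bound $\nabla H_tf$ by integrating the estimate $s^{\beta/2-3/2}$ from Lemma~\ref{lem:semigroup_derivative_estimate} out to infinity using the decay of $\nabla H_Tf$, which is exactly where $\beta<1$ enters.

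The differences are only in presentation. You fix the continuous representative first via uniform convergence of $H_tf$ (as the paper itself does for $1<\beta<2$), whereas the paper proves a uniform H\"older bound for $H_\varepsilon f$ and passes to the a.e.\ limit. You also spell out the decay estimate $|\partial_{x_j}H_Tf(\mathbf{x})|\le CMT^{-1/2}(1+\|\mathbf{x}\|+\sqrt T)^\beta$, which the paper only asserts by citation.
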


\begin{proof}
    Let $f \in \widetilde{\Lambda}_k^{\beta/2}$. Since the weighted $L^\infty$-norm bound is shared by both spaces, it trivially holds that
    \begin{equation}\label{eq:Lambda_f_weighted}
        \|f(\cdot)(1+\|\cdot\|)^{-\beta}\|_{L^\infty} \leq \|f\|_{\widetilde{\Lambda}_k^{\beta/2}}.
    \end{equation}
    
    We now turn to prove the Lipschitz regularity of $f$. Fix any $\mathbf{x}, \mathbf{x}' \in \mathbb{R}^N$ such that $\mathbf{x} \neq \mathbf{x}'$, and set $\delta = \|\mathbf{x} - \mathbf{x}'\|$. Using the Fundamental Theorem of Calculus:
    \begin{align}\label{eq:Lambda_split_heat}
        |f(\mathbf{x}) - f(\mathbf{x}')| &= \lim_{\varepsilon \to 0^+} |H_\varepsilon f(\mathbf{x}) - H_\varepsilon f(\mathbf{x}')| \nonumber \\
        &\leq \lim_{\varepsilon \to 0^+} \left| \int_\varepsilon^{\delta^2} \frac{d}{ds} \Big(H_s f(\mathbf{x}) - H_s f(\mathbf{x}')\Big) \, ds \right| + \left| H_{\delta^2} f(\mathbf{x}) - H_{\delta^2} f(\mathbf{x}') \right| \nonumber \\
        &\leq \int_0^{\delta^2} |\partial_s H_s f(\mathbf{x})| \, ds + \int_0^{\delta^2} |\partial_s H_s f(\mathbf{x}')| \, ds + \left| H_{\delta^2} f(\mathbf{x}) - H_{\delta^2} f(\mathbf{x}') \right|.
    \end{align}

    For the integral terms, the definition of the space $\widetilde{\Lambda}_k^{\beta/2}$ directly implies that $\|\partial_s H_s f\|_{L^\infty} \leq \|f\|_{\widetilde{\Lambda}_k^{\beta/2}} s^{\beta/2 - 1}$. Therefore,
    \begin{equation}\label{eq:Lambda_split_part1}
        \int_0^{\delta^2} |\partial_s H_s f(\mathbf{x})| \, ds \leq \|f\|_{\widetilde{\Lambda}_k^{\beta/2}} \int_0^{\delta^2} s^{\beta/2 - 1} \, ds = \frac{2}{\beta} \|f\|_{\widetilde{\Lambda}_k^{\beta/2}} (\delta^2)^{\beta/2} = \frac{2}{\beta} \|f\|_{\widetilde{\Lambda}_k^{\beta/2}} \delta^\beta.
    \end{equation}
    The exact same bound applies to the integral involving $\mathbf{x}'$.

    For the remaining difference, we use the Mean Value Theorem along with a gradient estimate. Since $0 < \beta < 1$, we have $m=1$. For any $r > 0$, Lemma~\ref{lem:semigroup_derivative_estimate} with $j_1=1$ and $j_2=1=n$ yields
    \[
        \|\nabla \partial_r H_r f\|_{L^\infty} \leq C_0 \|f\|_{\widetilde{\Lambda}_k^{\beta/2}} r^{\beta/2 - 3/2}.
    \]
    Since (by Theorem~\ref{teo:heat_new} and Lemma~\ref{lem:heat_semigroup_well_defined}) $\nabla H_T f \to 0$ pointwise as $T \to \infty$ and $\beta/2 - 3/2 < -1$, we can integrate this estimate from $s$ to $\infty$ to obtain
    \[
        \|\nabla H_s f\|_{L^\infty} \leq \int_s^\infty \|\nabla \partial_r H_r f\|_{L^\infty} \, dr \leq C_0 \|f\|_{\widetilde{\Lambda}_k^{\beta/2}} \int_s^\infty r^{\beta/2 - 3/2} \, dr = \frac{C_0}{1/2 - \beta/2} \|f\|_{\widetilde{\Lambda}_k^{\beta/2}} s^{\beta/2 - 1/2}.
    \]
    Evaluating this gradient bound at $s = \delta^2$, we get
    \[
        \|\nabla H_{\delta^2} f\|_{L^\infty} \leq C_1 \|f\|_{\widetilde{\Lambda}_k^{\beta/2}} (\delta^2)^{\beta/2 - 1/2} = C_1 \|f\|_{\widetilde{\Lambda}_k^{\beta/2}} \delta^{\beta - 1}.
    \]
    Consequently, by the Mean Value Theorem,
    \begin{equation}\label{eq:Lambda_split_part2}
        |H_{\delta^2} f(\mathbf{x}) - H_{\delta^2} f(\mathbf{x}')| \leq \|\mathbf{x} - \mathbf{x}'\| \|\nabla H_{\delta^2} f\|_{L^\infty} \leq \delta \cdot C_1 \|f\|_{\widetilde{\Lambda}_k^{\beta/2}} \delta^{\beta - 1} = C_1 \|f\|_{\widetilde{\Lambda}_k^{\beta/2}} \delta^\beta.
    \end{equation}

    Combining \eqref{eq:Lambda_split_part1} and \eqref{eq:Lambda_split_part2} into \eqref{eq:Lambda_split_heat}, we obtain that for all $\varepsilon \in (0, \delta^2)$:
    \[
        |H_\varepsilon f(\mathbf{x}) - H_\varepsilon f(\mathbf{x}')| \leq \left( C_1 + \frac{4}{\beta} \right) \|f\|_{\widetilde{\Lambda}_k^{\beta/2}} \delta^\beta = C \|f\|_{\widetilde{\Lambda}_k^{\beta/2}} \|\mathbf{x} - \mathbf{x}'\|^\beta.
    \]
    By our assumption \eqref{eq:Lambda_f_weighted}, the function $f$ satisfies the weighted bound $f(\cdot)(1+\|\cdot\|)^{-\beta} \in L^{\infty}(dw)$. Thus, Lemma~\ref{lem:converge} ensures that $H_\varepsilon f \to f$ pointwise almost everywhere as $\varepsilon \to 0^+$. Since the approximations $H_\varepsilon f$ satisfy the Hölder condition uniformly with a constant independent of $\varepsilon$, their almost everywhere limit $f$ admits a uniformly Hölder continuous representative. Identifying $f$ with this continuous representative, we can pass to the limit $\varepsilon \to 0^+$ for all $\mathbf{x}, \mathbf{x}' \in \mathbb{R}^N$ to obtain
    \[
        |f(\mathbf{x}) - f(\mathbf{x}')| \leq C \|f\|_{\widetilde{\Lambda}_k^{\beta/2}} \|\mathbf{x} - \mathbf{x}'\|^\beta.
    \]
    Dividing by $\|\mathbf{x} - \mathbf{x}'\|^\beta$ and taking the supremum over all $\mathbf{x} \neq \mathbf{x}'$ yields the required bound for the Lipschitz seminorm. Adding \eqref{eq:Lambda_f_weighted} completes the proof.
\end{proof}

\subsection{Case \texorpdfstring{$1 < \beta<2$}{1<beta<2}}

Unlike the previous regime, only one of the inclusions between the considered function spaces can be carried out using standard classical arguments. To establish the second inclusion, we rely on a specialized technique introduced in~\cite{DzHL}. Recall that the space $\widetilde{\Lambda}_k^{\beta/2,H}$ is defined in Definition~\ref{def:lipschitz_weighted_holder_2}.

\begin{remark}\label{rem:zygmund_equivalence}
    For $0 < \beta < 1$, the definition of the space $\widetilde{\Lambda}_k^{\beta/2,H}$ given via the first-order difference condition \eqref{eq:cond_L_Dunkl} is equivalent to the definition based on the Zygmund-type second symmetric difference condition \eqref{eq:cond_L_Dunkl_zygmund}. More precisely, for any measurable function $f$ satisfying $\| f(\cdot)(1+\|\cdot\|)^{-\beta}\|_{L^\infty} < \infty$, the first-difference semi-norm
    \[
        \sup_{\mathbf{x}\ne \mathbf{x}'}\frac{|f(\mathbf{x})-f(\mathbf{x}')|}{\|\mathbf{x}-\mathbf{x}'\|^\beta}
    \]
    and the second-difference semi-norm
    \[
        \sup_{\mathbf{x}\in\mathbb{R}^N} \sup_{\mathbf{y}\neq \mathbf{0}} \frac{|f(\mathbf{x}+\mathbf{y}) + f(\mathbf{x}-\mathbf{y}) - 2f(\mathbf{x})|}{\|\mathbf{y}\|^\beta}
    \]
    are equivalent, and thus yield equivalent norms for $\widetilde{\Lambda}_k^{\beta/2,H}$. This equivalence is established in \cite[Proposition 3.7]{DeLeonTorrea}.
\end{remark}

The following lemma establishes the first inclusion, and its proof closely imitates the classical approach.

\begin{lemma}\label{lem:inclusion_standard_to_H_zygmund}
    Let $1 < \beta < 2$. If $f \in \widetilde{\Lambda}_k^{\beta/2}$, then $f \in \widetilde{\Lambda}_k^{\beta/2,H}$. Moreover, there exists a constant $C>0$, independent of $f$, such that
    \[
        \|f\|_{\widetilde{\Lambda}_k^{\beta/2,H}} \leq C \|f\|_{\widetilde{\Lambda}_k^{\beta/2}}.
    \]
\end{lemma}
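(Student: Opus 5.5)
The weighted bound $\|f(\cdot)(1+\|\cdot\|)^{-\beta}\|_{L^\infty}$ is common to both norms. So the task is to bound the Zygmund seminorm in \eqref{eq:norm_L_Dunkl_zygmund} by $C\|f\|_{\widetilde{\Lambda}_k^{\beta/2}}$. For $1<\beta<2$ we have $n=1$. By Lemma~\ref{lem:derivative_existence}, $f$ has a $C^1$ representative, and $\partial_{x_j}H_tf\to\partial_{x_j}f$ uniformly. I will imitate the classical Stein--Taibleson argument. Fix $\mathbf{x}$, $\mathbf{y}\neq 0$, set $\delta=\|\mathbf{y}\|$, and write $\Delta^2_{\mathbf y}g(\mathbf x)=g(\mathbf{x}+\mathbf{y})+g(\mathbf{x}-\mathbf{y})-2g(\mathbf{x})$. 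Since $H_\varepsilon f\to f$ pointwise (the convergence is even uniform, by the proof of Lemma~\ref{lem:derivative_existence}), the fundamental theorem of calculus in $s$ gives
\[
\Delta^2_{\mathbf y}f(\mathbf x)=-\int_0^{\delta^2}\Delta^2_{\mathbf y}\,\partial_sH_sf(\mathbf x)\,ds+\Delta^2_{\mathbf y}H_{\delta^2}f(\mathbf x).
\]

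\emph{Small times.} Bound $|\Delta^2_{\mathbf y}\partial_sH_sf|\le 4\|\partial_sH_sf\|_{L^\infty}\le 4\|f\|_{\widetilde{\Lambda}_k^{\beta/2}}s^{\beta/2-1}$. Integrating over $(0,\delta^2)$ gives $C\|f\|_{\widetilde{\Lambda}_k^{\beta/2}}\delta^{\beta}$, because $\beta/2-1>-1$.

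\emph{Large time.} Taylor's formula with the second derivative gives
\[
|\Delta^2_{\mathbf y}H_{\delta^2}f(\mathbf x)|\le \delta^2\sup_{i,j}\|\partial_{x_i}\partial_{x_j}H_{\delta^2}f\|_{L^\infty}.
\]
Lemma~\ref{lem:semigroup_derivative_estimate} only handles repeated derivatives in a single variable $x_j$. I will therefore either prove the mixed version $\partial_{x_i}\partial_{x_j}$ by the same argument (Theorem~\ref{teo:heat_new} allows arbitrary multi-indices), or apply Taylor's formula along the line $\mathbf{x}+\tau\mathbf{y}$ and use the directional second derivative $\partial^2_{\mathbf{y}/\delta}$. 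The heat kernel bounds cover this as well, since a rotated direction is a combination of the coordinate derivatives. This yields
\[
\|\partial^{\boldsymbol\alpha}_{\mathbf x}\partial_rH_rf\|_{L^\infty}\le C\|f\|_{\widetilde{\Lambda}_k^{\beta/2}}r^{\beta/2-2}
\]
for $|\boldsymbol\alpha|=2$. Then I write
\[
\partial^{\boldsymbol\alpha}H_{s}f=-\int_s^\infty\partial^{\boldsymbol\alpha}\partial_rH_rf\,dr
\]
and get $\|\partial^{\boldsymbol\alpha}H_sf\|_{L^\infty}\le C\|f\|_{\widetilde{\Lambda}_k^{\beta/2}}s^{\beta/2-1}$; the integral converges because $\beta/2-2<-1$. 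Taking $s=\delta^2$ gives the bound $C\|f\|_{\widetilde{\Lambda}_k^{\beta/2}}\delta^\beta$.

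\emph{Main obstacle.} The delicate step is the representation $\partial^{\boldsymbol\alpha}H_sf=-\int_s^\infty\partial^{\boldsymbol\alpha}\partial_rH_rf\,dr$, which requires $\partial^{\boldsymbol\alpha}H_Tf(\mathbf x)\to0$ as $T\to\infty$ for fixed $\mathbf x$. $f$ itself may grow like $(1+\|\mathbf x\|)^\beta$, but Theorem~\ref{teo:heat_new} together with the computation in Lemma~\ref{lem:heat_semigroup_well_defined} (applied to $\partial^{\boldsymbol\alpha}_{\mathbf x}h_T$) gives
\[
|\partial^{\boldsymbol\alpha}H_Tf(\mathbf x)|\le C T^{-1}(1+\|\mathbf x\|+\sqrt T)^\beta,
\]
which tends to $0$ since $\beta<2$. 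Here the hypothesis $\beta<2$ is used essentially. This is exactly the analogue of the gradient step in Lemma~\ref{lem:inclusion_standard_to_H}. Combining the two parts and taking suprema over $\mathbf x$ and $\mathbf y$ gives the lemma.
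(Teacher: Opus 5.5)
Your proof is correct and follows essentially the same route as the paper. Both split $f = H_{\delta^2}f - \int_0^{\delta^2}\partial_s H_s f\,ds$ with $\delta=\|\mathbf y\|$, bound the small-time part by $4\|\partial_sH_sf\|_{L^\infty}$, and control the large-time part through $\nabla^2 H_sf=-\int_s^\infty \nabla^2\partial_rH_rf\,dr$. You also fill in two steps the paper only asserts: that mixed second derivatives need the obvious multi-index version of Lemma~\ref{lem:semigroup_derivative_estimate}, and that the decay $\partial^{\boldsymbol\alpha}_{\mathbf x}H_Tf(\mathbf x)\to0$ follows from the bound $CT^{-1}(1+\|\mathbf x\|+\sqrt T)^{\beta}$ because $\beta<2$.
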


\begin{proof}
    Let $f \in \widetilde{\Lambda}_k^{\beta/2}$. The weighted $L^\infty$-norm bound trivially holds:
    \begin{equation}\label{eq:L_inf_part_zygmund}
        \|f(\cdot)(1+\|\cdot\|)^{-\beta}\|_{L^\infty} \leq \|f\|_{\widetilde{\Lambda}_k^{\beta/2}}.
    \end{equation}

    To prove the Zygmund regularity, fix $\mathbf{x} \in \mathbb{R}^N$ and $\mathbf{y} \in \mathbb{R}^N \setminus \{\mathbf{0}\}$, and set $\delta = \|\mathbf{y}\| > 0$. For any function $g$, we denote the second-order difference operator by
    \[
        \Delta_{\mathbf{y}}^2 g(\mathbf{x}) = g(\mathbf{x}+\mathbf{y}) + g(\mathbf{x}-\mathbf{y}) - 2g(\mathbf{x}).
    \]

    Since $f \in \widetilde{\Lambda}_k^{\beta/2}$, Lemma~\ref{lem:semigroup_derivative_estimate} (with $j_1=0$, $j_2=1=n$) yields $\|\partial_s H_s f\|_{L^\infty} \leq C \|f\|_{\widetilde{\Lambda}_k^{\beta/2}} s^{\beta/2 - 1}$. Because $\beta > 0$, the integral $\int_0^{\delta^2} \|\partial_s H_s f\|_{L^\infty} \, ds$ is finite, implying that $H_s f$ converges uniformly as $s \to 0^+$ to a continuous representative of $f$. Identifying $f$ with its continuous representative, we can write it pointwise as the integral of its time derivative evaluated up to the scale $s = \delta^2 = \|\mathbf{y}\|^2$:
    \[
        f(\mathbf{x}) = H_{\delta^2} f(\mathbf{x}) - \int_0^{\delta^2} \partial_s H_s f(\mathbf{x}) \, ds.
    \]

    Applying the linear difference operator $\Delta_{\mathbf{y}}^2$ to both sides yields
    \begin{equation}\label{eq:split_zygmund}
        |\Delta_{\mathbf{y}}^2 f(\mathbf{x})| \leq |\Delta_{\mathbf{y}}^2 (H_{\delta^2} f)(\mathbf{x})| + \int_0^{\delta^2} |\Delta_{\mathbf{y}}^2 (\partial_s H_s f)(\mathbf{x})| \, ds.
    \end{equation}

    For the integral term in \eqref{eq:split_zygmund}, we use the bound $|\Delta_{\mathbf{y}}^2 g(\mathbf{x})| \leq 4 \|g\|_{L^\infty}$. Since $\|\partial_s H_s f\|_{L^\infty} \leq \|f\|_{\widetilde{\Lambda}_k^{\beta/2}} s^{\beta/2 - 1}$, we have
    \begin{equation}\label{eq:part1_zygmund}
        \int_0^{\delta^2} |\Delta_{\mathbf{y}}^2 (\partial_s H_s f)(\mathbf{x})| \, ds \leq 4 \|f\|_{\widetilde{\Lambda}_k^{\beta/2}} \int_0^{\delta^2} s^{\beta/2 - 1} \, ds = \frac{8}{\beta} \|f\|_{\widetilde{\Lambda}_k^{\beta/2}} (\delta^2)^{\beta/2} = \frac{8}{\beta} \|f\|_{\widetilde{\Lambda}_k^{\beta/2}} \|\mathbf{y}\|^\beta.
    \end{equation}

    For the term $|\Delta_{\mathbf{y}}^2 (H_{\delta^2} f)(\mathbf{x})|$, we use the identity for the second difference of a $C^2$ function $F$:
    \[
        \Delta_{\mathbf{y}}^2 F(\mathbf{x}) = \int_0^1 \left( \int_{-t}^t \sum_{i,j=1}^N y_i y_j \frac{\partial^2 F}{\partial x_i \partial x_j}(\mathbf{x} + \tau \mathbf{y}) \, d\tau \right) dt.
    \]
    Taking the absolute value, we obtain
    \[
        |\Delta_{\mathbf{y}}^2 F(\mathbf{x})| \leq \|\mathbf{y}\|^2 \|\nabla^2 F\|_{L^\infty} \int_0^1 \int_{-t}^t d\tau \, dt = \|\mathbf{y}\|^2 \|\nabla^2 F\|_{L^\infty}.
    \]

    To estimate the classical second spatial derivatives of the semigroup, we use the decay at infinity and write $\nabla^2 H_s f = - \int_s^\infty \nabla^2 \partial_t H_t f \, dt$. Applying Lemma~\ref{lem:semigroup_derivative_estimate} with $j_1=2$ and $j_2=1=n$, we have $\|\nabla^2 \partial_t H_t f\|_{L^\infty} \leq C_0 \|f\|_{\widetilde{\Lambda}_k^{\beta/2}} t^{\beta/2 - 2}$. Integrating this bound for $\beta < 2$ yields:
    \[
        \|\nabla^2 H_s f\|_{L^\infty} \leq \int_s^\infty \|\nabla^2 \partial_t H_t f\|_{L^\infty} \, dt \leq C_0 \|f\|_{\widetilde{\Lambda}_k^{\beta/2}} \int_s^\infty t^{\beta/2 - 2} \, dt = C_1 \|f\|_{\widetilde{\Lambda}_k^{\beta/2}} s^{\beta/2 - 1},
    \]
    where $C_1 = \frac{C_0}{1 - \beta/2}$. Evaluating this estimate at $s = \delta^2 = \|\mathbf{y}\|^2$, we get
    \[
        \|\nabla^2 H_{\delta^2} f\|_{L^\infty} \leq C_1 \|f\|_{\widetilde{\Lambda}_k^{\beta/2}} \|\mathbf{y}\|^{\beta - 2}.
    \]

    Thus, applying this bound to $F = H_{\delta^2} f$, we obtain
    \begin{equation}\label{eq:part2_zygmund}
        |\Delta_{\mathbf{y}}^2 (H_{\delta^2} f)(\mathbf{x})| \leq \|\mathbf{y}\|^2 \left( C_1 \|f\|_{\widetilde{\Lambda}_k^{\beta/2}} \|\mathbf{y}\|^{\beta - 2} \right) = C_1 \|f\|_{\widetilde{\Lambda}_k^{\beta/2}} \|\mathbf{y}\|^\beta.
    \end{equation}

    Combining estimates \eqref{eq:part1_zygmund} and \eqref{eq:part2_zygmund} into \eqref{eq:split_zygmund}, we conclude that
    \[
        |\Delta_{\mathbf{y}}^2 f(\mathbf{x})| \leq \left( C_1 + \frac{8}{\beta} \right) \|f\|_{\widetilde{\Lambda}_k^{\beta/2}} \|\mathbf{y}\|^\beta.
    \]
    Dividing by $\|\mathbf{y}\|^\beta$ and taking the supremum over $\mathbf{x} \in \mathbb{R}^N$ and $\mathbf{y} \neq \mathbf{0}$, together with \eqref{eq:L_inf_part_zygmund}, completes the proof.
\end{proof}

We now turn to the proof of the reverse inclusion, which will be established through a sequence of lemmas. We start by proving this inclusion in the standard Euclidean setting ($k \equiv 0$). Aside from serving as a convenient point of reference, the result of this classical lemma will be explicitly used in our approach to the general Dunkl setting later on. Although this result is standard, we provide a brief proof below for the sake of completeness and to keep the exposition self-contained.

\begin{lemma}\label{lem:inclusion_H_to_standard_zygmund_classical}
    Let $1 < \beta < 2$ and let $h_t(\mathbf{x},\mathbf{y}) = h_t(\mathbf{x}-\mathbf{y})$ be the classical Gaussian heat kernel on $\mathbb{R}^N$. If $f \in \widetilde{\Lambda}_k^{\beta/2,H}$, then $f \in \widetilde{\Lambda}_k^{\beta/2}$ and there exists $C>0$ such that
    \[
        \|f\|_{\widetilde{\Lambda}_k^{\beta/2}} \leq C \|f\|_{\widetilde{\Lambda}_k^{\beta/2,H}}.
    \]
\end{lemma}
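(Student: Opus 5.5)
The weighted $L^\infty$ part of the norm appears in both definitions with the same weight, so it transfers directly. Since $1<\beta<2$ gives $n=1$, the only thing left to show is
\[
\sup_{t>0} t^{1-\beta/2}\|\partial_t H_t f\|_{L^\infty}\le C\|f\|_{\widetilde{\Lambda}_k^{\beta/2,H}},
\]
where $H_t$ is the classical Gauss--Weierstrass semigroup, $h_t(\mathbf{x},\mathbf{y})=(4\pi t)^{-N/2}e^{-\|\mathbf{x}-\mathbf{y}\|^2/4t}$. The plan is the classical symmetrization argument: use translation invariance and the radial symmetry of the kernel to bring in the second-order difference.

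By Lemma~\ref{lem:heat_semigroup_well_defined}, differentiation under the integral is justified for functions of polynomial growth. Write
\[
\partial_tH_tf(\mathbf{x})=\int_{\mathbb{R}^N}\partial_t h_t(\mathbf{y})f(\mathbf{x}-\mathbf{y})\,d\mathbf{y}.
\]
The kernel $\partial_t h_t(\mathbf{y})$ is even in $\mathbf{y}$, and $\int\partial_t h_t=0$ (differentiate $\int h_t=1$). Replacing $\mathbf{y}$ by $-\mathbf{y}$, averaging, and subtracting $f(\mathbf{x})\int \partial_t h_t=0$ gives
\[
\partial_tH_tf(\mathbf{x})=\frac12\int_{\mathbb{R}^N}\partial_t h_t(\mathbf{y})\bigl(f(\mathbf{x}+\mathbf{y})+f(\mathbf{x}-\mathbf{y})-2f(\mathbf{x})\bigr)\,d\mathbf{y}.
\]
Each manipulation is legitimate because $f$ grows at most like $(1+\|\cdot\|)^\beta$ and $\partial_th_t$ has Gaussian decay, so all integrals converge absolutely.

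Next, insert the Zygmund bound $|\Delta^2_{\mathbf{y}}f(\mathbf{x})|\le \|f\|_{\widetilde{\Lambda}_k^{\beta/2,H}}\|\mathbf{y}\|^\beta$ together with the explicit estimate $|\partial_t h_t(\mathbf{y})|\le Ct^{-1-N/2}e^{-\|\mathbf{y}\|^2/(8t)}$, which follows from $\partial_t h_t=h_t\bigl(\|\mathbf{y}\|^2/(4t^2)-N/(2t)\bigr)$ or from Theorem~\ref{teo:heat_new} with $k\equiv 0$. Substituting $\mathbf{y}=\sqrt t\,\mathbf{u}$ gives
\[
|\partial_tH_tf(\mathbf{x})|\le C\|f\|_{\widetilde{\Lambda}_k^{\beta/2,H}}\,t^{\beta/2-1}\int_{\mathbb{R}^N}\|\mathbf{u}\|^\beta e^{-\|\mathbf{u}\|^2/8}\,d\mathbf{u},
\]
and the last integral is a finite constant. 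Taking the supremum over $\mathbf{x}$ and $t>0$ completes the argument.

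The only point that needs some care is the symmetrization step, namely justifying the splitting and recombination of the integrals. This reduces to absolute convergence, which the weighted $L^\infty$ bound on $f$ guarantees. Beyond that there is no real obstacle: the Euclidean structure, specifically translation invariance and evenness of the kernel, does all the work. This is exactly the feature that is missing in the Dunkl case and that the later lemmas must circumvent.
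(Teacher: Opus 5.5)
Your proposal is correct and follows essentially the same route as the paper: you symmetrize using the evenness of $\partial_t h_t$ and $\int \partial_t h_t = 0$ to write $\partial_t H_t f$ through the second difference, then combine the Zygmund bound with the Gaussian estimate for $\partial_t h_t$ and rescale by $\mathbf{y}=\sqrt{t}\,\mathbf{u}$. Your justification of absolute convergence from the polynomial growth of $f$ is a useful detail that the paper leaves implicit.
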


\begin{proof}
    The weighted $L^\infty$-norm bound holds by definition. For the semigroup seminorm ($n=1$), using the radial symmetry of $h_t$ and the identity $\int_{\mathbb{R}^N} \partial_t h_t(\mathbf{y}) \, d\mathbf{y} = 0$, we represent the time derivative via the Zygmund second difference:
    \[
        \partial_t H_t f(\mathbf{x}) = \frac{1}{2} \int_{\mathbb{R}^N} \partial_t h_t(\mathbf{y}) \Big( f(\mathbf{x}+\mathbf{y}) + f(\mathbf{x}-\mathbf{y}) - 2f(\mathbf{x}) \Big) \, d\mathbf{y}.
    \]
    Applying the Zygmund condition together with the kernel estimate $|\partial_t h_t(\mathbf{y})| \leq C_1 t^{-1 - N/2} \exp(-c \|\mathbf{y}\|^2 / t)$ and setting $\mathbf{z} = \mathbf{y} / \sqrt{t}$, we get:
    \[
        |\partial_t H_t f(\mathbf{x})| \leq \frac{C_1}{2} \|f\|_{\widetilde{\Lambda}_k^{\beta/2,H}} t^{\beta/2 - 1} \int_{\mathbb{R}^N} e^{-c\|\mathbf{z}\|^2} \|\mathbf{z}\|^\beta \, d\mathbf{z} = C_2 \|f\|_{\widetilde{\Lambda}_k^{\beta/2,H}} t^{\beta/2 - 1}.
    \]
    Multiplying by $t^{1-\beta/2}$ and taking the supremum over $\mathbf{x} \in \mathbb{R}^N$ and $t > 0$ yields the required seminorm bound.
\end{proof}

\begin{lemma}\label{lem:derivative_H_space}
    Let $1 < \beta < 2$. If $f \in \widetilde{\Lambda}_k^{\beta/2,H}$, then $f$ is continuously differentiable on $\mathbb{R}^N$ and for each $j \in \{1, \dots, N\}$, the partial derivative $\partial_j f$ belongs to $\widetilde{\Lambda}_k^{(\beta-1)/2,H}$. Moreover, there exists a constant $C>0$, independent of $f$, such that
    \[
        \|\partial_j f\|_{\widetilde{\Lambda}_k^{(\beta-1)/2,H}} \leq C \|f\|_{\widetilde{\Lambda}_k^{\beta/2,H}}.
    \]
\end{lemma}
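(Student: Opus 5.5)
The plan is to route everything through the Euclidean case $k\equiv 0$. The key observation is that the target space $\widetilde{\Lambda}_k^{(\beta-1)/2,H}$ is defined in Definition~\ref{def:lipschitz_weighted_holder_1} purely pointwise: a weighted growth bound with weight $(1+\|\cdot\|)^{-(\beta-1)}$ plus a first-order Hölder condition of order $\beta-1\in(0,1)$. Neither condition involves $R$ or $k$, so the space is the same for every multiplicity function. The same holds for the hypothesis space $\widetilde{\Lambda}_k^{\beta/2,H}$ from Definition~\ref{def:lipschitz_weighted_holder_2}. Consequently it suffices to prove the statement with the semigroup machinery of Sections~\ref{sec:lipschitz}--\ref{sec:equiv} specialised to $k\equiv0$. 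In that case $dw$ is Lebesgue measure, $H_t$ is the Gauss--Weierstrass semigroup, $d(\mathbf{x},\mathbf{y})=\|\mathbf{x}-\mathbf{y}\|$, and $T_j=\partial_j$. All results of Section~\ref{sec:lipschitz} (Lemmas~\ref{lem:heat_semigroup_well_defined}--\ref{lem:derivative_lipschitz} and Corollary~\ref{cor:dunkl_derivative_existence}) are stated for arbitrary $k\ge0$, so they apply verbatim. Below, $\widetilde{\Lambda}_0^{\beta/2}$ denotes the space of Definition~\ref{def:lipschitz_weighted} built with this classical semigroup.

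\emph{Step 1 (semigroup bound).} Let $f\in\widetilde{\Lambda}_k^{\beta/2,H}$ with $1<\beta<2$. By Lemma~\ref{lem:inclusion_H_to_standard_zygmund_classical}, which uses only the Zygmund condition and the radial Gaussian kernel, $f\in\widetilde{\Lambda}_0^{\beta/2}$ with $\|f\|_{\widetilde{\Lambda}_0^{\beta/2}}\le C\|f\|_{\widetilde{\Lambda}_k^{\beta/2,H}}$.

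\emph{Step 2 (differentiability).} Apply Lemma~\ref{lem:derivative_existence} with $k\equiv0$. It gives a continuous representative of $f$ whose partial derivatives $\partial_j f$ exist everywhere, are continuous, and satisfy $\partial_j H_tf\to\partial_jf$ uniformly. The continuous representative agrees with $f$ almost everywhere. The Zygmund condition, however, holds for every pair of points, and together with local boundedness it forces $f$ to be continuous already, so the representative coincides with $f$ itself. Hence $f\in C^1(\mathbb{R}^N)$.

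\emph{Step 3 (semigroup membership of the derivative).} Apply Lemma~\ref{lem:derivative_lipschitz} with $k\equiv0$, where $T_j=\partial_j$. This yields $\partial_jf\in\widetilde{\Lambda}_0^{(\beta-1)/2}$ with $\|\partial_jf\|_{\widetilde{\Lambda}_0^{(\beta-1)/2}}\le C\|f\|_{\widetilde{\Lambda}_0^{\beta/2}}$. This step supplies both the weighted growth bound $|\partial_jf(\mathbf{x})|\lesssim(1+\|\mathbf{x}\|)^{\beta-1}$, obtained via the choice $t=(1+\|\mathbf{x}\|)^2$ in the proof of that lemma, and the decay $\|\partial_tH_t\partial_jf\|_{L^\infty}\lesssim t^{(\beta-1)/2-1}$.

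\emph{Step 4 (back to the pointwise space).} Since $0<\beta-1<1$, Lemma~\ref{lem:inclusion_standard_to_H} applied with $k\equiv0$ and exponent $\beta-1$ converts this into the Hölder statement: $\partial_jf\in\widetilde{\Lambda}_k^{(\beta-1)/2,H}$ with norm at most $C\|\partial_jf\|_{\widetilde{\Lambda}_0^{(\beta-1)/2}}$. Here we again use that the pointwise space does not depend on $k$. Chaining Steps~1, 3 and~4 gives
\[
\|\partial_jf\|_{\widetilde{\Lambda}_k^{(\beta-1)/2,H}}\le C\|f\|_{\widetilde{\Lambda}_k^{\beta/2,H}} .
\]

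The main point requiring care is the legitimacy of specialising to $k\equiv0$. One must check that every cited lemma was proved for arbitrary $k\ge0$, including $k=0$, where Theorem~\ref{teo:heat_new} reduces to classical Gaussian bounds and formula \eqref{eq:T_j} becomes the classical gradient formula. One must also check that the $L^\infty$ decay and weighted-growth hypotheses of Lemma~\ref{lem:derivative_lipschitz} hold. Both are guaranteed by Step~1, so I expect no genuine obstruction, only bookkeeping.
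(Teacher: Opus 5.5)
Your proposal is correct and follows the paper's proof essentially step for step. You identify the pointwise spaces with their $k\equiv 0$ counterparts, pass to $\widetilde{\Lambda}_0^{\beta/2}$ via Lemma~\ref{lem:inclusion_H_to_standard_zygmund_classical}, differentiate via Lemma~\ref{lem:derivative_lipschitz} with $k\equiv 0$, and return to the H\"older space via Lemma~\ref{lem:inclusion_standard_to_H} with exponent $\beta-1$. Your Step~2 only makes explicit what the paper glosses over by ``identifying $f$ with its continuous representative''. A clean justification for it: if $g$ is that representative, then $h=f-g$ vanishes a.e.\ and satisfies the Zygmund bound at every point, so choosing arbitrarily small $\mathbf{y}$ with $h(\mathbf{x}\pm\mathbf{y})=0$ forces $h(\mathbf{x})=0$.
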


\begin{proof}
    We begin by observing that the definition of the inhomogeneous weighted Lipschitz space $\widetilde{\Lambda}_k^{\delta,H}$ (both for $0 < \delta < 1/2$ and $1/2 < \delta < 1$) depends exclusively on the Euclidean norm and the classical difference operators. It does not involve the root system $R$ nor the multiplicity function $k$. Consequently, these spaces coincide exactly with their classical counterparts, which we denote by setting $k \equiv 0$. In particular, $\widetilde{\Lambda}_k^{\beta/2,H} = \widetilde{\Lambda}_0^{\beta/2,H}$ and they share the same norm.

    Let $f \in \widetilde{\Lambda}_k^{\beta/2,H} = \widetilde{\Lambda}_0^{\beta/2,H}$. By Lemma~\ref{lem:inclusion_H_to_standard_zygmund_classical}, which was established for the classical heat kernel, we know that $f$ belongs to the classical semigroup space $\widetilde{\Lambda}_0^{\beta/2}$, and
    \begin{equation}\label{eq:k0_bound1}
        \|f\|_{\widetilde{\Lambda}_0^{\beta/2}} \leq C_1 \|f\|_{\widetilde{\Lambda}_0^{\beta/2,H}} = C_1 \|f\|_{\widetilde{\Lambda}_k^{\beta/2,H}}.
    \end{equation}

    Next, we use the derivative properties of the space (case $k \equiv 0$ in Lemma~\ref{lem:derivative_lipschitz}). Since $1 < \beta < 2$ and $f \in \widetilde{\Lambda}_0^{\beta/2}$, $f$ is continuously differentiable, and its classical partial derivatives $\partial_j f$ belong to $\widetilde{\Lambda}_0^{(\beta-1)/2}$. Furthermore, the norm of the derivative is bounded by the norm of the function:
    \begin{equation}\label{eq:k0_bound2}
        \|\partial_j f\|_{\widetilde{\Lambda}_0^{(\beta-1)/2}} \leq C_2 \|f\|_{\widetilde{\Lambda}_0^{\beta/2}}.
    \end{equation}

    Now, since $0 < \beta - 1 < 1$, we are in the Hölder regime for the exponent $(\beta-1)/2$. We can thus apply the classical version of Lemma~\ref{lem:inclusion_standard_to_H} (i.e., for $k \equiv 0$) to deduce that $\partial_j f \in \widetilde{\Lambda}_0^{(\beta-1)/2, H}$, with the estimate
    \begin{equation}\label{eq:k0_bound3}
        \|\partial_j f\|_{\widetilde{\Lambda}_0^{(\beta-1)/2,H}} \leq C_3 \|\partial_j f\|_{\widetilde{\Lambda}_0^{(\beta-1)/2}}.
    \end{equation}

    Finally, we use the independence of the spaces from the multiplicity function $k$ once again to identify $\widetilde{\Lambda}_0^{(\beta-1)/2,H} = \widetilde{\Lambda}_k^{(\beta-1)/2,H}$. Combining the inequalities \eqref{eq:k0_bound1}, \eqref{eq:k0_bound2}, and \eqref{eq:k0_bound3}, we obtain
    \[
        \|\partial_j f\|_{\widetilde{\Lambda}_k^{(\beta-1)/2,H}} = \|\partial_j f\|_{\widetilde{\Lambda}_0^{(\beta-1)/2,H}} \leq C_3 \|\partial_j f\|_{\widetilde{\Lambda}_0^{(\beta-1)/2}} \leq C_2 C_3 \|f\|_{\widetilde{\Lambda}_0^{\beta/2}} \leq C_1 C_2 C_3 \|f\|_{\widetilde{\Lambda}_k^{\beta/2,H}}.
    \]
    Setting $C = C_1 C_2 C_3$ completes the proof.
\end{proof}

\begin{corollary}\label{cor:derivative_semigroup_space}
    Let $1 < \beta < 2$. If $f \in \widetilde{\Lambda}_k^{\beta/2,H}$, then $f$ is continuously differentiable on $\mathbb{R}^N$ and for each $j \in \{1, \dots, N\}$, the partial derivative $\partial_j f$ belongs to $\widetilde{\Lambda}_k^{(\beta-1)/2}$. Moreover, there exists a constant $C>0$, independent of $f$, such that
    \[
        \|\partial_j f\|_{\widetilde{\Lambda}_k^{(\beta-1)/2}} \leq C \|f\|_{\widetilde{\Lambda}_k^{\beta/2,H}}.
    \]
\end{corollary}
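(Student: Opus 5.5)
The plan is to combine Lemma~\ref{lem:derivative_H_space} with the equivalence in the Hölder regime established earlier. Let $f \in \widetilde{\Lambda}_k^{\beta/2,H}$ with $1<\beta<2$. By Lemma~\ref{lem:derivative_H_space}, $f$ is continuously differentiable. Moreover, for each $j$ the classical partial derivative $\partial_j f$ belongs to the pointwise space $\widetilde{\Lambda}_k^{(\beta-1)/2,H}$, with
\[
\|\partial_j f\|_{\widetilde{\Lambda}_k^{(\beta-1)/2,H}} \leq C\|f\|_{\widetilde{\Lambda}_k^{\beta/2,H}}.
\]
Since $0<\beta-1<1$, this pointwise space is the one from Definition~\ref{def:lipschitz_weighted_holder_1} with exponent $\beta-1$. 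Its norm consists of the weight $(1+\|\cdot\|)^{-(\beta-1)}$ together with the first-order Hölder seminorm of order $\beta-1$.

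Next I would apply Lemma~\ref{lem:inclusion_H_to_standard} with $\beta-1$ in place of $\beta$. That lemma is proved for the general Dunkl heat semigroup $H_t$, not only for $k\equiv 0$. It gives $\partial_j f\in\widetilde{\Lambda}_k^{(\beta-1)/2}$ and
\[
\|\partial_j f\|_{\widetilde{\Lambda}_k^{(\beta-1)/2}}\le C\|\partial_j f\|_{\widetilde{\Lambda}_k^{(\beta-1)/2,H}}.
\]
Chaining the two inequalities yields the claimed bound, with constant equal to the product of the two constants.

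The only point needing care is that the argument really uses the Dunkl kernel and not only its classical counterpart. Lemma~\ref{lem:inclusion_H_to_standard} relies only on three facts:
\begin{itemize}
\item the conservation identity $\int\partial_t h_t(\mathbf{x},\mathbf{y})\,dw(\mathbf{y})=0$;
\item the estimate \eqref{eq:heat2}, which contains the Euclidean factor $(1+\|\mathbf{x}-\mathbf{y}\|/\sqrt t)^{-2}$;
\item the Hölder bound $|f(\mathbf{y})-f(\mathbf{x})|\le C\|\mathbf{x}-\mathbf{y}\|^{\beta-1}$, which is Euclidean.
\end{itemize}
The Euclidean factor in the second item absorbs $(\|\mathbf{x}-\mathbf{y}\|/\sqrt t)^{\beta-1}$, since $\beta-1<1<2$. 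No orbit-distance issue therefore arises, and I expect no genuine obstacle. The only check is that $\partial_j f$ is measurable and satisfies the weighted growth bound, so that $H_t(\partial_j f)$ is well defined. Both follow from the continuity of $\partial_j f$ and from Lemma~\ref{lem:derivative_H_space}.
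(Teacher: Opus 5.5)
Your proposal is correct and follows the paper's proof exactly. Lemma~\ref{lem:derivative_H_space} gives $\partial_j f\in\widetilde{\Lambda}_k^{(\beta-1)/2,H}$ with the norm bound, Lemma~\ref{lem:inclusion_H_to_standard} applied with exponent $\beta-1\in(0,1)$ upgrades this to $\widetilde{\Lambda}_k^{(\beta-1)/2}$, and the two constants multiply; your observation that the latter lemma is genuinely proved for the Dunkl kernel, thanks to the Euclidean factor in \eqref{eq:heat2}, is accurate.
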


\begin{proof}
    Let $f \in \widetilde{\Lambda}_k^{\beta/2,H}$. By Lemma~\ref{lem:derivative_H_space}, we know that $f$ is continuously differentiable and its partial derivative satisfies $\partial_j f \in \widetilde{\Lambda}_k^{(\beta-1)/2,H}$, with the norm estimate:
    \begin{equation}\label{eq:cor_bound1}
        \|\partial_j f\|_{\widetilde{\Lambda}_k^{(\beta-1)/2,H}} \leq C_1 \|f\|_{\widetilde{\Lambda}_k^{\beta/2,H}}.
    \end{equation}
    
    Since $1 < \beta < 2$, the exponent $\beta' := \beta - 1$ satisfies $0 < \beta' < 1$. This allows us to apply Lemma~\ref{lem:inclusion_H_to_standard} (which holds for parameters strictly between $0$ and $1$) to the function $\partial_j f$. Consequently, $\partial_j f$ belongs to the Dunkl semigroup space $\widetilde{\Lambda}_k^{(\beta-1)/2}$, and we have the bound:
    \begin{equation}\label{eq:cor_bound2}
        \|\partial_j f\|_{\widetilde{\Lambda}_k^{(\beta-1)/2}} \leq C_2 \|\partial_j f\|_{\widetilde{\Lambda}_k^{(\beta-1)/2,H}}.
    \end{equation}
    
    Combining \eqref{eq:cor_bound1} and \eqref{eq:cor_bound2} directly yields
    $
        \|\partial_j f\|_{\widetilde{\Lambda}_k^{(\beta-1)/2}} \leq C_1 C_2 \|f\|_{\widetilde{\Lambda}_k^{\beta/2,H}}
    $.
    Setting $C = C_1 C_2$ finishes the proof.
\end{proof}

\begin{lemma}\label{lem:Tj_semigroup_space}
    Let $1 < \beta < 2$ and suppose $f \in C^1(\mathbb{R}^N)$. If $\partial_l f \in \widetilde{\Lambda}_k^{(\beta-1)/2}$ for all $l \in \{1, \dots, N\}$, then the Dunkl derivative $T_j f$ belongs to $\widetilde{\Lambda}_k^{(\beta-1)/2}$ for each $j \in \{1, \dots, N\}$. Moreover, there exists a constant $C>0$, independent of $f$, such that
    \[
        \|T_j f\|_{\widetilde{\Lambda}_k^{(\beta-1)/2}} \leq C \sum_{l=1}^N \|\partial_l f\|_{\widetilde{\Lambda}_k^{(\beta-1)/2}}.
    \]
\end{lemma}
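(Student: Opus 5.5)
The plan is to pass to the Euclidean Hölder description of the target space, where the difference part of $T_j$ becomes harmless. Set $\beta' := \beta-1 \in (0,1)$, so that $\widetilde{\Lambda}_k^{(\beta-1)/2} = \widetilde{\Lambda}_k^{\beta'/2}$ with $0<\beta'<1$. By Lemma~\ref{lem:inclusion_standard_to_H} and Lemma~\ref{lem:inclusion_H_to_standard}, this space coincides with $\widetilde{\Lambda}_k^{\beta'/2,H}$ with equivalent norms. That space is defined by the weighted bound $\|g(\cdot)(1+\|\cdot\|)^{-\beta'}\|_{L^\infty}$ and the Hölder seminorm of order $\beta'$. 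So the hypothesis gives, for every $l$,
\[
|\partial_l f(\mathbf{x})| \le C M (1+\|\mathbf{x}\|)^{\beta'}, \qquad |\partial_l f(\mathbf{x}) - \partial_l f(\mathbf{x}')| \le C M \|\mathbf{x}-\mathbf{x}'\|^{\beta'},
\]
where $M = \sum_{l=1}^N \|\partial_l f\|_{\widetilde{\Lambda}_k^{(\beta-1)/2}}$. It then suffices to show $\|T_j f\|_{\widetilde{\Lambda}_k^{\beta'/2,H}} \le CM$ and apply Lemma~\ref{lem:inclusion_H_to_standard} once more.

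First I will justify the representation of $T_jf$ from Corollary~\ref{cor:dunkl_derivative_existence} for $f \in C^1$. Put $A_t = I - 2t\|\alpha\|^{-2}\alpha\alpha^T$, so $A_0 = I$ and $A_1 = \sigma_\alpha$. Since $\|\alpha\|^2 = 2$, we have $\frac{d}{dt} f(A_t\mathbf{x}) = -\langle \alpha, \mathbf{x}\rangle \langle \nabla f(A_t\mathbf{x}), \alpha\rangle$. Integrating over $t \in [0,1]$ gives $f(\mathbf{x}) - f(\sigma_\alpha\mathbf{x}) = \langle\alpha,\mathbf{x}\rangle D_\alpha f(\mathbf{x})$, valid also on $\alpha^\perp$ by continuity. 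Hence
\[
T_j f = \partial_j f + \sum_{\alpha\in R}\frac{k(\alpha)}{2}\langle\alpha,e_j\rangle D_\alpha f, \qquad D_\alpha f(\mathbf{x}) = \int_0^1 \langle \nabla f(A_t\mathbf{x}),\alpha\rangle\,dt.
\]
This is a finite linear combination, so the whole task reduces to bounding each $D_\alpha f$ in $\widetilde{\Lambda}_k^{\beta'/2,H}$ by $CM$.

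The key observation is that $A_t$ is symmetric with eigenvalues $1$ (on $\alpha^\perp$) and $1-2t \in [-1,1]$, so $\|A_t\| \le 1$ for $t \in [0,1]$. For the weighted bound, $|D_\alpha f(\mathbf{x})| \le \sqrt2 \sup_t |\nabla f(A_t\mathbf{x})| \le CM(1+\|A_t\mathbf{x}\|)^{\beta'} \le CM(1+\|\mathbf{x}\|)^{\beta'}$. For the Hölder seminorm,
\[
|D_\alpha f(\mathbf{x}) - D_\alpha f(\mathbf{x}')| \le \sqrt2\int_0^1 |\nabla f(A_t\mathbf{x}) - \nabla f(A_t\mathbf{x}')|\,dt \le CM\sup_t \|A_t(\mathbf{x}-\mathbf{x}')\|^{\beta'} \le CM\|\mathbf{x}-\mathbf{x}'\|^{\beta'}.
\]
The term $\partial_j f$ is already controlled by $CM$. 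Summing over $\alpha \in R$ gives $\|T_jf\|_{\widetilde{\Lambda}_k^{\beta'/2,H}} \le CM$, and Lemma~\ref{lem:inclusion_H_to_standard} converts this into the claimed bound in $\widetilde{\Lambda}_k^{(\beta-1)/2}$.

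I do not expect a serious obstacle. The only points needing care are the identity $f(\mathbf{x}) - f(\sigma_\alpha\mathbf{x}) = \langle\alpha,\mathbf{x}\rangle D_\alpha f(\mathbf{x})$, including the normalization $\|\alpha\|^2 = 2$, and the contraction property $\|A_t\| \le 1$. Together these make the difference part of $T_j$ inherit both the growth bound and the Hölder bound from $\nabla f$. A small technical point is using continuous representatives of $\partial_l f$, which is automatic since $f \in C^1$.
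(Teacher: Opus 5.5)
Your proposal is correct and follows essentially the same route as the paper. You pass to the H\"older description $\widetilde{\Lambda}_k^{(\beta-1)/2,H}$ via Lemmas~\ref{lem:inclusion_standard_to_H} and~\ref{lem:inclusion_H_to_standard}, write $T_jf=\partial_jf+\sum_{\alpha}\frac{k(\alpha)}{2}\langle\alpha,e_j\rangle D_\alpha f$ with $D_\alpha f(\mathbf{x})=\int_0^1\langle\nabla f(A_t\mathbf{x}),\alpha\rangle\,dt$, and use $\|A_t\|\le 1$ to transfer both the growth bound and the H\"older bound from $\nabla f$ to $D_\alpha f$. Two points the paper leaves implicit are made precise in your version: the check of $f(\mathbf{x})-f(\sigma_\alpha\mathbf{x})=\langle\alpha,\mathbf{x}\rangle D_\alpha f(\mathbf{x})$ using $\|\alpha\|^2=2$, and the tracking of $M=\sum_l\|\partial_l f\|_{\widetilde{\Lambda}_k^{(\beta-1)/2}}$, which is what yields the stated norm inequality.
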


\begin{proof}
    By the equivalence of the semigroup and Hölder spaces for exponents strictly between $0$ and $1$ (which applies here since $0 < \beta - 1 < 1$), the assumption implies that $\partial_l f \in \widetilde{\Lambda}_k^{(\beta-1)/2, H}$ for all $l$. Hence, there exists a constant $C > 0$ such that for all $\mathbf{x}, \mathbf{y} \in \mathbb{R}^N$ and all $l \in \{1, \dots, N\}$:
    \begin{equation}\label{eq:grad_bounds}
        |\partial_l f(\mathbf{x})| \leq C (1+\|\mathbf{x}\|)^{\beta-1}, \quad \text{and} \quad |\partial_l f(\mathbf{x}) - \partial_l f(\mathbf{y})| \leq 
        C\|\mathbf{x}-\mathbf{y}\|^{\beta-1}.
    \end{equation}
    Consequently, the gradient $\nabla f$ satisfies identical bounds (up to a dimensional constant).

    Recall that the Dunkl operator is given by
    \[
        T_j f(\mathbf{x}) = \partial_j f(\mathbf{x}) + \sum_{\alpha \in R} \frac{k(\alpha)}{2} \langle \alpha, e_j \rangle D_\alpha f(\mathbf{x}),
    \]
    where $D_\alpha f(\mathbf{x}) = \frac{f(\mathbf{x})-f(\sigma_\alpha(\mathbf{x}))}{\langle \alpha, \mathbf{x} \rangle}$. Since $\partial_j f \in \widetilde{\Lambda}_k^{(\beta-1)/2, H}$, it suffices to prove that each difference quotient $D_\alpha f$ also belongs to $\widetilde{\Lambda}_k^{(\beta-1)/2, H}$.  We can write $D_\alpha f$ as
    \begin{equation}\label{eq:integral_D_alpha}
        D_\alpha f(\mathbf{x}) = \int_0^1 \langle \nabla f(A_t \mathbf{x}), \alpha \rangle \, dt,
    \end{equation}
    where $A_t \mathbf{x} = \mathbf{x} - 2t\alpha \|\alpha\|^{-2}\langle \mathbf{x}, \alpha\rangle$. Observe that $A_t = I - 2t \|\alpha\|^{-2} \alpha \alpha^T$ is a symmetric matrix. Its eigenvalues are $1$ (on the orthogonal complement of $\alpha$) and $1-2t$ (in the direction of $\alpha$). Since $t \in [0,1]$, we have $|1-2t| \leq 1$, meaning the operator norm satisfies $\|A_t\| \leq 1$. Therefore, for any $\mathbf{x}, \mathbf{y} \in \mathbb{R}^N$:
    \[
        \|A_t \mathbf{x}\| \leq \|\mathbf{x}\| \quad \text{and} \quad \|A_t \mathbf{x} - A_t \mathbf{y}\| \leq \|\mathbf{x}-\mathbf{y}\|.
    \]

    Since the norm in the space $\widetilde{\Lambda}_k^{(\beta-1)/2, H}$ is defined via suprema (the weighted $L^\infty$-norm and the Hölder supremum), we can use Minkowski's inequality for integrals (i.e., taking the supremum inside the integral) to bound $D_\alpha f$. 

    First, for the weighted $L^\infty$-bound, we use \eqref{eq:grad_bounds} and the fact that $0 < \beta-1$:
    \begin{align*}
        |D_\alpha f(\mathbf{x})| &\leq \int_0^1 |\langle \nabla f(A_t \mathbf{x}), \alpha \rangle| \, dt \leq \|\alpha\| \int_0^1 \|\nabla f(A_t \mathbf{x})\| \, dt \leq C_1 \|\alpha\| \int_0^1 (1+\|A_t \mathbf{x}\|)^{\beta-1} \, dt \\
        &\leq C_1 \|\alpha\| \int_0^1 (1+\|\mathbf{x}\|)^{\beta-1} \, dt = C_1 \|\alpha\| (1+\|\mathbf{x}\|)^{\beta-1}.
    \end{align*}
    Thus, $\| D_\alpha f(\cdot) (1+\|\cdot\|)^{-(\beta-1)} \|_{L^\infty} < \infty$. Second, for the Hölder condition, we apply the difference bound from \eqref{eq:grad_bounds} directly inside the integral:
    \begin{align*}
        |D_\alpha f(\mathbf{x}) - D_\alpha f(\mathbf{y})| &\leq \int_0^1 |\langle \nabla f(A_t \mathbf{x}) - \nabla f(A_t \mathbf{y}), \alpha \rangle| \, dt \leq \|\alpha\| \int_0^1 \|\nabla f(A_t \mathbf{x}) - \nabla f(A_t \mathbf{y})\| \, dt \\
        &\leq C_2 \|\alpha\| \int_0^1 \|A_t \mathbf{x} - A_t \mathbf{y}\|^{\beta-1} \, dt \leq C_2 \|\alpha\| \int_0^1 \|\mathbf{x} - \mathbf{y}\|^{\beta-1} \, dt = C_2 \|\alpha\| \|\mathbf{x} - \mathbf{y}\|^{\beta-1}.
    \end{align*}
    Taking the supremum over $\mathbf{x} \neq \mathbf{y}$, we conclude that $D_\alpha f$ satisfies the Hölder condition.

    Consequently, $D_\alpha f \in \widetilde{\Lambda}_k^{(\beta-1)/2, H}$ for each root $\alpha \in R$. Since $T_j f$ is a finite linear combination of $\partial_j f$ and the terms $D_\alpha f$, it follows that $T_j f \in \widetilde{\Lambda}_k^{(\beta-1)/2, H}$. Applying the equivalence of spaces once more, we conclude that $T_j f \in \widetilde{\Lambda}_k^{(\beta-1)/2}$.
\end{proof}

\begin{lemma}\label{lem:second_Dunkl_derivative_heat}
    Let $1 < \beta < 2$ and $s > 0$. If $T_j f \in \widetilde{\Lambda}_k^{(\beta-1)/2}$ for all $j \in \{1, \dots, N\}$, then there exists a constant $C > 0$, independent of $f$ and $s$, such that for all $j \in \{1, \dots, N\}$:
    \[
        \|T_j^2 H_s f\|_{L^\infty} \leq C s^{\frac{\beta-1}{2} - \frac{1}{2}}.
    \]
\end{lemma}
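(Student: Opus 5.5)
The idea is to move one Dunkl derivative inside the semigroup and then use the semigroup regularity of $g_j := T_j f$ in $\widetilde{\Lambda}_k^{(\beta-1)/2}$. Since $T_j$ commutes with $H_s$ (see \eqref{eq:dunkl_time_commute}), we have
\[
T_j^2 H_s f = T_j H_s (T_j f) = T_j H_s g_j .
\]
This identity is used on functions with polynomial growth. It is justified by writing $H_s$ through its kernel, with Lemma~\ref{lem:h_t_smooth} and Corollary~\ref{cor:dunkl_derivative_existence}-type uniform convergence arguments. Alternatively, one uses skew-symmetry of $T_j$ against $h_s(\mathbf{x},\cdot)$ together with the Gaussian decay from Theorem~\ref{teo:heat_new} to kill the boundary terms.

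So the claim reduces to the bound $\|T_j H_s g\|_{L^\infty} \le C s^{\beta/2-1}$ for $g \in \widetilde{\Lambda}_k^{\beta'/2}$ with $\beta' = \beta-1 \in (0,1)$. Note that $s^{\frac{\beta-1}{2}-\frac12} = s^{\beta'/2-1/2}$.

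For this I follow the proof of Lemma~\ref{lem:inclusion_standard_to_H}, with $\nabla$ replaced by $T_j$. Apply Lemma~\ref{lem:semigroup_dunkl_derivative_estimate} to $g$ with $\beta'$ and $j_2 = 1$:
\[
\|T_j \partial_r H_r g\|_{L^\infty} \le C \|g\|_{\widetilde{\Lambda}_k^{\beta'/2}}\, r^{\beta'/2 - 3/2}.
\]
Since $\beta'/2 - 3/2 < -1$, this is integrable on $(s,\infty)$. Writing
\[
T_j H_s g = -\int_s^\infty T_j \partial_r H_r g \, dr
\]
then gives the bound $C s^{\beta'/2-1/2}$, with constant $\frac{2C}{1-\beta'}\|g\|_{\widetilde{\Lambda}_k^{\beta'/2}}$, and the constants are uniform in $j$.

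The main obstacle is justifying $T_j H_R g(\mathbf{x}) \to 0$ as $R \to \infty$ for fixed $\mathbf{x}$. Here I would use \eqref{eq:T_j} together with \eqref{eq:heat2} to get $|T_{j,\mathbf{x}} h_R(\mathbf{x},\mathbf{y})| \le C R^{-1/2}\mathcal{G}_{R/c}(\mathbf{x},\mathbf{y})$. The growth bound $|g(\mathbf{y})| \le C(1+\|\mathbf{y}\|)^{\beta'}$ then gives
\[
|T_j H_R g(\mathbf{x})| \le C R^{-1/2}(1+\|\mathbf{x}\|+\sqrt{R})^{\beta'},
\]
exactly as in the proof of Lemma~\ref{lem:derivative_lipschitz}. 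Since $\beta' < 1$, this tends to $0$ as $R \to \infty$. The fundamental theorem of calculus on $[s,R]$ is legitimate by the smoothness of $H_r g$ in $r$ (Lemma~\ref{lem:h_t_smooth} and differentiation under the integral with the bounds above), and letting $R \to \infty$ completes the argument. The resulting constant depends only on $\max_j \|T_j f\|_{\widetilde{\Lambda}_k^{(\beta-1)/2}}$, which is how the phrase ``independent of $f$'' is to be read.
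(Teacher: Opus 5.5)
Your argument is correct, but after the shared first step $T_j^2H_sf=T_jH_s(T_jf)$ it takes a different route from the paper.

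The paper's route is a single pointwise kernel estimate.
\begin{enumerate}
\item It first passes from $g=T_jf\in\widetilde{\Lambda}_k^{(\beta-1)/2}$ to the H\"older class $\widetilde{\Lambda}_k^{(\beta-1)/2,H}$, using the equivalence for exponents in $(0,1)$.
\item It uses the cancellation $\int T_{j,\mathbf{x}}h_s(\mathbf{x},\mathbf{y})\,dw(\mathbf{y})=0$ to insert $g(\mathbf{y})-g(\mathbf{x})$.
\item With \eqref{eq:T_j} it bounds the integrand by $s^{-1}\|\mathbf{x}-\mathbf{y}\|^{\beta}h_s(\mathbf{x},\mathbf{y})$.
\item The Euclidean factor $(1+\|\mathbf{x}-\mathbf{y}\|/\sqrt{s})^{-2}$ in \eqref{eq:heat2} absorbs $\|\mathbf{x}-\mathbf{y}\|^{\beta}$, precisely because $\beta<2$.
\end{enumerate}

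You instead never leave the semigroup framework. You apply Lemma~\ref{lem:semigroup_dunkl_derivative_estimate} to $g$ with exponent $\beta-1$ and integrate $T_j\partial_rH_rg$ over $(s,\infty)$, mirroring the gradient step in the proof of Lemma~\ref{lem:inclusion_standard_to_H}. Here $\beta<2$ appears as $\beta-1<1$. This gives both the integrability of $r^{(\beta-1)/2-3/2}$ at infinity and the decay $T_jH_Rg(\mathbf{x})\to 0$.

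Your route avoids the detour through the H\"older characterization and the cancellation identity. It also yields the constant directly in terms of the hypothesis norm $\|T_jf\|_{\widetilde{\Lambda}_k^{(\beta-1)/2}}$. The cost is the limiting argument as $R\to\infty$ and the interchange $\partial_rT_jH_rg=T_j\partial_rH_rg$, and you handle both correctly. The paper's version, by contrast, needs no time integration.

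Both proofs rest on the commutation $T_jH_s=H_sT_j$ for $C^1$ functions of polynomial growth, which the paper simply asserts. Your skew-symmetry sketch is a reasonable way to fill this in. The key input is $T_{j,\mathbf{y}}h_s(\mathbf{x},\mathbf{y})=-T_{j,\mathbf{x}}h_s(\mathbf{x},\mathbf{y})$, which follows from \eqref{eq:T_j} and the symmetry of $h_s$.
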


\begin{proof}
    By the commutativity of the Dunkl operators with the heat semigroup, we have $T_j H_s f = H_s T_j f$. Thus, we can express the second Dunkl derivative as
    \[
        T_j^2 H_s f(\mathbf{x}) = T_j H_s (T_j f)(\mathbf{x}).
    \]
    Let $g = T_j f$. By our assumption and the equivalence of the spaces for exponents strictly between $0$ and $1$ (which applies since $0 < \beta - 1 < 1$), we have $g \in \widetilde{\Lambda}_k^{(\beta-1)/2, H}$. In particular, $g$ is uniformly Hölder continuous:
    \begin{equation}\label{eq:holder_g_euclidean}
        |g(\mathbf{y}) - g(\mathbf{x})| \leq \|g\|_{\widetilde{\Lambda}_k^{(\beta-1)/2, H}} \|\mathbf{y} - \mathbf{x}\|^{\beta-1}.
    \end{equation}

    Applying the Dunkl operator to the heat semigroup, we write
    $
        T_j H_s g(\mathbf{x}) = \int_{\mathbb{R}^N} T_{j,\mathbf{x}} h_s(\mathbf{x},\mathbf{y}) g(\mathbf{y}) \, dw(\mathbf{y}).
    $
    Since $\int_{\mathbb{R}^N} T_{j,\mathbf{x}} h_s(\mathbf{x},\mathbf{y}) \, dw(\mathbf{y}) = 0$, we subtract $g(\mathbf{x})$ inside the integral without altering its value:
    \[
        T_j H_s g(\mathbf{x}) = \int_{\mathbb{R}^N} T_{j,\mathbf{x}} h_s(\mathbf{x},\mathbf{y}) (g(\mathbf{y}) - g(\mathbf{x})) \, dw(\mathbf{y}).
    \]

    We now employ the fundamental identity for the Dunkl derivative of the heat kernel (see~\eqref{eq:T_j})
    
        $T_{j,\mathbf x}h_s(\mathbf{x},\mathbf{y}) = \frac{y_j-x_j}{2s}h_s(\mathbf{x},\mathbf{y})$. Substituting this identity and taking absolute values, we obtain
    \begin{align*}
        |T_j^2 H_s f(\mathbf{x})| &\leq \int_{\mathbb{R}^N} \frac{|y_j - x_j|}{2s} h_s(\mathbf{x},\mathbf{y}) |g(\mathbf{y}) - g(\mathbf{x})| \, dw(\mathbf{y}) \\
        &\leq \frac{\|g\|_{\widetilde{\Lambda}_k^{(\beta-1)/2, H}}}{2s} \int_{\mathbb{R}^N} \|\mathbf{y} - \mathbf{x}\| \|\mathbf{y} - \mathbf{x}\|^{\beta-1} h_s(\mathbf{x},\mathbf{y}) \, dw(\mathbf{y}) \\
        &= \frac{\|g\|_{\widetilde{\Lambda}_k^{(\beta-1)/2, H}}}{2s} \int_{\mathbb{R}^N} \|\mathbf{y} - \mathbf{x}\|^\beta h_s(\mathbf{x},\mathbf{y}) \, dw(\mathbf{y}).
    \end{align*}

To estimate this integral, we employ the explicit upper bound for the heat kernel provided by Theorem~\ref{teo:heat_new} (taking $m=0$ and $|\boldsymbol{\beta}| = |\boldsymbol{\beta}'| = 0$):
    \[
        h_s(\mathbf{x},\mathbf{y}) \leq C_1 \Big(1+\frac{\| \mathbf x-\mathbf y\|}{\sqrt{s}}\Big)^{-2} \mathcal G_{s\slash c} (\mathbf x,\mathbf y).
    \]
    Substituting this bound into our inequality yields
    \[
        |T_j^2 H_s f(\mathbf{x})| \leq \frac{C_1 \|g\|_{\widetilde{\Lambda}_k^{(\beta-1)/2, H}}}{2s} \int_{\mathbb{R}^N} \|\mathbf{y} - \mathbf{x}\|^\beta \Big(1+\frac{\| \mathbf x-\mathbf y\|}{\sqrt{s}}\Big)^{-2} \mathcal G_{s\slash c} (\mathbf x,\mathbf y) \, dw(\mathbf{y}).
    \]
    
    To isolate the time dependence, we extract the factor $s^{\beta/2}$ from the Euclidean terms:
    \[
        \|\mathbf{y} - \mathbf{x}\|^\beta \Big(1+\frac{\| \mathbf x-\mathbf y\|}{\sqrt{s}}\Big)^{-2} = s^{\beta/2} \left( \frac{\|\mathbf{y} - \mathbf{x}\|}{\sqrt{s}} \right)^\beta \Big(1+\frac{\| \mathbf x-\mathbf y\|}{\sqrt{s}}\Big)^{-2}.
    \]
    This is precisely where the assumption $\beta < 2$ plays a critical role. Because $\beta < 2$, the function $r \mapsto r^\beta (1+r)^{-2}$ is uniformly bounded on $[0, \infty)$ by a constant $C_\beta$. Consequently, the integrand is dominated by the Gaussian factor:
    \[
        |T_j^2 H_s f(\mathbf{x})| \leq C_1 C_\beta \|g\|_{\widetilde{\Lambda}_k^{(\beta-1)/2, H}} s^{\frac{\beta}{2}-1} \int_{\mathbb{R}^N} \mathcal G_{s\slash c} (\mathbf x,\mathbf y) \, dw(\mathbf{y}).
    \]
    
    Since the integral of $\mathcal G_{s\slash c}(\mathbf{x},\mathbf{y})$ with respect to $w(\mathbf{y})$ is uniformly bounded by a constant $C_2 > 0$ (independent of $\mathbf{x}$ and $s$, see Lemma~\ref{lem:homogeneous}), we arrive at
    $
        |T_j^2 H_s f(\mathbf{x})| \leq C s^{\frac{\beta}{2}-1} = C s^{\frac{\beta-1}{2} - \frac{1}{2}},
    $
    where $C = C_1 C_2 C_\beta \|g\|_{\widetilde{\Lambda}_k^{(\beta-1)/2, H}}$. Taking the supremum over all $\mathbf{x} \in \mathbb{R}^N$ completes the proof.
   
\end{proof}

\begin{lemma}\label{lem:f_in_semigroup_space}
    Let $1 < \beta < 2$ and suppose that $f$ satisfies $\|f(\cdot)(1+\|\cdot\|)^{-\beta}\|_{L^\infty} < \infty$. Assume further that for all $j \in \{1, \dots, N\}$ we have
    \[
        \|T_j^2 H_s f\|_{L^\infty} \leq C_0 s^{\frac{\beta-1}{2} - \frac{1}{2}}, \quad \text{for all } s > 0,
    \]
    where $C_0 > 0$ is a constant independent of $s$. Then $f \in \widetilde{\Lambda}_k^{\beta/2}$ and there exists $C>0$ such that
    \[
        \|f\|_{\widetilde{\Lambda}_k^{\beta/2}} \leq C \left( \|f(\cdot)(1+\|\cdot\|)^{-\beta}\|_{L^\infty} + C_0 \right).
    \]
\end{lemma}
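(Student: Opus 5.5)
Since $1<\beta<2$, we have $n=1$. The weighted $L^\infty$ part of the norm is already part of the hypothesis, so the only task is to bound $t^{1-\beta/2}\|\partial_t H_t f\|_{L^\infty}$ by $C(\|f(\cdot)(1+\|\cdot\|)^{-\beta}\|_{L^\infty}+C_0)$. The approach is to pass from $\partial_t$ to the Dunkl Laplacian, using $\partial_t H_t f=\Delta_k H_t f=\sum_{j=1}^N T_j^2 H_t f$. This identity must be justified for $f$ of polynomial growth rather than for Schwartz functions. By Lemma~\ref{lem:h_t_smooth} we can differentiate under the integral sign, and $\partial_t h_t(\mathbf{x},\mathbf{y})=\Delta_{k,\mathbf{x}}h_t(\mathbf{x},\mathbf{y})$ holds pointwise for the kernel. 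It therefore remains to move $T_{j,\mathbf{x}}$ (including its difference part, which is a reflection combined with division by $\langle\alpha,\mathbf{x}\rangle$) inside the integral. This follows from the same dominated convergence argument, now using the Gaussian bounds of Theorem~\ref{teo:heat_new} together with formula \eqref{eq:T_j}.

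Granting this identity, the hypothesis gives directly
\[
|\partial_t H_t f(\mathbf{x})|\le \sum_{j=1}^N \|T_j^2H_tf\|_{L^\infty}\le N C_0\, t^{\frac{\beta-1}{2}-\frac12}=NC_0\,t^{\beta/2-1},
\]
and hence $\sup_{t>0}t^{1-\beta/2}\|\partial_tH_tf\|_{L^\infty}\le NC_0$. Adding the weighted sup norm then yields the stated inequality.

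The main obstacle is the justification of $T_j^2 H_t f=\int T_{j,\mathbf{x}}^2h_t(\mathbf{x},\mathbf{y})f(\mathbf{y})\,dw(\mathbf{y})$ when $f$ grows like $(1+\|\mathbf{y}\|)^\beta$. I would handle the derivative part through Lemma~\ref{lem:h_t_smooth}, with the majorant built exactly as in its proof. For the difference part, I would write $\frac{F(\mathbf{x})-F(\sigma_\alpha\mathbf{x})}{\langle\alpha,\mathbf{x}\rangle}=\int_0^1\langle\nabla F(A_\tau\mathbf{x}),\alpha\rangle\,d\tau$ with $F=H_tf$ or $F=T_jH_tf$. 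Since $\|A_\tau\mathbf{x}\|\le\|\mathbf{x}\|$, the compact-set majorants from Lemma~\ref{lem:h_t_smooth} apply uniformly in $\tau$. This lets us interchange the $\tau$-integral, the $\mathbf{y}$-integral and the derivatives by Fubini, which shows that $T_j^2$ commutes with the integral. Then $\sum_j T_{j,\mathbf{x}}^2h_t=\Delta_{k,\mathbf{x}}h_t=\partial_th_t$, a fact that is standard from \eqref{eq:heat_kernel_explicit}. Alternatively, one could approximate $f$ by the truncations $f\chi_{B(0,R)}$, apply the identity to these, and pass to the limit $R\to\infty$ locally uniformly using the Gaussian decay.
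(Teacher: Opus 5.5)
Your proposal is correct and follows the paper's proof: with $n=1$, you use $\partial_t H_t f=\sum_j T_j^2H_tf$, obtained from $\partial_t h_t=\Delta_{k,\mathbf{x}}h_t$ by differentiating under the integral, and then sum the hypothesized bounds to get $\sup_{t>0}t^{1-\beta/2}\|\partial_tH_tf\|_{L^\infty}\le NC_0$. The only difference is that you spell out the dominated-convergence/Fubini justification for moving $T_j^2$ (including its reflection part) inside the integral when $f$ has polynomial growth, which the paper asserts in a single line.
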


\begin{proof}
    Since $1 < \beta < 2$, we have $1/2 < \beta/2 < 1$. According to the definition of the inhomogeneous weighted Lipschitz space $\widetilde{\Lambda}_k^{\beta/2}$, the parameter $n$ (the smallest positive integer strictly greater than $\beta/2$) is $n=1$. Thus, to show that $f \in \widetilde{\Lambda}_k^{\beta/2}$, it suffices to prove that
    \begin{equation}\label{eq:space_condition}
        \sup_{s>0} s^{1-\beta/2} \| \partial_s H_s f \|_{L^\infty} < \infty,
    \end{equation}
    since the weighted $L^\infty$-bound is satisfied by assumption. To establish \eqref{eq:space_condition}, we use the defining property of the Dunkl heat kernel $h_s(\mathbf{x},\mathbf{y})$, which solves the Dunkl heat equation:
    \[
        \partial_s h_s(\mathbf{x},\mathbf{y}) = \Delta_{k,\mathbf{x}} h_s(\mathbf{x},\mathbf{y}) = \sum_{j=1}^N T_{j,\mathbf{x}}^2 h_s(\mathbf{x},\mathbf{y}).
    \]
    Differentiating under the integral sign in $H_s f(\mathbf{x}) = \int_{\mathbb{R}^N} h_s(\mathbf{x},\mathbf{y}) f(\mathbf{y}) \, dw(\mathbf{y})$ with respect to $s$ yields
    $
        \partial_s H_s f(\mathbf{x}) = \sum_{j=1}^N T_j^2 H_s f(\mathbf{x})
    $.

    By the triangle inequality and our assumption on $T_j^2 H_s f$, we estimate:
    \[
        \|\partial_s H_s f\|_{L^\infty} \leq \sum_{j=1}^N \|T_j^2 H_s f\|_{L^\infty} \leq N C_0 s^{\frac{\beta-1}{2} - \frac{1}{2}} = N C_0 s^{\frac{\beta}{2} - 1}.
    \]

    Multiplying both sides by $s^{1-\beta/2}$ gives
    \[
        s^{1-\beta/2} \|\partial_s H_s f\|_{L^\infty} \leq N C_0 s^{1-\beta/2} s^{\frac{\beta}{2}-1} = N C_0.
    \]
    Taking the supremum over all $s > 0$, we conclude that
    $
        \sup_{s>0} s^{1-\beta/2} \| \partial_s H_s f \|_{L^\infty} \leq N C_0 < \infty
    $.
    This confirms that $f \in \widetilde{\Lambda}_k^{\beta/2}$ and completes the proof.
\end{proof}

\begin{proof}[Proof of Theorem~\ref{teo:equivalence}]
    This equivalence is a direct consequence of the mutual inclusions established in the preceding lemmas for both ranges of the parameter $\beta$.
    
    \textbf{Case $0 < \beta < 1$:} 
    Lemma~\ref{lem:inclusion_H_to_standard} yields the inclusion $\widetilde{\Lambda}_k^{\beta/2,H} \subset \widetilde{\Lambda}_k^{\beta/2}$ along with the corresponding norm estimate. The reverse inclusion, $\widetilde{\Lambda}_k^{\beta/2} \subset \widetilde{\Lambda}_k^{\beta/2,H}$, is guaranteed by Lemma~\ref{lem:inclusion_standard_to_H}.
    
    \textbf{Case $1 < \beta < 2$:} 
    For the first inclusion, $\widetilde{\Lambda}_k^{\beta/2} \subset \widetilde{\Lambda}_k^{\beta/2,H}$, the result is established in Lemma~\ref{lem:inclusion_standard_to_H_zygmund} based on the Zygmund condition. 
    
    To prove the reverse inclusion, $\widetilde{\Lambda}_k^{\beta/2,H} \subset \widetilde{\Lambda}_k^{\beta/2}$, let $f \in \widetilde{\Lambda}_k^{\beta/2,H}$. By Corollary~\ref{cor:derivative_semigroup_space}, we know that $f \in C^1(\mathbb{R}^N)$ and its classical partial derivatives satisfy $\partial_l f \in \widetilde{\Lambda}_k^{(\beta-1)/2}$ for all $l \in \{1, \dots, N\}$. 
    Applying Lemma~\ref{lem:Tj_semigroup_space}, this regularity transfers to the Dunkl derivatives, yielding $T_j f \in \widetilde{\Lambda}_k^{(\beta-1)/2}$ for all $j \in \{1, \dots, N\}$. 
    This allows us to invoke Lemma~\ref{lem:second_Dunkl_derivative_heat}, which provides the crucial time-decay estimate for the second Dunkl derivatives of the heat semigroup:
    \[
        \|T_j^2 H_s f\|_{L^\infty} \leq C s^{\frac{\beta-1}{2} - \frac{1}{2}}.
    \]
    Finally, since $f \in \widetilde{\Lambda}_k^{\beta/2,H}$ inherently satisfies the weighted $L^\infty$-bound ($\|f(\cdot)(1+\|\cdot\|)^{-\beta}\|_{L^\infty} < \infty$), we can apply Lemma~\ref{lem:f_in_semigroup_space} to conclude that $f \in \widetilde{\Lambda}_k^{\beta/2}$. Combining these steps provides the full equivalence of the norms and confirms that both definitions describe the exact same function space for the entire considered range of $\beta$.
\end{proof}

\begin{remark} \label{rem:beta_one}
The borderline case $\beta = 1$ is excluded from the statements of Theorems~\ref{teo:equivalence} and~\ref{thm:dunkl_schrodinger_equivalence}. Although the equivalence of the adapted spaces is expected to hold at this point, its rigorous verification is omitted here due to the high technical complexity. Indeed, a formal proof for $\beta = 1$ would require developing interpolation machinery (such as the Peetre $K$-method adapted to the Dunkl setting), which we choose to defer to a forthcoming paper.
\end{remark}

\section{Estimates for semigroup kernels and potential operators}\label{sec:aux}

\subsection{Integral estimates involving the potential}

This section is devoted to establishing key technical bounds for potential operators and integral kernels, which will be essential in the subsequent proofs.

\begin{lemma}\label{lem:integral_estimate_A}
Assume that $V \in {\rm{RH}}^{q}(dw)$, where $q>\max(1,\frac{\mathbf{N}}{2})$, and $V \geq 0$. There is a constant $C>0$ such that for all $\mathbf{x} \in \mathbb{R}^N$, for all $\sigma \in G$, and for all $t>0$ satisfying $\sqrt{t} \leq m(\sigma(\mathbf{x}))^{-1}$, we have
\begin{equation}
    \int_{A_\sigma(\mathbf{x})} V(\mathbf{y})\mathcal{G}_t(\mathbf{x},\mathbf{y})\,dw(\mathbf{y}) \leq \frac{C}{t} (\sqrt{t}m(\sigma(\mathbf{x})))^{\gamma},
\end{equation}
where $A_\sigma(\mathbf{x})=\{\mathbf{y} \in \mathbb{R}^N : d(\mathbf{x},\mathbf{y})=\|\sigma(\mathbf{x})-\mathbf{y}\|\}$ and $\gamma=2-\frac{\mathbf{N}}{q}$.
\end{lemma}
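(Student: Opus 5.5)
On $A_\sigma(\mathbf{x})$ the orbit distance is realized by the single point $\sigma(\mathbf{x})$, so $d(\mathbf{x},\mathbf{y})=\|\sigma(\mathbf{x})-\mathbf{y}\|$ there. The plan is to replace the orbit geometry by Euclidean geometry centred at $\sigma(\mathbf{x})$ and then run the classical dyadic-annuli argument of Shen. I will use two facts: $\mathcal{G}_t(\mathbf{x},\mathbf{y})\le w(B(\mathbf{x},\sqrt t))^{-1}\exp(-\|\sigma(\mathbf{x})-\mathbf{y}\|^2/t)$ on $A_\sigma(\mathbf{x})$, and $w(B(\mathbf{x},r))=w(B(\sigma(\mathbf{x}),r))$ by $G$-invariance of $dw$. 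Hence
\[
\int_{A_\sigma(\mathbf{x})} V\mathcal{G}_t(\mathbf{x},\cdot)\,dw \le \frac{1}{w(B(\sigma(\mathbf{x}),\sqrt t))}\int_{\mathbb{R}^N} V(\mathbf{y})\exp\Big(-\frac{\|\sigma(\mathbf{x})-\mathbf{y}\|^2}{t}\Big)dw(\mathbf{y}).
\]
The non-$G$-invariance of $V$ causes no trouble, since everything is now centred at the fixed point $\mathbf{z}=\sigma(\mathbf{x})$.

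Write $r=\sqrt t\le m(\mathbf{z})^{-1}$. I split $\mathbb{R}^N$ into $B(\mathbf{z},r)$ and the annuli $B(\mathbf{z},2^{j}r)\setminus B(\mathbf{z},2^{j-1}r)$ for $j\ge1$. On the $j$-th annulus the exponential is at most $e^{-4^{j-1}}$, so the integral is bounded by
\[
\sum_{j\ge0} e^{-c4^{j}}\frac{\mu(B(\mathbf{z},2^jr))}{w(B(\mathbf{z},r))}.
\]
For each $j$ there are two regimes. When $2^jr\le m(\mathbf{z})^{-1}$, Lemma~\ref{lem:Rr} with $r_1=2^jr$ and $r_2=m(\mathbf{z})^{-1}$ applies; the critical quantity at $r_2$ equals $1$ by definition of $m$ and continuity of $\rho\mapsto \frac{\rho^2}{w(B(\mathbf{z},\rho))}\mu(B(\mathbf{z},\rho))$. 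This gives
\[
\mu(B(\mathbf{z},2^jr))\le C (2^jr)^{-2}w(B(\mathbf{z},2^jr))\,(2^jr\,m(\mathbf{z}))^{\gamma}.
\]
When $2^jr>m(\mathbf{z})^{-1}$, Lemma~\ref{lem:Rr} with $r_1=m(\mathbf{z})^{-1}$ and $r_2=2^jr$ does not help. Instead I use the doubling of $\mu$ (Lemma~\ref{lem:mu_doubling}) repeatedly from radius $m(\mathbf{z})^{-1}$, together with the value $1$ at the critical radius. This yields the polynomial bound
\[
\mu(B(\mathbf{z},2^jr))\le C (2^jr m(\mathbf{z}))^{\log_2 C_\mu}\, m(\mathbf{z})^2 w(B(\mathbf{z},m(\mathbf{z})^{-1})).
\]
Then I convert $w(B(\mathbf{z},\rho))/w(B(\mathbf{z},r))\le C(\rho/r)^{\mathbf N}$ by \eqref{eq:growth}.

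In the first regime the $j$-th term is at most $C e^{-c4^j}2^{j(\mathbf N-2+\gamma)}\,r^{-2}(rm(\mathbf{z}))^{\gamma}$. In the second regime it is at most $e^{-c4^j}2^{jM}r^{-2}(r m(\mathbf{z}))^{2}\cdot(\text{powers of }2^j)$. Since $rm(\mathbf{z})\le1$ and $\gamma\le 2$, we have $(rm)^2\le (rm)^\gamma$. The Gaussian factor $e^{-c4^j}$ kills every polynomial in $2^j$, so summing over $j$ gives $Ct^{-1}(\sqrt t\,m(\sigma(\mathbf{x})))^{\gamma}$.

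The main point to check carefully is the large-annulus regime. There I must make sure the bound really carries the factor $(rm(\mathbf{z}))^2$, rather than something like $r^{-2}$ times a large quantity. The key is to write $\mu(B(\mathbf{z},m^{-1}))=m^{2}w(B(\mathbf{z},m^{-1}))$ and compare $w(B(\mathbf{z},m^{-1}))$ with $w(B(\mathbf{z},r))$ via \eqref{eq:growth}. This produces a factor $(mr)^{-\mathbf N}$, which is compensated by $(2^jr m)^{\mathbf N+\log_2C_\mu}$ up to powers of $2^j$. So the exponent of $rm$ stays $\ge 2\ge\gamma$, which is fine since $rm\le1$.
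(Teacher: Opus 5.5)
Your proof is correct and follows the same scheme as the paper. Both arguments recentre at $\sigma(\mathbf{x})$ via the $G$-invariance of $w$, decompose into dyadic annuli with Gaussian decay, and combine the doubling of $\mu$ with Lemma~\ref{lem:Rr} and the normalisation $\frac{r^2}{w(B(\sigma(\mathbf{x}),r))}\mu(B(\sigma(\mathbf{x}),r))\le 1$ at $r=m(\sigma(\mathbf{x}))^{-1}$.

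The only difference is your two-regime split. Your large-annulus bookkeeping is right, since $2^j\sqrt t\,m(\sigma(\mathbf{x}))>1$ lets you trade $(\sqrt t\,m(\sigma(\mathbf{x})))^{-\mathbf N}$ for $2^{j\mathbf N}$. The paper avoids the split altogether by doubling $\mu$ upward from the smallest ball for every $j$, $\mu(B(\sigma(\mathbf{x}),2^j\sqrt t))\le C_\mu^j\mu(B(\sigma(\mathbf{x}),\sqrt t))$, and absorbing $C_\mu^j$ into $e^{-c4^{j-1}}$. It then applies Lemma~\ref{lem:Rr} only once, with $r_1=\sqrt t$ and $r_2=m(\sigma(\mathbf{x}))^{-1}$.
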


\begin{proof}
 For $\mathbf{y} \in A_{\sigma}$, the distance between the orbits simplifies to $d(\mathbf{x},\mathbf{y}) = \|\sigma(\mathbf{x})-\mathbf{y}\|$. Utilizing the $G$-invariance of the measure $w$, which implies $w(B(\mathbf{x},\sqrt{t})) = w(B(\sigma(\mathbf{x}),\sqrt{t}))$, we obtain
$$
\mathcal{G}_t(\mathbf{x},\mathbf{y}) \leq \frac{C}{w(B(\sigma(\mathbf{x}),\sqrt{t}))} \exp\left(-c\frac{\|\sigma(\mathbf{x})-\mathbf{y}\|^2}{t}\right).
$$

We decompose the set $A_{\sigma}$ into dyadic annuli centered at $\sigma(\mathbf{x})$. Let $A_{0,\sigma} = A_{\sigma} \cap B(\sigma(\mathbf{x}), \sqrt{t})$ and for $j \geq 1$, define
$$
A_{j,\sigma} = A_{\sigma} \cap \left\{\mathbf{y} \in \mathbb{R}^N : 2^{j-1}\sqrt{t} \leq \|\sigma(\mathbf{x})-\mathbf{y}\| < 2^j\sqrt{t}\right\}.
$$
Using this decomposition, we can estimate the integral as follows:
$$
\begin{aligned}
\int\limits_{A_{\sigma}} V(\mathbf{y})\mathcal{G}_t(\mathbf{x},\mathbf{y})\,dw(\mathbf{y}) &= \sum_{j=0}^{\infty} \int\limits_{A_{j,\sigma}} V(\mathbf{y})\mathcal{G}_t(\mathbf{x},\mathbf{y})\,dw(\mathbf{y}) \leq \frac{C}{w(B(\sigma(\mathbf{x}),\sqrt{t}))} \sum_{j=0}^{\infty} e^{-c 4^{j-1}} \int\limits_{A_{j,\sigma}} V(\mathbf{y})\,dw(\mathbf{y}).
\end{aligned}
$$

Recall the measure $\mu(E) = \int_E V(\mathbf{y})\,dw(\mathbf{y})$ defined in \eqref{eq:mu}. By Lemma~\ref{lem:mu_doubling}, $\mu$ is a doubling measure with a doubling constant $C_\mu > 0$. Thus, for each $j \geq 1$, the integral over the annulus $A_{j,\sigma}$ can be bounded by the integral over the corresponding ball:
$$
\int_{A_{j,\sigma}} V(\mathbf{y})\,dw(\mathbf{y}) \leq \mu(B(\sigma(\mathbf{x}), 2^j\sqrt{t})) \leq C_{\mu}^j \mu(B(\sigma(\mathbf{x}), \sqrt{t})).
$$
Inserting this into our series yields
$$
\begin{aligned}
\int_{A_{\sigma}} V(\mathbf{y})\mathcal{G}_t(\mathbf{x},\mathbf{y})\,dw(\mathbf{y}) &\leq \frac{C}{w(B(\sigma(\mathbf{x}),\sqrt{t}))} \sum_{j=0}^{\infty} e^{-c 4^{j-1}} C_{\mu}^j \int_{B(\sigma(\mathbf{x}), \sqrt{t})} V(\mathbf{y})\,dw(\mathbf{y}) \\
&\leq \frac{C'}{w(B(\sigma(\mathbf{x}),\sqrt{t}))} \int_{B(\sigma(\mathbf{x}), \sqrt{t})} V(\mathbf{y})\,dw(\mathbf{y}),
\end{aligned}
$$
where we have used the fact that the series $\sum_{j=0}^{\infty} e^{-c 4^{j-1}} C_{\mu}^j$ is convergent. To  refine the estimate, we rewrite the right-hand side by multiplying and dividing by $t$:
$$
\int_{A_{\sigma}} V(\mathbf{y})\mathcal{G}_t(\mathbf{x},\mathbf{y})\,dw(\mathbf{y}) \leq \frac{C'}{t} \left( \frac{t}{w(B(\sigma(\mathbf{x}),\sqrt{t}))} \int_{B(\sigma(\mathbf{x}), \sqrt{t})} V(\mathbf{y})\,dw(\mathbf{y}) \right).
$$

By our assumption $\sqrt{t} \leq m(\sigma(\mathbf{x}))^{-1}$, we can apply Lemma~\ref{lem:Rr} with $r_1 = \sqrt{t}$ and $r_2 = m(\sigma(\mathbf{x}))^{-1}$. This yields
$$
\begin{aligned}
\int_{A_{\sigma}} V(\mathbf{y})\mathcal{G}_t(\mathbf{x},\mathbf{y})\,dw(\mathbf{y}) &\leq \frac{C' C}{t} \left(\frac{\sqrt{t}}{m(\sigma(\mathbf{x}))^{-1}}\right)^{\gamma} \frac{m(\sigma(\mathbf{x}))^{-2}}{w(B(\sigma(\mathbf{x}),m(\sigma(\mathbf{x}))^{-1}))} \int_{B(\sigma(\mathbf{x}), m(\sigma(\mathbf{x}))^{-1})} V(\mathbf{y})\,dw(\mathbf{y}).
\end{aligned}
$$
By the definition of the function $m$ given in \eqref{eq:m}, the term representing the average of the potential $V$ at the critical radius $r_2 = m(\sigma(\mathbf{x}))^{-1}$ is bounded by $1$. Consequently, the estimate simplifies to
$$
\int_{A_{\sigma}} V(\mathbf{y})\mathcal{G}_t(\mathbf{x},\mathbf{y})\,dw(\mathbf{y}) \leq \frac{C''}{t} (\sqrt{t}m(\sigma(\mathbf{x})))^{\gamma},
$$
which concludes the proof.
\end{proof}

\begin{lemma}\label{lem:integral_estimate_Duhamel}

Assume that $V \in L^2_{\rm{loc}}(dw)$ and $V \geq 0$. For all $\mathbf{x} \in \mathbb{R}^N$ and $t>0$, we have

\begin{equation}\label{eq:Duhamel_bound}
    \int_{0}^{t} \int_{\mathbb{R}^N} V(\mathbf{y})k_s(\mathbf{x},\mathbf{y})\,dw(\mathbf{y})\,ds \leq 1.
\end{equation}

\end{lemma}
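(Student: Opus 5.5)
The plan is to derive \eqref{eq:Duhamel_bound} from the Duhamel (perturbation) formula relating $H_t$ and $K_t$, applied to the constant function $1$. Formally, $\partial_s\bigl(H_{t-s}K_s\bigr)=-H_{t-s}VK_s$, which integrates to
\[
    H_t g - K_t g = \int_0^t H_{t-s}\bigl(V K_s g\bigr)\,ds ,
\]
or, with the roles of the two semigroups swapped (which is the version we need, since we want $k_s$ next to $V$),
\[
    h_t(\mathbf{x},\mathbf{z}) - k_t(\mathbf{x},\mathbf{z}) = \int_0^t\int_{\mathbb{R}^N} k_s(\mathbf{x},\mathbf{y})V(\mathbf{y})h_{t-s}(\mathbf{y},\mathbf{z})\,dw(\mathbf{y})\,ds .
\]
Both sides are nonnegative, since $0\le k_t\le h_t$ by \eqref{eq:kernels_compare} and $V\ge0$. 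Integrating in $\mathbf{z}$ and using Tonelli together with $\int h_{t-s}(\mathbf{y},\mathbf{z})\,dw(\mathbf{z})=1$ from \eqref{eq:h_integral_1}, we get
\[
    \int_0^t\int V(\mathbf{y})k_s(\mathbf{x},\mathbf{y})\,dw(\mathbf{y})\,ds = 1-\int k_t(\mathbf{x},\mathbf{z})\,dw(\mathbf{z}) \le 1 .
\]

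The main obstacle is justifying the kernel-level Duhamel identity for a merely $L^2_{\rm loc}$ potential. I would proceed by approximation, taking truncations $V_n=\min(V,n)$. For bounded $V_n$, the operator $-\Delta_k+V_n$ is a bounded perturbation, so the Duhamel formula holds on $L^2(dw)$ by the standard argument: differentiate $s\mapsto K^{(n)}_{s}H_{t-s}g$ for $g\in C_c^\infty$. Testing against nonnegative $\varphi,\psi\in C_c^\infty$ then gives the inequality
\[
    \int_0^t\langle V_n K^{(n)}_s\varphi,\,H_{t-s}\psi\rangle\,ds\le \langle H_t\varphi,\psi\rangle .
\]
Next I would take $\psi$ to be an increasing sequence of cutoffs converging to $1$. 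Monotone convergence applies, and the left side tends to $\int_0^t\int V_n k^{(n)}_s\varphi\,dw\,ds$, because $H_{t-s}\psi\uparrow1$.

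To pass $n\to\infty$, I would use that the kernels $k^{(n)}_s$ decrease monotonically to $k_s$ (monotone convergence of quadratic forms, Kato/Simon), while $V_n$ increases to $V$. Since the products are not monotone, I would instead fix $m$ and use $V_m\le V_n$ for $n\ge m$. This gives $\int_0^t\int V_m k^{(n)}_s\varphi\le \int\varphi$. Letting $n\to\infty$ by dominated convergence (dominated by $V_m h_s$) and then $m\to\infty$ by monotone convergence yields the bound tested against $\varphi$.

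Finally, the pointwise statement for every $\mathbf{x}$ follows by letting $\varphi$ run through an approximate identity at $\mathbf{x}$. This uses continuity of $k_s(\cdot,\mathbf{y})$, or alternatively Fatou's lemma combined with the semigroup property $k_s(\mathbf{x},\mathbf{y})=\int k_{\varepsilon}(\mathbf{x},\mathbf{u})k_{s-\varepsilon}(\mathbf{u},\mathbf{y})\,dw(\mathbf{u})$, after cutting the time integral at $\varepsilon$ and then letting $\varepsilon\to0$.
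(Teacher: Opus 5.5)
Your proposal is correct and follows the paper's proof. The paper likewise integrates the kernel Duhamel identity $h_t(\mathbf{x},\mathbf{y})-k_t(\mathbf{x},\mathbf{y})=\int_0^t\int k_{t-s}(\mathbf{x},\mathbf{z})V(\mathbf{z})h_s(\mathbf{z},\mathbf{y})\,dw(\mathbf{z})\,ds$ in $\mathbf{y}$, uses $\int h_s(\mathbf{z},\mathbf{y})\,dw(\mathbf{y})=1$ and $k_t\ge 0$, and reads off the bound after the substitution $s\mapsto t-s$. The only difference is rigor: the paper simply invokes the Duhamel formula, whereas you justify it via the truncations $V_n=\min(V,n)$ and monotone form convergence, and then pass from an a.e.\ bound to every $\mathbf{x}$ via the Chapman--Kolmogorov step, all of which the paper omits.
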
 

\begin{proof}
By the Duhamel formula, for all $\mathbf{x},\mathbf{y} \in \mathbb{R}^N$ and $t>0$, we have
\begin{equation*}
    h_t(\mathbf{x},\mathbf{y}) - k_t(\mathbf{x},\mathbf{y}) = \int_{0}^{t} \int_{\mathbb{R}^N} k_{t-s}(\mathbf{x},\mathbf{z})V(\mathbf{z})h_s(\mathbf{z},\mathbf{y})\,dw(\mathbf{z})\,ds.
\end{equation*}
Integrating both sides with respect to $\mathbf{y}$ over $\mathbb{R}^N$ and applying Tonelli's theorem yields
\begin{equation*}
    \int_{\mathbb{R}^N} h_t(\mathbf{x},\mathbf{y})\,dw(\mathbf{y}) - \int_{\mathbb{R}^N} k_t(\mathbf{x},\mathbf{y})\,dw(\mathbf{y}) = \int_{0}^{t} \int_{\mathbb{R}^N} k_{t-s}(\mathbf{x},\mathbf{z})V(\mathbf{z}) \left( \int_{\mathbb{R}^N} h_s(\mathbf{z},\mathbf{y})\,dw(\mathbf{y}) \right) dw(\mathbf{z})\,ds.
\end{equation*}
Since $\int_{\mathbb{R}^N} h_r(\mathbf{u},\mathbf{y})\,dw(\mathbf{y}) = 1$ for all $\mathbf{u} \in \mathbb{R}^N, r>0$, and $k_t \geq 0$, we obtain
\begin{equation*}
    1 \geq \int_{0}^{t} \int_{\mathbb{R}^N} k_{t-s}(\mathbf{x},\mathbf{z})V(\mathbf{z})\,dw(\mathbf{z})\,ds.
\end{equation*}
Changing the variable of integration from $s$ to $t-s$ completes the proof.
\end{proof}

\begin{lemma}\label{lem:integral_estimate_1_over_t}
Assume that $V \in L^2_{\rm{loc}}(dw)$ and $V \geq 0$. There exists a constant $C > 0$ such that for all $\mathbf{x} \in \mathbb{R}^N$ and $t>0$, we have
\begin{equation}\label{eq:Schrodinger_1_over_t}
    \int_{\mathbb{R}^N} V(\mathbf{y})k_t(\mathbf{x},\mathbf{y})\,dw(\mathbf{y}) \leq \frac{C}{t}.
\end{equation}
\end{lemma}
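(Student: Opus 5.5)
The plan is to deduce the bound from Lemma~\ref{lem:integral_estimate_Duhamel} together with the semigroup property of $K_t$ and the comparison $0\le k_t\le h_t$. Since the claimed bound has no constant depending on $V$ beyond a universal one, I would not use the reverse H\"older structure at all. Instead I would exploit the monotonicity in time of the quantity
\[
\Phi_{\mathbf{x}}(s):=\int_{\mathbb{R}^N}V(\mathbf{y})k_s(\mathbf{x},\mathbf{y})\,dw(\mathbf{y}),
\]
or at least a substitute for it.

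\emph{Step 1.} By the semigroup property,
\[
k_t(\mathbf{x},\mathbf{y})=\int_{\mathbb{R}^N}k_{t-s}(\mathbf{x},\mathbf{z})k_s(\mathbf{z},\mathbf{y})\,dw(\mathbf{z})
\]
for $0<s<t$. Hence, by Tonelli,
\[
\Phi_{\mathbf{x}}(t)=\int_{\mathbb{R}^N}k_{t-s}(\mathbf{x},\mathbf{z})\,\Phi_{\mathbf{z}}(s)\,dw(\mathbf{z}).
\]

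\emph{Step 2.} Average over $s\in(t/2,t)$. This gives
\[
\Phi_{\mathbf{x}}(t)=\frac{2}{t}\int_{t/2}^{t}\int_{\mathbb{R}^N}k_{t-s}(\mathbf{x},\mathbf{z})\,\Phi_{\mathbf{z}}(s)\,dw(\mathbf{z})\,ds .
\]
The difficulty is that $k_{t-s}(\mathbf{x},\mathbf{z})$ sits in front with a varying $s$. It is better to split the time symmetrically: write $t=u+(t-u)$ with $u\in(0,t/2)$, so that $t-u\in(t/2,t)$. Then
\[
\Phi_{\mathbf{x}}(t)=\int_{\mathbb{R}^N}k_{t-u}(\mathbf{x},\mathbf{z})\,\Phi_{\mathbf{z}}(u)\,dw(\mathbf{z}).
\]
Now bound $k_{t-u}(\mathbf{x},\mathbf{z})\le h_{t-u}(\mathbf{x},\mathbf{z})\le C\,\mathcal{G}_{t/c}(\mathbf{x},\mathbf{z})$ uniformly in $u\in(0,t/2)$. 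This uses Theorem~\ref{teo:heat_new} together with the doubling property, since $t-u\sim t$. Averaging over $u\in(0,t/2)$ yields
\[
\Phi_{\mathbf{x}}(t)\le \frac{2C}{t}\int_{\mathbb{R}^N}\mathcal{G}_{t/c}(\mathbf{x},\mathbf{z})\int_0^{t/2}\Phi_{\mathbf{z}}(u)\,du\,dw(\mathbf{z}).
\]

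\emph{Step 3.} Apply Lemma~\ref{lem:integral_estimate_Duhamel} at the point $\mathbf{z}$ to get $\int_0^{t/2}\Phi_{\mathbf{z}}(u)\,du\le 1$ for every $\mathbf{z}$. Then $\int\mathcal{G}_{t/c}(\mathbf{x},\mathbf{z})\,dw(\mathbf{z})\le C$ by Lemma~\ref{lem:homogeneous} (equivalently, the standard integrability of the Gaussian over a space of homogeneous type with the orbit metric). Together these give $\Phi_{\mathbf{x}}(t)\le C/t$.

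The main obstacle I expect is the justification of the interchanges. The quantity $\Phi_{\mathbf{z}}(u)$ could be infinite pointwise, and $V$ is only in $L^2_{\rm loc}$. Everything is nonnegative, however, so Tonelli applies throughout, and Lemma~\ref{lem:integral_estimate_Duhamel} gives finiteness of the time integral for every $\mathbf{z}$. The other point needing care is the uniform comparison $h_{t-u}\lesssim\mathcal{G}_{t/c}$ for $t-u\in(t/2,t)$. This follows from the bound $w(B(\mathbf{x},\sqrt{t-u}))\ge C^{-1}w(B(\mathbf{x},\sqrt{t}))$ in \eqref{eq:growth} and from the monotonicity of the exponential in time after adjusting $c$.
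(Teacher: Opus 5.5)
Your argument is correct, but it takes a different route from the paper's.

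\textbf{The paper's route.} The paper integrates the Duhamel formula in $\mathbf{y}$ to obtain the mass identity $1-\int_{\mathbb{R}^N}k_t(\mathbf{x},\mathbf{y})\,dw(\mathbf{y})=\int_0^t\Phi_{\mathbf{x}}(s)\,ds$. It differentiates this in $t$ to get $\Phi_{\mathbf{x}}(t)=-\int_{\mathbb{R}^N}\partial_t k_t(\mathbf{x},\mathbf{y})\,dw(\mathbf{y})$. It then bounds $\int_{\mathbb{R}^N}|\partial_t k_t(\mathbf{x},\mathbf{y})|\,dw(\mathbf{y})\le C/t$ using the Gaussian bound for $\partial_t k_t$ from Theorem~\ref{thm:time_derivatives}.

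\textbf{Your route.} You use Chapman--Kolmogorov with the split $t=(t-u)+u$, $u\in(0,t/2)$, and average in $u$. You dominate $k_{t-u}\le h_{t-u}\lesssim\mathcal{G}_{t/c}$ uniformly, since $t-u\ge t/2$. You then apply Lemma~\ref{lem:integral_estimate_Duhamel} at each point $\mathbf{z}$. This is essentially the mechanism the paper itself uses in the proof of Lemma~\ref{lem:derivative_integral_estimate}.

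\textbf{What your route buys.} It needs no estimate on time derivatives of $k_t$ at all. The lemma then no longer depends on the Davies--Gaffney/Phragm\'en--Lindel\"of/Cauchy-formula machinery of Section~\ref{sec:appendix_}. You only need $0\le k_t\le h_t$, Theorem~\ref{teo:heat_new}, doubling, and the Duhamel mass bound. Since every integrand is nonnegative, Tonelli justifies all interchanges. The bound also comes out for every $t$ directly. The paper's argument, by contrast, relies on differentiating $t\mapsto\int_0^t\Phi_{\mathbf{x}}(s)\,ds$ pointwise and passing $\partial_t$ under $\int k_t\,dw$.

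\textbf{What the paper's route buys.} Once Theorem~\ref{thm:time_derivatives} is available, it is a two-line argument. It also identifies $\Phi_{\mathbf{x}}(t)$ exactly as the rate of mass loss $-\partial_t\int_{\mathbb{R}^N}k_t(\mathbf{x},\mathbf{y})\,dw(\mathbf{y})$.
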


\begin{proof}
By the Duhamel formula (see Lemma~\ref{lem:integral_estimate_Duhamel}), we have the identity
\begin{equation}\label{eq:duhamel_mass_identity}
    1 - \int_{\mathbb{R}^N} k_t(\mathbf{x},\mathbf{y})\,dw(\mathbf{y}) = \int_{0}^{t} \int_{\mathbb{R}^N} V(\mathbf{y})k_s(\mathbf{x},\mathbf{y})\,dw(\mathbf{y})\,ds.
\end{equation}
Differentiating both sides with respect to $t > 0$, we obtain
\begin{equation}\label{eq:derivative_identity}
    \int_{\mathbb{R}^N} V(\mathbf{y})k_t(\mathbf{x},\mathbf{y})\,dw(\mathbf{y}) = - \int_{\mathbb{R}^N} \partial_t k_t(\mathbf{x},\mathbf{y})\,dw(\mathbf{y}).
\end{equation}

Applying Theorem~\ref{thm:time_derivatives} with $m=1$, there exist constants $C_1, c_1 > 0$ such that
\begin{equation*}
    |\partial_t k_t(\mathbf{x},\mathbf{y})| \leq \frac{C_1}{t w(B(\mathbf{x},\sqrt{t}))} \exp\left(- c_1 \frac{d(\mathbf{x},\mathbf{y})^2}{t}\right).
\end{equation*}

Notice that the right-hand side is  bounded by $\frac{1}{t}\mathcal{G}_{t/c_1}(\mathbf{x},\mathbf{y})$ defined in \eqref{eq:mathcal_G}. Therefore,
\begin{equation*}
    \int_{\mathbb{R}^N} V(\mathbf{y})k_t(\mathbf{x},\mathbf{y})\,dw(\mathbf{y}) \leq \int_{\mathbb{R}^N} |\partial_t k_t(\mathbf{x},\mathbf{y})|\,dw(\mathbf{y}) \leq \frac{C_2}{t} \int_{\mathbb{R}^N} \mathcal{G}_{t/c_1}(\mathbf{x},\mathbf{y})\,dw(\mathbf{y}).
\end{equation*}

Since the function $\mathbf{y} \mapsto \mathcal{G}_{t/c_1}(\mathbf{x},\mathbf{y})$ integrates to a bounded constant uniformly in $\mathbf{x}$ and $t$ (due to the Gaussian decay on the space of homogeneous type), we conclude that
\begin{equation*}
    \int_{\mathbb{R}^N} V(\mathbf{y})k_t(\mathbf{x},\mathbf{y})\,dw(\mathbf{y}) \leq \frac{C}{t}.
\end{equation*}
\end{proof}

\begin{lemma}\label{lem:kernel_time_derivative_identity}
Assume that $V \in L^2_{\rm{loc}}(dw)$ and $V \geq 0$. For any $\mathbf{x}, \mathbf{y} \in \mathbb{R}^N$ and $0 < u < s$, we have
\begin{equation}\label{eq:kernel_time_derivative_identity}
    \partial_s k_s(\mathbf{x},\mathbf{y}) = \int_{\mathbb{R}^N} \partial_u k_u(\mathbf{x},\mathbf{z}) k_{s-u}(\mathbf{z},\mathbf{y})\,dw(\mathbf{z}).
\end{equation}
\end{lemma}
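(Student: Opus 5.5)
The identity is the semigroup law for $K_t$, differentiated in time at the kernel level. Since $K_s = K_u K_{s-u}$ on $L^2(dw)$ for $0<u<s$, the kernels satisfy the Chapman--Kolmogorov relation
\[
k_s(\mathbf{x},\mathbf{y}) = \int_{\mathbb{R}^N} k_u(\mathbf{x},\mathbf{z})\,k_{s-u}(\mathbf{z},\mathbf{y})\,dw(\mathbf{z})
\]
for all $\mathbf{x},\mathbf{y}$. This uses the continuity of $k_t$ in the space variables, which we take from~\cite{AH}. We then differentiate in $s$.

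The cleanest route avoids differentiating inside the integral in the $(s-u)$ slot. Fix $0<u<s$ and write $s=u+v$ with $v=s-u$ held fixed. Then $k_{u+v}(\mathbf{x},\mathbf{y}) = \int k_u(\mathbf{x},\mathbf{z})k_v(\mathbf{z},\mathbf{y})\,dw(\mathbf{z})$ for every $u>0$. Differentiating in $u$ with $v$ fixed gives $\partial_s k_s(\mathbf{x},\mathbf{y})|_{s=u+v}$ on the left and the claimed right-hand side of \eqref{eq:kernel_time_derivative_identity}, provided we may pass $\partial_u$ under the integral.

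To justify this, take $u$ in a compact interval $[u_1,u_2]\subset(0,\infty)$. Theorem~\ref{thm:time_derivatives} with $m=1$ gives
\[
|\partial_u k_u(\mathbf{x},\mathbf{z})| \le C\,u_1^{-1}\,w(B(\mathbf{x},\sqrt{u_1}))^{-1},
\]
using doubling to compare $w(B(\mathbf{x},\sqrt{u}))$ with $w(B(\mathbf{x},\sqrt{u_1}))$. Also $0\le k_v(\mathbf{z},\mathbf{y})\le h_v(\mathbf{z},\mathbf{y})$ by \eqref{eq:kernels_compare}, and $\int h_v(\mathbf{z},\mathbf{y})\,dw(\mathbf{z})=1$ by \eqref{eq:h_integral_1} together with the symmetry of $h_v$. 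The product is therefore dominated by an integrable function of $\mathbf{z}$, uniformly in $u\in[u_1,u_2]$. The mean value theorem and dominated convergence then allow differentiation under the integral sign.

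The main point requiring care is that $u\mapsto k_u(\mathbf{x},\mathbf{z})$ is differentiable pointwise. Theorem~\ref{thm:time_derivatives} presupposes this, since its proof (via the Coulhon--Sikora framework, i.e.\ analyticity of the semigroup $e^{-tL}$ on $L^2$) produces $\partial_t k_t$ as a genuine kernel. I would simply invoke that existence as part of Theorem~\ref{thm:time_derivatives}.

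If one prefers to avoid pointwise issues, an alternative is to argue at the operator level. On $L^2(dw)$ we have $\partial_s K_s = (\partial_u K_u)K_{s-u}$ by analyticity. Pairing with test functions $\varphi,\psi\in C_c$ and using Fubini (justified by the same bounds) identifies the kernels almost everywhere, and continuity upgrades this to every point.
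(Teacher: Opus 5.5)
Your argument is correct. It differs from the paper's mainly in where the differentiation is carried out.

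The paper works entirely at the operator level. It writes $\partial_s K_s = -LK_s = (-LK_u)K_{s-u} = (\partial_u K_u)K_{s-u}$ on $L^2(dw)$ and then simply asserts that this identity, read on integral kernels, gives \eqref{eq:kernel_time_derivative_identity}.

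You instead start from the pointwise Chapman--Kolmogorov relation and freeze $v=s-u$, so that only the first factor depends on the differentiation variable. You then move $\partial_u$ under the integral by dominated convergence, with the majorant $C u_1^{-1} w(B(\mathbf{x},\sqrt{u_1}))^{-1} h_v(\mathbf{z},\mathbf{y})$ supplied by Theorem~\ref{thm:time_derivatives} and \eqref{eq:kernels_compare}.

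The two routes buy different things:
\begin{itemize}
\item The paper's proof is shorter and uses nothing beyond spectral calculus, but it leaves implicit the passage from the operator identity to a pointwise kernel identity.
\item Your proof makes that passage rigorous, at the price of importing the Gaussian bound for $\partial_t k_t$. The import is legitimate: the proof of Theorem~\ref{thm:time_derivatives} in Section~\ref{sec:appendix_} does not use this lemma, which is only used later in Lemma~\ref{lem:derivative_integral_estimate}. So there is no circularity.
\end{itemize}
Your fallback operator-level argument is exactly the paper's proof.

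Two small remarks. First, monotonicity of $r\mapsto w(B(\mathbf{x},r))$ already gives $w(B(\mathbf{x},\sqrt{u}))\ge w(B(\mathbf{x},\sqrt{u_1}))$ for $u\ge u_1$, so doubling is not needed there. Second, like the paper, you rely on $k_t$ being defined pointwise (continuously) and on pointwise differentiability of $t\mapsto k_t(\mathbf{x},\mathbf{z})$. The paper also takes these for granted when it applies Cauchy's formula to $z\mapsto k_z(\mathbf{x},\mathbf{y})$.
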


\begin{proof}
By the semigroup property, $K_s = K_u K_{s-u}$. Since the self-adjoint generator $-L$ commutes with $K_t$, differentiating with respect to $s$ yields $\partial_s K_s = -L K_s = (-L K_u) K_{s-u} = (\partial_u K_u) K_{s-u}$. Writing this identity in terms of integral kernels gives \eqref{eq:kernel_time_derivative_identity}.
\end{proof}

\begin{lemma}\label{lem:derivative_integral_estimate}
Assume that $V \in L^2_{\rm{loc}}(dw)$ and $V \geq 0$. There exists a constant $C > 0$ such that for all $\mathbf{x} \in \mathbb{R}^N$ and $t > 0$, we have
\begin{equation}\label{eq:Duhamel_derivative_bound}
    \int_{t/2}^{t} \int_{\mathbb{R}^N} V(\mathbf{y})|\partial_s k_s(\mathbf{x},\mathbf{y})|\,dw(\mathbf{y})\,ds \leq \frac{C}{t}.
\end{equation}
\end{lemma}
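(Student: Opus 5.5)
We combine Lemma~\ref{lem:kernel_time_derivative_identity} with Lemma~\ref{lem:integral_estimate_1_over_t} and the Gaussian bound for $\partial_u k_u$ from Theorem~\ref{thm:time_derivatives}.

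Fix $\mathbf{x}$, $t$, and $s\in[t/2,t]$. We apply Lemma~\ref{lem:kernel_time_derivative_identity} with $u=s/2$, which gives
\[
|\partial_s k_s(\mathbf{x},\mathbf{y})| \le \int_{\mathbb{R}^N} |\partial_u k_u(\mathbf{x},\mathbf{z})|\, k_{s/2}(\mathbf{z},\mathbf{y})\,dw(\mathbf{z}), \qquad u=s/2,
\]
where we used $k_{s-u}\ge 0$. Multiplying by $V(\mathbf{y})$, integrating in $\mathbf{y}$, and using Tonelli, we obtain
\[
\int V(\mathbf{y})|\partial_s k_s(\mathbf{x},\mathbf{y})|\,dw(\mathbf{y}) \le \int |\partial_u k_u(\mathbf{x},\mathbf{z})| \Big(\int V(\mathbf{y})k_{s/2}(\mathbf{z},\mathbf{y})\,dw(\mathbf{y})\Big) dw(\mathbf{z}).
\]

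The inner integral is bounded by $C/(s/2)\le C'/t$. This follows from Lemma~\ref{lem:integral_estimate_1_over_t}, applied at the point $\mathbf{z}$ and using the symmetry $k_r(\mathbf{z},\mathbf{y})=k_r(\mathbf{y},\mathbf{z})$, which holds because $K_r$ is self-adjoint.

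The remaining integral $\int|\partial_u k_u(\mathbf{x},\mathbf{z})|\,dw(\mathbf{z})$ is at most $C/u\le C/t$. This comes from Theorem~\ref{thm:time_derivatives} with $m=1$: it gives the bound $\frac{C}{u}\, w(B(\mathbf{x},\sqrt u))^{-1}e^{-c\,d(\mathbf{x},\mathbf{z})^2/u}$, whose $dw(\mathbf{z})$-integral is uniformly bounded by Lemma~\ref{lem:homogeneous} (equivalently, by splitting into orbit annuli and using doubling together with~\eqref{eq:ball_orbit_compare}).

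Hence for each $s\in[t/2,t]$ the $\mathbf{y}$-integral is at most $C/t^2$. Integrating over $s\in[t/2,t]$, an interval of length $t/2$, yields the claimed bound $C/t$.

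The main point needing care is justifying the interchange of integrals and the uniform integrability of the Gaussian in the orbit metric. The latter is handled by the $|G|$-fold covering: the set $\{\mathbf{z}: d(\mathbf{x},\mathbf{z})<r\}$ equals $\mathcal{O}(B(\mathbf{x},r))$, whose measure is at most $|G|\,w(B(\mathbf{x},r))$. Since all integrands are nonnegative after taking absolute values, Tonelli applies without further hypotheses.
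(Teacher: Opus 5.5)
Your argument is correct. It shares the paper's skeleton: factor $\partial_s k_s$ through Lemma~\ref{lem:kernel_time_derivative_identity}, apply Tonelli, and use the $L^1$ bound $\int|\partial_u k_u(\mathbf{x},\mathbf{z})|\,dw(\mathbf{z})\le C/u$ from Theorem~\ref{thm:time_derivatives}. The difference is in how the $V$-weighted factor is controlled.

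The paper fixes $u=t/4$ independently of $s$. After Tonelli, the $s$-integral then moves onto the second factor, giving $\int_{t/4}^{3t/4}\int_{\mathbb{R}^N}V(\mathbf{y})k_r(\mathbf{z},\mathbf{y})\,dw(\mathbf{y})\,dr$. This is bounded by $1$ using only the elementary mass bound of Lemma~\ref{lem:integral_estimate_Duhamel} (Duhamel, positivity, and $\int h_t\,dw=1$), and the whole $1/t$ comes from the single factor $C/u$.

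You instead take $u=s/2$ and bound the $V$-factor pointwise in time by Lemma~\ref{lem:integral_estimate_1_over_t}. This yields the stronger fixed-time estimate $\int_{\mathbb{R}^N}V(\mathbf{y})|\partial_s k_s(\mathbf{x},\mathbf{y})|\,dw(\mathbf{y})\le C s^{-2}$, and the lemma follows by integrating over an interval of length $t/2$.

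The trade-off is as follows. The paper's route needs only the time-averaged Duhamel bound, and that is exactly why the time integral in the statement is there. Your route gives a pointwise-in-$s$ bound, but it leans on Lemma~\ref{lem:integral_estimate_1_over_t}. That lemma's proof differentiates the Duhamel mass identity in $t$, an interchange that needs some justification, for instance lower semicontinuity of $s\mapsto\int V k_s\,dw$ via Fatou. It also invokes Theorem~\ref{thm:time_derivatives} a second time.

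A minor point: your appeal to the symmetry of $k_r$ is unnecessary. Lemma~\ref{lem:integral_estimate_1_over_t} applied at the point $\mathbf{z}$ already has $\mathbf{y}$ in the second slot.
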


\begin{proof}
Fix $t > 0$ and set $u = t/4$. By Lemma~\ref{lem:kernel_time_derivative_identity}, we have $\partial_s k_s(\mathbf{x},\mathbf{y}) = \int_{\mathbb{R}^N} \partial_u k_u(\mathbf{x},\mathbf{z}) k_{s-u}(\mathbf{z},\mathbf{y})\,dw(\mathbf{z})$. Substituting this into the integral and applying Fubini's theorem, we obtain
\begin{align*}
    \int_{t/2}^{t} \int_{\mathbb{R}^N} V(\mathbf{y})|\partial_s k_s(\mathbf{x},\mathbf{y})|\,dw(\mathbf{y})\,ds 
    &\leq \int_{\mathbb{R}^N} |\partial_u k_u(\mathbf{x},\mathbf{z})| \left( \int_{t/4}^{3t/4} \int_{\mathbb{R}^N} V(\mathbf{y}) k_{r}(\mathbf{z},\mathbf{y})\,dw(\mathbf{y})\,dr \right) dw(\mathbf{z}),
\end{align*}
where we substituted $r = s - u$. By Lemma~\ref{lem:integral_estimate_Duhamel}, the inner double integral is bounded from above by $\int_{0}^{t} \int_{\mathbb{R}^N} V(\mathbf{y}) k_r(\mathbf{z},\mathbf{y})\,dw(\mathbf{y})\,dr \leq 1$. Consequently,
\begin{equation*}
    \int_{t/2}^{t} \int_{\mathbb{R}^N} V(\mathbf{y})|\partial_s k_s(\mathbf{x},\mathbf{y})|\,dw(\mathbf{y})\,ds \leq \int_{\mathbb{R}^N} |\partial_u k_u(\mathbf{x},\mathbf{z})|\,dw(\mathbf{z}).
\end{equation*}
Finally, by the Gaussian bounds from Theorem~\ref{thm:time_derivatives} (with $m=1$), the last integral is bounded by $C_2/u$ for some constant $C_2 > 0$. Since $u = t/4$, we obtain the desired bound with $C = 4C_2$.
\end{proof}

\section{Proof of Theorem \ref{thm:main_derivative_difference}}\label{sec:proof_1}

The primary objective of this section is to prove Theorem \ref{thm:main_derivative_difference}. In order to estimate the difference between the time derivatives of the Dunkl heat semigroup and the Dunkl--Schr\"odinger semigroup, we analyze the integral representation of their difference directly.  For a fixed $\mathbf{x} \in \mathbb{R}^N$, we partition the integration domain $\mathbb{R}^N$ into $|G|$ regions (up to sets of measure zero), corresponding to the elements of the reflection group $G$. Specifically, for each $\sigma \in G$, we define the set
\begin{equation}\label{eq:group_decomposition_sets}
    D_\sigma(\mathbf{x}) := \left\{ \mathbf{y} \in \mathbb{R}^N : d(\mathbf{x},\mathbf{y}) = \|\sigma(\mathbf{x}) - \mathbf{y}\| \right\}.
\end{equation}
We can express the difference of their time derivatives as
\begin{equation}\label{eq:semigroup_difference_integral}
    \partial_t H_t f(\mathbf{x}) - \partial_t K_t f(\mathbf{x}) = \int_{\mathbb{R}^N} \big( \partial_t h_t(\mathbf{x},\mathbf{y}) - \partial_t k_t(\mathbf{x},\mathbf{y}) \big) f(\mathbf{y}) \, dw(\mathbf{y}).
\end{equation}
Splitting the integral over $\mathbb{R}^N$ according to the partition $\{D_\sigma(\mathbf{x})\}_{\sigma \in G}$, we obtain
\begin{equation}\label{eq:semigroup_difference_split}
    |\partial_t H_t f(\mathbf{x}) - \partial_t K_t f(\mathbf{x})| \leq \sum_{\sigma \in G} \int_{D_\sigma(\mathbf{x})} \big| \partial_t h_t(\mathbf{x},\mathbf{y}) - \partial_t k_t(\mathbf{x},\mathbf{y}) \big| |f(\mathbf{y})| \, dw(\mathbf{y}).
\end{equation}
Our strategy will be to estimate the integral over each region $D_\sigma(\mathbf{x})$ separately. 


\begin{lemma}\label{lem:large_time_derivative_bound}
Assume that $V \in {\rm{RH}}^{q}(dw)$ for some $q>\max(1,\frac{\mathbf{N}}{2})$, and $V \geq 0$. Let $f$ be a measurable function such that $\|f m^\beta\|_{L^\infty(\mathbb{R}^N)} < \infty$ for some $0 < \beta < 2 - \frac{\mathbf{N}}{q}$. Let $\Gamma_t(\mathbf{x},\mathbf{y}) = \partial_t h_t(\mathbf{x},\mathbf{y}) - \partial_t k_t(\mathbf{x},\mathbf{y})$. For any fixed $\sigma \in G$, there exists a constant $C>0$ such that for all $\mathbf{x} \in \mathbb{R}^N$ and $t \geq m(\sigma(\mathbf{x}))^{-2}$, we have
\begin{equation}
    \int_{D_\sigma(\mathbf{x})} |\Gamma_t(\mathbf{x},\mathbf{y})| |f(\mathbf{y})| \, dw(\mathbf{y}) \leq C t^{-1+\beta/2} \|f m^\beta\|_{L^\infty(\mathbb{R}^N)}.
\end{equation}
\end{lemma}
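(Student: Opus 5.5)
In the large-time regime $t \geq m(\sigma(\mathbf{x}))^{-2}$ we do not exploit any cancellation between $h_t$ and $k_t$. We bound the two pieces separately:
\[
|\Gamma_t(\mathbf{x},\mathbf{y})| \le |\partial_t h_t(\mathbf{x},\mathbf{y})| + |\partial_t k_t(\mathbf{x},\mathbf{y})|.
\]
Theorem~\ref{teo:heat_new} (with $m=1$) and Theorem~\ref{thm:time_derivatives} (with $m=1$) then give
\[
|\Gamma_t(\mathbf{x},\mathbf{y})| \le \frac{C}{t\, w(B(\mathbf{x},\sqrt{t}))}\exp\Big(-c\frac{d(\mathbf{x},\mathbf{y})^2}{t}\Big).
\]
On $D_\sigma(\mathbf{x})$ we have $d(\mathbf{x},\mathbf{y})=\|\sigma(\mathbf{x})-\mathbf{y}\|$. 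By $G$-invariance of $w$, $w(B(\mathbf{x},\sqrt t))=w(B(\sigma(\mathbf{x}),\sqrt t))$. Hence on this sector the bound is a genuine Euclidean Gaussian centred at the point $\mathbf{x}_\sigma:=\sigma(\mathbf{x})$. This is the reason for localizing to $D_\sigma(\mathbf{x})$: $m$ is not $G$-invariant, so we must compare $m(\mathbf{y})$ with $m$ at the centre that actually realizes the orbit distance.

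Next we control $|f(\mathbf{y})| \le \|fm^\beta\|_{L^\infty}\, m(\mathbf{y})^{-\beta}$ through Lemma~\ref{lem:m_growth}. Inequality \eqref{eq:Shen_C}, applied with base point $\mathbf{x}_\sigma$, gives
\[
m(\mathbf{y})^{-\beta}\le C\, m(\mathbf{x}_\sigma)^{-\beta}\big(1+m(\mathbf{x}_\sigma)\|\mathbf{x}_\sigma-\mathbf{y}\|\big)^{\beta\kappa/(1+\kappa)}.
\]
The hypothesis $t\ge m(\mathbf{x}_\sigma)^{-2}$ means $m(\mathbf{x}_\sigma)\ge t^{-1/2}$, so $m(\mathbf{x}_\sigma)^{-\beta}\le t^{\beta/2}$. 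The remaining factor has to be absorbed into the Gaussian. We therefore write $m(\mathbf{x}_\sigma)\|\mathbf{x}_\sigma-\mathbf{y}\| = \sqrt t\, m(\mathbf{x}_\sigma)\cdot \|\mathbf{x}_\sigma-\mathbf{y}\|/\sqrt t$.

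The main obstacle is the product $m(\mathbf{x}_\sigma)^{-\beta}(1+m(\mathbf{x}_\sigma)\|\mathbf{x}_\sigma-\mathbf{y}\|)^{\beta\kappa/(1+\kappa)}$. A naive bound gives $(\sqrt t\,m(\mathbf{x}_\sigma))^{\beta\kappa/(1+\kappa)}$, which grows in $\sqrt t\, m(\mathbf{x}_\sigma)$. The remedy is to keep the powers of $m(\mathbf{x}_\sigma)$ together. Setting $\lambda=\sqrt t\, m(\mathbf{x}_\sigma)\ge1$ and $u=\|\mathbf{x}_\sigma-\mathbf{y}\|/\sqrt t$, the product equals
\[
t^{\beta/2}\lambda^{-\beta}(1+\lambda u)^{\beta\kappa/(1+\kappa)}\le t^{\beta/2}\lambda^{-\beta+\beta\kappa/(1+\kappa)}(1+u)^{\beta\kappa/(1+\kappa)}=t^{\beta/2}\lambda^{-\beta/(1+\kappa)}(1+u)^{\beta\kappa/(1+\kappa)}.
\]
Since $\lambda\ge1$, this is at most $t^{\beta/2}(1+u)^{\beta}$. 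Thus the exponents combine favourably precisely because $t\ge m(\mathbf{x}_\sigma)^{-2}$, and in this regime no reverse H\"older input beyond Lemma~\ref{lem:m_growth} is needed.

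Finally, we enlarge the domain from $D_\sigma(\mathbf{x})$ to all of $\mathbb{R}^N$. The absorbed polynomial factor is swallowed by halving the Gaussian constant. The remaining integral
\[
\int_{\mathbb{R}^N}\frac{1}{w(B(\mathbf{x}_\sigma,\sqrt t))}e^{-c\|\mathbf{x}_\sigma-\mathbf{y}\|^2/(2t)}\,dw(\mathbf{y})\le C
\]
is bounded by the doubling property \eqref{eq:growth}, via a dyadic annulus decomposition as in Lemma~\ref{lem:homogeneous}. Collecting the factors $t^{-1}$ and $t^{\beta/2}$ yields $C t^{-1+\beta/2}\|fm^\beta\|_{L^\infty}$. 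The constant is uniform in $\sigma$, and there are only finitely many elements $\sigma\in G$.
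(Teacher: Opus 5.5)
Your proof is correct and follows the paper's argument: the Gaussian bound on the sector $D_\sigma(\mathbf{x})$, control of $m(\mathbf{y})^{-\beta}$ through Lemma~\ref{lem:m_growth} with base point $\sigma(\mathbf{x})$, and absorption of the resulting polynomial factor into the Gaussian. The only difference is cosmetic. The paper splits $D_\sigma(\mathbf{x})$ into $\|\sigma(\mathbf{x})-\mathbf{y}\|<m(\sigma(\mathbf{x}))^{-1}$, handled by \eqref{eq:Shen_A}, and its complement, handled by \eqref{eq:Shen_C}; you avoid this split by applying \eqref{eq:Shen_C} everywhere together with $1+\lambda u\le\lambda(1+u)$ for $\lambda\ge1$, which is a harmless streamlining.
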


\begin{proof}
By Theorem~\ref{thm:time_derivatives}, both $\partial_t h_t$ and $\partial_t k_t$ satisfy the Gaussian bound, hence
\begin{equation}\label{eq:gamma_bound_sigma}
    |\Gamma_t(\mathbf{x},\mathbf{y})| \leq \frac{C}{t w(B(\mathbf{x},\sqrt{t}))} \exp\left(- c \frac{d(\mathbf{x},\mathbf{y})^2}{t}\right).
\end{equation}
On the region $D_\sigma(\mathbf{x})$, we have $d(\mathbf{x},\mathbf{y}) = \|\sigma(\mathbf{x})-\mathbf{y}\|$. We decompose $D_\sigma(\mathbf{x})$ into a local part $I_1 = \{ \mathbf{y} \in D_\sigma(\mathbf{x}) : \|\sigma(\mathbf{x})-\mathbf{y}\| < m(\sigma(\mathbf{x}))^{-1} \}$ and a global part $I_2 = D_\sigma(\mathbf{x}) \setminus I_1$.

On $I_1$, Lemma~\ref{lem:m_growth}\eqref{eq:Shen_A} yields $m(\mathbf{y}) \geq C^{-1} m(\sigma(\mathbf{x}))$. Using $t \geq m(\sigma(\mathbf{x}))^{-2}$, we obtain
\begin{equation*}
    |f(\mathbf{y})| \leq \|f m^\beta\|_{L^\infty} m(\mathbf{y})^{-\beta} \leq C \|f m^\beta\|_{L^\infty} m(\sigma(\mathbf{x}))^{-\beta} \leq C \|f m^\beta\|_{L^\infty} t^{\beta/2}.
\end{equation*}
Combining this with \eqref{eq:gamma_bound_sigma} and extending the integration to $\mathbb{R}^N$ gives
\begin{equation*}
    \int_{I_1} |\Gamma_t(\mathbf{x},\mathbf{y})| |f(\mathbf{y})| \, dw(\mathbf{y}) \leq \frac{C \|f m^\beta\|_{L^\infty} t^{-1+\beta/2}}{w(B(\mathbf{x},\sqrt{t}))} \int_{\mathbb{R}^N} \exp\left(- c \frac{\|\sigma(\mathbf{x})-\mathbf{y}\|^2}{t}\right) dw(\mathbf{y}) \leq C t^{-1+\beta/2} \|f m^\beta\|_{L^\infty}.
\end{equation*}

On $I_2$, we apply Lemma~\ref{lem:m_growth}\eqref{eq:Shen_C} with $\lambda = \beta \frac{\kappa}{1+\kappa} < \beta$. Since $m(\sigma(\mathbf{x}))\|\sigma(\mathbf{x})-\mathbf{y}\| \geq 1$, we have
\begin{equation*}
    m(\mathbf{y})^{-\beta} \leq C m(\sigma(\mathbf{x}))^{-\beta} (1 + m(\sigma(\mathbf{x}))\|\sigma(\mathbf{x})-\mathbf{y}\|)^{\lambda} \leq C m(\sigma(\mathbf{x}))^{-\beta+\lambda} \|\sigma(\mathbf{x})-\mathbf{y}\|^\lambda.
\end{equation*}
Since $m(\sigma(\mathbf{x})) \geq t^{-1/2}$ and $-\beta+\lambda < 0$, it follows that $m(\sigma(\mathbf{x}))^{-\beta+\lambda} \leq t^{(\beta-\lambda)/2}$. Thus,
\begin{equation*}
    |f(\mathbf{y})| \leq C \|f m^\beta\|_{L^\infty} t^{\beta/2} \left( \frac{\|\sigma(\mathbf{x})-\mathbf{y}\|}{\sqrt{t}} \right)^\lambda.
\end{equation*}
Integrating over $I_2$ using \eqref{eq:gamma_bound_sigma} yields
\begin{equation*}
    \int_{I_2} |\Gamma_t(\mathbf{x},\mathbf{y})| |f(\mathbf{y})| \, dw(\mathbf{y}) \leq \frac{C \|f m^\beta\|_{L^\infty} t^{-1+\beta/2}}{w(B(\mathbf{x},\sqrt{t}))} \int_{\mathbb{R}^N} \left( \frac{\|\sigma(\mathbf{x})-\mathbf{y}\|}{\sqrt{t}} \right)^\lambda \exp\left(- c \frac{\|\sigma(\mathbf{x})-\mathbf{y}\|^2}{t}\right) dw(\mathbf{y}).
\end{equation*}
Since the function $u \mapsto u^\lambda \exp(-c u^2)$ is uniformly bounded on $[0, \infty)$, the integral is bounded by a constant independent of $\mathbf{x}$ and $t$. This establishes the bound for $I_2$ and completes the proof.
\end{proof}


\begin{lemma}\label{lem:far_region_estimate}
Assume that $V \in {\rm{RH}}^{q}(dw)$ with $q>\max(1,\frac{\mathbf{N}}{2})$, and $V \geq 0$. Let $f$ be a measurable function such that $\|f m^\beta\|_{L^\infty(\mathbb{R}^N)} < \infty$ for some $0 < \beta < 2 - \frac{\mathbf{N}}{q}$. Let $\Gamma_t(\mathbf{x},\mathbf{y}) = \partial_t h_t(\mathbf{x},\mathbf{y}) - \partial_t k_t(\mathbf{x},\mathbf{y})$. For any fixed $\sigma \in G$ and $\mathbf{x} \in \mathbb{R}^N$, let $C_{\sigma,\mathbf{x}} = \{ \mathbf{y} \in D_\sigma(\mathbf{x}) : \|\sigma(\mathbf{x})-\mathbf{y}\| \geq m(\sigma(\mathbf{x}))^{-1} \}$. Then for all $t \leq m(\sigma(\mathbf{x}))^{-2}$, we have
\begin{equation}\label{eq:far_region_bound}
    \int_{C_{\sigma,\mathbf{x}}} |\Gamma_t(\mathbf{x},\mathbf{y})| |f(\mathbf{y})| \, dw(\mathbf{y}) \leq C t^{-1+\beta/2} \|f m^\beta\|_{L^\infty(\mathbb{R}^N)}.
\end{equation}
\end{lemma}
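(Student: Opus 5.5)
We bound $\Gamma_t$ crudely by the sum of the Gaussian bounds for $\partial_t h_t$ and $\partial_t k_t$ (Theorem~\ref{teo:heat_new} and Theorem~\ref{thm:time_derivatives}), exactly as in \eqref{eq:gamma_bound_sigma}. No cancellation between the two kernels is needed here, because on $C_{\sigma,\mathbf{x}}$ the point $\mathbf{y}$ lies at orbit distance at least $m(\sigma(\mathbf{x}))^{-1}\ge\sqrt{t}$ from $\mathbf{x}$. Thus on $C_{\sigma,\mathbf{x}}$ we have
\[
|\Gamma_t(\mathbf{x},\mathbf{y})|\le \frac{C}{t\,w(B(\mathbf{x},\sqrt t))}\exp\Big(-c\frac{\|\sigma(\mathbf{x})-\mathbf{y}\|^2}{t}\Big),
\qquad w(B(\mathbf{x},\sqrt t))=w(B(\sigma(\mathbf{x}),\sqrt t)).
\]
The second identity uses the $G$-invariance of $w$.

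Next we control $f$ on $C_{\sigma,\mathbf{x}}$. We write $\rho=m(\sigma(\mathbf{x}))^{-1}$ and $r=\|\sigma(\mathbf{x})-\mathbf{y}\|\ge\rho$. As in the proof of Lemma~\ref{lem:large_time_derivative_bound}, Lemma~\ref{lem:m_growth}\eqref{eq:Shen_C} centred at $\sigma(\mathbf{x})$ gives, with $\lambda=\beta\kappa/(1+\kappa)<\beta$,
\[
|f(\mathbf{y})|\le \|fm^\beta\|_{L^\infty}m(\mathbf{y})^{-\beta}\le C\|fm^\beta\|_{L^\infty}\rho^{\beta}(r/\rho)^{\lambda}\le C\|fm^\beta\|_{L^\infty}\,r^{\beta}.
\]
The last step uses $r\ge\rho$ and $\lambda<\beta$, so that $\rho^{\beta-\lambda}\le r^{\beta-\lambda}$. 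This is the key point. In the regime $t\le\rho^2$ we cannot trade $\rho^\beta$ for $t^{\beta/2}$ (the inequality goes the wrong way), but on the far region the bound $\rho\le r$ lets us replace all powers of $\rho$ by powers of the distance $r$. We then write $r^\beta=t^{\beta/2}(r/\sqrt t)^\beta$.

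Inserting both bounds gives
\[
\int_{C_{\sigma,\mathbf{x}}}|\Gamma_t||f|\,dw\le C\|fm^\beta\|_{L^\infty}\,t^{-1+\beta/2}\,\frac{1}{w(B(\sigma(\mathbf{x}),\sqrt t))}\int_{\mathbb{R}^N}\Big(\frac{\|\sigma(\mathbf{x})-\mathbf{y}\|}{\sqrt t}\Big)^{\beta}e^{-c\|\sigma(\mathbf{x})-\mathbf{y}\|^2/t}\,dw(\mathbf{y}).
\]
We absorb the polynomial factor into half of the exponential. The remaining integral is then bounded by a constant: we decompose into dyadic annuli around $\sigma(\mathbf{x})$ of radius $2^j\sqrt t$ and use the growth bound \eqref{eq:growth}, which gives $\sum_j 2^{j\mathbf N}e^{-c4^{j}}<\infty$. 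This yields \eqref{eq:far_region_bound}, with constants independent of $\sigma$, $\mathbf{x}$ and $t$.

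The only delicate step is the weight comparison. If one instead used \eqref{eq:Shen_B}-type growth with a larger exponent, or tried to pass to $m(\mathbf{x})$ rather than $m(\sigma(\mathbf{x}))$, the exponent could exceed $\beta$ and be lost. Centring everything at $\sigma(\mathbf{x})$, which is legitimate on $D_\sigma(\mathbf{x})$, avoids any need for $G$-invariance of $V$. The hypothesis $\beta<2-\mathbf N/q$ is not needed for this piece; it only enters in the near region through Lemma~\ref{lem:integral_estimate_A}.
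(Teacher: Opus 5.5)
Your proposal is correct and follows essentially the same route as the paper. Both arguments bound $\Gamma_t$ by the Gaussian estimate of Theorem~\ref{thm:time_derivatives} and apply \eqref{eq:Shen_C} centred at $\sigma(\mathbf{x})$ with $\lambda=\beta\kappa/(1+\kappa)<\beta$. Both then use $m(\sigma(\mathbf{x}))^{-1}\le\|\sigma(\mathbf{x})-\mathbf{y}\|$ on $C_{\sigma,\mathbf{x}}$ to obtain $|f(\mathbf{y})|\le C\|fm^\beta\|_{L^\infty}\,t^{\beta/2}(\|\sigma(\mathbf{x})-\mathbf{y}\|/\sqrt t)^{\beta}$ and integrate against the Gaussian. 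Your extra remarks, namely the explicit dyadic summation and the observation that $\beta<2-\mathbf N/q$ is not used in this piece, are accurate refinements of that same argument.
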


\begin{proof}
By Theorem~\ref{thm:time_derivatives}, the kernel difference $\Gamma_t(\mathbf{x},\mathbf{y})$ satisfies the Gaussian bound
\begin{equation}\label{eq:gamma_bound_far}
    |\Gamma_t(\mathbf{x},\mathbf{y})| \leq \frac{C}{t w(B(\mathbf{x},\sqrt{t}))} \exp\left(- c \frac{d(\mathbf{x},\mathbf{y})^2}{t}\right).
\end{equation}
Recall that $d(\mathbf{x},\mathbf{y}) = \|\sigma(\mathbf{x})-\mathbf{y}\|$ on $D_\sigma(\mathbf{x})$. For any $\mathbf{y} \in \mathbb{R}^N$, we have $|f(\mathbf{y})| \leq \|f m^\beta\|_{L^\infty} m(\mathbf{y})^{-\beta}$. Applying Lemma~\ref{lem:m_growth}\eqref{eq:Shen_C} with $\lambda = \beta \frac{\kappa}{1+\kappa} < \beta$, we obtain
\begin{equation*}
    m(\mathbf{y})^{-\beta} \leq C m(\sigma(\mathbf{x}))^{-\beta} (1 + m(\sigma(\mathbf{x}))\|\sigma(\mathbf{x})-\mathbf{y}\|)^{\lambda}.
\end{equation*}
For $\mathbf{y} \in C_{\sigma,\mathbf{x}}$, the condition $m(\sigma(\mathbf{x}))\|\sigma(\mathbf{x})-\mathbf{y}\| \geq 1$ implies that the term $(1 + m(\sigma(\mathbf{x}))\|\sigma(\mathbf{x})-\mathbf{y}\|)^\lambda$ is bounded by $2^\lambda m(\sigma(\mathbf{x}))^\lambda \|\sigma(\mathbf{x})-\mathbf{y}\|^\lambda$. Hence,
\begin{equation*}
    |f(\mathbf{y})| \leq C \|f m^\beta\|_{L^\infty} m(\sigma(\mathbf{x}))^{-\beta+\lambda} \|\sigma(\mathbf{x})-\mathbf{y}\|^\lambda.
\end{equation*}
Since $-\beta+\lambda < 0$ and $m(\sigma(\mathbf{x})) \geq \|\sigma(\mathbf{x})-\mathbf{y}\|^{-1}$ on $C_{\sigma,\mathbf{x}}$, we deduce that $m(\sigma(\mathbf{x}))^{-\beta+\lambda} \leq \|\sigma(\mathbf{x})-\mathbf{y}\|^{\beta-\lambda}$. Multiplying this by $\|\sigma(\mathbf{x})-\mathbf{y}\|^\lambda$ gives
\begin{equation*}
    |f(\mathbf{y})| \leq C \|f m^\beta\|_{L^\infty} \|\sigma(\mathbf{x})-\mathbf{y}\|^\beta = C \|f m^\beta\|_{L^\infty} t^{\beta/2} \left( \frac{\|\sigma(\mathbf{x})-\mathbf{y}\|}{\sqrt{t}} \right)^\beta.
\end{equation*}
Substituting this bound and \eqref{eq:gamma_bound_far} into the integral over $C_{\sigma,\mathbf{x}}$, we get
\begin{equation*}
    \int_{C_{\sigma,\mathbf{x}}} |\Gamma_t(\mathbf{x},\mathbf{y})| |f(\mathbf{y})| \, dw(\mathbf{y}) \leq \frac{C \|f m^\beta\|_{L^\infty} t^{-1+\beta/2}}{w(B(\mathbf{x},\sqrt{t}))} \int_{\mathbb{R}^N} \left( \frac{\|\sigma(\mathbf{x})-\mathbf{y}\|}{\sqrt{t}} \right)^\beta \exp\left(- c \frac{\|\sigma(\mathbf{x})-\mathbf{y}\|^2}{t}\right) dw(\mathbf{y}).
\end{equation*}
Therefore, the integral is bounded by a uniform constant, which yields the desired estimate \eqref{eq:far_region_bound}.
\end{proof}


\begin{lemma}\label{lem:critical_radius_comparability}
Assume that $V \in {\rm{RH}}^{q}(dw)$ with $q>\max(1,\frac{\mathbf{N}}{2})$ and $V \geq 0$. For any fixed $\sigma \in G$ and $\mathbf{x} \in \mathbb{R}^N$, let $C_{\sigma,\mathbf{x}}^c = \{ \mathbf{y} \in D_\sigma(\mathbf{x}) : \|\sigma(\mathbf{x})-\mathbf{y}\| < m(\sigma(\mathbf{x}))^{-1} \}$. Then there exists a constant $C \geq 1$ such that for all $\mathbf{x} \in \mathbb{R}^N$ and $\mathbf{y} \in C_{\sigma,\mathbf{x}}^c$, we have
\begin{equation}\label{eq:m_comparability}
    C^{-1} \frac{1}{m(\sigma(\mathbf{x}))} \leq \frac{1}{m(\mathbf{y})} \leq C \frac{1}{m(\sigma(\mathbf{x}))}.
\end{equation}
In particular, $m(\mathbf{y}) \sim m(\sigma(\mathbf{x}))$ uniformly on $C_{\sigma,\mathbf{x}}^c$.
\end{lemma}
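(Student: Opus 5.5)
This is a direct application of Lemma~\ref{lem:m_growth}\eqref{eq:Shen_A} with the base point taken to be $\sigma(\mathbf{x})$ instead of $\mathbf{x}$. The only concern is that $m$ is not $G$-invariant. We deal with this by never comparing $m(\mathbf{y})$ with $m(\mathbf{x})$. All comparisons are made with $m(\sigma(\mathbf{x}))$, which is the natural center of the sector $D_\sigma(\mathbf{x})$. Since \eqref{eq:Shen_A} holds for \emph{all} pairs of points in $\mathbb{R}^N$, and no group action appears in it, we may apply it to the pair $(\sigma(\mathbf{x}),\mathbf{y})$.

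Concretely, fix $\sigma\in G$, $\mathbf{x}\in\mathbb{R}^N$, and let $\mathbf{y}\in C_{\sigma,\mathbf{x}}^c$. Set $\mathbf{x}_\sigma:=\sigma(\mathbf{x})\in\mathbb{R}^N$. By the definition of $C_{\sigma,\mathbf{x}}^c$ we have
\[
\|\mathbf{x}_\sigma-\mathbf{y}\|<m(\mathbf{x}_\sigma)^{-1}.
\]
This is exactly the hypothesis of \eqref{eq:Shen_A} with $\mathbf{x}_\sigma$ in place of $\mathbf{x}$. That inequality gives $C^{-1}m(\mathbf{y})\le m(\mathbf{x}_\sigma)\le Cm(\mathbf{y})$. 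Taking reciprocals yields \eqref{eq:m_comparability}, and the constant is the one from Lemma~\ref{lem:m_growth}. In particular the constant does not depend on $\sigma$, $\mathbf{x}$ or $\mathbf{y}$. The constraint $\mathbf{y}\in D_\sigma(\mathbf{x})$ plays no role here; it only matters later, where $d(\mathbf{x},\mathbf{y})=\|\sigma(\mathbf{x})-\mathbf{y}\|$ is used.

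There is no real obstacle. The only point to check is that Lemma~\ref{lem:m_growth} was stated for arbitrary points, without any $G$-invariance of $V$. This holds because its proof (an adaptation of Shen's Lemma 1.4) uses only Lemma~\ref{lem:Rr}, the doubling of $dw$ and of $\mu$, and ball inclusions $B(\mathbf{y},r)\subset B(\mathbf{x}_\sigma,2r)$, all of which are Euclidean statements. If one wanted to be extra careful, one could re-derive \eqref{eq:Shen_A} at $\mathbf{x}_\sigma$ as follows. With $r=m(\mathbf{x}_\sigma)^{-1}$, compare $\frac{r^2}{w(B(\mathbf{y},r))}\mu(B(\mathbf{y},r))$ with the same quantity at $\mathbf{x}_\sigma$, using $B(\mathbf{y},r)\subset B(\mathbf{x}_\sigma,2r)\subset B(\mathbf{y},3r)$ and doubling. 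This shows the quantity is comparable to $1$. Lemma~\ref{lem:Rr} then pins down $m(\mathbf{y})^{-1}\sim r$.
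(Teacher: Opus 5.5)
Your proposal is correct and is exactly the paper's argument: apply \eqref{eq:Shen_A} of Lemma~\ref{lem:m_growth} to the pair $(\sigma(\mathbf{x}),\mathbf{y})$ using only $\|\sigma(\mathbf{x})-\mathbf{y}\|<m(\sigma(\mathbf{x}))^{-1}$, then take reciprocals. You are also right that no $G$-invariance is needed and that the condition $\mathbf{y}\in D_\sigma(\mathbf{x})$ plays no role in this step.
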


\begin{proof}
Let $\mathbf{y} \in C_{\sigma,\mathbf{x}}^c$. By definition, this implies the local Euclidean distance bound $\|\sigma(\mathbf{x})-\mathbf{y}\| < m(\sigma(\mathbf{x}))^{-1}$. Applying property \eqref{eq:Shen_A} of Lemma~\ref{lem:m_growth} directly to the points $\sigma(\mathbf{x})$ and $\mathbf{y}$ yields
\begin{equation*}
    C^{-1} m(\sigma(\mathbf{x})) \leq m(\mathbf{y}) \leq C m(\sigma(\mathbf{x})).
\end{equation*}
\end{proof}

\begin{lemma}\label{lem:kernel_duhamel_derivative}
Assume that $V \in L^2_{\rm{loc}}(dw)$ and $V \geq 0$. For all $\mathbf{x}, \mathbf{y} \in \mathbb{R}^N$ and $t>0$, the following identity holds:
\begin{equation}\label{eq:kernel_duhamel_derivative}
\begin{split}
    \partial_t \big( h_t(\mathbf{x},\mathbf{y}) - k_t(\mathbf{x},\mathbf{y}) \big) &= \int_0^{t/2} \int_{\mathbb{R}^N} \partial_t h_{t-s}(\mathbf{x},\mathbf{z}) V(\mathbf{z}) k_s(\mathbf{z},\mathbf{y})\,dw(\mathbf{z})\,ds \\
    &\quad + \int_{t/2}^{t} \int_{\mathbb{R}^N} h_{t-s}(\mathbf{x},\mathbf{z}) V(\mathbf{z}) \partial_s k_s(\mathbf{z},\mathbf{y})\,dw(\mathbf{z})\,ds \\
    &\quad + \int_{\mathbb{R}^N} h_{t/2}(\mathbf{x},\mathbf{z}) V(\mathbf{z}) k_{t/2}(\mathbf{z},\mathbf{y})\,dw(\mathbf{z}).
\end{split}
\end{equation}
\end{lemma}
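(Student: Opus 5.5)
The plan is to start from the Duhamel formula already used in Lemma~\ref{lem:integral_estimate_Duhamel}, namely
\[
    h_t(\mathbf{x},\mathbf{y}) - k_t(\mathbf{x},\mathbf{y}) = \int_0^t \int_{\mathbb{R}^N} h_{t-s}(\mathbf{x},\mathbf{z}) V(\mathbf{z}) k_s(\mathbf{z},\mathbf{y})\,dw(\mathbf{z})\,ds,
\]
which is the form with $h$ on the left and $k$ on the right; it is obtained from the symmetric version by the same argument, or by taking adjoints and using $h_t(\mathbf{x},\mathbf{y})=h_t(\mathbf{y},\mathbf{x})$ and $k_t(\mathbf{x},\mathbf{y})=k_t(\mathbf{y},\mathbf{x})$. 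I then split the time integral at $s=t/2$ and change variables in the second half, writing the whole thing as $F(t)$:
\[
    F(t) = \int_0^{t/2}\!\!\int h_{t-s}(\mathbf{x},\mathbf{z})V(\mathbf{z})k_s(\mathbf{z},\mathbf{y})\,dw\,ds + \int_0^{t/2}\!\!\int h_{u}(\mathbf{x},\mathbf{z})V(\mathbf{z})k_{t-u}(\mathbf{z},\mathbf{y})\,dw\,du,
\]
with $u=t-s$. Both pieces now have a $t$-dependent upper limit $t/2$ and an integrand depending smoothly on $t$.

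Next I differentiate each piece in $t$ by the Leibniz rule. The boundary terms are $\tfrac12\int h_{t/2}Vk_{t/2}\,dw$ from each piece, which sum to the third term of \eqref{eq:kernel_duhamel_derivative}. The interior term of the first piece is $\int_0^{t/2}\int \partial_t h_{t-s}\,V\,k_s$, which is the first term. The interior term of the second piece is $\int_0^{t/2}\int h_u V\,\partial_t k_{t-u}$; changing back to $s=t-u$ gives the second term, with $\partial_s k_s$. This bookkeeping is routine.

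The main obstacle is justifying differentiation under the integral signs, since $V$ is only in $L^2_{\rm loc}$ and the kernels are singular at the time endpoints. My choice of split is designed for this: in the first piece $t-s\ge t/2$, so by Theorem~\ref{teo:heat_new}, $|\partial_t h_{t-s}(\mathbf{x},\mathbf{z})|\le C t^{-1}\mathcal{G}_{t/c}(\mathbf{x},\mathbf{z})$ locally uniformly in $t$. Combined with $k_s\le h_s$ and $\int_0^{t}\int V k_s\,dw\,ds\le 1$ (Lemma~\ref{lem:integral_estimate_Duhamel}), this gives an integrable majorant, though it needs some care in $\mathbf{z}$. In the second piece $t-u\ge t/2$, and Theorem~\ref{thm:time_derivatives} bounds $|\partial_t k_{t-u}(\mathbf{z},\mathbf{y})|\le C t^{-1}w(B(\mathbf{z},\sqrt t))^{-1}e^{-cd(\mathbf{z},\mathbf{y})^2/t}$. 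The factor $\int_0^{t/2}\int h_u(\mathbf{x},\mathbf{z})V(\mathbf{z})\,(\cdot)\,dw\,du$ is then controlled by the analogue of Lemma~\ref{lem:integral_estimate_Duhamel} for $h$, or by Lemma~\ref{lem:derivative_integral_estimate}-type arguments; here the Gaussian in $\mathbf{z}$ is comparable to $\mathcal{G}$. Locally uniform bounds for $t$ in compact subintervals of $(0,\infty)$ then allow dominated convergence, and continuity in $t$ of the boundary integrand follows similarly. If pointwise domination in $(\mathbf{x},\mathbf{y})$ is awkward, I would instead prove the identity after integrating against test functions in $\mathbf{x}$ and $\mathbf{y}$ and conclude by continuity of the kernels.
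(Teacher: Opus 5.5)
Your proposal is correct and is essentially the paper's proof. Both start from Duhamel's formula with $h$ on the left and $k$ on the right, split at $s=t/2$, substitute $u=t-s$ in the second half, and apply the Leibniz rule, so that the two boundary contributions $\frac12\int h_{t/2}Vk_{t/2}\,dw$ combine into the third term.

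The paper gives no justification at all for differentiating under the integral. In your added justification, the one shaky point is the majorant for the second piece. Lemma~\ref{lem:integral_estimate_Duhamel} has no $h$-analogue: for $V\equiv\lambda$ one gets $\int_0^t\!\int V h_u\,dw\,du=\lambda t$. Moreover, under mere $V\in L^2_{\rm loc}$ the integral $\int_0^t\!\int h_u(\mathbf{x},\mathbf{z})V(\mathbf{z})\,dw(\mathbf{z})\,du$ can diverge, so there you need the reverse H\"older input of Lemma~\ref{lem:integral_estimate_A} or a different domination argument.
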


\begin{proof}
By the standard Duhamel formula for the semigroups $H_t$ and $K_t$, we have the integral equation for their kernels:
\begin{equation*}
    h_t(\mathbf{x},\mathbf{y}) - k_t(\mathbf{x},\mathbf{y}) = \int_0^{t} \int_{\mathbb{R}^N} h_{t-s}(\mathbf{x},\mathbf{z}) V(\mathbf{z}) k_s(\mathbf{z},\mathbf{y}) \, dw(\mathbf{z}) \, ds.
\end{equation*}
To differentiate this expression with respect to $t$, we first split the time integral into two parts: $I_1$ over $[0, t/2]$ and $I_2$ over $[t/2, t]$.

\textit{Derivative of $I_1$:}
Using the Leibniz integral rule, differentiating $I_1$ with respect to the parameter $t$ produces a term from differentiating the integrand and a boundary term from the upper limit evaluated at $s = t/2$:
\begin{align*}
    \partial_t \left( \int_0^{t/2} \int_{\mathbb{R}^N} h_{t-s}(\mathbf{x},\mathbf{z}) V(\mathbf{z}) k_s(\mathbf{z},\mathbf{y}) \, dw(\mathbf{z}) \, ds \right) 
    &= \int_0^{t/2} \int_{\mathbb{R}^N} \partial_t h_{t-s}(\mathbf{x},\mathbf{z}) V(\mathbf{z}) k_s(\mathbf{z},\mathbf{y}) \, dw(\mathbf{z}) \, ds \\
    &\quad + \frac{1}{2} \int_{\mathbb{R}^N} h_{t/2}(\mathbf{x},\mathbf{z}) V(\mathbf{z}) k_{t/2}(\mathbf{z},\mathbf{y}) \, dw(\mathbf{z}).
\end{align*}

\textit{Derivative of $I_2$:}
For the second interval $[t/2, t]$, we first use the change of variables $\tau = t - s$ (so $d\tau = -ds$, and the limits $[t/2, t]$ become $[t/2, 0]$, which we reverse to $[0, t/2]$):
\begin{equation*}
    I_2 = \int_0^{t/2} \int_{\mathbb{R}^N} h_{\tau}(\mathbf{x},\mathbf{z}) V(\mathbf{z}) k_{t-\tau}(\mathbf{z},\mathbf{y}) \, dw(\mathbf{z}) \, d\tau.
\end{equation*}
Now we apply the Leibniz rule again to differentiate with respect to $t$:
\begin{align*}
    \partial_t I_2 &= \int_0^{t/2} \int_{\mathbb{R}^N} h_{\tau}(\mathbf{x},\mathbf{z}) V(\mathbf{z}) \partial_t k_{t-\tau}(\mathbf{z},\mathbf{y}) \, dw(\mathbf{z}) \, d\tau + \frac{1}{2} \int_{\mathbb{R}^N} h_{t/2}(\mathbf{x},\mathbf{z}) V(\mathbf{z}) k_{t/2}(\mathbf{z},\mathbf{y}) \, dw(\mathbf{z}).
\end{align*}
We now reverse the change of variables, setting $s = t - \tau$. The integration range $\tau \in [0, t/2]$ shifts back to $s \in [t/2, t]$. Furthermore, by the chain rule, $\partial_t k_{t-\tau}(\mathbf{z},\mathbf{y}) = \partial_s k_s(\mathbf{z},\mathbf{y})$. Thus,
\begin{equation*}
    \partial_t I_2 = \int_{t/2}^{t} \int_{\mathbb{R}^N} h_{t-s}(\mathbf{x},\mathbf{z}) V(\mathbf{z}) \partial_s k_s(\mathbf{z},\mathbf{y}) \, dw(\mathbf{z}) \, ds + \frac{1}{2} \int_{\mathbb{R}^N} h_{t/2}(\mathbf{x},\mathbf{z}) V(\mathbf{z}) k_{t/2}(\mathbf{z},\mathbf{y}) \, dw(\mathbf{z}).
\end{equation*}

Summing the derivatives of $I_1$ and $I_2$, the two boundary terms add up exactly to 
\[ \int_{\mathbb{R}^N} h_{t/2}(\mathbf{x},\mathbf{z}) V(\mathbf{z}) k_{t/2}(\mathbf{z},\mathbf{y}) \, dw(\mathbf{z}),\]
 matching the third term of our desired expression. 
\end{proof}


\begin{lemma}\label{lem:z_far_region_estimates_integrated}
Assume that $V \in {\rm{RH}}^{q}(dw)$ with $q>\max(1,\frac{\mathbf{N}}{2})$ and $V \geq 0$. Let $0 < \beta < 2 - \frac{\mathbf{N}}{q}$. For a fixed $\sigma \in G$ and $\mathbf{x} \in \mathbb{R}^N$, define the local region $C_{\sigma,\mathbf{x}}^c = \{ \mathbf{y} \in D_\sigma(\mathbf{x}) : \|\sigma(\mathbf{x})-\mathbf{y}\| < m(\sigma(\mathbf{x}))^{-1} \}$ and the far region $C_{\mathbf{y},\mathbf{z},1} = \{ \mathbf{z} \in \mathbb{R}^N : \|\mathbf{y}-\mathbf{z}\| \geq m(\mathbf{y})^{-1} \}$. Then for any $t>0$, we have:
\begin{align}
    \int_{C_{\sigma,\mathbf{x}}^c} m(\sigma(\mathbf{x}))^{-\beta} \int_0^{t/2} \int_{C_{\mathbf{y},\mathbf{z},1}} |\partial_t h_{t-s}(\mathbf{y},\mathbf{z})| V(\mathbf{z}) k_s(\mathbf{z},\mathbf{x})\,dw(\mathbf{z})\,ds \,dw(\mathbf{y}) &\leq C t^{-1+\beta/2}, \label{eq:D1_bound_int} \\
    \int_{C_{\sigma,\mathbf{x}}^c} m(\sigma(\mathbf{x}))^{-\beta} \int_{t/2}^{t} \int_{C_{\mathbf{y},\mathbf{z},1}} h_{t-s}(\mathbf{y},\mathbf{z}) V(\mathbf{z}) |\partial_s k_s(\mathbf{z},\mathbf{x})|\,dw(\mathbf{z})\,ds \,dw(\mathbf{y}) &\leq C t^{-1+\beta/2}, \label{eq:D2_bound_int} \\
    \int_{C_{\sigma,\mathbf{x}}^c} m(\sigma(\mathbf{x}))^{-\beta} \int_{C_{\mathbf{y},\mathbf{z},1}} h_{t/2}(\mathbf{y},\mathbf{z}) V(\mathbf{z}) k_{t/2}(\mathbf{z},\mathbf{x})\,dw(\mathbf{z}) \,dw(\mathbf{y}) &\leq C t^{-1+\beta/2}. \label{eq:D3_bound_int}
\end{align}
\end{lemma}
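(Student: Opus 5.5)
The plan is to trade the smallness of $m(\sigma(\mathbf{x}))^{-\beta}$ against the localization $\|\mathbf{y}-\mathbf{z}\|\geq m(\mathbf{y})^{-1}\sim m(\sigma(\mathbf{x}))^{-1}$. The first step is Lemma~\ref{lem:critical_radius_comparability}: for $\mathbf{y}\in C^c_{\sigma,\mathbf{x}}$ we have $m(\mathbf{y})\sim m(\sigma(\mathbf{x}))$. Hence on $C_{\mathbf{y},\mathbf{z},1}$ we get $\|\mathbf{y}-\mathbf{z}\|\geq c\, m(\sigma(\mathbf{x}))^{-1}$, and therefore
\begin{equation*}
m(\sigma(\mathbf{x}))^{-\beta}\leq C\|\mathbf{y}-\mathbf{z}\|^{\beta}.
\end{equation*}
This removes the weight entirely. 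In each of the three integrals I then enlarge the $\mathbf{y}$-domain to all of $\mathbb{R}^N$ and integrate in $\mathbf{y}$ first, using Tonelli since all integrands are nonnegative.

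For \eqref{eq:D1_bound_int}, fix $s\in(0,t/2)$, so that $t-s\sim t$. Theorem~\ref{teo:heat_new} with $m=1$ gives
\begin{equation*}
|\partial_t h_{t-s}(\mathbf{y},\mathbf{z})|\,\|\mathbf{y}-\mathbf{z}\|^{\beta}\leq C t^{-1+\beta/2}\Big(\tfrac{\|\mathbf{y}-\mathbf{z}\|}{\sqrt{t}}\Big)^{\beta}\Big(1+\tfrac{\|\mathbf{y}-\mathbf{z}\|}{\sqrt t}\Big)^{-2}\mathcal{G}_{t/c}(\mathbf{y},\mathbf{z}).
\end{equation*}
Since $\beta<2$, the Euclidean factor $r^\beta(1+r)^{-2}$ is bounded. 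This is the key point: the Euclidean power is absorbed by the Euclidean decay factor rather than by the orbit Gaussian. By symmetry of $\mathcal{G}$ and Lemma~\ref{lem:homogeneous}, $\int \mathcal{G}_{t/c}(\mathbf{y},\mathbf{z})\,dw(\mathbf{y})\leq C$ uniformly in $\mathbf{z}$. What remains is
\begin{equation*}
C t^{-1+\beta/2}\int_0^{t/2}\!\!\int V(\mathbf{z})k_s(\mathbf{z},\mathbf{x})\,dw(\mathbf{z})\,ds,
\end{equation*}
and the double integral is at most $1$ by Lemma~\ref{lem:integral_estimate_Duhamel} (using $k_s$ symmetric).

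For \eqref{eq:D3_bound_int} the argument is the same, with $h_{t/2}$ bounded via Theorem~\ref{teo:heat_new} with $m=0$. The $\mathbf{y}$-integral then gives $C t^{\beta/2}$, and $\int V(\mathbf{z})k_{t/2}(\mathbf{z},\mathbf{x})\,dw(\mathbf{z})\leq C/t$ by Lemma~\ref{lem:integral_estimate_1_over_t}.

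For \eqref{eq:D2_bound_int} the time $t-s\in(0,t/2)$ may be small, so this is the step I expect to be the main obstacle. Here $h_{t-s}$ gives only $(t-s)^{\beta/2}$, not $t^{\beta/2}$. Integrating in $\mathbf{y}$ with Theorem~\ref{teo:heat_new} ($m=0$) yields $\int \|\mathbf{y}-\mathbf{z}\|^{\beta}h_{t-s}(\mathbf{y},\mathbf{z})\,dw(\mathbf{y})\leq C(t-s)^{\beta/2}\leq Ct^{\beta/2}$, so the monotonicity in the good direction saves us. We are left with
\begin{equation*}
Ct^{\beta/2}\int_{t/2}^t\!\int V(\mathbf{z})|\partial_s k_s(\mathbf{z},\mathbf{x})|\,dw(\mathbf{z})\,ds\leq Ct^{-1+\beta/2}
\end{equation*}
by Lemma~\ref{lem:derivative_integral_estimate}, applied with the symmetry $k_s(\mathbf{z},\mathbf{x})=k_s(\mathbf{x},\mathbf{z})$.

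The delicate points to check are the following. First, that the heat kernel bound carries the Euclidean factor $(1+\|\mathbf{y}-\mathbf{z}\|/\sqrt{\cdot})^{-2}$ (and not just the orbit Gaussian), so that the Euclidean distance $\|\mathbf{y}-\mathbf{z}\|$ can be controlled even when $d(\mathbf{y},\mathbf{z})$ is small. Second, that the comparability $m(\mathbf{y})\sim m(\sigma(\mathbf{x}))$ holds with constants uniform in $\sigma$. Both are supplied by earlier results.
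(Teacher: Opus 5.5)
Your proposal is correct and follows the paper's proof essentially step for step. You convert the weight via $m(\sigma(\mathbf{x}))^{-\beta}\le C\,m(\mathbf{y})^{-\beta}\le C\|\mathbf{y}-\mathbf{z}\|^{\beta}$ (Lemma~\ref{lem:critical_radius_comparability}), absorb the Euclidean power into the factor $(1+\|\mathbf{y}-\mathbf{z}\|/\sqrt{\tau})^{-2}$ from Theorem~\ref{teo:heat_new} using $\beta<2$, integrate in $\mathbf{y}$, and close the three estimates with Lemmas~\ref{lem:integral_estimate_Duhamel}, \ref{lem:derivative_integral_estimate} and \ref{lem:integral_estimate_1_over_t}, exactly as the paper does.
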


\begin{proof}
For any $\mathbf{y} \in C_{\sigma,\mathbf{x}}^c$, Lemma~\ref{lem:critical_radius_comparability} yields $m(\sigma(\mathbf{x})) \sim m(\mathbf{y})$, so $m(\sigma(\mathbf{x}))^{-\beta} \leq C m(\mathbf{y})^{-\beta}$. Furthermore, for $\mathbf{z} \in C_{\mathbf{y},\mathbf{z},1}$, the definition of the region implies $m(\mathbf{y}) \geq \|\mathbf{y}-\mathbf{z}\|^{-1}$. Combining these gives the pointwise weight bound in terms of the purely Euclidean distance between $\mathbf{y}$ and $\mathbf{z}$:
\begin{equation*}
    m(\sigma(\mathbf{x}))^{-\beta} \leq C m(\mathbf{y})^{-\beta} \leq C \|\mathbf{y}-\mathbf{z}\|^\beta.
\end{equation*}

To handle the polynomial growth of the factor $\|\mathbf{y}-\mathbf{z}\|^\beta$, we apply Theorem~\ref{teo:heat_new}. For any $\tau > 0$, the upper bounds for the Dunkl heat kernel $h_\tau(\mathbf{y},\mathbf{z})$ and its time derivative inherently contain the Euclidean decay factor $\left(1 + \frac{\|\mathbf{y}-\mathbf{z}\|}{\sqrt{\tau}}\right)^{-2}$. Since $\beta < 2 - \mathbf{N}/q < 2$, the real function $u \mapsto \frac{u^\beta}{(1+u)^2}$ is uniformly bounded by a constant on $[0, \infty)$. Consequently,
\begin{equation}\label{eq:euclidean_absorption}
    \left(1 + \frac{\|\mathbf{y}-\mathbf{z}\|}{\sqrt{\tau}}\right)^{-2} \|\mathbf{y}-\mathbf{z}\|^\beta = \tau^{\beta/2} \left( \frac{\|\mathbf{y}-\mathbf{z}\|}{\sqrt{\tau}} \right)^\beta \left(1 + \frac{\|\mathbf{y}-\mathbf{z}\|}{\sqrt{\tau}}\right)^{-2} \leq C \tau^{\beta/2}.
\end{equation}
Therefore, Theorem~\ref{teo:heat_new} provides the following bounds incorporating the weight:
\begin{align}
    h_\tau(\mathbf{y},\mathbf{z}) m(\sigma(\mathbf{x}))^{-\beta} &\leq C \tau^{\beta/2} \mathcal{G}_{\tau/c}(\mathbf{y},\mathbf{z}), \label{eq:h_weight_bound} \\
    |\partial_\tau h_\tau(\mathbf{y},\mathbf{z})| m(\sigma(\mathbf{x}))^{-\beta} &\leq C \tau^{-1+\beta/2} \mathcal{G}_{\tau/c}(\mathbf{y},\mathbf{z}). \label{eq:dh_weight_bound}
\end{align}
Recall that for any $\tau > 0$ and $\mathbf{z} \in \mathbb{R}^N$, the Gaussian kernel integrates to a uniform constant: $\int_{\mathbb{R}^N} \mathcal{G}_{\tau/c}(\mathbf{y},\mathbf{z}) \, dw(\mathbf{y}) \leq C$. We now apply this to each term. For $s \in [0, t/2]$, we have $t-s \geq t/2$, meaning $t-s \sim t$. Applying \eqref{eq:dh_weight_bound} with $\tau = t-s$, we obtain:
\begin{equation*}
    |\partial_t h_{t-s}(\mathbf{y},\mathbf{z})| m(\sigma(\mathbf{x}))^{-\beta} \leq C (t-s)^{-1+\beta/2} \mathcal{G}_{(t-s)/c}(\mathbf{y},\mathbf{z}) \leq C t^{-1+\beta/2} \mathcal{G}_{t/c'}(\mathbf{y},\mathbf{z}).
\end{equation*}
Integrating this bound over $dw(\mathbf{y})$ on the local region $C_{\sigma,\mathbf{x}}^c$ yields:
\begin{equation*}
    \int_{C_{\sigma,\mathbf{x}}^c} |\partial_t h_{t-s}(\mathbf{y},\mathbf{z})| m(\sigma(\mathbf{x}))^{-\beta} \, dw(\mathbf{y}) \leq C t^{-1+\beta/2} \int_{\mathbb{R}^N} \mathcal{G}_{t/c'}(\mathbf{y},\mathbf{z}) \, dw(\mathbf{y}) \leq C t^{-1+\beta/2}.
\end{equation*}
Substituting this back into the left-hand side of \eqref{eq:D1_bound_int} leaves:
\begin{equation*}
    C t^{-1+\beta/2} \int_0^{t/2} \int_{\mathbb{R}^N} V(\mathbf{z}) k_s(\mathbf{z},\mathbf{x}) \, dw(\mathbf{z}) \, ds \leq C t^{-1+\beta/2},
\end{equation*}
where the remaining integral is bounded by $1$ according to Lemma~\ref{lem:integral_estimate_Duhamel}.

In order to estimate the second term of \eqref{eq:D2_bound_int}, $s \in [t/2, t]$, we apply \eqref{eq:h_weight_bound} with $\tau = t-s \leq t/2$:
\begin{equation*}
    h_{t-s}(\mathbf{y},\mathbf{z}) m(\sigma(\mathbf{x}))^{-\beta} \leq C (t-s)^{\beta/2} \mathcal{G}_{(t-s)/c}(\mathbf{y},\mathbf{z}) \leq C t^{\beta/2} \mathcal{G}_{(t-s)/c}(\mathbf{y},\mathbf{z}).
\end{equation*}
Integrating with respect to $dw(\mathbf{y})$ gives:
\begin{equation*}
    \int_{C_{\sigma,\mathbf{x}}^c} h_{t-s}(\mathbf{y},\mathbf{z}) m(\sigma(\mathbf{x}))^{-\beta} \, dw(\mathbf{y}) \leq C t^{\beta/2} \int_{\mathbb{R}^N} \mathcal{G}_{(t-s)/c}(\mathbf{y},\mathbf{z}) \, dw(\mathbf{y}) \leq C t^{\beta/2}.
\end{equation*}
Plugging this result into the left-hand side of \eqref{eq:D2_bound_int} yields:
\begin{equation*}
    C t^{\beta/2} \int_{t/2}^{t} \int_{\mathbb{R}^N} V(\mathbf{z}) |\partial_s k_s(\mathbf{z},\mathbf{x})| \, dw(\mathbf{z}) \, ds \leq C t^{\beta/2} \left( \frac{C}{t} \right) = C t^{-1+\beta/2},
\end{equation*}
where we used Lemma~\ref{lem:derivative_integral_estimate} to bound the remaining double integral by $C/t$.

Finally, for the third term \eqref{eq:D3_bound_int}, setting $\tau = t/2$ in \eqref{eq:h_weight_bound} gives:
\begin{equation*}
    h_{t/2}(\mathbf{y},\mathbf{z}) m(\sigma(\mathbf{x}))^{-\beta} \leq C t^{\beta/2} \mathcal{G}_{t/c'}(\mathbf{y},\mathbf{z}).
\end{equation*}
Integrating over $dw(\mathbf{y})$ yields a bound of $C t^{\beta/2}$. Substituting this into \eqref{eq:D3_bound_int} leads to:
\begin{equation*}
    C t^{\beta/2} \int_{\mathbb{R}^N} V(\mathbf{z}) k_{t/2}(\mathbf{z},\mathbf{x}) \, dw(\mathbf{z}) \leq C t^{\beta/2} \left( \frac{C}{t} \right) = C t^{-1+\beta/2},
\end{equation*}
where we used Lemma~\ref{lem:integral_estimate_1_over_t}. This concludes the proof for all three estimates.
\end{proof}


\begin{lemma}\label{lem:geometric_m_equivalence}
Assume that $V \in {\rm{RH}}^{q}(dw)$ with $q>\max(1,\frac{\mathbf{N}}{2})$ and $V \geq 0$. Let $\mathbf{x}, \mathbf{y}, \mathbf{z} \in \mathbb{R}^N$ and let $\sigma \in G$ be a fixed element. Assume that $\mathbf{y} \in C_{\sigma,\mathbf{x}}^c$ and $\mathbf{z} \in C_{\mathbf{y},\mathbf{z},1}^c$, where the regions are defined as $C_{\sigma,\mathbf{x}}^c = \{ \mathbf{y} \in \mathbb{R}^N : \|\sigma(\mathbf{x})-\mathbf{y}\| < m(\sigma(\mathbf{x}))^{-1} \}$ and $C_{\mathbf{y},\mathbf{z},1}^c = \{ \mathbf{z} \in \mathbb{R}^N : \|\mathbf{y}-\mathbf{z}\| < m(\mathbf{y})^{-1} \}$. 

Let $\sigma_1 \in G$ be an element such that $d(\mathbf{x},\mathbf{z}) = \|\sigma_1(\mathbf{x})-\mathbf{z}\|$. Then $m(\sigma(\mathbf{x})) \sim m(\mathbf{y}) \sim m(\mathbf{z}) \sim m(\sigma_1(\mathbf{x}))$. 

Consequently, there exists a constant $C>0$ such that if $t \leq m(\sigma(\mathbf{x}))^{-2}$, we have
\begin{equation}
    t \leq C m(\sigma_1(\mathbf{x}))^{-2}.
\end{equation}
\end{lemma}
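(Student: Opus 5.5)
We will chain the comparabilities along the path $\sigma(\mathbf{x}) \to \mathbf{y} \to \mathbf{z} \to \sigma_1(\mathbf{x})$, using Lemma~\ref{lem:m_growth} at each step. The first two links are immediate from \eqref{eq:Shen_A}. Since $\|\sigma(\mathbf{x})-\mathbf{y}\| < m(\sigma(\mathbf{x}))^{-1}$, we get $m(\mathbf{y}) \sim m(\sigma(\mathbf{x}))$, which is Lemma~\ref{lem:critical_radius_comparability}. Since $\|\mathbf{y}-\mathbf{z}\| < m(\mathbf{y})^{-1}$, we get $m(\mathbf{z}) \sim m(\mathbf{y})$. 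Hence $m(\mathbf{z}) \sim m(\sigma(\mathbf{x}))$, and by the triangle inequality
\[
\|\sigma(\mathbf{x})-\mathbf{z}\| \le \|\sigma(\mathbf{x})-\mathbf{y}\| + \|\mathbf{y}-\mathbf{z}\| < m(\sigma(\mathbf{x}))^{-1} + m(\mathbf{y})^{-1} \le C\, m(\mathbf{z})^{-1}.
\]

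For the last link we use the minimality of $\sigma_1$. Because $d(\mathbf{x},\mathbf{z}) = \|\sigma_1(\mathbf{x})-\mathbf{z}\|$ is the minimum over $G$, we have
\[
\|\sigma_1(\mathbf{x})-\mathbf{z}\| \le \|\sigma(\mathbf{x})-\mathbf{z}\| \le C\, m(\mathbf{z})^{-1}.
\]
This is the one genuinely Dunkl-specific step. The point $\sigma_1(\mathbf{x})$ may differ from $\sigma(\mathbf{x})$, and $m$ is not $G$-invariant, so we cannot compare $m(\sigma_1(\mathbf{x}))$ with $m(\sigma(\mathbf{x}))$ directly. The bound must pass through $\mathbf{z}$, using only that $\sigma_1$ realizes the orbit distance.

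Now apply \eqref{eq:Shen_B} and \eqref{eq:Shen_C} with base point $\mathbf{z}$. Since $m(\mathbf{z})\|\mathbf{z}-\sigma_1(\mathbf{x})\| \le C$, we obtain
\[
m(\sigma_1(\mathbf{x})) \le C m(\mathbf{z})(1+C)^{\kappa}
\quad\text{and}\quad
m(\sigma_1(\mathbf{x})) \ge C^{-1} m(\mathbf{z})(1+C)^{-\kappa/(1+\kappa)}.
\]
Thus $m(\sigma_1(\mathbf{x})) \sim m(\mathbf{z})$. The main obstacle is only that \eqref{eq:Shen_A} requires the strict condition $\|\cdot\| < m^{-1}$, while here we have distance at most a constant multiple of $m(\mathbf{z})^{-1}$. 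Using \eqref{eq:Shen_B} and \eqref{eq:Shen_C} instead of \eqref{eq:Shen_A} absorbs that constant.

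Finally, combining the links gives $m(\sigma_1(\mathbf{x})) \le C' m(\sigma(\mathbf{x}))$. So if $t \le m(\sigma(\mathbf{x}))^{-2}$, then $t \le C'^2\, m(\sigma_1(\mathbf{x}))^{-2}$, which is the asserted bound.
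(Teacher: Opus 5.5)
Your proposal is correct and follows the paper's proof essentially step for step. You apply \eqref{eq:Shen_A} twice for the local links $\sigma(\mathbf{x})\to\mathbf{y}\to\mathbf{z}$, and combine the triangle inequality with the minimality of $\sigma_1$ to get $\|\sigma_1(\mathbf{x})-\mathbf{z}\|\le C\,m(\mathbf{z})^{-1}$. Then you use \eqref{eq:Shen_B} and \eqref{eq:Shen_C} centered at $\mathbf{z}$ to absorb the constant, which is exactly the paper's argument.
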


\begin{proof}
By the definition of the local region $C_{\sigma,\mathbf{x}}^c$, we have the Euclidean distance bound $\|\sigma(\mathbf{x})-\mathbf{y}\| < m(\sigma(\mathbf{x}))^{-1}$. Applying Lemma~\ref{lem:m_growth} \eqref{eq:Shen_A} to the points $\sigma(\mathbf{x})$ and $\mathbf{y}$ yields $m(\sigma(\mathbf{x})) \sim m(\mathbf{y})$. Similarly, the assumption $\mathbf{z} \in C_{\mathbf{y},\mathbf{z},1}^c$ implies $\|\mathbf{y}-\mathbf{z}\| < m(\mathbf{y})^{-1}$. Applying Lemma~\ref{lem:m_growth} \eqref{eq:Shen_A} again to $\mathbf{y}$ and $\mathbf{z}$ gives $m(\mathbf{y}) \sim m(\mathbf{z})$. Thus, we obtain the first chain of equivalences:
\begin{equation}\label{eq:m_chain_1}
    m(\sigma(\mathbf{x})) \sim m(\mathbf{y}) \sim m(\mathbf{z}).
\end{equation}

Next, we bound the Euclidean distance between $\sigma(\mathbf{x})$ and $\mathbf{z}$ using the triangle inequality and the comparability of the critical radii established above:
\begin{equation*}
    \|\sigma(\mathbf{x})-\mathbf{z}\| \leq \|\sigma(\mathbf{x})-\mathbf{y}\| + \|\mathbf{y}-\mathbf{z}\| < \frac{1}{m(\sigma(\mathbf{x}))} + \frac{1}{m(\mathbf{y})} \leq \frac{C}{m(\mathbf{z})}.
\end{equation*}

Now we consider the element $\sigma_1 \in G$. By the definition of the orbit distance $d(\mathbf{x},\mathbf{z}) = \min_{g \in G} \|g(\mathbf{x})-\mathbf{z}\|$, and the fact that $\sigma_1$ realizes this minimum while $\sigma$ is just another element in the group $G$, we obtain:
\begin{equation*}
    \|\sigma_1(\mathbf{x})-\mathbf{z}\| = d(\mathbf{x},\mathbf{z}) \leq \|\sigma(\mathbf{x})-\mathbf{z}\| \leq \frac{C}{m(\mathbf{z})}.
\end{equation*}

Since the distance between $\sigma_1(\mathbf{x})$ and $\mathbf{z}$ is bounded by $C/m(\mathbf{z})$, we can invoke the polynomial growth property from Lemma~\ref{lem:m_growth} \eqref{eq:Shen_B} (or equivalent standard bounds) to deduce that $m(\sigma_1(\mathbf{x})) \sim m(\mathbf{z})$. Specifically, we have:
\begin{equation*}
    m(\sigma_1(\mathbf{x})) \leq C m(\mathbf{z}) (1+m(\mathbf{z})\|\sigma_1(\mathbf{x})-\mathbf{z}\|)^\kappa \leq C m(\mathbf{z}) (1+C)^\kappa \leq C' m(\mathbf{z}),
\end{equation*}
and the lower bound follows analogously from \eqref{eq:Shen_C}. Combining this with \eqref{eq:m_chain_1} gives the full chain:
\begin{equation*}
    m(\sigma(\mathbf{x})) \sim m(\mathbf{y}) \sim m(\mathbf{z}) \sim m(\sigma_1(\mathbf{x})).
\end{equation*}

Finally, assuming $t \leq m(\sigma(\mathbf{x}))^{-2}$, the established equivalence $m(\sigma(\mathbf{x})) \sim m(\sigma_1(\mathbf{x}))$ immediately implies that
$
    t \leq C m(\sigma_1(\mathbf{x}))^{-2}
$,
which completes the proof.
\end{proof}

\begin{lemma}\label{lem:z_near_region_estimates_integrated}
Assume that $V \in {\rm{RH}}^{q}(dw)$ with $q>\max(1,\frac{\mathbf{N}}{2})$, $V \geq 0$, and let $0 < \beta < 2 - \frac{\mathbf{N}}{q}$. For a fixed $\sigma \in G$ and $\mathbf{x} \in \mathbb{R}^N$, define the local region $C_{\sigma,\mathbf{x}}^c = \{ \mathbf{y} \in D_\sigma(\mathbf{x}) : \|\sigma(\mathbf{x})-\mathbf{y}\| < m(\sigma(\mathbf{x}))^{-1} \}$ and the local region for $\mathbf{z}$ as $C_{\mathbf{y},\mathbf{z},1}^c = \{ \mathbf{z} \in \mathbb{R}^N : \|\mathbf{y}-\mathbf{z}\| < m(\mathbf{y})^{-1} \}$. Then there exists a constant $C>0$ such that for any $0 < t \leq m(\sigma(\mathbf{x}))^{-2}$, we have:
\begin{align}
    \int_{C_{\sigma,\mathbf{x}}^c} m(\sigma(\mathbf{x}))^{-\beta} \int_0^{t/2} \int_{C_{\mathbf{y},\mathbf{z},1}^c} |\partial_t h_{t-s}(\mathbf{y},\mathbf{z})| V(\mathbf{z}) k_s(\mathbf{z},\mathbf{x})\,dw(\mathbf{z})\,ds \,dw(\mathbf{y}) &\leq C t^{-1+\beta/2}, \label{eq:D1_bound_near} \\
    \int_{C_{\sigma,\mathbf{x}}^c} m(\sigma(\mathbf{x}))^{-\beta} \int_{t/2}^{t} \int_{C_{\mathbf{y},\mathbf{z},1}^c} h_{t-s}(\mathbf{y},\mathbf{z}) V(\mathbf{z}) |\partial_s k_s(\mathbf{z},\mathbf{x})|\,dw(\mathbf{z})\,ds \,dw(\mathbf{y}) &\leq C t^{-1+\beta/2}, \label{eq:D2_bound_near} \\
    \int_{C_{\sigma,\mathbf{x}}^c} m(\sigma(\mathbf{x}))^{-\beta} \int_{C_{\mathbf{y},\mathbf{z},1}^c} h_{t/2}(\mathbf{y},\mathbf{z}) V(\mathbf{z}) k_{t/2}(\mathbf{z},\mathbf{x})\,dw(\mathbf{z}) \,dw(\mathbf{y}) &\leq C t^{-1+\beta/2}. \label{eq:D3_bound_near}
\end{align}
\end{lemma}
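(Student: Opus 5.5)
The plan is to exploit the smallness of the potential in the near region, via Lemma~\ref{lem:integral_estimate_A}, in place of the Euclidean absorption used in Lemma~\ref{lem:z_far_region_estimates_integrated}. The weight $m(\sigma(\mathbf{x}))^{-\beta}$ can no longer be traded for $\|\mathbf{y}-\mathbf{z}\|^\beta$, so we keep it as is and use $t \le m(\sigma(\mathbf{x}))^{-2}$. By Lemma~\ref{lem:geometric_m_equivalence}, whenever $\mathbf{y}\in C^c_{\sigma,\mathbf{x}}$ and $\mathbf{z}\in C^c_{\mathbf{y},\mathbf{z},1}$ we have $m(\sigma(\mathbf{x}))\sim m(\mathbf{z})\sim m(\sigma_1(\mathbf{x}))$, where $\sigma_1$ realizes $d(\mathbf{x},\mathbf{z})$. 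In particular $s\le t\le C m(\sigma_1(\mathbf{x}))^{-2}$ for every relevant $s$. The first step in each of the three estimates is Tonelli: integrate out $\mathbf{y}$ first, using $\int |\partial_t h_{t-s}(\mathbf{y},\mathbf{z})|\,dw(\mathbf{y})\le C(t-s)^{-1}$ and $\int h_\tau(\mathbf{y},\mathbf{z})\,dw(\mathbf{y})\le 1$ (Theorem~\ref{teo:heat_new} and Lemma~\ref{lem:homogeneous}). The $\mathbf{z}$-integral then ranges only over the set $E_\mathbf{x}$ of $\mathbf{z}$ with $\|\sigma(\mathbf{x})-\mathbf{z}\|\le C m(\sigma(\mathbf{x}))^{-1}$, on which all the above comparabilities hold.

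Next we partition $E_\mathbf{x}$ by $\sigma_1$, that is, into the pieces $E_\mathbf{x}\cap A_{\sigma_1}(\mathbf{x})$, and bound $k_s(\mathbf{z},\mathbf{x})\le h_s\le C\mathcal{G}_{s/c}(\mathbf{x},\mathbf{z})$ or $|\partial_s k_s(\mathbf{z},\mathbf{x})|\le C s^{-1}\mathcal{G}_{s/c}$ via \eqref{eq:kernels_compare} and Theorem~\ref{thm:time_derivatives}. On each piece, Lemma~\ref{lem:integral_estimate_A} (applicable since $\sqrt{s/c}\lesssim m(\sigma_1(\mathbf{x}))^{-1}$; constants are adjusted by doubling of $\mu$) gives
\[
\int_{A_{\sigma_1}(\mathbf{x})} V(\mathbf{z})\mathcal{G}_{s/c}(\mathbf{x},\mathbf{z})\,dw(\mathbf{z}) \le C s^{-1}\bigl(\sqrt{s}\, m(\sigma(\mathbf{x}))\bigr)^{\gamma}.
\]
Summing over the $|G|$ choices of $\sigma_1$ costs only a constant.

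The three terms are then handled as follows. For \eqref{eq:D1_bound_near} we get
\[
m(\sigma(\mathbf{x}))^{-\beta}\int_0^{t/2} t^{-1} s^{-1+\gamma/2} m(\sigma(\mathbf{x}))^{\gamma}\,ds \sim t^{-1} t^{\gamma/2} m(\sigma(\mathbf{x}))^{\gamma-\beta},
\]
which is at most $C t^{-1+\beta/2}$ because $\gamma>\beta$ and $t m(\sigma(\mathbf{x}))^2\le 1$. For \eqref{eq:D3_bound_near} the same computation at $s=t/2$ gives $m^{-\beta} t^{-1}(\sqrt t m)^\gamma\le t^{-1+\beta/2}$. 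For \eqref{eq:D2_bound_near} we have $s\sim t$, so the factor is $t^{-1}\cdot t^{-1}(\sqrt t m)^\gamma$, integrated over an interval of length $t/2$; this again gives $t^{-1}(\sqrt t m)^{\gamma} m^{-\beta}\le t^{-1+\beta/2}$.

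The main obstacle is the geometric bookkeeping that justifies applying Lemma~\ref{lem:integral_estimate_A} centered at $\sigma_1(\mathbf{x})$ rather than $\sigma(\mathbf{x})$, since $V$ is not $G$-invariant. This requires checking that the $\sigma_1$ realizing $d(\mathbf{x},\mathbf{z})$ satisfies $m(\sigma_1(\mathbf{x}))\sim m(\sigma(\mathbf{x}))$ uniformly in $\mathbf{z}\in E_\mathbf{x}$, which is exactly Lemma~\ref{lem:geometric_m_equivalence}. It also requires that the Gaussian for $\partial_s k_s$ is expressed in the orbit metric with normalization $w(B(\mathbf{x},\sqrt s))=w(B(\sigma_1(\mathbf{x}),\sqrt s))$. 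I would check carefully that replacing $C m^{-1}$ by $m^{-1}$ in the hypothesis $\sqrt{s}\le m(\sigma_1(\mathbf{x}))^{-1}$ only changes constants, via Lemma~\ref{lem:Rr}.
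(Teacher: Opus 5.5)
Your proposal is correct and follows essentially the same route as the paper: integrate out $\mathbf{y}$ first by Tonelli, split the $\mathbf{z}$-integral into the sectors $A_{\sigma_1}(\mathbf{x})$, use Lemma~\ref{lem:geometric_m_equivalence} to move from $\sigma(\mathbf{x})$ to $\sigma_1(\mathbf{x})$, apply Lemma~\ref{lem:integral_estimate_A}, and finish with $\gamma>\beta$ together with $t\,m(\sigma(\mathbf{x}))^2\le 1$. The differences are only cosmetic: you reduce $\gamma$ to $\beta$ after the $s$-integration rather than before, and you state explicitly two things the paper leaves implicit, namely the $\mathbf{z}$-envelope $E_{\mathbf{x}}$ and the constant adjustment in the hypothesis $\sqrt{s}\le m(\sigma_1(\mathbf{x}))^{-1}$ of Lemma~\ref{lem:integral_estimate_A}.
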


\begin{proof}
Fix $\sigma \in G$ and $\mathbf{x} \in \mathbb{R}^N$. We decompose the space of integration for $\mathbf{z}$ into sectors $A_{\sigma_1}(\mathbf{x})$ for $\sigma_1 \in G$, where
\[
A_{\sigma_1}(\mathbf{x}) = \{ \mathbf{z} \in \mathbb{R}^N : d(\mathbf{x},\mathbf{z}) = \|\sigma_1(\mathbf{x})-\mathbf{z}\| \}.
\]
For any $\mathbf{y} \in C_{\sigma,\mathbf{x}}^c$ and $\mathbf{z} \in C_{\mathbf{y},\mathbf{z},1}^c \cap A_{\sigma_1}(\mathbf{x})$, Lemma~\ref{lem:geometric_m_equivalence} implies the full equivalence of critical radii:
\[
m(\sigma(\mathbf{x})) \sim m(\mathbf{y}) \sim m(\mathbf{z}) \sim m(\sigma_1(\mathbf{x})).
\]
Consequently, $m(\sigma(\mathbf{x}))^{-\beta} \leq C m(\sigma_1(\mathbf{x}))^{-\beta}$. Moreover, the assumption $t \leq m(\sigma(\mathbf{x}))^{-2}$ guarantees $t \leq C m(\sigma_1(\mathbf{x}))^{-2}$, so for any $s \in (0,t]$ we have $\sqrt{s} \leq C m(\sigma_1(\mathbf{x}))^{-1}$. This satisfies the hypothesis of Lemma~\ref{lem:integral_estimate_A} on $A_{\sigma_1}(\mathbf{x})$. Combined with $\beta < \gamma$, we have:
\begin{equation}\label{eq:gamma_to_beta_reduction_corrected}
    (\sqrt{s}m(\sigma_1(\mathbf{x})))^{\gamma} \leq C (\sqrt{s}m(\sigma_1(\mathbf{x})))^{\beta}.
\end{equation}

We bound the integral over each sector $A_{\sigma_1}(\mathbf{x})$ using Theorem~\ref{thm:time_derivatives} and Fubini's theorem.

\textit{Estimate for \eqref{eq:D1_bound_near}:}
Using the bound $k_s(\mathbf{z},\mathbf{x}) \leq C \mathcal{G}_{cs}(\mathbf{x},\mathbf{z})$ from Theorem~\ref{thm:time_derivatives}, we write:
\begin{align*}
    &\int_{C_{\sigma,\mathbf{x}}^c} m(\sigma(\mathbf{x}))^{-\beta} \int_0^{t/2} \int_{C_{\mathbf{y},\mathbf{z},1}^c \cap A_{\sigma_1}(\mathbf{x})} |\partial_t h_{t-s}(\mathbf{y},\mathbf{z})| V(\mathbf{z}) k_s(\mathbf{z},\mathbf{x})\,dw(\mathbf{z})\,ds \,dw(\mathbf{y}) \\
    &\leq C m(\sigma_1(\mathbf{x}))^{-\beta} \int_0^{t/2} \left( \int_{\mathbb{R}^N} |\partial_t h_{t-s}(\mathbf{y},\mathbf{z})| \,dw(\mathbf{y}) \right) \int_{A_{\sigma_1}(\mathbf{x})} V(\mathbf{z}) \mathcal{G}_{cs}(\mathbf{x},\mathbf{z})\,dw(\mathbf{z})\,ds.
\end{align*}
Since $\int_{\mathbb{R}^N} |\partial_t h_{t-s}(\mathbf{y},\mathbf{z})| \,dw(\mathbf{y}) \leq \frac{C}{t-s} \leq \frac{C}{t}$ for $s \in [0,t/2]$, applying Lemma~\ref{lem:integral_estimate_A} to the inner integral yields $\frac{C}{s}(\sqrt{s}m(\sigma_1(\mathbf{x})))^{\gamma}$. By \eqref{eq:gamma_to_beta_reduction_corrected}, we obtain:
\begin{align*}
    \frac{C}{t} m(\sigma_1(\mathbf{x}))^{-\beta} \int_0^{t/2} \frac{1}{s} (\sqrt{s}m(\sigma_1(\mathbf{x})))^{\beta} \, ds = \frac{C}{t} \int_0^{t/2} s^{-1+\beta/2} \, ds = C t^{-1+\beta/2}.
\end{align*}

\textit{Estimate of \eqref{eq:D2_bound_near}:}
By Theorem~\ref{thm:time_derivatives} we have $|\partial_s k_s(\mathbf{z},\mathbf{x})| \leq \frac{C}{s} \mathcal{G}_{cs}(\mathbf{x},\mathbf{z})$, so:
\begin{align*}
    &\int_{C_{\sigma,\mathbf{x}}^c} m(\sigma(\mathbf{x}))^{-\beta} \int_{t/2}^{t} \int_{C_{\mathbf{y},\mathbf{z},1}^c \cap A_{\sigma_1}(\mathbf{x})} h_{t-s}(\mathbf{y},\mathbf{z}) V(\mathbf{z}) |\partial_s k_s(\mathbf{z},\mathbf{x})|\,dw(\mathbf{z})\,ds \,dw(\mathbf{y}) \\
    &\leq C m(\sigma_1(\mathbf{x}))^{-\beta} \int_{t/2}^{t} \frac{1}{s} (1) \left( \int_{A_{\sigma_1}(\mathbf{x})} V(\mathbf{z}) \mathcal{G}_{cs}(\mathbf{x},\mathbf{z})\,dw(\mathbf{z}) \right) ds.
\end{align*}
Applying Lemma~\ref{lem:integral_estimate_A} and \eqref{eq:gamma_to_beta_reduction_corrected}, the integral is bounded by:
\begin{align*}
    C m(\sigma_1(\mathbf{x}))^{-\beta} \int_{t/2}^{t} \frac{1}{s^2} (\sqrt{s}m(\sigma_1(\mathbf{x})))^{\beta} \, ds = C \int_{t/2}^{t} s^{-2+\beta/2} \, ds \leq C t^{-1+\beta/2}.
\end{align*}

\textit{Estimate of \eqref{eq:D3_bound_near}:}
Using $k_{t/2}(\mathbf{z},\mathbf{x}) \leq C \mathcal{G}_{ct}(\mathbf{x},\mathbf{z})$ and $\int_{\mathbb{R}^N} h_{t/2}(\mathbf{y},\mathbf{z}) \,dw(\mathbf{y}) = 1$, Lemma~\ref{lem:integral_estimate_A} evaluated at $s=t/2$ together with \eqref{eq:gamma_to_beta_reduction_corrected} yields:
\begin{align*}
    &\int_{C_{\sigma,\mathbf{x}}^c} m(\sigma(\mathbf{x}))^{-\beta} \int_{C_{\mathbf{y},\mathbf{z},1}^c \cap A_{\sigma_1}(\mathbf{x})} h_{t/2}(\mathbf{y},\mathbf{z}) V(\mathbf{z}) k_{t/2}(\mathbf{z},\mathbf{x})\,dw(\mathbf{z}) \,dw(\mathbf{y}) \\
    &\leq C m(\sigma_1(\mathbf{x}))^{-\beta} (1) \left( \frac{C}{t} (\sqrt{t}m(\sigma_1(\mathbf{x})))^{\gamma} \right) \leq \frac{C}{t} m(\sigma_1(\mathbf{x}))^{-\beta} (\sqrt{t}m(\sigma_1(\mathbf{x})))^{\beta} = C t^{-1+\beta/2}.
\end{align*}

Summing these bounds over all domains $A_{\sigma_1}(\mathbf{x})$ for the finite group $G$ completes the proof.
\end{proof}

\begin{proof}[Proof of Theorem~\ref{thm:main_derivative_difference}]
Let $\mathbf{x} \in \mathbb{R}^N$ and $t>0$. We decompose $\mathbb{R}^N$ into orbit sectors $D_\sigma(\mathbf{x}) = \{ \mathbf{y} \in \mathbb{R}^N : d(\mathbf{x},\mathbf{y}) = \|\sigma(\mathbf{x})-\mathbf{y}\| \}$ for $\sigma \in G$:
\begin{align*}
    |\partial_t H_t f(\mathbf{x}) - \partial_t K_t f(\mathbf{x})| &\leq \sum_{\sigma \in G} \int_{D_\sigma(\mathbf{x})} |\partial_t(h_t(\mathbf{x},\mathbf{y}) - k_t(\mathbf{x},\mathbf{y}))| |f(\mathbf{y})| \, dw(\mathbf{y}) =: \sum_{\sigma \in G} I_\sigma.
\end{align*}

Fix $\sigma \in G$. We estimate the term $I_\sigma$ by splitting the analysis into two cases.

\textbf{Case 1:} $t \geq m(\sigma(\mathbf{x}))^{-2}$.
Applying Lemma \ref{lem:large_time_derivative_bound} to the integral over $D_\sigma(\mathbf{x})$, we directly obtain:
\begin{equation*}
    I_\sigma \leq C t^{-1+\beta/2} \|f m^\beta\|_{L^\infty}.
\end{equation*}

\textbf{Case 2:} $t < m(\sigma(\mathbf{x}))^{-2}$.
We split $D_\sigma(\mathbf{x})$ into the far region $C_{\sigma,\mathbf{x}} = \{ \mathbf{y} \in D_\sigma(\mathbf{x}) : \|\sigma(\mathbf{x})-\mathbf{y}\| \geq m(\sigma(\mathbf{x}))^{-1} \}$ and the local region $C_{\sigma,\mathbf{x}}^c = \{ \mathbf{y} \in D_\sigma(\mathbf{x}) : \|\sigma(\mathbf{x})-\mathbf{y}\| < m(\sigma(\mathbf{x}))^{-1} \}$, writing $I_\sigma = I_{1,\sigma} + I_{2,\sigma}$, where:
\begin{align*}
    I_{1,\sigma} &= \int_{C_{\sigma,\mathbf{x}}} |\partial_t h_t(\mathbf{x},\mathbf{y}) - \partial_t k_t(\mathbf{x},\mathbf{y})| |f(\mathbf{y})| \, dw(\mathbf{y}), 
    I_{2,\sigma} = \int_{C_{\sigma,\mathbf{x}}^c} |\partial_t (h_t(\mathbf{x},\mathbf{y}) - k_t(\mathbf{x},\mathbf{y}))| |f(\mathbf{y})| \, dw(\mathbf{y}).
\end{align*}

For the integral $I_{1,\sigma}$ over the far region $C_{\sigma,\mathbf{x}}$, applying Lemma \ref{lem:far_region_estimate} yields:
\begin{equation*}
    I_{1,\sigma} \leq C t^{-1+\beta/2} \|f m^\beta\|_{L^\infty(\mathbb{R}^N)}.
\end{equation*}

To estimate the integral $I_{2,\sigma}$ over the local region $C_{\sigma,\mathbf{x}}^c$, we use the assumption $\|f m^\beta\|_{L^\infty(\mathbb{R}^N)} < \infty$, which gives $|f(\mathbf{y})| \leq \|f m^\beta\|_{L^\infty(\mathbb{R}^N)} m(\mathbf{y})^{-\beta}$. By Lemma \ref{lem:critical_radius_comparability}, for $\mathbf{y} \in C_{\sigma,\mathbf{x}}^c$, we have $m(\mathbf{y}) \sim m(\sigma(\mathbf{x}))$, giving:
\begin{equation*}
    |f(\mathbf{y})| \leq C \|f m^\beta\|_{L^\infty(\mathbb{R}^N)} m(\sigma(\mathbf{x}))^{-\beta}.
\end{equation*}

Applying the point-wise Duhamel identity from Lemma \ref{lem:kernel_duhamel_derivative} to the kernel derivative within $I_{2,\sigma}$, we introduce the majorant $I_{2,\sigma} \leq C \|f m^\beta\|_{L^\infty(\mathbb{R}^N)} (J_{1,\sigma} + J_{2,\sigma} + J_{3,\sigma})$, where:
\begin{align*}
    J_{1,\sigma} &= \int_{C_{\sigma,\mathbf{x}}^c} m(\sigma(\mathbf{x}))^{-\beta} \int_0^{t/2} \int_{\mathbb{R}^N} |\partial_t h_{t-s}(\mathbf{y},\mathbf{z})| V(\mathbf{z}) k_s(\mathbf{z},\mathbf{x})\,dw(\mathbf{z})\,ds \,dw(\mathbf{y}), \\
    J_{2,\sigma} &= \int_{C_{\sigma,\mathbf{x}}^c} m(\sigma(\mathbf{x}))^{-\beta} \int_{t/2}^{t} \int_{\mathbb{R}^N} h_{t-s}(\mathbf{y},\mathbf{z}) V(\mathbf{z}) |\partial_s k_s(\mathbf{z},\mathbf{x})|\,dw(\mathbf{z})\,ds \,dw(\mathbf{y}), \\
    J_{3,\sigma} &= \int_{C_{\sigma,\mathbf{x}}^c} m(\sigma(\mathbf{x}))^{-\beta} \int_{\mathbb{R}^N} h_{t/2}(\mathbf{y},\mathbf{z}) V(\mathbf{z}) k_{t/2}(\mathbf{z},\mathbf{x})\,dw(\mathbf{z}) \,dw(\mathbf{y}).
\end{align*}

For each term $J_{1,\sigma}, J_{2,\sigma}, J_{3,\sigma}$, we split the inner domain of integration with respect to $\mathbf{z} \in \mathbb{R}^N$ into $C_{\mathbf{y},\mathbf{z},1} = \{ \mathbf{z} \in \mathbb{R}^N : \|\mathbf{y}-\mathbf{z}\| \geq m(\mathbf{y})^{-1} \}$ and its complement $C_{\mathbf{y},\mathbf{z},1}^c = \{ \mathbf{z} \in \mathbb{R}^N : \|\mathbf{y}-\mathbf{z}\| < m(\mathbf{y})^{-1} \}$.

The components evaluated over the region $C_{\mathbf{y},\mathbf{z},1}$ are bounded by $C t^{-1+\beta/2}$ due to Lemma \ref{lem:z_far_region_estimates_integrated}.

For the components evaluated over the local region $C_{\mathbf{y},\mathbf{z},1}^c$, since $t < m(\sigma(\mathbf{x}))^{-2}$, Lemma \ref{lem:z_near_region_estimates_integrated} applies directly for this fixed $\sigma$, yielding $C t^{-1+\beta/2}$.

Combining these bounds gives $J_{1,\sigma} + J_{2,\sigma} + J_{3,\sigma} \leq C t^{-1+\beta/2}$, which implies:
\begin{equation*}
    I_{2,\sigma} \leq C t^{-1+\beta/2} \|f m^\beta\|_{L^\infty(\mathbb{R}^N)}.
\end{equation*}

Thus, in Case 2, we also have $I_\sigma = I_{1,\sigma} + I_{2,\sigma} \leq C t^{-1+\beta/2} \|f m^\beta\|_{L^\infty(\mathbb{R}^N)}$. Since for every $\sigma \in G$ the estimate $I_\sigma \leq C t^{-1+\beta/2} \|f m^\beta\|_{L^\infty(\mathbb{R}^N)}$ holds uniformly, summing over the finite group $G$ finishes the proof:
\begin{equation*}
    |\partial_t H_t f(\mathbf{x}) - \partial_t K_t f(\mathbf{x})| \leq \sum_{\sigma \in G} I_\sigma \leq C t^{-1+\beta/2} \|f m^\beta\|_{L^\infty(\mathbb{R}^N)}.
\end{equation*}
\end{proof}

\begin{proof}[Proof of Theorem~\ref{thm:dunkl_schrodinger_equivalence}]
Since $0 < \beta < 2$, we have $n = 1$. Set $[f]_{\mathrm{Zyg},\beta} := \sup_{\mathbf{z} \neq 0} |\mathbf{z}|^{-\beta} \| f(\cdot + \mathbf{z}) + f(\cdot - \mathbf{z}) - 2f(\cdot) \|_{L^\infty}$.

By Lemma~\ref{lem:m_growth}, $(1+\|\mathbf{x}\|)^{-\beta} \le C m(\mathbf{x})^\beta$, giving
\begin{equation}\label{eq:weight_bound}
    \| f(\cdot)(1+\|\cdot\|)^{-\beta}\|_{L^\infty} \le C \| f m^\beta \|_{L^\infty}.
\end{equation}
By Theorem~\ref{thm:main_derivative_difference}, we have
\begin{equation}\label{eq:semigroup_comp}
    \sup_{t>0} t^{1 - \beta/2} \left\| \partial_t K_t f - \partial_t H_t f \right\|_{L^\infty} \le C \|f m^\beta\|_{L^\infty}.
\end{equation}
Theorem~\ref{teo:equivalence} (and Remark~\ref{rem:zygmund_equivalence} for $0 < \beta < 1$) yields $\|f\|_{\widetilde{\Lambda}_k^{\beta/2}} \approx \| f(\cdot)(1+\|\cdot\|)^{-\beta}\|_{L^\infty} + [f]_{\mathrm{Zyg},\beta}$. 

Combining this equivalence with \eqref{eq:weight_bound} and applying the triangle inequality to \eqref{eq:semigroup_comp}, we obtain
\[
    \sup_{t>0} t^{1 - \beta/2} \|\partial_t K_t f\|_{L^\infty} \le \sup_{t>0} t^{1 - \beta/2} \|\partial_t H_t f\|_{L^\infty} + C \|f m^\beta\|_{L^\infty} \le C' \big( \|f m^\beta\|_{L^\infty} + [f]_{\mathrm{Zyg},\beta} \big),
\]
which proves $\|f\|_{\widetilde{\Lambda}^{\beta/2}_{\mathcal{L},k}} \le C \|f\|_{\Lambda^{\beta}_{\mathcal{L},k}}$.

The reverse inequality $\|f\|_{\Lambda^{\beta}_{\mathcal{L},k}} \le C \|f\|_{\widetilde{\Lambda}^{\beta/2}_{\mathcal{L},k}}$ follows symmetrically by bounding $[f]_{\mathrm{Zyg},\beta}$ via $\|f\|_{\widetilde{\Lambda}_k^{\beta/2}}$, replacing $\partial_t H_t f$ with $\partial_t K_t f$ using \eqref{eq:semigroup_comp}, and controlling the weight via \eqref{eq:weight_bound}.
\end{proof}

\section{Proof of Theorem~\ref{thm:time_derivatives}}\label{sec:appendix_}

\begin{lemma}\label{lem:davies_gaffney}
Let $U_1$ and $U_2$ be measurable subsets of $\mathbb{R}^N$ and let $d(U_1,U_2) = \inf_{\mathbf{x} \in U_1, \mathbf{y} \in U_2} d(\mathbf{x},\mathbf{y})$ denote the orbit distance between $U_1$ and $U_2$. Then there exist constants $C', c' > 0$ such that for all $t > 0$ and for all functions $f_1 \in L^2(U_1, dw)$, $f_2 \in L^2(U_2, dw)$, the Dunkl-Schr\"odinger semigroup $K_t = e^{-tL}$ satisfies the following Davies-Gaffney estimate:
\begin{equation}
    |\langle K_t f_1, f_2 \rangle| \leq C' \exp\left(- \frac{c'\,d(U_1,U_2)^2}{t} \right) \|f_1\|_{L^2(dw)} \|f_2\|_{L^2(dw)}.
\end{equation}
\end{lemma}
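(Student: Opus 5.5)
The plan is to reduce the statement to the corresponding bound for the unperturbed Dunkl heat semigroup, using the pointwise domination \eqref{eq:kernels_compare}. Assume without loss of generality that $f_1,f_2$ are extended by zero outside $U_1,U_2$. Then
\begin{equation*}
|\langle K_t f_1,f_2\rangle| \le \int_{U_2}\int_{U_1} k_t(\mathbf{x},\mathbf{y})|f_1(\mathbf{y})||f_2(\mathbf{x})|\,dw(\mathbf{y})\,dw(\mathbf{x}) \le \int_{U_2}\int_{U_1} h_t(\mathbf{x},\mathbf{y})|f_1(\mathbf{y})||f_2(\mathbf{x})|\,dw(\mathbf{y})\,dw(\mathbf{x}),
\end{equation*}
since $0\le k_t\le h_t$. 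This avoids any need to run the Davies integrated-maximum-principle argument for $\mathcal{L}$ directly, where the difference part of $T_j$ interacts badly with exponential weights built from a non-$G$-invariant metric.

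Next I would insert the Gaussian bound from Theorem~\ref{teo:heat_new} with $m=0$ and no spatial derivatives, which gives $h_t(\mathbf{x},\mathbf{y})\le C\,\mathcal{G}_{t/c}(\mathbf{x},\mathbf{y})$. For $\mathbf{x}\in U_2$ and $\mathbf{y}\in U_1$ we have $d(\mathbf{x},\mathbf{y})\ge D:=d(U_1,U_2)$. I would split the exponential as $e^{-c d(\mathbf{x},\mathbf{y})^2/t}\le e^{-cD^2/(2t)}e^{-c d(\mathbf{x},\mathbf{y})^2/(2t)}$. The first factor comes out of the integral, and what remains is an integral operator with kernel $\widetilde{\mathcal{G}}(\mathbf{x},\mathbf{y})=\max(w(B(\mathbf{x},\sqrt{t/c})),w(B(\mathbf{y},\sqrt{t/c})))^{-1}e^{-c d(\mathbf{x},\mathbf{y})^2/(2t)}$.

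The key step is to show that this kernel defines an operator bounded on $L^2(dw)$ uniformly in $t$; I would use Schur's test for this. The kernel is symmetric, so it suffices to check $\sup_{\mathbf{x}}\int \widetilde{\mathcal{G}}(\mathbf{x},\mathbf{y})\,dw(\mathbf{y})\le C$. Bound the max from below by $w(B(\mathbf{x},\sqrt{t/c}))$ and cover $\mathbb{R}^N$ by the orbit annuli $\mathcal{O}(B(\mathbf{x},2^{j}\sqrt t))\setminus \mathcal{O}(B(\mathbf{x},2^{j-1}\sqrt t))$, on which $d(\mathbf{x},\mathbf{y})\ge 2^{j-1}\sqrt t$. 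By \eqref{eq:ball_orbit_compare} and the growth estimate \eqref{eq:growth}, each annulus has $w$-measure at most $C|G|2^{j\mathbf{N}}w(B(\mathbf{x},\sqrt t))$. Summing against $e^{-c4^{j}/8}$ gives a constant independent of $\mathbf{x}$ and $t$; Lemma~\ref{lem:homogeneous} can alternatively be quoted here, and this is already implicitly used throughout the paper.

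Schur's test then gives
\begin{equation*}
\iint \widetilde{\mathcal{G}}(\mathbf{x},\mathbf{y})|f_1(\mathbf{y})||f_2(\mathbf{x})|\,dw\,dw\le C\|f_1\|_{L^2(dw)}\|f_2\|_{L^2(dw)}.
\end{equation*}
Combining this with the extracted factor $e^{-cD^2/(2t)}$ yields the claim with $c'=c/2$. The main (mild) obstacle is only the uniform $L^1$ bound for the Gaussian in the orbit metric; this is where the finite-group covering via $\mathcal{O}(B)$ replaces Euclidean annuli, and I expect it to be routine.
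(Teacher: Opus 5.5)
Your proposal is correct and follows the paper's proof essentially step by step: the domination $0\le k_t\le h_t$, the Gaussian bound $h_t\le C\,\mathcal{G}_{t/c}$ from Theorem~\ref{teo:heat_new}, splitting off the factor $e^{-c\,d(U_1,U_2)^2/(2t)}$, and Schur's test with a uniform $L^1$ bound for the remaining Gaussian kernel. The only cosmetic difference is in the last step: the paper first uses doubling to rewrite the leftover kernel as $\tilde C\,\mathcal{G}_{2t/c}$ and then quotes Lemma~\ref{lem:homogeneous}, whereas you keep the original volume normalization and check the uniform integrability directly with orbit annuli, which is equally valid.
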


\begin{proof}
Let $t>0$. By the definition of the semigroup $K_t$ given in \eqref{eq:K_semigroup}, we can write the inner product as:
\begin{equation*}
    \langle K_t f_1, f_2 \rangle = \int_{U_1} \int_{U_2} k_t(\mathbf{x},\mathbf{y}) f_1(\mathbf{x}) \overline{f_2(\mathbf{y})} \, dw(\mathbf{y}) \, dw(\mathbf{x}).
\end{equation*}
By the domination property \eqref{eq:kernels_compare}, we have $0 \leq k_t(\mathbf{x},\mathbf{y}) \leq h_t(\mathbf{x},\mathbf{y})$. Taking the absolute value and applying the estimate \eqref{eq:heat2} from Theorem \ref{teo:heat_new}, we obtain:
\begin{align}
    | \langle K_t f_1, f_2 \rangle | &\leq \int_{U_1} \int_{U_2} h_t(\mathbf{x},\mathbf{y}) |f_1(\mathbf{x})| |f_2(\mathbf{y})| \, dw(\mathbf{y}) \, dw(\mathbf{x}) \nonumber \\
    &\leq C \int_{U_1} \int_{U_2} \mathcal{G}_{t/c}(\mathbf{x},\mathbf{y}) |f_1(\mathbf{x})| |f_2(\mathbf{y})| \, dw(\mathbf{y}) \, dw(\mathbf{x}), \label{eq:dg_proof_1}
\end{align}
where $\mathcal{G}_{t/c}$ is defined in \eqref{eq:mathcal_G}.

Recall that $\mathbf{x} \in U_1$ and $\mathbf{y} \in U_2$ implies $d(\mathbf{x},\mathbf{y}) \geq d(U_1,U_2)$. We can split the exponential factor in $\mathcal{G}_{t/c}(\mathbf{x},\mathbf{y})$ as follows:
\begin{align*}
    \exp\left(-\frac{c\,d(\mathbf{x},\mathbf{y})^2}{t}\right) &= \exp\left(-\frac{c\,d(\mathbf{x},\mathbf{y})^2}{2t}\right) \exp\left(-\frac{c\,d(\mathbf{x},\mathbf{y})^2}{2t}\right) \\
    &\leq \exp\left(-\frac{c\,d(U_1,U_2)^2}{2t}\right) \exp\left(-\frac{(c/2)\,d(\mathbf{x},\mathbf{y})^2}{t}\right).
\end{align*}
Using the doubling property of the measure $dw$ given in \eqref{eq:doubling} and \eqref{eq:growth}, the volume factor satisfies:
\begin{equation*}
    \Big(\max (w(B(\mathbf{x},\sqrt{t/c})),w(B(\mathbf{y}, \sqrt{t/c})))\Big)^{-1} \leq \tilde{C} \Big(\max (w(B(\mathbf{x},\sqrt{2t/c})),w(B(\mathbf{y}, \sqrt{2t/c})))\Big)^{-1}
\end{equation*}
for some constant $\tilde{C} > 0$. Therefore, we can bound $\mathcal{G}_{t/c}$ by:
\begin{equation}\label{eq:G_split}
    \mathcal{G}_{t/c}(\mathbf{x},\mathbf{y}) \leq \tilde{C} \exp\left(-\frac{c\,d(U_1,U_2)^2}{2t}\right) \mathcal{G}_{2t/c}(\mathbf{x},\mathbf{y}).
\end{equation}
Substituting \eqref{eq:G_split} into \eqref{eq:dg_proof_1}, we obtain:
\begin{equation}\label{eq:dg_proof_2}
    | \langle K_t f_1, f_2 \rangle | \leq C \tilde{C} \exp\left(-\frac{c\,d(U_1,U_2)^2}{2t}\right) \int_{U_1} \int_{U_2} \mathcal{G}_{2t/c}(\mathbf{x},\mathbf{y}) |f_1(\mathbf{x})| |f_2(\mathbf{y})| \, dw(\mathbf{y}) \, dw(\mathbf{x}).
\end{equation}

By Lemma \ref{lem:homogeneous}, there exists a constant $M > 0$ (independent of $t$ and $\mathbf{x}$) such that
\begin{equation*}
    \sup_{\mathbf{x} \in \mathbb{R}^N} \int_{\mathbb{R}^N} \mathcal{G}_{2t/c}(\mathbf{x},\mathbf{y}) \, dw(\mathbf{y}) \leq M \quad \text{and} \quad \sup_{\mathbf{y} \in \mathbb{R}^N} \int_{\mathbb{R}^N} \mathcal{G}_{2t/c}(\mathbf{x},\mathbf{y}) \, dw(\mathbf{x}) \leq M.
\end{equation*}
By Schur's test, the integral operator with kernel $\mathcal{G}_{2t/c}(\mathbf{x},\mathbf{y})$ is bounded on $L^2(dw)$ with operator norm at most $M$. Applying this to \eqref{eq:dg_proof_2} yields:
\begin{equation*}
    \int_{U_1} \int_{U_2} \mathcal{G}_{2t/c}(\mathbf{x},\mathbf{y}) |f_1(\mathbf{x})| |f_2(\mathbf{y})| \, dw(\mathbf{y}) \, dw(\mathbf{x}) \leq M \|f_1\|_{L^2(dw)} \|f_2\|_{L^2(dw)}.
\end{equation*}
Consequently, we conclude that
\begin{equation*}
    |\langle K_t f_1, f_2 \rangle| \leq C' \exp\left(- \frac{c'\,d(U_1,U_2)^2}{t} \right) \|f_1\|_{L^2(dw)} \|f_2\|_{L^2(dw)},
\end{equation*}
where $C' = C \tilde{C} M$ and $c' = c/2$. This completes the proof.
\end{proof}

\begin{lemma}\label{lem:on_diagonal_bound}
There exists a constant $C > 0$ such that for all $\mathbf{x} \in \mathbb{R}^N$ and $t > 0$, the Dunkl-Schr\"odinger heat kernel satisfies
\begin{equation}\label{eq:on_diagonal_bound}
    k_t(\mathbf{x},\mathbf{x}) \leq \frac{C}{w(B(\mathbf{x},\sqrt{t}))}.
\end{equation}
\end{lemma}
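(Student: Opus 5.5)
The bound follows directly from the domination property \eqref{eq:kernels_compare} combined with the on-diagonal case of the Dunkl heat kernel estimate in Theorem~\ref{teo:heat_new}. First, since $0 \le k_t(\mathbf{x},\mathbf{y}) \le h_t(\mathbf{x},\mathbf{y})$ for all $\mathbf{x},\mathbf{y}$ and $t>0$, specializing to $\mathbf{y}=\mathbf{x}$ gives $k_t(\mathbf{x},\mathbf{x}) \le h_t(\mathbf{x},\mathbf{x})$. If the domination is only known almost everywhere, I would first argue that both kernels are continuous in the spatial variables, or else obtain the diagonal value as $k_t(\mathbf{x},\mathbf{x}) = \|k_{t/2}(\mathbf{x},\cdot)\|_{L^2(dw)}^2$. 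The right-hand side of this identity is dominated by $\|h_{t/2}(\mathbf{x},\cdot)\|_{L^2(dw)}^2 = h_t(\mathbf{x},\mathbf{x})$, using the semigroup property and symmetry of both kernels. This second route avoids any pointwise issues on the diagonal.

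Second, I would apply \eqref{eq:heat2} with $m=0$ and $\boldsymbol{\beta}=\boldsymbol{\beta}'=0$ at $\mathbf{y}=\mathbf{x}$. Here $d(\mathbf{x},\mathbf{x})=0$ and the Euclidean factor equals $1$, so
\begin{equation*}
h_t(\mathbf{x},\mathbf{x}) \le C\,\mathcal{G}_{t/c}(\mathbf{x},\mathbf{x}) = \frac{C}{w(B(\mathbf{x},\sqrt{t/c}))}.
\end{equation*}
Finally, the doubling property \eqref{eq:doubling}, made quantitative through \eqref{eq:growth}, gives $w(B(\mathbf{x},\sqrt{t})) \le C w(B(\mathbf{x},\sqrt{t/c}))$ when $c<1$; when $c\ge 1$ the inequality is trivial by monotonicity. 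Combining these three steps yields \eqref{eq:on_diagonal_bound}.

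The only delicate point is justifying the pointwise comparison on the diagonal, which is measure zero. I would handle it through the $L^2$ identity $k_t(\mathbf{x},\mathbf{x})=\int k_{t/2}(\mathbf{x},\mathbf{z})^2\,dw(\mathbf{z})$. This identity holds for the self-adjoint semigroup $K_t$ with kernel in $L^2$, which follows from $k_{t/2}(\mathbf{x},\cdot)\le h_{t/2}(\mathbf{x},\cdot)\in L^2(dw)$. Everything else is routine.
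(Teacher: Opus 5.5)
Your proposal is correct and follows the paper's proof exactly: domination $k_t(\mathbf{x},\mathbf{x})\le h_t(\mathbf{x},\mathbf{x})$, the estimate \eqref{eq:heat2} with $m=0$ and $d(\mathbf{x},\mathbf{x})=0$ giving $C/w(B(\mathbf{x},\sqrt{t/c}))$, and then doubling to replace $\sqrt{t/c}$ by $\sqrt{t}$. Your extra justification of the diagonal value, $k_t(\mathbf{x},\mathbf{x})=\|k_{t/2}(\mathbf{x},\cdot)\|_{L^2(dw)}^2\le\|h_{t/2}(\mathbf{x},\cdot)\|_{L^2(dw)}^2=h_t(\mathbf{x},\mathbf{x})$, is a harmless refinement of a point the paper simply takes for granted.
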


\begin{proof}
By the domination property \eqref{eq:kernels_compare}, we have $k_t(\mathbf{x},\mathbf{x}) \leq h_t(\mathbf{x},\mathbf{x})$. Applying Theorem \ref{teo:heat_new} with $m=0$ and $|\boldsymbol{\beta}| = |\boldsymbol{\beta}'| = 0$, and noting that $d(\mathbf{x},\mathbf{x})=0$, we obtain
$$k_t(\mathbf{x},\mathbf{x}) \leq C \mathcal{G}_{t/c}(\mathbf{x},\mathbf{x}) = \frac{C}{w(B(\mathbf{x},\sqrt{t/c}))}.$$
The desired estimate \eqref{eq:on_diagonal_bound} then follows directly from the doubling property of the measure $dw$ given in \eqref{eq:growth}.
\end{proof}

\begin{lemma}\label{lem:complex_time_bound}
Let $z \in \mathbb{C}_+$ with $t = \mathrm{Re}\, z > 0$ and $s = \mathrm{Im}\, z \in \mathbb{R}$. The complex-time heat kernel $k_z(\mathbf{x},\mathbf{y})$ satisfies the pointwise bound:
\begin{equation}\label{eq:complex_kernel_pointwise}
    |k_z(\mathbf{x},\mathbf{y})| \leq \frac{C}{\sqrt{w(B(\mathbf{x},\sqrt{\mathrm{Re}\, z}))}\sqrt{w(B(\mathbf{y},\sqrt{\mathrm{Re}\, z}))}} \quad \text{for all } \mathbf{x}, \mathbf{y} \in \mathbb{R}^N.
\end{equation}
\end{lemma}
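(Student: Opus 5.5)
The approach is to factor the complex-time semigroup through real-time semigroups, using the semigroup property and the on-diagonal bound of Lemma~\ref{lem:on_diagonal_bound}. Let $z = t + is$ with $t>0$. Since $L$ is a nonnegative self-adjoint operator, the functional calculus gives a holomorphic semigroup $K_z = e^{-zL}$ on $\mathbb{C}_+$, and we write
\[
K_z = K_{t/2}\, e^{-isL}\, K_{t/2}.
\]
Here $e^{-isL}$ is unitary on $L^2(dw)$, so its operator norm is $1$.

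For the kernel, we use that $K_{t/2}$ has the kernel $k_{t/2}$. Hence $k_z(\mathbf{x},\mathbf{y}) = \langle e^{-isL} k_{t/2}(\cdot,\mathbf{y}), \overline{k_{t/2}(\mathbf{x},\cdot)}\rangle$, where symmetry and reality of $k_{t/2}$ let us write both factors as $k_{t/2}(\mathbf{x},\cdot)$ and $k_{t/2}(\cdot,\mathbf{y})$. Cauchy--Schwarz and unitarity then give
\[
|k_z(\mathbf{x},\mathbf{y})| \le \|k_{t/2}(\mathbf{x},\cdot)\|_{L^2(dw)}\,\|k_{t/2}(\cdot,\mathbf{y})\|_{L^2(dw)}.
\]
By the semigroup property and symmetry, $\|k_{t/2}(\mathbf{x},\cdot)\|_{L^2(dw)}^2 = \int k_{t/2}(\mathbf{x},\mathbf{u})k_{t/2}(\mathbf{u},\mathbf{x})\,dw(\mathbf{u}) = k_t(\mathbf{x},\mathbf{x})$. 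Lemma~\ref{lem:on_diagonal_bound} bounds this by $C/w(B(\mathbf{x},\sqrt t))$, and the same holds at $\mathbf{y}$. Taking square roots yields \eqref{eq:complex_kernel_pointwise}.

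The main obstacle is rigor rather than estimates. We must check that $k_{t/2}(\mathbf{x},\cdot)\in L^2(dw)$, which follows from $0\le k_{t/2}\le h_{t/2}$ and the Gaussian bound \eqref{eq:heat2}. We must also justify that the kernel of $K_z$ is given pointwise by this pairing, and not merely almost everywhere. For this, I would define $k_z(\mathbf{x},\mathbf{y})$ by the pairing formula itself. For $f,g\in L^2(dw)$ one has $\langle K_z f,g\rangle = \langle e^{-isL}K_{t/2}f, K_{t/2}\bar g\rangle$, and Fubini, justified by the $L^2$ bounds, shows that this pairing formula is the integral kernel of $K_z$.
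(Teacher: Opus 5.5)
Your proposal is correct and follows the paper's proof: the factorization $e^{-zL}=e^{-tL/2}e^{-isL}e^{-tL/2}$ with $t=\mathrm{Re}\,z$, Cauchy--Schwarz with unitarity of $e^{-isL}$, the identity $\|k_{t/2}(\mathbf{x},\cdot)\|_{L^2(dw)}^2=k_t(\mathbf{x},\mathbf{x})$, and the on-diagonal bound of Lemma~\ref{lem:on_diagonal_bound}. Your extra remarks on the $L^2$ membership of $k_{t/2}(\mathbf{x},\cdot)$, and on defining $k_z$ pointwise through the pairing, add rigor that the paper leaves implicit.
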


\begin{proof}
Let $z = t + is \in \mathbb{C}_+$, where $t = \mathrm{Re}\, z > 0$ and $s = \mathrm{Im}\, z \in \mathbb{R}$. By the semigroup property, the complex-time heat operator factorizes as:
$$e^{-zL} = e^{-tL/2} e^{-isL} e^{-tL/2}.$$
Evaluating the action of $e^{-zL}$ at points $(\mathbf{x},\mathbf{y})$ corresponds to the $L^2(w)$-inner product of the real-time heat kernels:
$$k_{t+is}(\mathbf{x},\mathbf{y}) = \left\langle e^{-isL} k_{t/2}(\cdot, \mathbf{y}), k_{t/2}(\mathbf{x}, \cdot) \right\rangle_{L^2(w)}.$$
Applying the Cauchy-Schwarz inequality in $L^2(w)$ and using the fact that $L$ is self-adjoint non-negative (hence $e^{-isL}$ is a unitary operator on $L^2(w)$ with $\|e^{-isL}\|_{L^2(w) \to L^2(w)} = 1$), we obtain:
\begin{align*}
    |k_{t+is}(\mathbf{x},\mathbf{y})| &\leq \left\| e^{-isL} k_{t/2}(\cdot, \mathbf{y}) \right\|_{L^2(w)} \left\| k_{t/2}(\mathbf{x}, \cdot) \right\|_{L^2(w)} \\
    &= \left\| k_{t/2}(\cdot, \mathbf{y}) \right\|_{L^2(w)} \left\| k_{t/2}(\mathbf{x}, \cdot) \right\|_{L^2(w)}.
\end{align*}
By the self-adjointness and symmetry of the real-time heat kernel $k_{t/2}$, the $L^2(w)$-norm equals the diagonal kernel at time $t$:
$$\left\| k_{t/2}(\mathbf{x}, \cdot) \right\|_{L^2(w)}^2 = \int_{\mathbb{R}^N} |k_{t/2}(\mathbf{x},\mathbf{z})|^2 \, dw(\mathbf{z}) = k_t(\mathbf{x},\mathbf{x}).$$
Similarly, $\left\| k_{t/2}(\cdot, \mathbf{y}) \right\|_{L^2(w)}^2 = k_t(\mathbf{y},\mathbf{y})$. Combining these with the standard diagonal bound $k_t(\mathbf{x},\mathbf{x}) \leq \frac{C}{w(B(\mathbf{x},\sqrt{t}))}$, we conclude:
$$|k_{t+is}(\mathbf{x},\mathbf{y})| \leq \sqrt{k_t(\mathbf{x},\mathbf{x})} \sqrt{k_t(\mathbf{y},\mathbf{y})} \leq \frac{C}{\sqrt{w(B(\mathbf{x},\sqrt{t}))}\sqrt{w(B(\mathbf{y},\sqrt{t}))}}.$$
This establishes \eqref{eq:complex_kernel_pointwise}.
\end{proof}


\begin{theorem}[Phragm\'en-Lindel\"of]\label{thm:phragmen_lindelof_21}
Let $S$ be the open region in $\mathbb{C}$ bounded by two rays meeting at an angle $\pi/\alpha$, for some $\alpha > 1/2$. Suppose that $F$ is analytic on $S$, continuous on $\overline{S}$, and satisfies
\begin{equation}\label{eq:pl_growth}
    |F(z)| \leq C \exp\left(c |z|^\beta\right)
\end{equation}
for some $\beta \in [0, \alpha)$ and for all $z \in S$. Then the condition $|F(z)| \leq B$ on the two bounding rays implies $|F(z)| \leq B$ for all $z \in S$.
\end{theorem}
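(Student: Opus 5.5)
The statement is the classical Phragmén--Lindelöf principle for a sector, and the plan is the standard auxiliary-function argument. First, rotate and translate so that $S=\{z\neq 0:\ |\arg z|<\pi/(2\alpha)\}$. The map $z\mapsto e^{i\theta}z+z_0$ preserves both the hypotheses and the conclusion: the growth bound $C\exp(c|z|^\beta)$ changes only in its constants, because $|z-z_0|^\beta\le C(1+|z|^\beta)$.

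Next, fix $\gamma$ with $\beta<\gamma<\alpha$ and $\varepsilon>0$, and set
\[
F_\varepsilon(z)=F(z)\exp\bigl(-\varepsilon z^{\gamma}\bigr),
\]
using the principal branch of $z^{\gamma}$, which is analytic on $S$ and continuous on $\overline S$. For $z=re^{i\phi}$ with $|\phi|\le\pi/(2\alpha)$ we have $\gamma|\phi|\le \gamma\pi/(2\alpha)<\pi/2$. Hence
\[
\mathrm{Re}\,z^\gamma=r^\gamma\cos(\gamma\phi)\ge \delta r^\gamma, \qquad \delta=\cos\bigl(\gamma\pi/(2\alpha)\bigr)>0.
\]
The hypothesis $\alpha>1/2$ is what allows this choice: the sector has opening less than $2\pi$, so the principal branch is well defined, and $\gamma$ can be chosen strictly between $\beta$ and $\alpha$.

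It then follows that $|F_\varepsilon|\le B$ on the two bounding rays, since there $|\exp(-\varepsilon z^\gamma)|\le 1$. Inside $S$ we get
\[
|F_\varepsilon(z)|\le C\exp\bigl(c r^\beta-\varepsilon\delta r^\gamma\bigr)\longrightarrow 0 \quad (r\to\infty),
\]
because $\gamma>\beta$. Choose $R$ so large that $|F_\varepsilon|\le B$ on the arc $|z|=R$ in $\overline S$. The maximum modulus principle on the bounded region $S\cap\{|z|<R\}$, where $F_\varepsilon$ is continuous up to the boundary, gives $|F_\varepsilon|\le B$ there. Since $R$ can be taken arbitrarily large, $|F_\varepsilon|\le B$ on all of $S$.

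Finally, let $\varepsilon\to 0^+$ pointwise: $|F(z)|=\lim_{\varepsilon\to 0^+}|F_\varepsilon(z)|\le B$. The main point to get right is the angular bookkeeping that makes $\cos(\gamma\phi)$ bounded below by a positive constant uniformly on $\overline S$. This is exactly where the strict inequalities $\beta<\gamma<\alpha$ are used, and in the intended application the sector is the half-plane or a subsector of it. Everything else is routine.
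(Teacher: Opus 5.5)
The paper gives no proof of this statement. It quotes it as the classical Phragm\'en--Lindel\"of principle and only applies it in Step 4 of the proof of Theorem~\ref{thm:pointwise_complex_kernel_sector}, on the quarter-planes $\Omega^\pm$ with vertex at $1$, growth exponent $1$ and $\alpha=2$. Your proof is the standard auxiliary-function argument and is correct, and your remark that translations only change the constants in the growth bound is exactly what that application needs.

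There is one small slip, in the case $B=0$. No $R$ satisfies $C\exp(cR^\beta-\varepsilon\delta R^\gamma)\le B$ when $B=0$. Run the argument with $B+\eta$ in place of $B$ and let $\eta\to0^+$ at the end.
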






\begin{theorem}\label{thm:pointwise_complex_kernel_sector}
Let $L$ be a non-negative self-adjoint operator on $L^2(\mathbb{R}^N, dw)$, where $w$ satisfies the doubling condition \eqref{eq:growth}. Let $k_z(\mathbf{x},\mathbf{y})$ be the kernel of $e^{-zL}$. Fix $\theta \in (0, \pi/2)$ and define the sector $S_\theta = \{ z \in \mathbb{C}_+ : |\arg z| \le \theta \}$.

For any $\mathbf{x}, \mathbf{y} \in \mathbb{R}^N$ such that $d(\mathbf{x},\mathbf{y}) > 0$ and $z \in S_\theta$, there exist constants $C^*, c'' > 0$ (depending only on $\theta$ and the constants of the space) such that:
\begin{equation}\label{eq:pointwise_kernel_sector}
    |k_z(\mathbf{x}, \mathbf{y})| \leq \frac{C^*}{w(B(\mathbf{x}, d(\mathbf{x},\mathbf{y})))} \exp\left(- c''\,\mathrm{Re}\,\frac{d(\mathbf{x},\mathbf{y})^2}{z}\right).
\end{equation}
\end{theorem}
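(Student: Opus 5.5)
This is the Coulhon--Sikora extension of Gaussian bounds to complex times. I will combine the Davies--Gaffney estimate (Lemma~\ref{lem:davies_gaffney}) with the on-diagonal complex-time bound (Lemma~\ref{lem:complex_time_bound}), using the Phragm\'en--Lindel\"of principle (Theorem~\ref{thm:phragmen_lindelof_21}). Throughout, $L$ is the Dunkl--Schr\"odinger operator, since the statement implicitly relies on the two lemmas just proved for $K_t$.

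\textbf{Step 1: a complex-time Davies--Gaffney estimate.} Fix sets $U_1=B(\mathbf{x},r)$ and $U_2=\mathcal{O}(B(\mathbf{y},r))$ with $r=d(\mathbf{x},\mathbf{y})/4$, so that $d(U_1,U_2)\ge d(\mathbf{x},\mathbf{y})/2=:\rho$. Fix $f_1\in L^2(U_1)$ and $f_2\in L^2(U_2)$ of unit norm, and set $F(z)=\langle e^{-zL}f_1,f_2\rangle$. This function is analytic on $\mathrm{Re}\,z>0$, continuous up to the boundary, and bounded by $1$.

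On the positive real axis, Lemma~\ref{lem:davies_gaffney} gives $|F(t)|\le C'e^{-c'\rho^2/t}$. Following Coulhon--Sikora, I apply Phragm\'en--Lindel\"of to
\[
G(\zeta)=F(1/\zeta)\exp(c'\rho^2\zeta)
\]
on the quarter-plane sectors $0<\arg\zeta<\pi/2$ and $-\pi/2<\arg\zeta<0$. These sectors have opening angle $\pi/2$, so $\alpha=2$, while $G$ has growth of order $\beta=1<2$.
\begin{itemize}
\item On the real ray, $|G|\le C'$.
\item On the imaginary ray, $|G|\le1$, because $|F|\le1$ and $|e^{c'\rho^2 i s}|=1$.
\end{itemize}
Hence $|G|\le C'$ in the right half-plane. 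Translating back gives
\[
|F(z)|\le C'\exp\bigl(-c'\rho^2\,\mathrm{Re}(1/z)\bigr)=C'\exp\bigl(-c'\,\mathrm{Re}(\rho^2/z)\bigr).
\]

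\textbf{Step 2: a weighted $L^2$ bound.} I want the pointwise bound without sacrificing the Gaussian factor. I will factor $e^{-zL}=e^{-\epsilon tL}e^{-z'L}e^{-\epsilon tL}$, where $t=\mathrm{Re}\,z$, $z'=z-2\epsilon t$, and $\epsilon$ is small. For $z\in S_\theta$, $z'$ stays in a slightly larger sector and $\mathrm{Re}(\rho^2/z')\gtrsim\mathrm{Re}(\rho^2/z)$.

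The kernel is then
\[
k_z(\mathbf{x},\mathbf{y})=\langle e^{-z'L}k_{\epsilon t}(\cdot,\mathbf{y}),k_{\epsilon t}(\mathbf{x},\cdot)\rangle.
\]
I split each real-time kernel $k_{\epsilon t}(\mathbf{x},\cdot)$ into its part in $U_1$, controlled by Step 1, and its part off $U_1$. The off-$U_1$ part has $L^2$ norm at most $C\,w(B(\mathbf{x},\sqrt t))^{-1/2}e^{-c r^2/t}$ by the real Gaussian bound $k\le h$ and Theorem~\ref{teo:heat_new}. The $L^2$ norms of the full kernels are handled by Lemma~\ref{lem:on_diagonal_bound}, exactly as in Lemma~\ref{lem:complex_time_bound}. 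Each cross term therefore carries either the Step~1 factor or a real Gaussian factor $e^{-cr^2/t}$, and in $S_\theta$ we have $\mathrm{Re}(\rho^2/z)\sim\rho^2/t$.

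The result is
\[
|k_z(\mathbf{x},\mathbf{y})|\le C\,\bigl(w(B(\mathbf{x},\sqrt t))\,w(B(\mathbf{y},\sqrt t))\bigr)^{-1/2}\exp\bigl(-c\,\mathrm{Re}(d^2/z)\bigr).
\]
The splitting by $\mathcal{O}$-sets uses that $d(\cdot,\cdot)$ is $G$-invariant, so orbit balls are the right "complements".

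\textbf{Step 3: replacing the volume factor.} I trade the volume at scale $\sqrt t$ for $w(B(\mathbf{x},d))$ using \eqref{eq:growth} and \eqref{eq:balls_asymp}.
\begin{itemize}
\item \emph{Case $\sqrt t\ge d$.} Here $w(B(\mathbf{x},\sqrt t))\ge w(B(\mathbf{x},d))$. Also $w(B(\mathbf{y},\sqrt t))\sim w(B(\mathbf{x},\sqrt t))$, since $d(\mathbf{x},\mathbf{y})\le\sqrt t$, $w$ is $G$-invariant, and the doubling property holds.
\item \emph{Case $\sqrt t<d$.} The loss is at most a polynomial factor $(d/\sqrt t)^{\mathbf{N}}$ from \eqref{eq:growth}. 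The comparison of $w(B(\mathbf{y},\cdot))$ with $w(B(\mathbf{x},\cdot))$, via $w(B(\mathbf{y},r))=w(B(\sigma\mathbf{y},r))$, also costs only a polynomial factor $(1+d/\sqrt t)^{\mathbf{N}}$. Both factors are absorbed by halving $c''$.
\end{itemize}

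\textbf{Main obstacle.} The delicate step is Step~2. I must ensure that the exponent comes out as $\mathrm{Re}(d^2/z)$ uniformly on $S_\theta$, rather than $d^2/|z|$. I must also handle the orbit geometry, since the supports must be separated in $d$ and not in Euclidean distance, which forces the use of orbit sets $\mathcal{O}(B)$.
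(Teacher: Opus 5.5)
One step fails as written. In Step~1 you take $U_1=B(\mathbf{x},r)$, a Euclidean ball. In Step~2 you then assert that the part of $k_{\epsilon t}(\mathbf{x},\cdot)$ off $U_1$ has $L^2(dw)$ norm at most $C\,w(B(\mathbf{x},\sqrt t))^{-1/2}e^{-cr^2/t}$. This is false in general. The set $\mathbb{R}^N\setminus B(\mathbf{x},r)$ contains neighbourhoods of the reflected points $\sigma(\mathbf{x})$ with $\|\sigma(\mathbf{x})-\mathbf{x}\|\ge r$. Near those points $d(\mathbf{x},\cdot)\approx 0$, and Theorem~\ref{teo:heat_new} gives only the polynomial factor $(1+\|\mathbf{x}-\mathbf{u}\|/\sqrt{t})^{-2}$, which cannot produce $\exp(-c\,\mathrm{Re}(d^2/z))$.

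The fix is the one your closing remark points to: take $U_1=\mathcal{O}(B(\mathbf{x},r))$ as well. Because $d$ is $G$-invariant in each variable, you still have $d(U_1,U_2)\ge d(\mathbf{x},\mathbf{y})-2r=d(\mathbf{x},\mathbf{y})/2$. Now $\mathbf{u}\notin U_1$ forces $d(\mathbf{x},\mathbf{u})\ge r$, so $k\le h$ and \eqref{eq:heat2} give exactly the Gaussian $L^2$ tail you need.

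A minor technical point in Step~1: $G(\zeta)=F(1/\zeta)e^{c'\rho^2\zeta}$ need not be continuous at the vertex $\zeta=0$, which Theorem~\ref{thm:phragmen_lindelof_21} requires. Apply the theorem to $\zeta\mapsto G(\zeta+\delta)$ on the two quarter-planes instead. On the shifted axis $\mathrm{Re}\,\zeta=\delta$ one has $\mathrm{Re}(1/\zeta)>0$, hence $|G|\le e^{c'\rho^2\delta}$ there; then let $\delta\to0$.

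With these corrections your argument is correct, and it takes a genuinely different route from the paper. The paper never proves a complex-time Davies--Gaffney estimate. Instead it fixes $\mathbf{x},\mathbf{y}$ and applies Phragm\'en--Lindel\"of directly to the scalar function $F(z)=k_z(\mathbf{x},\mathbf{y})\,w(B(\mathbf{x},d))$:
\begin{itemize}
\item on the real axis it uses the pointwise real-time Gaussian bound for $k_t$, which ultimately comes from $0\le k_t\le h_t$;
\item in place of your unitarity on the imaginary axis, it uses the polynomial a priori bound $|F(z)|\le C(1+d^2/\mathrm{Re}\,z)^{\mathbf{N}/2}$ from Lemma~\ref{lem:complex_time_bound}, on the circle $\mathrm{Re}(cd^2/z)=1$;
\item after the inversion $\zeta=cd^2/z$ onto $\{\mathrm{Re}\,\zeta\ge 1\}$, the weight $(2\zeta)^{-\mathbf{N}}$ absorbs that polynomial growth;
\item the rest of $S_\theta$, where $\mathrm{Re}(cd^2/z)<1$, is handled by the a priori bound alone.
\end{itemize}

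Your two-stage Coulhon--Sikora scheme has a cleaner Phragm\'en--Lindel\"of step: the pairing is bounded by $1$, so no weights or a priori bound are needed. Your Step~2 in fact holds on the whole half-plane and gives the bound with $\bigl(w(B(\mathbf{x},\sqrt{\mathrm{Re}\,z}))\,w(B(\mathbf{y},\sqrt{\mathrm{Re}\,z}))\bigr)^{-1/2}$. That is essentially the form the Cauchy-integral proof of Theorem~\ref{thm:time_derivatives} needs, including the regime $d\lesssim\sqrt t$. In that regime the normalization by $w(B(\mathbf{x},d))$ is too weak, and the paper has to fall back on Lemma~\ref{lem:complex_time_bound}. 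The price of your route is the extra splitting step, which is exactly where the orbit geometry must be handled. The paper's one-shot argument avoids any splitting but presupposes pointwise real-time Gaussian bounds.
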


\begin{proof}
Fix $\mathbf{x}, \mathbf{y} \in \mathbb{R}^N$ with orbit distance $d = d(\mathbf{x},\mathbf{y}) > 0$. Define the normalized function $F: \mathbb{C}_+ \to \mathbb{C}$ by
$$ F(z) = k_z(\mathbf{x}, \mathbf{y}) \cdot w(B(\mathbf{x}, d)). $$

\textbf{Step 1: Global a priori bound on $\mathbb{C}_+$}

By Lemma~\ref{lem:complex_time_bound}, for any $z \in \mathbb{C}_+$ we have:
$$ |k_z(\mathbf{x},\mathbf{y})| \leq \frac{C}{\sqrt{w(B(\mathbf{x},\sqrt{\mathrm{Re}\, z}))}\sqrt{w(B(\mathbf{y},\sqrt{\mathrm{Re}\, z}))}}. $$
Let $\sigma_0 \in G$ be an element of the Weyl group such that $\|\sigma_0(\mathbf{x}) - \mathbf{y}\| = d$. Due to the $G$-invariance of the measure $dw$, we have $w(B(\mathbf{x},d)) = w(B(\sigma_0(\mathbf{x}),d))$. The Euclidean inclusion $B(\sigma_0(\mathbf{x}),d) \subset B(\mathbf{y},2d)$, combined with the doubling condition centered at $\mathbf{y}$ and at $\mathbf{x}$, yields:
\begin{equation}\label{eq:F_apriori_global_strict}
    |F(z)| \le C_1 \left(1 + \frac{d^2}{\mathrm{Re}\,z}\right)^{\mathbf{N}/2} \quad \text{for all } z \in \mathbb{C}_+,
\end{equation}
where $C_1 > 0$  depend only on the doubling constants of $w$.

\textbf{Step 2: Real-time Gaussian estimate and conformal mapping}

From the real-time heat kernel bound (derived from Lemma~\ref{lem:on_diagonal_bound} combined with Davies-Gaffney estimates from Lemma~\ref{lem:davies_gaffney}), there exists a constant $c > 0$ such that for any $t > 0$:
$$ |k_t(\mathbf{x},\mathbf{y})| \le \frac{C_0}{w(B(\mathbf{x},d))} \left(1 + \frac{d^2}{t}\right)^{\mathbf{N}/2} \exp\left(-\frac{c\,d^2}{t}\right). $$
Set $\gamma = c d^2$ and consider the region $\mathcal{C}_\gamma = \{ z \in \mathbb{C}_+ : \mathrm{Re}\left(\frac{\gamma}{z}\right) \ge 1 \}$. 
The inversion $\zeta = \frac{\gamma}{z}$ maps $\mathcal{C}_\gamma$ bijectively onto the half-plane $\Omega = \{ \zeta \in \mathbb{C} : \mathrm{Re}\,\zeta \ge 1 \}$. 
Define $u(\zeta) = F\left(\frac{\gamma}{\zeta}\right)$ for $\zeta \in \Omega$. 

On the real axis $\zeta = \xi \ge 1$ (corresponding to real times $t = \frac{\gamma}{\xi} = \frac{c d^2}{\xi}$), we observe that $\frac{d^2}{t} = \frac{\xi}{c}$. Thus:
$$ |u(\xi) e^\xi| = \left|F\left(\frac{c d^2}{\xi}\right)\right| e^\xi \le C_0 \left(1 + \frac{\xi}{c}\right)^{\mathbf{N}/2}. $$

\textbf{Step 3: Boundary estimates for the auxiliary function $v(\zeta)$}

Define the auxiliary function $v(\zeta)$ directly on $\Omega$ by:
$$ v(\zeta) = (2\zeta)^{-\mathbf{N}} u(\zeta) e^\zeta. $$
We estimate $|v(\zeta)|$ on the boundaries of the upper and lower quarter-planes $\Omega^\pm = \{ \zeta \in \mathbb{C} : \mathrm{Re}\,\zeta \ge 1, \pm \mathrm{Im}\,\zeta \ge 0 \}$:

\begin{enumerate}
    \item \textbf{On the horizontal ray $\{\zeta = \xi \in \mathbb{R} : \xi \ge 1\}$:}
    Using $1 + \frac{\xi}{c} \le \left(1 + \frac{1}{c}\right)\xi$ for $\xi \ge 1$:
    $$ |v(\xi)| = (2\xi)^{-\mathbf{N}} |u(\xi) e^\xi| \le 2^{-\mathbf{N}} \xi^{-\mathbf{N}} \cdot C_0 \left(1 + \frac{1}{c}\right)^{\mathbf{N}/2} \xi^{\mathbf{N}/2} \le 2^{-\mathbf{N}} \left(1 + \frac{1}{c}\right)^{\mathbf{N}/2} C_0 =: M_{\text{real}}. $$
    Notice that the remaining $\xi^{-\mathbf{N}/2} \le 1$, so $v$ is strictly bounded on the real ray.

    \item \textbf{On the vertical line $\mathrm{Re}\,\zeta = 1$ ($\zeta = 1+is, s \in \mathbb{R}$):}
    The points $z = \frac{\gamma}{1+is}$ have real part $\mathrm{Re}\,z = \frac{\gamma}{1+s^2} = \frac{c d^2}{1+s^2}$. By the global bound \eqref{eq:F_apriori_global_strict}:
    $$ |u(1+is)| = \left|F\left(\frac{\gamma}{1+is}\right)\right| \le C_1 \left(1 + \frac{1+s^2}{c}\right)^{\mathbf{N}/2} \le C_1 \left(1+\frac{1}{c}\right)^{\mathbf{N}/2} (1+s^2)^{\mathbf{N}/2} =: C_2 (1+s^2)^{\mathbf{N}/2}. $$
    Evaluating $|v(1+is)|$:
    $$ |v(1+is)| = |2(1+is)|^{-\mathbf{N}} |u(1+is)| |e^{1+is}| \le 2^{-\mathbf{N}} (1+s^2)^{-\mathbf{N}/2} \cdot C_2 (1+s^2)^{\mathbf{N}/2} \cdot e = e 2^{-\mathbf{N}} C_2 =: M_{\text{vert}}. $$
\end{enumerate}

\textbf{Step 4: Phragmén-Lindelöf on quarter-planes}

Set $M_0 = \max(M_{\text{real}}, M_{\text{vert}})$. Then $|v(\zeta)| \le M_0$ on the entire boundary $\partial \Omega^+$ and $\partial \Omega^-$. 

Inside $\Omega^+$, $v(\zeta)$ satisfies the exponential growth bound $|v(\zeta)| \le C e^{|\zeta|}$ (growth exponent $\beta = 1$). Since $\Omega^+$ is a sector of opening angle $\pi/2$, the critical Phragmén-Lindelöf exponent is $\alpha = \frac{\pi}{\pi/2} = 2$. 

Because $\beta = 1 < \alpha = 2$, the classical Phragmén-Lindelöf theorem applies independently on $\Omega^+$ and $\Omega^-$, yielding:
$$ |v(\zeta)| \le M_0 \quad \text{for all } \zeta \in \Omega. $$

\textbf{Step 5: Sector geometry and polynomial absorption}

Unfolding $v(\zeta)$ gives for all $z \in \mathcal{C}_\gamma$:
$$ |F(z)| \le M_0 \left|\frac{2\gamma}{z}\right|^{\mathbf{N}} \exp\left(-\mathrm{Re}\frac{\gamma}{z}\right). $$
Now restrict to $z \in S_\theta \cap \mathcal{C}_\gamma$. For $z \in S_\theta$, we have $|\arg z| \le \theta < \pi/2$, which implies $|\arg(\gamma/z)| = |\arg z| \le \theta$. In this sector, the modulus is controlled by the real part:
$$ \left|\frac{\gamma}{z}\right| = \frac{\mathrm{Re}(\gamma/z)}{\cos(\arg z)} \le \frac{1}{\cos\theta} \mathrm{Re}\left(\frac{\gamma}{z}\right). $$
Letting $t = \mathrm{Re}\left(\frac{\gamma}{z}\right) \ge 1$, we absorb the polynomial factor using half of the exponential decay:
$$ \left|\frac{2\gamma}{z}\right|^{\mathbf{N}} \exp\left(-\frac{1}{2}\mathrm{Re}\frac{\gamma}{z}\right) \le \sup_{t \ge 1} \left(\frac{2t}{\cos\theta}\right)^{\mathbf{N}} e^{-t/2} =: C_{\text{poly}}(\theta) < \infty. $$
Thus, for all $z \in S_\theta \cap \mathcal{C}_\gamma$:
$$ |F(z)| \le M_0 C_{\text{poly}}(\theta) \exp\left(-\frac{1}{2}\mathrm{Re}\frac{\gamma}{z}\right) = M_0 C_{\text{poly}}(\theta) \exp\left(-\frac{c}{2}\mathrm{Re}\frac{d^2}{z}\right). $$

For $z \in S_\theta \setminus \mathcal{C}_\gamma$, we have $\mathrm{Re}\left(\frac{\gamma}{z}\right) < 1$. Because $z \in S_\theta$, this geometric constraint implies $\frac{\gamma}{|z|} < \frac{1}{\cos\theta}$, hence $\frac{\gamma}{\mathrm{Re}\,z} \le \frac{\gamma}{|z|\cos\theta} < \frac{1}{\cos^2\theta}$, meaning that $\frac{d^2}{\mathrm{Re}\,z} < \frac{1}{c\cos^2\theta}$. On this set, $F(z)$ is uniformly bounded by \eqref{eq:F_apriori_global_strict}:
$$ |F(z)| \le C_1 \left(1 + \frac{1}{c\cos^2\theta}\right)^{\mathbf{N}/2} =: M_2. $$
Since $\mathrm{Re}\left(\frac{\gamma}{z}\right) < 1$, multiplying by $-1/2$ and exponentiating gives $e^{-1/2} < \exp\left(-\frac{1}{2}\mathrm{Re}\frac{\gamma}{z}\right)$. We use this to balance the constant $M_2$:
$$ |F(z)| \le M_2 \le M_2 e^{1/2} \exp\left(-\frac{1}{2}\mathrm{Re}\frac{\gamma}{z}\right) = M_2 e^{1/2} \exp\left(-\frac{c}{2}\mathrm{Re}\frac{d^2}{z}\right). $$

Setting $C^* = \max(M_0 C_{\text{poly}}(\theta), M_2 e^{1/2})$ and $c'' = c/2$, we obtain for all $z \in S_\theta$:
$$ |F(z)| \le C^* \exp\left(-c'' \mathrm{Re}\frac{d^2}{z}\right). $$
Dividing both sides by $w(B(\mathbf{x}, d))$ completes the proof.
\end{proof}

\begin{proof}[Proof of Theorem~\ref{thm:time_derivatives}]
Fix $t > 0$ and choose $\varepsilon \in (0, \sin\theta)$ such that the circle $\gamma_t = \{ z \in \mathbb{C} : |z - t| = \varepsilon t \}$ lies entirely within the sector $S_\theta$. 

For any $z \in \gamma_t$, we have $\mathrm{Re}\,z \approx t$ and $\mathrm{Re}(1/z) \ge \frac{c_0}{t}$ for $c_0 = \frac{1-\varepsilon}{(1+\varepsilon)^2}$. By the holomorphic property of $z \mapsto k_z(\mathbf{x},\mathbf{y})$ on $S_\theta$, Cauchy's integral formula for the $m$-th derivative yields:
\begin{equation*}
    \partial_t^m k_t(\mathbf{x},\mathbf{y}) = \frac{m!}{2\pi i} \int_{\gamma_t} \frac{k_z(\mathbf{x},\mathbf{y})}{(z - t)^{m+1}} \, dz.
\end{equation*}
Applying the complex-time kernel bounds from Theorem~\ref{thm:pointwise_complex_kernel_sector} (combined with standard volume estimates for $w(B(\mathbf{x},\sqrt{t}))$), we bound the integrand on $\gamma_t$ by
\begin{equation*}
    \sup_{z \in \gamma_t} |k_z(\mathbf{x},\mathbf{y})| \leq \frac{C}{w(B(\mathbf{x},\sqrt{t}))} \exp\left(- c'' c_0 \frac{d(\mathbf{x},\mathbf{y})^2}{t}\right).
\end{equation*}
Estimating the integral via the length of $\gamma_t$ ($2\pi \varepsilon t$) and the denominator $|z-t|^{m+1} = (\varepsilon t)^{m+1}$, we get:
\begin{align*}
    |\partial_t^m k_t(\mathbf{x},\mathbf{y})| &\leq \frac{m!}{2\pi} \frac{2\pi \varepsilon t}{(\varepsilon t)^{m+1}} \sup_{z \in \gamma_t} |k_z(\mathbf{x},\mathbf{y})| \\
    &\leq \frac{C_m}{t^m w(B(\mathbf{x},\sqrt{t}))} \exp\left(- c_m \frac{d(\mathbf{x},\mathbf{y})^2}{t}\right),
\end{align*}
where $C_m = \frac{m! C}{\varepsilon^m}$ and $c_m = c'' c_0$.
\end{proof}

\end{document}